\newtheorem{thm}{Theorem}[section]
\newtheorem*{thm*}{Theorem}
\newtheorem{lem}[thm]{Lemma}
\newtheorem{prop}[thm]{Proposition}
\newtheorem{cor}[thm]{Corollary}
\theoremstyle{definition}
\newtheorem{defn}[thm]{Definition}
\newtheorem{exmp}[thm]{Example}
\newtheorem{rmk}[thm]{Remark}
\newcommand{\sgn}{\text{sgn}}
\newcommand{\RN}[1]{%
 \textup{\uppercase\expandafter{\romannumeral#1}}%
}
\newcommand{\Addresses}{{
 \bigskip
 \footnotesize
 \textsc{Department of Mathematics, Eberhard Karls University of T\"{u}bingen,
   Auf der Morgenstelle 10, 72076 T\"{u}bingen, Germany}\par\nopagebreak
 \textit{E-mail address}: \texttt{jonathan.wilson@mnf.uni-tuebingen.de} }}
 \title{Laurent phenomenon algebras arising from surfaces II: Laminated surfaces}
\author{Jon Wilson}
\date{}
\newcommand*\circled[1]{\kern-2.5em%
  \put(0,4){\color{white}\circle*{18}}\put(-0.5,4){\circle{12}}%
  \put(-3,0){\color{black}\small#1}~~}
\begin{document}

\maketitle

\thispagestyle{firstpage}

\begin{abstract}

It was shown by Fock and Goncharov \cite{fock2007dual}, and Fomin, Shapiro, Thurston \cite{fomin2008cluster} that some cluster algebras arise from triangulated orientable suraces. Subsequently Dupont and Palesi \cite{dupont2015quasi} generalised this construction to include unpunctured non-orientable surfaces, giving birth to quasi-cluster algebras. In \cite{wilson2017laurent} we linked this framework to Lam and Pylyavskyy's Laurent phenomenon algebras \cite{lam2012laurent}, showing that unpunctured surfaces admit an LP structure. In this paper we extend quasi-cluster algebras to include punctured surfaces. Moreover, by adding laminations to the surface we demonstrate that all punctured and unpunctured surfaces admit LP structures.

\end{abstract}

\pagenumbering{arabic}

\tableofcontents

\section{Introduction}

Fomin and Zelevinsky introduced cluster algebras in 2002 \cite{fomin2002cluster}; broadly speaking they are algebras whose generators, \textit{cluster variables}, have been grouped into overlapping finite sets of the same size called \textit{clusters}. Moreover, one can move between these clusters in a process known as \textit{mutation}, which consists of replacing one cluster variable with another. This framework of clusters and mutations is known as the \textit{cluster structure}.

In practice, the whole cluster structure is not actually given from the outset, but rather an initial cluster is provided together with an initial piece of combinatorial data which describes how to obtain more clusters. For cluster algebras (of geometric type) this data is a skew-symmetrizable matrix; the columns of the matrix are in bijection with the cluster variables. Moreover, to each column $i$ of the matrix one can canonically associate a binomial $F_i$, and mutation \textit{in direction $i$} consists of replacing the cluster variable $x_i$ with $x_i'$ using the relation:
$$x_i x_i' = F_i .$$
Additionally the matrix also changes under rules governed by itself, and repeated employment of this process is then used to obtain the whole cluster structure.

This $21^{\text{st}}$ century construct has already proven to be ubiquitous in mathematics -- examples cover vast areas including Poisson geometry, integrable systems, mathematical physics, quiver representations, Teichm\"{u}ller theory and polytopes. The theory is not just enjoyed using examples though, there are many deep and beautiful results running through this structure. Perhaps one of the most famous is the celebrated Laurent phenomenon which states that every cluster variable can be expressed as a Laurent polynomial in the initial cluster variables \cite{fomin2002laurent}. The Caterpillar Lemma \cite{fomin2002laurent} was the key ingredient used to realise this phenomenon but it is worth noting that the lemma provides much more generality than the cluster algebra setting requires. Aimed at extracting the full power of the lemma Lam and Pylyavskyy manufactured their own broader cluster structure purposely built to boast the Laurent phenomenon \cite{lam2012laurent}. They appropriately named their construction the \textit{Laurent phenomenon algebra}, or LP algebra for short. Their setup is essentially the same as in cluster algebras, however, now the cluster variables exchange under the relation:
$$x_i x_i' = \frac{F_i}{M}$$
where $F_i$ is an irreducible polynomial, and $M$ is a monomial determined by a \textit{normalisation process}. This freedom to have polynomials vastly generalises the binomial setup of cluster algebras, and even though there is a seemingly harsh restriction of the exchange polynomials being irreducible, Lam and Pylyavskyy showed that LP algebras still encompass cluster algebras. Namely, they showed every cluster algebra with principal coefficients is an LP algebra \cite{lam2012laurent}. 

One of the most visually comprehensible appearances of cluster algebras comes from the study of orientable surfaces \cite{fock2007dual},\cite{fomin2008cluster},\cite{fomin2012cluster}. Given an orientable marked surface we may \textit{triangulate} it. For each triangulation $T$ of the surface we may assign a \textit{seed} in which the cluster variables correspond to arcs in $T$, and the skew symmetric matrix is obtained via the process of inscribing cycles in each triangle, with respect to the surface's orientation. These seeds form a cluster algebra structure where mutations correspond to flipping arcs in triangulations. The underlying reason for this behaviour is explained by recognising that each cluster variable actually represents the \textit{(lambda) length} of their corresponding arc, and the matrices encode how these lengths are related.

Subsequently, on a purely geometric level, Dupont and Palesi generalised this process to unpunctured non-orientable surfaces \cite{dupont2015quasi}. They decided upon a notion of \textit{quasi-triangulation} that guaranteed the flippability of every constituent \textit{quasi-arc}, and then discovered the various relationships between their \textit{(lambda) lengths}. The cluster structure is initiated by fixing a quasi-triangulation $T$ and a set of algebraically independent \textit{cluster variables} corresponding to the quasi-arcs in $T$. Mutation then consists of performing flips of quasi-arcs and exchanging cluster variables under the relationship governed by how the lengths of their corresponding quasi-arcs transform. With this done, their \textit{quasi-cluster algebras} were born.

After making a small tweak to their setup, we provided a purely combinatorial description of this geometric mutation process, showing that both orientable and non-orientable unpunctured surfaces exhibit an LP structure \cite{wilson2017laurent}. Moreover, it was shown that for punctured surfaces the analogous constructions could not possess an LP structure -- the inhibiting factor being that punctured surfaces admit triangulations containing arcs whose exchange polynomials coincide.

In this paper we first extend the construction of a quasi-cluster algebra to punctured surfaces. Then, with the desire to modify the exchange polynomials, we imitate the work of Fomin and Thurston \cite{fomin2012cluster} by adding laminations to the surface and introducing \textit{laminated lambda lengths}; a notion of length that takes into account these laminations, as well as the underlying geometry. By embodying the concept of principal coefficients for cluster algebras, we introduce \textit{principal laminations}. Crucially this class of laminations guarantees the uniqueness of exchange polynomials in every quasi-triangulation, allowing us to obtain a geometric realisation of LP algebras for all bordered surfaces: \newline

\noindent \textbf{Main Theorem} (Theorem \ref{maintheorem}). \textit{Let $(S,M)$ be an orientable or non-orientable marked surface and $\mathbf{L}$ a principal lamination. Then the LP cluster complex $\Delta_{LP}(S,M,\mathbf{L})$ is isomorphic to the laminated quasi-arc complex $\Delta^{\otimes}(S,M,\mathbf{L})$, and the exchange graph of $\mathcal{A}_{LP}(S,M,\mathbf{L})$ is isomorphic to $E^{\otimes}(S,M,\mathbf{L})$.} 

\textit{More explicitly, if $(S,M)$ is not a once-punctured closed surface, the isomorphisms may be rephrased as follows. Let $T$ be a quasi-triangulation of $(S,M)$ and $\Sigma_{T}$ its associated LP seed. Then in the LP algebra $\mathcal{A}_{LP}(\Sigma_{T})$ generated by this seed the following correspondence holds:}
\begin{align*}
&\hspace{8mm} \mathbf{\mathcal{A}_{LP}(\Sigma_T)} & & &\mathbf{(S,M,\mathbf{L})} \hspace{26mm}&  \\ 
&\textit{Cluster variables} &\longleftrightarrow& &\textit{Laminated lambda lengths of quasi-arcs} & \\
&\hspace{8mm}\textit{Clusters}  &\longleftrightarrow& &\textit{Quasi-triangulations} \hspace{16mm}& \\
&\hspace{4mm} \textit{LP mutation}   &\longleftrightarrow&  &\textit{Flips} \hspace{30.5mm}& \\
\end{align*}

An analogous correspondence holds for once-punctured surfaces too, it is just in this case the exchange graph splits into two isomorphic components. Moreover, specialising the variables corresponding to the laminations we get the following result regarding the underlying quasi-cluster algebra: \newline

\noindent \textbf{Corollary} (Corollary \ref{maincor}). \textit{Let $(S,M)$ be a bordered surface. Then the quasi-cluster algebra $\mathcal{A}(S,M)$ is a specialised LP algebra.} \newline

The paper is organised as follows. In Section 2 we recall the construction of Lam and Pylyavskyy's LP algebras and introduce the notion of a specialised LP algebra. Section 3 extends Dupont and Palesi's quasi-cluster algebras to include punctured surfaces -- this extension is in keeping with the construction already established on orientable surfaces \cite{fomin2008cluster},\cite{fomin2012cluster}. Moreover, as in \cite{wilson2017laurent}, to enable a connection of these quasi-cluster algebras to LP algebras, we make a small alteration to Dupont and Palesi's compatibility relations. In Section 4 we consider the double cover of triangulated surfaces and remark on a certain \textit{anti-symmetric} property of the associated quivers. The section concludes with an explanation on the relationship between mutation of these anti-symmetric quivers and LP mutation. Sections 5 and 6 make up the bulk of the paper and are devoted to finding a cluster structure on surfaces that fits into the LP algebra framework. Namely, in Section 5, imitating \cite{fomin2012cluster}, we introduce laminations on our surface with the intention of defining a notion of length of quasi-arc that depends on intersection numbers with these laminations as well as the underlying geometry. We open up the punctures of $(S,M)$ so that intersection numbers between quasi-arcs and laminations on this opened surface $(S^*,M^*)$ are finite. For quasi-arcs $\gamma$ and laminations $L$ on $(S,M)$ we fix associated lifts $\overline{\gamma}$ and $L^*$ on $(S^*,M^*)$, and from here we can define the \textit{laminated lambda length} of each $\gamma$; this is a rescaling of the lambda length of the lifted arc $\overline{\gamma}$ by the \textit{tropical lambda length} $c_{\mathbf{L}^*}(\overline{\gamma})$ -- a length that measures the number of intersections between $\overline{\gamma}$ and $\mathbf{L}^*$. Moreover, we put boundary conditions on the opened punctures to ensure the laminated lambda length does not depend on the choice of lifts we take for the quasi-arcs. We conclude the section by defining the associated \textit{laminated quasi-cluster algebra}. In Section 6 we first investigate the exchange relations between laminated lambda lengths of quasi-arcs. In particular, we show how we can obtain these relations from the associated anti-symmetric quivers, and we discover how the quivers change under flips. From here, assuming distinctness of exchange polynomials in each quasi-triangulation, we show the laminated quasi-cluster algebra has an LP structure. With this in mind we introduce \textit{principal laminations}; a class of laminations that ensures distinctness of exchange polynomials in each triangulation. The proof of distinctness is essentially obtained by showing the rank of the \textit{shortened exchange matrix} of an anti-symmetric quiver is invariant under mutation. We conclude the paper with the statement and proof of the Main Theorem along with a corollary regarding specialised LP algebras.

\section*{\large \centering Acknowledgements}
I would like to thank Anna Felikson for her continued support both during and after my PhD studies -- her careful reading of earlier versions of this paper also vastly improved its presentation. I also wish to thank Pavel Tumarkin, Pavlo Pylyavskyy and Vladimir Fock for stimulating discussions and their many insightful comments.

\section{Laurent phenomenon algebras}

This chapter follows the work of Lam and Pylyavskyy \cite{lam2012laurent}. We will first introduce the notion of a Laurent phenomenon algebra and then conclude the section with the idea of a specialised Laurent phenomenon algebra. \newline

Let the \textit{coefficient ring} $R$ be a unique factorisation domain over $\mathbb{Z}$ and let $\mathcal{F}$ denote the field of rational functions in $n \geq 1$ independent variables over the field of fractions $\text{Frac}(R)$. \newline

A Laurent phenomenon (LP) \textit{\textbf{seed}} in $\mathcal{F}$ is a pair $(\textbf{x}, \textbf{F})$ satisfying the following conditions:

\begin{itemize}

\item $\textbf{x} = \{x_1, \ldots, x_n\}$ is a transcendence basis for $\mathcal{F}$ over $\text{Frac}(R)$.

\item $\textbf{F} = \{F_1, \ldots, F_n\}$ is a collection of irreducible polynomials in $R[x_1, \ldots, x_n]$ such that for each $i \in \{1, \ldots, n\}$, $F_i \notin \{x_1, \ldots, x_n\}$; and $F_i$ does not depend on $x_i$ .

\end{itemize} 

Adopting the terminology of cluster algebras, $\textbf{x}$ is called the \textit{\textbf{cluster}} and $x_1, \ldots, x_n$ the \textit{\textbf{cluster variables}}. $F_1, \ldots, F_n$ are called the \textit{\textbf{exchange polynomials}}. \newline

Recall that a cluster algebra seed of geometric type $(\textbf{x}, B)$ consists of a cluster $\textbf{x} = \{x_1, \ldots, x_n\}$ and an $m \times n$ integer matrix $B = (b_{ij})$ whose top $n \times n$ submatrix is skew-symmetrizable. We can recode the columns of this matrix as binomials defined by $F^B_j := \prod_{b_{ij}>0}x_i^{b_{ij}} + \prod_{b_{ij}<0}x_i^{-b_{ij}}$, so there is a strong similarity between the definition of cluster algebra and LP seeds. The key difference being that for LP algebras the exchange relations can be polynomial, not just binomial. However, unlike in cluster algebras, these polynomials are required to be irreducible. \newline
\indent To obtain an \textit\textbf{LP algebra} from a seed we imitate the construction of cluster algebras. Namely, we introduce a notion of mutation of seeds. The LP algebra will then be defined as the ring generated by all the cluster variables we obtain throughout the mutation process. Before we present the rules of mutation we first need to clarify notation and introduce the idea of normalising exchange polynomials.

\underline{\textbf{Notation:}} \begin{itemize}

\item Let $F$ and $G$ be Laurent polynomials in the variables $x_1, \ldots x_n$. We denote by $F \rvert_{x_j \leftarrow G}$ the expression obtained by substituting $x_j$ in $F$ for the Laurent polynomial $G$.

\item If $F$ is a Laurent polynomial involving a variable $x$ then we write $x \in F$. Similarly, $x \notin F$ indicates that $F$ does not involve $x$.

\end{itemize}

\begin{defn}

Given $ \mathbf{F} = \{F_1,\ldots,F_n\} $ from an LP seed, then for each $j\in \{1,\ldots,n\}$ we define $ \hat{F}_j := \frac{F_j}{x_1^{a_1} \ldots x_{j-1}^{a^{j-1}}x_{j+1}^{a_{j+1}} \ldots x_n^{a_n}}$ where $a_k \in \mathbb{Z}_{\geq 0}$ is maximal such that $F_k^{a_k} $ divides ${F}_j \rvert_{x_k \leftarrow \frac{F_k}{x}}$ as an element of $R[x_1, \ldots, x_{k-1},x^{-1},x_{k+1},\ldots,x_n]$.  The Laurent polynomials in $\mathbf{\hat{F}} := \{\hat{F}_1 , \ldots , \hat{F}_n\}$ are called the \textbf{\textit{normalised exchange polynomials}} of $\mathbf{F}$.
\end{defn}

\begin{exmp}
\label{norm}
Consider the following exchange polynomials in $\mathbb{Z}[a,b,c]$ $$F_a = 1+bc,\hspace{7mm} F_b = 1+a,\hspace{7mm} F_c = (1+a)^2 + ab^2.$$ \indent Since $F_b$ and $F_c$ both depend on $a$ then $F_{a}\rvert_{b \leftarrow \frac{F_b}{x}}$ and $F_{a}\rvert_{c \leftarrow \frac{F_c}{x}}$ are not divisible by $F_b$ and $F_c$ respectively. Consequently $\hat{F}_a = F_a$, and an analogous argument shows $\hat{F}_b = F_b$. Similarly, $c\in F_a$ implies $a\notin \frac{F_c}{\hat{F}_c}$. However, $2$ is the maximal power of $F_b$ that divides $F_{c}\rvert_{b \leftarrow \frac{F_b}{x}}$, so $\hat{F}_c = \frac{F_c}{b^2}$.

\end{exmp}

\begin{defn}

Let $(\mathbf{x}, \mathbf{F})$ be an LP seed and $i \in \{1,\ldots, n\}$. We define a new seed $\mu_i(\mathbf{x}, \mathbf{F}) := (\{x_1',\ldots,x_n'\},\{F_1',\ldots, F_n'\})$. Here $x_j' := x_j $ for $j \neq i$ and $x_i' := \hat{F}_i/x_i $.  The exchange polynomials change as follows:

\begin{itemize}

\item If $x_i \notin F_j$ then $F_j' := F_j$.

\item If $x_i \in F_j $ then $F_j' $ is obtained from the following 3 step process:

\begin{description}[align=left]
\item [(Step $\bf{1}$)] Define $G_j := F_j \rvert_{x_i\leftarrow \frac{\hat{F}_i\rvert_{x_j\leftarrow 0}}{x_i' }}$ 
\item [(Step $\bf{2}$)] Define $H_j := G_j$ divided out by all common factors with $\hat{F}_i \rvert_{x_j\leftarrow 0}$, so that none remain, i.e. we have $gcd(H_j,\hat{F}_i \rvert_{x_j\leftarrow 0})=1$.
\item [(Step $\bf{3}$)] Let $M$ be the unique monic Laurent monomial in $R[x_1'^{\pm 1},\ldots,x_n'^{\pm 1}]$ such that $F_j' := H_jM \in R[x_1',\ldots,x_n']$ and is not divisible by any of the variables $x_1',\ldots,x_n'$.
\end{description}

\end{itemize}

The new seed $\mu_i(\mathbf{x},\mathbf{F})$ is called the \textit{\textbf{mutation}} of $(\mathbf{x},\mathbf{F})$ in \textit{\textbf{direction $\boldsymbol{i}$}}. It is important to note that because of \textbf{Step $\bf{2}$} the new exchange polynomials are only defined up to a unit in $R$.

\end{defn}

It is certainly not clear a priori that $\mu_i(\mathbf{x},\mathbf{F})$ will be a valid LP seed due to the irreducibility requirement of the new exchange polynomials. Furthermore, due to the expression $\hat{F}_i\rvert_{x_j\leftarrow 0}$ appearing in \textbf{Step 1} it may not even be apparent that the process is well defined. These issues are resolved by the following two lemmas.

\begin{lem}[Proposition 2.7,\cite{lam2012laurent}]
\label{welldefined}
$x_i\in F_j\implies  x_j\notin \frac{F_i}{\hat{F}_i}$. In particular, $x_i\in F_j$ implies that $\hat{F}_i\rvert_{x_j\leftarrow 0}$ is well defined.

\end{lem}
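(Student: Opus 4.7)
I would argue by contradiction. Suppose $x_i \in F_j$ yet $a_j \geq 1$, where $a_j$ denotes the exponent of $x_j$ in the monomial $F_i/\hat{F}_i$. Then by definition $F_j$ divides the Laurent polynomial $F_i|_{x_j \leftarrow F_j/x}$ in $B := R[x_1, \ldots, \hat{x_j}, \ldots, x_n, x^{\pm 1}]$. My goal will be to leverage this divisibility to obtain $x_j \mid F_i$, which contradicts the irreducibility of $F_i$ together with the LP-seed axiom $F_i \notin \{x_1, \ldots, x_n\}$ (interpreted up to units in $R$).

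The key step is to reduce modulo $F_j$. Observe that $F_j$, which does not involve $x_j$, remains irreducible (hence prime) in $B$, since $B$ is obtained from the UFD $R[x_1, \ldots, \hat{x_j}, \ldots, x_n]$ by localising at the element $x$, which does not appear in $F_j$. Let $\psi \colon R[x_1, \ldots, x_n] \to B$ be the substitution homomorphism with $x_j \mapsto F_j/x$ and $x_k \mapsto x_k$ otherwise, and let $\pi \colon B \to B/(F_j)$ be the quotient map. Modulo $F_j$ we have $\psi(x_j) \equiv 0$, so $\pi \circ \psi$ coincides with the map induced by first setting $x_j = 0$ and then projecting. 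Applying this to $F_i$ and using the hypothesis $F_j \mid \psi(F_i)$, I conclude $F_i|_{x_j=0} \equiv 0 \pmod{F_j}$. A quick comparison of $x$-degrees then shows that this divisibility already holds in the subring $B_0 := R[x_1, \ldots, \hat{x_j}, \ldots, x_n]$.

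To finish, I would compare $x_i$-degrees in the identity $F_j \cdot Q = F_i|_{x_j=0}$ in $B_0$. Since $x_i \in F_j$ and $B_0$ is a domain, the left-hand side has strictly positive $x_i$-degree whenever $Q \neq 0$; but $F_i$ does not depend on $x_i$, so the right-hand side has $x_i$-degree zero. Hence $Q = 0$ and thus $F_i|_{x_j=0} = 0$, i.e., $x_j \mid F_i$, producing the desired contradiction. The "in particular" clause is then immediate: the freshly established equality $a_j = 0$ means that the denominator monomial of $\hat{F}_i$ carries no $x_j$ factor, so the substitution $\hat{F}_i|_{x_j \leftarrow 0}$ is unambiguously defined.

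I expect the main obstacle to be less conceptual than bookkeeping: one must carefully track which ring each divisibility statement lives in (the full Laurent ring $B$, the polynomial subring $B_0$, or the original $R[x_1, \ldots, x_n]$), and verify at the outset that $F_j$ remains prime after the localisation. Both are routine, but easy places to slip up.
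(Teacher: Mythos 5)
The paper does not reprove this lemma; it simply cites Proposition 2.7 of Lam and Pylyavskyy. Your argument is correct and is in substance the same as theirs: from $F_j\mid F_i\rvert_{x_j\leftarrow F_j/x}$, reduce modulo the prime $F_j$ to extract $F_j\mid F_i\rvert_{x_j=0}$ in $B_0:=R[x_1,\ldots,\widehat{x_j},\ldots,x_n]$, and the $x_i$-degree comparison ($x_i$ occurs in $F_j$ but not in $F_i$) then forces $F_i\rvert_{x_j=0}=0$, whence $x_j\mid F_i$, contradicting the seed axioms. A slightly lighter route to the key divisibility is to expand $F_i\rvert_{x_j\leftarrow F_j/x}=\sum_{k\geq 0}c_k F_j^k x^{-k}$ with $c_k\in B_0$ and $c_0=F_i\rvert_{x_j=0}$: since $F_j$ manifestly divides every summand with $k\geq 1$, the divisibility $F_j\mid c_0$ is immediate, and one then passes from the Laurent ring $B$ to $B_0$ by the same one-line $x$-degree check you already carry out.
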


\begin{lem}[Proposition 2.15, \cite{lam2012laurent}]

$F_j'$ is irreducible in $R[x_1',\ldots, x_n']$ for all $j \in \{1,\ldots,n\}$. In particular, $\mu_i(\mathbf{x}, \mathbf{F})$ is a valid LP seed.

\end{lem}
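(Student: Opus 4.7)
The plan is to argue by contradiction: assume $F_j' = A'\cdot B'$ is a nontrivial factorisation in $R[x_1',\ldots,x_n']$, and pull this back via the substitution $x_k'\leftarrow x_k$ for $k\neq i$ and $x_i'\leftarrow \hat{F}_i/x_i$, which identifies the two polynomial rings as subrings of the common field $\mathcal{F}$. The strategy is to show that under this round-trip substitution, $F_j'$ becomes the original $F_j$ (up to a tractable correction), so that a nontrivial factorisation of $F_j'$ would yield a nontrivial factorisation of $F_j$, contradicting the irreducibility assumption in the definition of an LP seed.

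First, I would trace Steps 1--3 carefully. Step 1 performs the substitution $x_i\leftarrow \hat{F}_i\vert_{x_j\leftarrow 0}/x_i'$, which is the formal inverse of the substitution $x_i'\leftarrow \hat{F}_i\vert_{x_j\leftarrow 0}/x_i$; the discrepancy between the latter and the "true" mutation $x_i'\leftarrow \hat{F}_i/x_i$ is controlled by the terms of $\hat{F}_i$ that vanish when $x_j=0$. Step 2 divides out a (unique) power $(\hat{F}_i\vert_{x_j\leftarrow 0})^{m}$, and Step 3 multiplies by a Laurent monomial $M$ in the $x_k'$. Consequently, in the Laurent ring $R[x_1^{\pm 1},\ldots,x_n^{\pm 1}]$ one obtains an identity of the form
\[
F_j'\bigl\vert_{x_i'\leftarrow \hat{F}_i/x_i}\;=\;F_j\cdot N\cdot\bigl(\hat{F}_i\vert_{x_j\leftarrow 0}\bigr)^{-m},
\]
where $N$ is a Laurent monomial in $x_1,\ldots,x_n$. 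Applying the same substitution to the factorisation $F_j'=A'B'$ yields $F_j\cdot N = \tilde A\cdot\tilde B\cdot (\hat{F}_i\vert_{x_j\leftarrow 0})^{m}$ as Laurent polynomials, and since $F_j$ is irreducible in $R[x_1,\ldots,x_n]$ (hence prime up to monomials in the Laurent ring, as $R$ is a UFD), it must wholly divide one factor, say $\tilde A$. The other factor $\tilde B$ is then a product of a Laurent monomial and a divisor of a power of $\hat{F}_i\vert_{x_j\leftarrow 0}$. The Step 3 normalisation, together with Lemma \ref{welldefined}, then forces $B'$ to be a unit in $R[x_1',\ldots,x_n']$: any nontrivial polynomial divisor of $\hat{F}_i\vert_{x_j\leftarrow 0}$ would, after clearing the monomial $M$ and reinterpreting in the primed variables, produce a variable $x_k'$ (for some $k\neq i$) dividing $F_j'$, which is explicitly forbidden by the Step 3 construction.

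The main obstacle will be controlling how the power $(\hat{F}_i\vert_{x_j\leftarrow 0})^{m}$ redistributes between $\tilde A$ and $\tilde B$, since $\hat{F}_i\vert_{x_j\leftarrow 0}$ need not itself be irreducible. The crucial technical input is the \emph{maximality} built into the definition of $\hat{F}_j$: the exponents $a_k$ are chosen so that the removal of common factors in Step 2 is as large as possible, which pins down $m$ uniquely and prevents any "hidden" factor of $\hat{F}_i\vert_{x_j\leftarrow 0}$ from surviving inside $F_j'$ in a form that could be siphoned off into a nontrivial divisor. This maximality, combined with Lemma \ref{welldefined} guaranteeing that $x_j\notin F_i/\hat{F}_i$ so the substitution $x_j\leftarrow 0$ in $\hat{F}_i$ is meaningful, is what underlies the clean contradiction. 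Once irreducibility is established, the remaining LP-seed axioms for $\mu_i(\mathbf{x},\mathbf{F})$ — transcendence of $\{x_1',\ldots,x_n'\}$ and the condition $F_j'\notin\{x_1',\ldots,x_n'\}$, $x_j\notin F_j'$ — follow directly from the construction and from the analogous properties of the original seed.
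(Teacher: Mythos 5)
This paper does not actually prove the statement; it cites it as Proposition~2.15 of Lam--Pylyavskyy \cite{lam2012laurent}, so there is no in-paper argument to compare against. Your overall strategy---pull a hypothetical factorisation $F_j'=A'B'$ back along $x_i'\leftarrow \hat F_i|_{x_j\leftarrow 0}/x_i$, invoke primality of $F_j$ in the appropriate localisation, and then argue that the ``leftover'' factor must be a unit---is indeed the right flavour and matches the Lam--Pylyavskyy approach in spirit. However, your final step contains a genuine gap. You claim that a nontrivial polynomial divisor of $\hat F_i|_{x_j\leftarrow 0}$ surviving in $B'$ would ``produce a variable $x_k'$ dividing $F_j'$, which is forbidden by Step~3.'' That implication does not hold: divisors of $\hat F_i|_{x_j\leftarrow 0}$ are ordinary polynomials in the $x_k$, $k\neq i,j$, and there is no mechanism by which one of them is converted into a single variable $x_k'$ after clearing the monomial $M$. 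The correct source of the contradiction is Step~2, which \emph{explicitly} enforces $\gcd(H_j,\hat F_i|_{x_j\leftarrow 0})=1$; combined with the Step~3 condition that $F_j'=MH_j$ has no monomial factor, this forces any divisor $B'$ of $F_j'$ whose pullback $\tilde B$ is a Laurent monomial times a divisor of a power of $\hat F_i|_{x_j\leftarrow 0}$ to be a unit---\emph{provided} $B'$ does not involve $x_i'$. You also do not address the case where $B'$ has positive degree in $x_i'$; the substitution $x_i'\leftarrow \hat F_i|_{x_j\leftarrow 0}/x_i$ then introduces both $x_i$-denominators and powers of $\hat F_i|_{x_j\leftarrow 0}$ into $\tilde B$, and showing this is incompatible with $\tilde B$ being essentially a divisor of a power of $\hat F_i|_{x_j\leftarrow 0}$ requires a separate (short but not automatic) argument about the $x_i$-degree and the nonvanishing of the constant coefficient $B'|_{x_i'\leftarrow 0}$, which uses the fact that $x_i'\nmid F_j'$. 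Finally, ``the maximality built into the definition of $\hat F_j$'' is the wrong lever: the normalisation process governs how exchange polynomials interact with each other, whereas the coprimality you need is enforced independently by Step~2.
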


\begin{exmp}

We will perform mutation $\mu_{b}$ at $b$ on the LP seed $$(\{a,b,c\},\{F_a = 1+bc, F_b = 1+a, F_c = (1+a)^2 + ab^2\}).$$ Recall from Example \ref{norm} that $\hat{F}_b = F_b$. Both $F_a$ and $F_c$ depend on $b$ so we are required to apply the 3 step process on each of them. We shall denote the new variable $b' := \frac{\hat{F}_b}{b}$ by $d$. \newline
$$G_a = F_a \rvert_{b \leftarrow \frac{\hat{F_b} \rvert_{a \leftarrow 0}}{d}} = F_a \rvert_{b \leftarrow \frac{1}{d}} = 1+ \frac{c}{d}.$$ Nothing happens at Step 2 since $\hat{F_a} \rvert_{b \leftarrow 0} = 1$. Multiplying by the monomial $d$ gives us our new exchange polynomial $F_a' = d + c$. \newline
$$G_c = F_c \rvert_{b \leftarrow \frac{\hat{F_b} \rvert_{c \leftarrow 0}}{d}} = F_c \rvert_{b \leftarrow \frac{1+a}{d}} = (1+a)^2 + \frac{a(1+ a)^2}{d^2}.$$ Following Step 2 we divide $G_c$ by any of its common factors with $\hat{F_a} \rvert_{c \leftarrow 0} = 1+a$. This leaves us with $H_c = 1 + \frac{a}{d^2}$. Finally, multiplying by the monomial $d^2$ gives us our new exchange polynomial $F_c' = d^2 + a$. \newline
Hence, our new LP seed is $$(\{a,d,c\},\{F_a = d+c, F_d = 1+a, F_c = d^2 + a\}).$$

\end{exmp}

Recall that mutation in cluster algebras is an involution. In the LP algebra setting, because mutation of exchange polynomials is only defined up to a unit in $R$, it is clear we cannot say precisely the same thing for LP mutation. Nevertheless, we do have the following analogue.

\begin{prop}[Proposition 2.16, \cite{lam2012laurent}]

If $(\mathbf{x}', \mathbf{F}')$ is obtained from $(\mathbf{x}, \mathbf{F})$ by mutation at $i$, then $(\mathbf{x}, \mathbf{F})$ can be obtained from $(\mathbf{x}',\mathbf{F}')$ by mutation at $i $. It is in this sense that LP mutation is an involution.
\end{prop}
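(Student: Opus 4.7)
The plan is to verify that $\mu_i \circ \mu_i$ returns the original seed modulo units of $R$, by tracking the cluster variables and the exchange polynomials separately.

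For the cluster variables, only $x_i$ can change under mutation in direction $i$: from $x_i' = \hat{F}_i/x_i$ one obtains, after a second mutation at $i$, the variable $x_i'' = \hat{F}_i'/x_i' = (\hat{F}_i'/\hat{F}_i)\,x_i$, so it suffices to show $\hat{F}_i'$ and $\hat{F}_i$ coincide up to a unit in $R$. Since $x_i \notin F_i$ the first step of the mutation rule forces $F_i' = F_i$, and for $j\neq i$ the variable $x_j' = x_j$. Thus the only change affecting the normalisation comes from the update $F_k \mapsto F_k'$ for $k \neq i$, which is dictated by the three-step process applied using $F_i$ itself. A direct inspection shows that the maximal $a_k$ with $F_k^{a_k} \mid F_i|_{x_k\leftarrow F_k/x}$ equals the maximal $a_k'$ with $(F_k')^{a_k'} \mid F_i|_{x_k\leftarrow F_k'/x}$; morally, both read off the same intrinsic divisibility data from the irreducible polynomial $F_i$, and the extra monomial factors absorbed into $F_k'$ do not interfere because they are coprime to $F_k'$.

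For the exchange polynomials, the case $x_i \notin F_j$ is immediate: $F_j' = F_j$, and since $F_j$ is independent of $x_i$ it is also independent of $x_i' = \hat{F}_i/x_i$, so applying mutation again does nothing. The non-trivial case is $x_i \in F_j$. Here I would trace through the three-step process producing $F_j'$, then apply the same three steps to $F_j'$ and check that the result is $F_j$ up to a unit. The core identity is that Step $1$ of the first mutation is the substitution $x_i \leftarrow \hat{F}_i|_{x_j\leftarrow 0}/x_i'$, while Step $1$ of the second mutation is $x_i' \leftarrow \hat{F}_i'|_{x_j\leftarrow 0}/x_i''$. Since $\hat{F}_i' = \hat{F}_i$ up to units (by the previous paragraph), these substitutions are mutually inverse modulo an overall Laurent monomial, and the monomial discrepancy is exactly what Steps $2$ and $3$ are designed to absorb: Step $2$ clears the common factors with $\hat{F}_i|_{x_j\leftarrow 0}$ introduced during the substitution, and Step $3$'s unique monic monomial pins down the result inside $R[x_1',\ldots,x_n']$.

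The main obstacle is the bookkeeping in Steps $2$ and $3$. Because exchange polynomials are defined only up to units in $R$, and because Step $2$ involves dividing out common factors while Step $3$ involves multiplying by the unique monic monomial clearing all denominators, one must verify that the combined effect of these two normalisations, applied twice in succession, is the identity up to a unit. Lemma \ref{welldefined} is essential here: it guarantees $x_j \notin F_i/\hat{F}_i$, so $\hat{F}_i|_{x_j\leftarrow 0}$ is genuinely well-defined, and it controls how the $x_j$-specialisation interacts with the normalisation. Irreducibility of the $F_k$ prevents spurious cancellations, and uniqueness of the monomial $M$ in Step $3$ makes the normalisation rigid enough that the two rounds of mutation must compose to the identity on the exchange polynomials modulo units. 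Once these two ingredients are in place, one reads off $F_j'' = F_j$ up to a unit, completing the involution.
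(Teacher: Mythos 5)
This proposition is quoted in the paper directly from Lam and Pylyavskyy (as Proposition 2.16 of \cite{lam2012laurent}); the paper offers no proof of its own, so there is no in-paper argument to compare against. That said, let me assess your sketch on its own terms.

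You correctly identify the structure of the argument: handle cluster variables and exchange polynomials separately, and within the latter distinguish $x_i \notin F_j$ from $x_i \in F_j$. But two load-bearing claims are asserted rather than proved, and one of them is asserted with the wrong justification.

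First, your claim that $\hat{F}_i' = \hat{F}_i$ (up to a unit) is labelled ``direct inspection'' and backed by a coprimality heuristic, but the actual reason is more structural and should be said explicitly. If $a_k > 0$ in the normalisation of $F_i$, i.e.\ $x_k \in F_i/\hat{F}_i$, then the contrapositive of Lemma \ref{welldefined} gives $x_i \notin F_k$; but then the mutation rule says $F_k' = F_k$, so the divisibility data defining $a_k$ is literally unchanged. That is why $\hat{F}_i'$ and $\hat{F}_i$ coincide. Your invocation of Lemma \ref{welldefined} appears later and only for the well-definedness of $\hat{F}_i|_{x_j \leftarrow 0}$, which misses its central role here. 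You also need the converse direction ($a_k = 0 \Rightarrow a_k' = 0$), which requires a similar argument applied to the mutated seed $(\mathbf{x}',\mathbf{F}')$.

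Second, the case $x_i \in F_j$ is the bulk of the work in Lam and Pylyavskyy's proof, and your sketch reduces it to ``trace through the three steps twice and check the result is $F_j$ up to a unit.'' That is precisely what needs proving. Step 2 discards common factors with $\hat{F}_i|_{x_j\leftarrow 0}$, Step 3 reinserts a monomial, and one must verify that the factors removed by the first Step 2 are exactly recovered (up to unit) by the interaction of the inverse substitution with the second Steps 2 and 3. The phrase ``these substitutions are mutually inverse modulo an overall Laurent monomial'' is the key computational identity and needs to be established, not posited. Without that verification, the sketch asserts the conclusion rather than deriving it.
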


\begin{defn}

A \textbf{\textit{Laurent phenomenon algebra}} $(\mathcal{A},\mathcal{S})$ consists of a collection of seeds $\mathcal{S}$, and a subring $\mathcal{A}\subset \mathcal{F}$ that is generated by all the cluster variables appearing in the seeds of $\mathcal{S} $. This collection of seeds must be connected and closed under mutation. More formally, $\mathcal{S} $ is required to satisfy the following conditions: \begin{itemize}

\item Any two seeds in $\mathcal{S}$ are connected by a sequence of LP mutations.
\item$\forall$ $(\mathbf{x},\mathbf{F}) \in \mathcal{S}$ $\forall i \in \{1,\ldots,n\}$ there is a seed $(\mathbf{x}',\mathbf{F}') \in \mathcal{S}$ that can be obtained by mutating $(\mathbf{x},\mathbf{F})$ at $i$.

\end{itemize}

\end{defn}

\begin{defn}[Section 3.6, \cite{lam2012laurent}]

The \textit{\textbf{cluster complex }} $\Delta_{LP}(\mathcal{A})$ of an LP algebra $\mathcal{A}$ is the simplicial complex with the ground set being the cluster variables of $\mathcal{A}$, and the maximal simplices being the clusters.

\end{defn}

\begin{defn}[Subsection 3.6, \cite{lam2012laurent}]

The \textit{\textbf{exchange graph }} of an LP algebra $\mathcal{A}$ is the graph whose vertices correspond to the clusters of $\mathcal{A}$. Two vertices are connected by an edge if their corresponding clusters differ by a single mutation.

\end{defn}

\begin{defn}

A \textbf{\textit{specialised Laurent phenomenon algebra}} $(\mathcal{A}, \mathcal{S})_{sp}$ is the structure obtained from an LP algebra $(\mathcal{A}, \mathcal{S})$ when evaluating some elements in the coefficient ring $R$ at $1$.

\end{defn}

It is worth noting that, unlike in cluster algebras, this specialisation process does not generally produce another LP algebra. For $(\mathcal{A}, \mathcal{S})_{sp}$ to be an LP algebra the specialisation must commute with mutation. Namely, we would need $\mu_i(\Sigma)_{sp} = \mu_i(\Sigma_{sp})$ for each LP seed $\Sigma \in \mathcal{S}$. The following example shows this is not true in general.

\begin{exmp}
\label{not LP}

Consider the following seed where our coefficient ring is $R= \mathbb{Z}[X]$: $$\Sigma = (\{a,b,c\}, \{F_a = 1+Xb, F_b = a+c, F_c = 1+b\}).$$ Perfoming mutation at $a$ we obtain $$\mu_a(\Sigma) = (\{a'=\frac{1+Xb}{a},b,c\}, \{F_{a'} = 1+Xb, F_b = 1+a'c, F_c = 1+b\}).$$ However, if we specialise at $X=1$ and mutate the specialisation of $\Sigma$ at $a$, we get $$\mu_a(\Sigma_{sp}) = (\{a'=\frac{1+b}{ac},b,c\}, \{F_{a'} = 1+b, F_b = a'+1, F_c = 1+b\}).$$ Seeing as $\mu_a(\Sigma)_{sp} \neq \mu_a(\Sigma_{sp})$ we realise that the specialisation of the LP algebra generated by $\Sigma$ is not itself an LP algebra.

\end{exmp}

\section{Quasi-cluster algebras}

This section continues our previous paper \cite{wilson2017laurent}, which was based on the work of Dupont and Palesi \cite{dupont2015quasi}. Namely, we extend the construction of a quasi-cluster algebra to include punctured surfaces.

Let $S$ be a compact $2$-dimensional manifold. Fix a finite set $M$ of marked points of $S$ such that each boundary component contains at least one marked point - we will refer to marked points in the interior of $S$ as \textit{punctures}. The tuple $(S,M)$ is called a \textit{\textbf{bordered surface}}. We wish to exclude cases where $(S,M)$ does not admit a triangulation. As such, we do not allow $(S,M)$ to be an unpunctured or once-punctured monogon; digon; triangle; once or twice punctured sphere; M\"obius strip with one marked point on the boundary; or the once-punctured projective space. For technical reasons we also exclude the case where $(S,M)$ is the thrice-punctured sphere, the twice-punctured projective space and the once-punctured Klein bottle. \newline

To imitate the construction of cluster algebras arising from orientable surfaces we must first agree on which curves will form our notion of 'triangulation'. Our definitions are based on the theories developed by: Fock and Goncharov \cite{fock2007dual}, and Fomin, Shapiro and Thurston \cite{fomin2008cluster} on orientable surfaces; and Dupont and Palesi on non-orientable surfaces \cite{dupont2015quasi}. As in \cite{wilson2017laurent}, the key difference to our setup is the adjustment made to Dupont and Palesi's compatibility relations; this alteration facilitates the eventual connecting of quasi-cluster algebras to Laurent phenomenon algebras.

\begin{defn}

An \textit{\textbf{ordinary arc}} of $(S,M)$ is a simple curve in $S$ connecting two (not necessarily distinct) marked points of $M$, which is not homotopic to a boundary arc or a marked point.
\end{defn}

\begin{defn}

An \textit{\textbf{arc}} $\gamma$ is obtained from decorating ('tagging') an ordinary arc at each of its endpoints in one of two ways; \textit{\textbf{plain}} or \textit{\textbf{notched}}. This tagging is required to satisfy the following conditions:

\begin{itemize}

\item An endpoint of $\gamma$ lying on the boundary $\partial S$ must receive a plain tagging.

\item If the endpoints of $\gamma$ coincide they must receive the same tagging.

\end{itemize}

\end{defn}

\begin{defn}

A simple closed curve in $S$ is said to be \textit{\textbf{two-sided}} if it admits a regular neighbourhood which is orientable. Otherwise, it is said to be \textit{\textbf{one-sided}}.

\end{defn}

\begin{defn}

A \textit{\textbf{quasi-arc}} is either an arc or a one-sided closed curve. Throughout this paper we shall always consider quasi-arcs up to isotopy. Let $A^\otimes(S,M)$ denote the set of all quasi-arcs (considered up to isotopy).

\end{defn}

Recall that a closed non-orientable surface is homeomorphic to the connected sum of $k$ projective planes $\mathbb{R}P^2$. Such a surface is said to have (non-orientable) genus $k$. A \textit{\textbf{cross-cap}} is a cylinder where antipodal points on one of the boundary components are identified. In particular, note that a cross-cap is homeomorphic to $\mathbb{R}P^2$ with an open disk removed. An illustration of a cross cap in given in Figure \ref{crosscap} - throughout this paper we shall always represent it in this way.
For pictorial convenience we use the following alternative description: A compact non-orientable surface of genus $k$ (with boundary) is homeomorphic to a sphere where (more than) $k$ open disks are removed, and $k$ of them have been replaced with cross-caps.

\begin{figure}[H]
\begin{center}
\includegraphics[width=3cm]{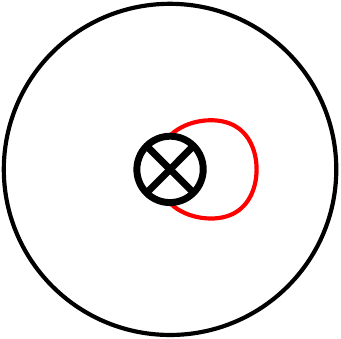}
\caption{A crosscap together with an example of a one-sided closed curve.}
\label{crosscap}
\end{center}
\end{figure}

\begin{defn}[Compatibility of arcs] 

Let $\alpha$ and $\beta$ be two arcs of $(S,M)$. We say $\alpha$ and $\beta$ are \textit{\textbf{compatible}} if and only if the following conditions are satisfied:

\begin{itemize}

\item There exist isotopic representatives of $\alpha$ and $\beta$ that don't intersect in the interior of $S$.

\item Suppose the untagged versions of $\alpha$ and $\beta$ do not coincide. If $\alpha$ and $\beta$ share an endpoint $p$ then the ends of $\alpha$ and $\beta$ at $p$ must be tagged in the same way.

\item Suppose the untagged versions of $\alpha$ and $\beta$ do coincide. Then precisely one end of $\alpha$ must be tagged in the same way as the corresponding end of $\beta$.

\end{itemize}

\end{defn}

To each arc $\gamma$ bounding a M\"obius strip with one marked point, $M_1^{\gamma}$, we uniquely associate the two quasi-arcs of $M_1^{\gamma}$. Namely, we associate the one-sided closed curve $\alpha_{\gamma}$ and the arc $\beta_{\gamma}$ enclosed in $M_1^{\gamma}$, see Figure \ref{intersectioncompatible}.

\begin{figure}[H]
\begin{center}
\includegraphics[width=3cm]{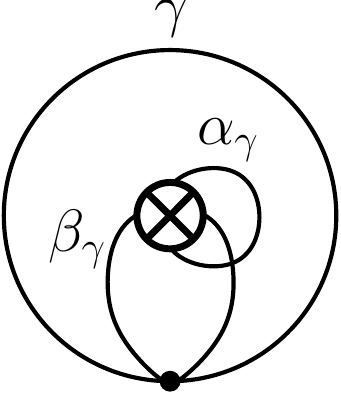}
\caption{The two quasi-arcs $\alpha_{\gamma}$ and $\beta_{\gamma}$ enclosed in the M\"obius strip, $M_1^{\gamma}$, cut out by an arc $\gamma$.}
\label{intersectioncompatible}
\end{center}
\end{figure}

\begin{defn}[Compatibility of quasi-arcs]
\label{newcompatibilitydef}
We say that two quasi-arcs $\alpha$ and $\beta$ are \textit{\textbf{compatible}} if either:
\begin{itemize}
\item $\alpha$ and $\beta$ are compatible arcs;
\item $\alpha$ and $\beta$ are not both arcs, and either $\alpha$ and $\beta$ do not intersect or $\{\alpha,\beta\} = \{\alpha_{\gamma},\beta_{\gamma}\}$ for some arc $\gamma$ bounding a M\"obius strip $M_1^{\gamma}$ - see Figure \ref{intersectioncompatible}.
\end{itemize}

\end{defn}

\begin{defn}
\label{quasitriangulationdef}
A \textit{\textbf{quasi-triangulation}} of $(S,M)$ is a maximal collection of pairwise compatible quasi-arcs of $(S,M)$ containing no arcs that cut out a once-punctured monogon or a M\"obius strip with one marked point on the boundary -- an example is shown on the left in Figure \ref{idealtriangulation}. A quasi triangulation is referred to as a \textbf{\textit{triangulation}} if it contains no one-sided closed curves.

\end{defn}

\begin{defn}
\label{ideal quasi-triangulation}
An \textit{\textbf{ideal quasi-triangulation}} of $(S,M)$ is a maximal collection of pairwise non-intersecting ordinary arcs and one-sided closed curves of $(S,M)$ -- an example is shown on the right in Figure \ref{idealtriangulation}. We shall refer to the curves comprising an ideal quasi-triangulation as \textit{\textbf{ordinary quasi-arcs}}.

\end{defn}

\begin{rmk}
After putting a hyperbolic metric on $(S,M)$ we need only ever consider the geodesic representatives of ordinary quasi-arcs to decide which collections form ideal quasi-triangulations. This is due to the fact that ordinary quasi-arcs have non-intersecting representatives \textit{if and only if} their geodesic representatives do not intersect. An analogous statement can be made when deciding which quasi-arcs form quasi-triangulations.
\end{rmk}

Let $T$ be a quasi-triangulation of $(S,M)$. As illustrated in Figure \ref{idealtriangulation}, we may associate an ideal quasi-triangulation $T^{\circ}$ to $T$ as follows:

\begin{itemize}

\item If $p$ is a puncture with more than one incident notch, then replace all these notches with plain taggings.

\item If $p$ is a puncture with precisely one incident notch, and this notch belongs to $\beta$, then replace $\beta$ with the unique arc $\gamma$ which encloses $\beta$ and $p$ in a monogon.

\item If $\alpha$ is a one-sided closed curve in $T$ then (by maximality of a quasi-triangulation) there exists a unique arc $\beta$ in $T$ which intersects $\alpha$. Replace $\beta$ with the unique arc $\gamma$ enclosing $\alpha$ and $\beta$ in a M\"obius strip with one marked point.

\end{itemize}

\begin{figure}[H]
\begin{center}
\includegraphics[width=11cm]{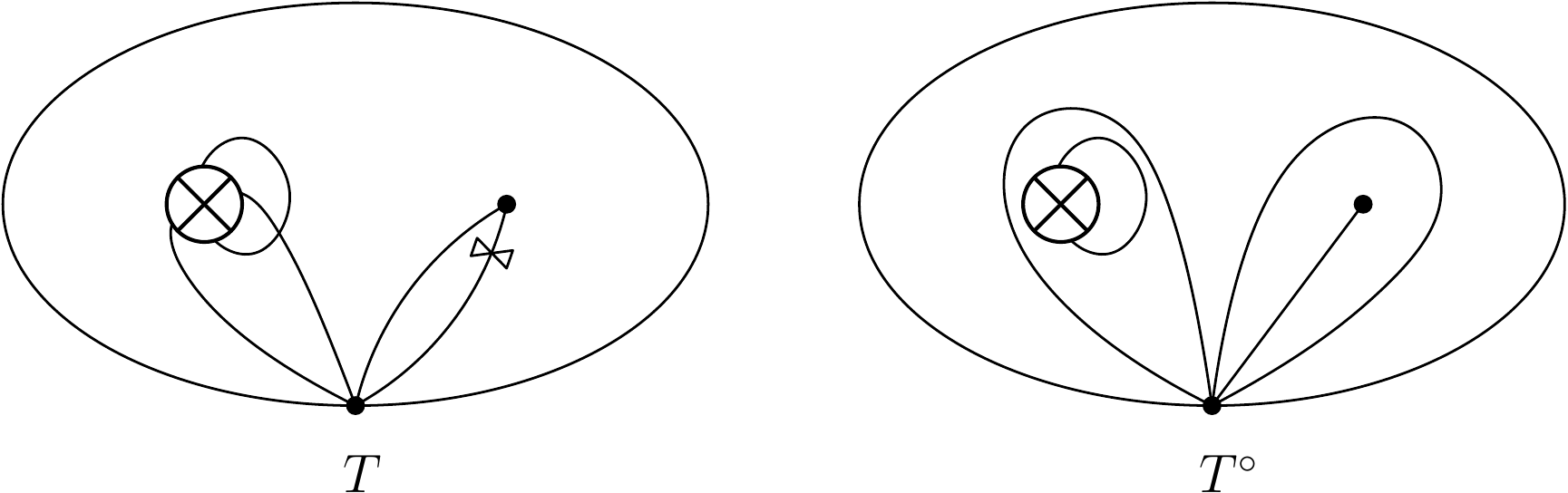}
\caption{Transforming a quasi-triangulation $T$ into an ideal triangulation $T^{\circ}$.}
\label{idealtriangulation}
\end{center}
\end{figure}

\begin{lem}
\label{puzzlepieces}
Let $T$ be a quasi-triangulation of $(S,M)$. Then $T^{\circ}$ cuts $(S,M)$ into triangles and annuli.

\end{lem}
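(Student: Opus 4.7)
The strategy is to separate the contribution of the one-sided closed curves of $T^{\circ}$ from that of the ordinary arcs: each one-sided closed curve will contribute an annulus, and the complement will be triangulated by ordinary arcs in the classical sense. I would reduce to the known fact that an ideal triangulation without one-sided closed curves cuts the surface into triangles, appealing to the orientable case in Fomin--Shapiro--Thurston \cite{fomin2008cluster} and to its extension in Dupont--Palesi \cite{dupont2015quasi}.

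First, for each one-sided closed curve $\alpha \in T^{\circ}$ I would identify the unique arc $\gamma \in T^{\circ}$ that bounds a M\"obius strip $M_1^{\gamma}$ containing $\alpha$. This $\gamma$ is precisely the arc that replaced $\beta_{\gamma}$ under the passage from $T$ to $T^{\circ}$, and by maximality of $T$ together with Definition \ref{newcompatibilitydef} no other quasi-arcs of $T^{\circ}$ can have representatives in the interior of $M_1^{\gamma}$: any other candidate ordinary arc inside $M_1^{\gamma}$ either coincides with $\beta_{\gamma}$ (which is not in $T^{\circ}$) or is forced to intersect $\alpha$, contradicting compatibility. Hence, restricted to each such $M_1^{\gamma}$, cutting along $T^{\circ}$ amounts to a single cut along the central one-sided closed curve $\alpha$, which is the classical operation unfolding $M_1^{\gamma}$ into an annulus.

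Having dealt with every M\"obius strip in this way, I would then remove the open interior of each $M_1^{\gamma}$ from $(S,M)$ to obtain a subsurface $(S',M')$ whose boundary inherits the $\gamma$'s. The remaining ordinary arcs of $T^{\circ}$, together with these $\gamma$'s viewed as boundary arcs of $(S',M')$, form a maximal collection of pairwise non-intersecting ordinary arcs in $(S',M')$ containing no one-sided closed curves: that is, an ideal triangulation in the sense of \cite{fomin2008cluster,dupont2015quasi}. The cited results then guarantee that this triangulation cuts $(S',M')$ into triangles, with self-folded triangles accounting for the once-punctured monogons produced by the tag-removal step in the construction of $T^{\circ}$. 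Combining the annuli obtained from the M\"obius strips with these triangles yields the claimed decomposition.

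The main obstacle, in my view, is the local analysis inside each M\"obius strip: one must leverage the maximality of $T$ and the compatibility conditions to rule out any stray arcs of $T^{\circ}$ in the interior of $M_1^{\gamma}$, since this is precisely what makes the resulting annulus the only piece of the decomposition lying in that strip. Once this local statement is in hand, the global assertion follows by splicing together the two cases.
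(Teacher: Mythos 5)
Your decomposition into Möbius-strip annuli plus a residual triangulated piece is structurally sound, and the local analysis inside each $M_1^{\gamma}$ is correct: by compatibility, no stray ordinary arc of $T^{\circ}$ can lie in the interior of $M_1^{\gamma}$, so cutting there yields exactly one annulus. The paper's proof arrives at the same annuli, but in the opposite order: it first cuts along all arcs, then observes that any non-orientable complementary component must itself be $M_1$ (by maximality), and cuts its core curve to get the annulus.

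The genuine gap is in the other half of your argument. After excising the Möbius strips, the residual surface $(S',M')$ can still be a non-orientable surface with punctures (for instance, $(S',M')=(S,M)$ whenever $T$ contains no one-sided closed curves). For such a surface, the assertion that a maximal collection of pairwise non-crossing ordinary arcs cuts it into triangles is precisely what this lemma is establishing for the first time; it is not contained in the references you cite. Fomin--Shapiro--Thurston treat orientable surfaces, and Dupont--Palesi treat non-orientable surfaces that are \emph{unpunctured}, so neither covers non-orientable punctured surfaces. Your reduction therefore defers the new content to a result that doesn't yet exist, and you still need the direct case analysis: showing that an orientable complementary component cannot be a monogon or digon (by excluding the forbidden surfaces and degenerate identifications) and cannot have more than three marked points (by maximality), hence must be a triangle. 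The paper supplies exactly this argument; without it, your proof is incomplete.
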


\begin{proof}

Firstly, cut along all arcs in $T^{\circ}$ (i.e don't cut along any one-sided closed curves) to obtain a collection of connected components. Let $K$ be one of these connected components. Note that because we have cut along arcs, $K$ will have boundary with at least one marked point on each boundary component. Furthermore, we may assume $K$ has only one boundary component and no punctures as otherwise this contradicts the maximality of our quasi-triangulation. 

If $K$ is non-orientable then it contains a one-sided closed curve $\alpha \in T$. Let $\gamma$ be a curve that encloses $\alpha$ in a M\"obius strip with one marked point. By the maximality of the quasi-triangulation, $\gamma$ is either isotopic to the boundary of $K$, forcing $K$ to be the M\"obius strip with one marked point, or $\gamma$ is contractable resulting in $(S,M)$ being the once-punctured projective space. Since we have forbidden the later case then if $K$ is non-orientable it is the M\"obius strip with one marked point and a one-sided closed curve. Cutting along the one-sided closed curve yields the annulus with a marked point on one boundary component and the other empty of marked points.

What remains is to consider the case when $K$ is orientable. $K$ cannot be a monogon as then either $(S,M)$ itself is a monogon, or the boundary of $K$ is a contractable curve in $(S,M)$ and is therefore not a valid arc.
Similarly, $K$ cannot be a digon as then one of the following situations occur: $(S,M)$ is itself a digon; the two boundary segments of $K$ are isotopic; or $(S,M)$ is obtained from gluing together the boundary of $K$ with the result being the twice punctured sphere or the once-punctured projective space.
$K$ cannot have more than four marked points as this would contradict the maximality of the quasi-triangulation. Hence if $K$ is orientable it must be a triangle.

\end{proof}

\begin{prop}
\label{flip}
Let $T$ be a quasi-triangulation of $(S,M)$. Then for any $\gamma \in T$ there exists a unique $\gamma' \in A^\otimes(S,M)$ such that $\gamma' \neq \gamma$ and $\mu_{\gamma}(T) := T\setminus\{\gamma\}\cup \gamma'$ is a quasi-triangulation. We call $\gamma'$ the \textbf{\textit{flip}} of $\gamma$ with respect to $T$.

\end{prop}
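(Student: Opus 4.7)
My approach is a local analysis using the puzzle-piece decomposition from Lemma~\ref{puzzlepieces}: after cutting $(S,M)$ along the arcs of $T^\circ$ and along the one-sided closed curves of $T$, the surface breaks into triangles and annuli (the annuli each arising from a one-sided closed curve of $T$). Any candidate $\gamma'$ replacing $\gamma$ must be compatible with every element of $T\setminus\{\gamma\}$, so it cannot cross any surviving arc of $T^\circ$; hence $\gamma'$ is forced to lie inside the union of the puzzle pieces adjacent to $\gamma$. Both existence and uniqueness will therefore reduce to a finite combinatorial check inside each such local region.

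I would carry out this check case by case. (i) If $\gamma$ is a plain arc of $T\cap T^\circ$ sitting as a common edge of two triangles of the decomposition, then those triangles glue into a (possibly self-identified) quadrilateral $Q$ and $\gamma'$ is its other diagonal---the classical flip. (ii) If $\gamma$ is a plain arc of $T$ whose neighbourhood contains an annulus arising from a one-sided closed curve $\alpha \in T$, then $\gamma'$ is obtained by twisting $\gamma$ once through the enclosing M\"obius strip around $\alpha$. (iii) If $\gamma$ is a notched arc at a puncture $p$, then $\gamma \notin T^\circ$, and the flip is produced by either swapping the tag of $\gamma$ at $p$ or performing an ordinary diagonal flip of its underlying ordinary arc, with the precise prescription depending on how many of the other arcs of $T$ incident to $p$ are notched. (iv) If $\gamma$ is a one-sided closed curve, then by maximality of $T$ there is a unique $\beta \in T$ meeting $\gamma$; writing $M_1^\delta$ for the enclosing M\"obius strip with bounding arc $\delta \in T^\circ$, the flip is pinned down by enumerating the (very short) list of quasi-arcs of this local region compatible with $\beta$. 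In every case I would then verify that $T\setminus\{\gamma\}\cup\{\gamma'\}$ is pairwise compatible, maximal, and contains no arc cutting out a once-punctured monogon or a one-marked-point M\"obius strip---the two forbidden configurations in Definition~\ref{quasitriangulationdef}.

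I expect the main obstacle to be case (iii): the prohibition on arcs bounding a once-punctured monogon interacts delicately with the tagging rules, and several subcases must be distinguished according to the pattern of notches at $p$ and whether both ends of $\gamma$ terminate at $p$. Case (iv) is conceptually unusual because it has no counterpart in the orientable setting, but it reduces to a finite check since a M\"obius strip with one marked point supports only two quasi-arcs. Uniqueness in every case then follows immediately from the confinement of $\gamma'$ to the union of puzzle pieces adjacent to $\gamma$, together with the observation that each such local region admits exactly one non-trivial replacement.
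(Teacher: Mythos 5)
Your high-level plan mirrors the paper's: apply Lemma~\ref{puzzlepieces} to get the local puzzle-piece structure around $\gamma$, confine $\gamma'$ to that neighbourhood, and enumerate local configurations. However, two things are off.

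First, your confinement claim (``$\gamma'$ is forced to lie inside the union of the puzzle pieces adjacent to $\gamma$'') is false as stated, and it fails precisely in your case (iv). If $\gamma=\alpha$ is a one-sided closed curve with unique intersecting arc $\beta$, then the flip $\gamma'$ is an arc joining the two marked points of the surrounding M\"obius strip with two marked points, and this arc \emph{crosses} the boundary arc $\delta$ of $M_1^\delta$. That crossing is allowed, because $\delta\notin T$ (arcs bounding $M_1$ are forbidden in a quasi-triangulation), so $\gamma'$ need not be compatible with $\delta$. Your own remark that $M_1$ supports only two quasi-arcs ($\alpha$ and $\beta$, both already spoken for) already shows that $\gamma'$ must escape $M_1^\delta$; so the local region you restrict to is too small, and your case (iv) enumeration would come up empty. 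The correct flip region is the M\"obius strip with two marked points, and an analogous subtlety arises whenever an arc of $T^\circ$ is a substituted arc not present in $T$. The paper handles this head-on by observing that a boundary segment of a flip region may be such a substituted arc, and that the substituted arc and the two quasi-arcs it bounds are compatible with exactly the same quasi-arcs, so existence and uniqueness of the flip are unaffected.

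Second, you flag case (iii) (notched arcs) as the main obstacle and leave it open. The paper dissolves this case with a preliminary reduction you are missing: simultaneously changing every tag at a puncture is a symmetry of the compatibility relations and so has no effect on which quasi-arcs are flippable or what they flip to. One may therefore assume, without loss of generality, that a notched arc in $T$ occurs only inside a once-punctured digon, paired with its plain counterpart. After this normalisation the only local pictures that arise when pulling the puzzle-piece gluings back to $T$ are the quadrilateral, the once-punctured digon, and the M\"obius strip with two marked points, each with a visibly unique flip; there is no subcase explosion over ``the pattern of notches at $p$'' to push through.
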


\begin{proof}

For a quasi-triangulation $T$ of $(S,M)$ note that performing tag changing transformations at punctures has no effect on the flippability of quasi-arcs in $T$. Therefore, without loss of generality, we may assume that the only instance when a notched arc appears in T is when it is accompanied by its plain counterpart.  \newline \indent
To decide the flipability of an arc in $T$ we shall consider its local configuration. We achieve this by first considering the local configurations of quasi-arcs in the associated ideal quasi-triangulation $T^{\circ}$, and from here we will then discover the possible local pictures in $T$. \newline \indent
By Lemma \ref{puzzlepieces} we know that $T^{\circ}$ cuts $(S,M)$ into triangles and annuli - for convenience we shall refer to them as \textit{puzzle pieces}. Therefore any quasi-arc of $T^{\circ}$ is the glued side of two puzzle pieces. We list these gluings in Figure \ref{puzzlegluing} to obtain all possible neighbourhoods of a quasi-arc in $T^{\circ}$. When the configurations in Figure \ref{puzzlegluing} are pulled back to $T$ the only valid local configurations, shown in Figure \ref{flipregions}, are the quadrilateral, the punctured digon and the M\"{o}bius strip with two marked points - as by definition of a bordered surface we have forbidden the instance when $(S,M)$ is the thrice punctured sphere, the twice-punctured projective space, or the once-punctured Klein bottle. Each quasi-arc in the interior of the configurations in Figure \ref{flipregions} is uniquely flippable. An important point to add is that the boundary segments of these configurations may in fact be a substituted arc bounding a punctured monogon, or a M\"obius strip with one marked point. However, since this substituted arc, and the two quasi-arcs it bounds are compatible with precisely the same quasi-arcs, then this doesn't affect the existence or uniqueness of the flip in question.

\end{proof}

\begin{rmk}
The reason we have forbidden $(S,M)$ to be the thrice punctured sphere, the twice-punctured projective space, or the once-punctured Klein bottle should now be clear - the glued side of their corresponding configuration in Figure \ref{puzzlegluing} is pulled back to two arcs.
\end{rmk}

\begin{figure}[H]
\begin{center}
\includegraphics[width=13cm]{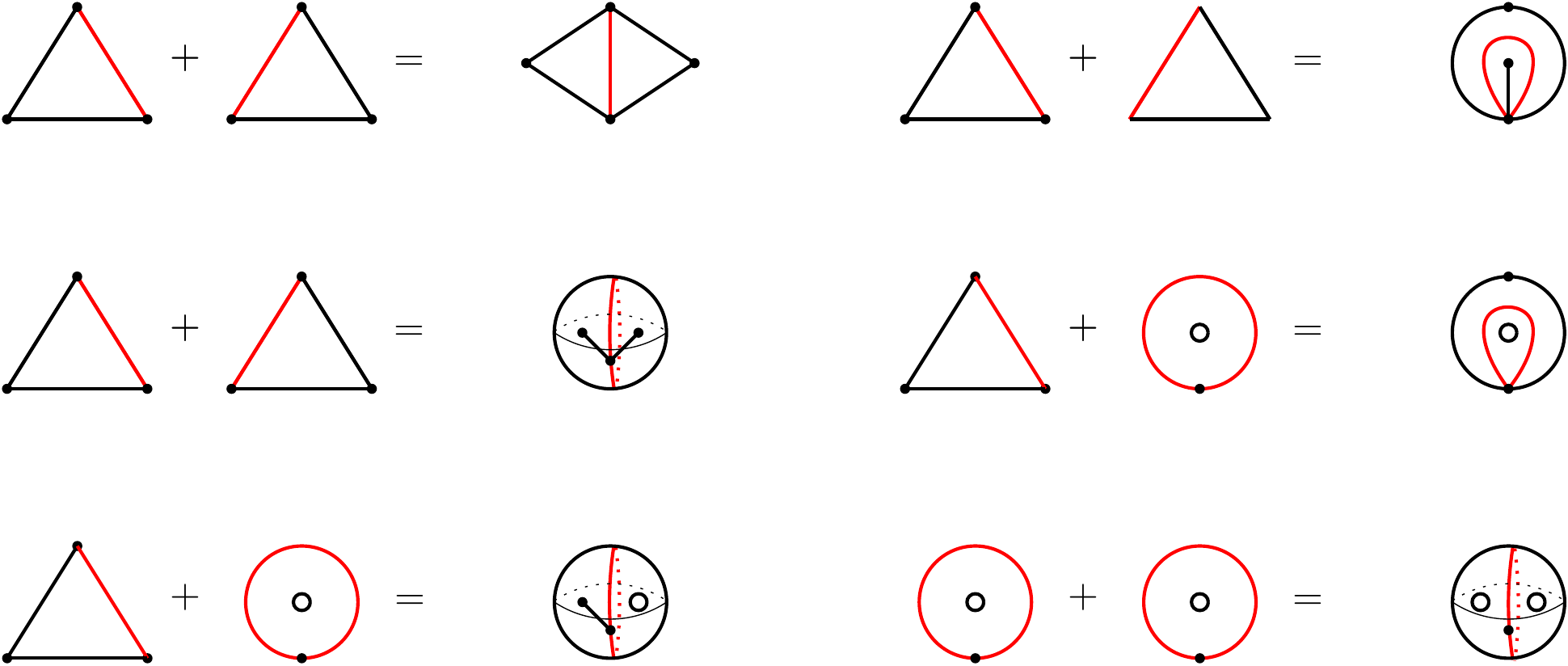}
\caption{The possible gluings of puzzle pieces.}
\label{puzzlegluing}
\end{center}
\end{figure}

\begin{figure}[H]
\begin{center}
\includegraphics[width=11cm]{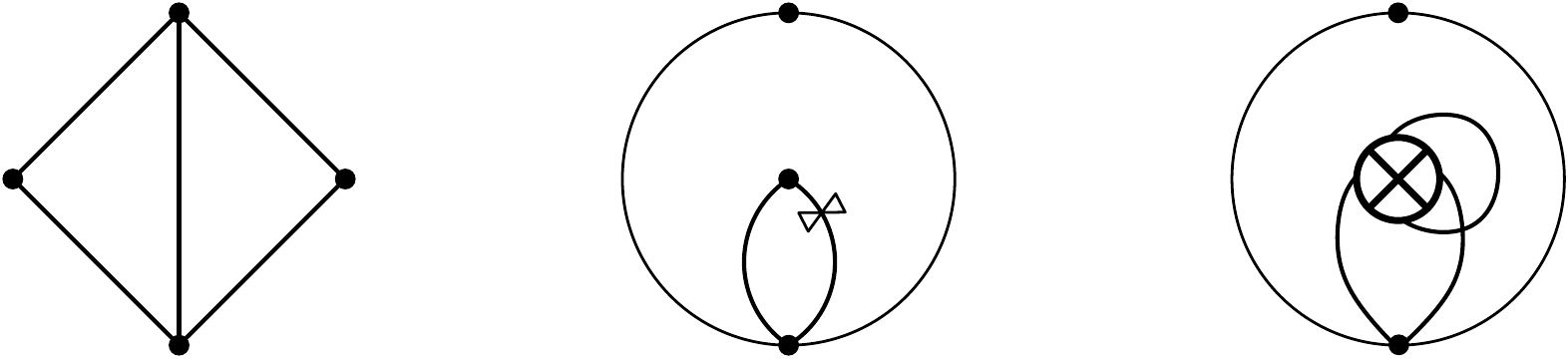}
\caption{The valid pullbacks obtained from gluings of puzzle pieces.}
\label{flipregions}
\end{center}
\end{figure}

\begin{defn}

The \textit{\textbf{flip graph}} of a bordered surface $(S,M)$ is the graph with vertices corresponding to quasi-triangulations and edges corresponding to flips.

\end{defn}

Harer \cite{harer1985stability} proved that two ideal triangulations on an orientable surface are connected via a sequence of flips. This result applies equally well to non-orientable surfaces; for a simple proof of this see Mosher \cite{mosher1988tiling}. The following proposition concerning the connectivity of the flip graph follows from the result of Harer, and arguments of Fomin, Shapiro and Thurston \cite{fomin2008cluster} regarding the ability to flip between plain and notched arcs in triangulations.

\begin{prop}
\label{flipconnected}
If $(S,M)$ is not a closed once-punctured surface then the flip graph of $(S,M)$ is connected. In the closed once-punctured case the flip graph has two isomorphic connected components: one containing only plain quasi-arcs, and the other containing only notched ones.

\end{prop}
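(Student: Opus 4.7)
My plan is to reduce the statement to the corresponding connectivity result for ideal triangulations (where no tags are present) and then handle the tagging separately, treating the closed once-punctured case as the exceptional situation.

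First I would set up the reduction from quasi-triangulations to ideal quasi-triangulations via the map $T \mapsto T^{\circ}$ of Definition \ref{ideal quasi-triangulation}. The key observation is that this map has a controlled fibre: given $T^{\circ}$, the quasi-triangulations $T$ with this associated ideal quasi-triangulation are parametrised by choices of taggings on punctures that carry more than one incident arc, choices of replacing a bounded monogon with a notched arc when the puncture has only one incident arc, and choices of replacing a small enclosed arc with a one-sided closed curve inside each M\"obius strip with one marked point. Each such choice can be realised by a flip in the quasi-triangulation $T$ (flipping the arc enclosing the M\"obius strip swaps the enclosed arc and the one-sided closed curve; flipping the bounding arc of a once-punctured monogon swaps plain and notched taggings at the enclosed puncture). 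Thus to prove connectivity in the quasi-triangulation flip graph it suffices to connect (i) any two quasi-triangulations lying over the same ideal quasi-triangulation $T^{\circ}$, and (ii) any two ideal quasi-triangulations via a sequence of flips.

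For (ii), I would invoke Harer's theorem \cite{harer1985stability} for orientable surfaces and Mosher's proof \cite{mosher1988tiling} for the non-orientable case, applied after first collapsing any one-sided closed curves (via the substitution inverse to the third bullet in the definition of $T \mapsto T^{\circ}$) to a genuine ideal triangulation, where the classical flip connectivity applies. The moves I use above to pass between one-sided closed curves and their enclosing arcs then translate this into a sequence of flips on the quasi-triangulation side. For (i), when two quasi-triangulations share the same underlying $T^{\circ}$ they differ only by the tagging choices described, so I need to show that for each puncture $p$, the taggings at $p$ can be toggled by flips. Following Fomin, Shapiro and Thurston \cite{fomin2008cluster}, the standard way to do this is: locate a quasi-triangulation in which $p$ is incident to exactly two arcs that together cut out a punctured digon, and then the flip of one of them changes the tagging at $p$. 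Harer/Mosher connectivity lets one move any puncture into such a local configuration, \emph{provided} that the surface has either some boundary or a second puncture to anchor the digon.

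The main obstacle is therefore the closed once-punctured case, in which no second anchor is available. Here I would argue that the move swapping plain and notched at the unique puncture $p$ is simply not realisable by flips: in any quasi-triangulation of such a surface every arc is incident to $p$ at both ends, so compatibility forces all taggings at $p$ to agree, and a single flip cannot change the global tagging parity at $p$. Consequently the flip graph splits into (at least) two components, one with all plain taggings and one with all notched. The map sending every arc to the same arc with taggings reversed is a bijection on quasi-triangulations that intertwines flips, producing an isomorphism between these two components. Combined with (ii) and the version of (i) available in this case (which toggles taggings consistently at $p$ but never changes their parity), this shows there are exactly two components and they are isomorphic, completing the proof.
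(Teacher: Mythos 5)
Your proposal follows the same route the paper indicates: reduce to ideal (quasi-)triangulations, invoke Harer--Mosher for connectivity there, and use the Fomin--Shapiro--Thurston digon argument to toggle taggings, handling the closed once-punctured case as the exception where the tagging at the unique puncture is a flip-invariant. This is precisely what the paper cites as its justification (it gives no further detail itself), so the approaches agree; your account just fleshes out the steps. One small imprecision: the Möbius-strip part of the fibre of $T\mapsto T^{\circ}$ is trivial, since any $T$ containing the one-sided curve $\alpha_{\gamma}$ also contains $\beta_{\gamma}$, and $T^{\circ}$ keeps $\alpha_{\gamma}$ while replacing $\beta_{\gamma}$ by $\gamma$; likewise, ``the substitution inverse to the third bullet'' would reinsert $\beta$ alongside $\alpha$ and create an intersection --- what you actually want is to replace $\alpha$ by $\beta$ to obtain a genuine ideal triangulation before applying Harer--Mosher. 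Neither slip affects the soundness of the argument.
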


Propositions \ref{flip} and \ref{flipconnected} tell us that the number of quasi-arcs in a quasi-triangulation is an invariant of $(S,M)$ - this number is called the \textit{\textbf{rank}} of $(S,M)$.
\newline

\indent We now introduce the notion of a seed of a bordered surface $(S,M)$.

\subsection*{Quasi-seeds and mutation.}

Suppose $(S,M)$ is a bordered surface of rank $n$ and let $b_1,\ldots, b_m$ consist of all the boundary segments of $(S,M)$. Denote by $\mathcal{F}$ the field of rational functions in $n+m$ independent variables over $\mathbb{Q}$.

A \textit{\textbf{quasi-seed}} of a bordered surface $(S,M)$ in $\mathcal{F}$ is a pair $(\mathbf{x},T)$ such that:

\begin{itemize}

\item $T$ is a quasi-triangulation of $(S,M)$.

\item $\mathbf{x} := \{x_{\gamma} \in \mathcal{F} | \gamma \in T\}$ is an algebraically independent set in $\mathcal{F}$ over $\mathbb{ZP} := \mathbb{Z}[x_{b_1},\ldots,x_{b_m}]$.

\end{itemize}

\indent We call $\mathbf{x}$ the \textit{\textbf{cluster}} of $(\mathbf{x},T)$ and the variables themselves are called \textit{\textbf{cluster variables}}.\newline

To define a cluster structure on $(S,M)$ we shall consider the \textit{decorated Teichm\"uller space}, $\tilde{\mathcal{T}}(S,M)$, as introduced by Penner \cite{penner2012decorated}. An element of $\tilde{\mathcal{T}}(S,M)$ consists of a complete finite-area hyperbolic structure of constant curvature $-1$ on $S\setminus M$ together with a collection of horocycles, one around each marked point. 

Fixing a decorated hyperbolic structure $\sigma \in \tilde{\mathcal{T}}(S,M)$ we may define the notion of \textit{lambda length}, $\lambda_{\sigma}(\gamma)$, for each quasi-arc $\gamma$ in $(S,M)$. More explicitly,

\[   
\lambda_{\sigma}(\gamma) = 
     \begin{cases}
       e^{\frac{l_{\sigma}(\gamma)}{{2}}},& \text{if $\gamma$ is an arc,}\\
       2\sinh(\frac{l_{\sigma}(\gamma)}{{2}}),& \text{if $\gamma$ is a one-sided closed curve,}\\
     \end{cases}
\]

\noindent where $l_{\sigma}(\gamma)$ is defined as follows. If $\gamma$ is a one-sided closed curve then $l_{\sigma}(\gamma)$ simply denotes the length of $\gamma$ in $\sigma$. If $\gamma$ is an arc then its endpoints are at cusps in $\sigma$, and so $\gamma$ will have infinite length. However, we define $l_{\sigma}(\gamma)$ to be the length of $\gamma$ between certain horocycles at its endpoints; the horocycle chosen at an endpoint will depend on how $\gamma$ is tagged. Recall that $\sigma$ comes equipped with a horocycle $h_k$ at each marked point $k$. If $\gamma$ has a plain tag at $k$ then we consider precisely the horocycle $h_k$. If $\gamma$ is notched at $k$ then we instead consider the \textit{conjugate horocycle} $\tilde{h}_k$, of $h_k$. (If $h_k$ has length $x$ then the \textit\textbf{{conjugate horocycle}} $\tilde{h}_k$ is defined to be the unique horocycle at $k$ with length $\frac{1}{x}$. \newline

The \textit{lambda length}, $\lambda({\gamma})$, of a quasi-arc $\gamma$ is the evaluation map on $\tilde{\mathcal{T}}(S,M)$ sending decorated hyperbolic structures $\sigma$ to $\lambda_{\sigma}(\gamma)$.

The theorem below follows from [Theorem 7.4, \cite{fomin2012cluster}] and [Remark 8.8, \cite{fomin2012cluster}].

\begin{thm}

For any quasi-triangulation $T$ with quasi-arcs and boundary arcs $\gamma_1, \ldots, \gamma_{n+b}$ there exists a homeomorphism 

 \begin{align*} 
\Lambda_T \colon   \tilde{\mathcal{T}}(S & ,M) \longrightarrow \mathbb{R}_{>0}^{n+b} \\
          &\sigma \mapsto (\lambda_{\sigma}(\gamma_1), \ldots, \lambda_{\sigma}(\gamma_{n+b}))
\end{align*} 

\end{thm}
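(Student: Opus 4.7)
The plan is to bootstrap from Penner's classical coordinatization of decorated Teichm\"uller space (Theorem 7.4 of \cite{fomin2012cluster}) by successively enlarging the class of allowed quasi-triangulations. Since the statement allows (i) notched ends of arcs, (ii) non-orientability, and (iii) one-sided closed curves, I would handle these three features in turn.

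First I would dispense with the orientable, ordinary (plain-tagged, no one-sided closed curves) case. Here $T$ is a triangulation and $\Lambda_T$ is exactly the Penner map, which is a real-analytic homeomorphism $\tilde{\mathcal{T}}(S,M)\to\mathbb{R}_{>0}^{n+b}$; the inverse is built triangle-by-triangle by placing each ideal triangle in the upper half-plane so that its horocyclic segment lengths match the prescribed lambda lengths, and then gluing along matching edges. Next I would upgrade to notched arcs on orientable surfaces, which is the content of Remark 8.8 of \cite{fomin2012cluster}: replacing a plain tag at a puncture $p$ by a notched one amounts to replacing the horocycle $h_p$ by its conjugate $\tilde h_p$, and this substitution is itself a homeomorphism of the horocycle factor of $\tilde{\mathcal{T}}(S,M)$. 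Hence composing with $\Lambda_T$ for a plain triangulation yields a homeomorphism for any tagged triangulation.

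For non-orientable surfaces I would pass to the orientation double cover $\pi\colon(\widetilde S,\widetilde M)\to(S,M)$. Any quasi-triangulation $T$ of $(S,M)$ lifts canonically to a $\mathbb{Z}/2$-invariant triangulation $\widetilde T$ of $\widetilde S$ (with a one-sided closed curve $\alpha\in T$ lifting to a single two-sided loop, handled in the next step). Decorated hyperbolic structures on $(S,M)$ are in natural bijection with $\mathbb{Z}/2$-equivariant decorated structures on $(\widetilde S,\widetilde M)$, and for every lift $\widetilde\gamma$ of an arc $\gamma\in T$ one has $\lambda_\sigma(\gamma)=\lambda_{\tilde\sigma}(\widetilde\gamma)$. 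Consequently $\Lambda_T$ factors as the restriction of the (already established) orientable map $\Lambda_{\widetilde T}$ to the fixed locus of the involution, which is again a homeomorphism onto its image $\mathbb{R}_{>0}^{n+b}$.

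The main obstacle, and the final step, is to incorporate one-sided closed curves. If $\alpha\in T$ is one-sided, then by Proposition \ref{flip} there is a unique $\beta\in T$ intersecting $\alpha$, and both are contained in a M\"obius strip $M_1^\gamma$ bounded by an arc $\gamma$ which, together with $T\setminus\{\alpha\}$, forms a quasi-triangulation $T'$ to which the previous steps already apply. I would therefore reduce the theorem for $T$ to the theorem for $T'$ by exhibiting an explicit homeomorphism $\Phi\colon\mathbb{R}_{>0}^{n+b}\to\mathbb{R}_{>0}^{n+b}$ that swaps the $\gamma$-coordinate with the $\alpha$-coordinate while leaving all others fixed. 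Concretely, inside $M_1^\gamma$ a short calculation in the hyperbolic universal cover (lifting $M_1^\gamma$ to an annulus in the orientation cover, where $\gamma$ lifts to the boundary and $\alpha$ to the core geodesic, with $\beta$ lifting to a pair of arcs crossing the core) yields a closed-form relation of the shape
\[
\lambda(\gamma)\;=\;\lambda(\beta)^{2}\bigl(\lambda(\alpha)^{2}+c\bigr)\cdot\lambda(\alpha)^{-1}
\]
for an explicit constant/factor coming from the adjacent triangle; the precise identity is not important for the plan, only that it is smooth and strictly monotone in $\lambda(\alpha)$ for fixed $\lambda(\beta)$, with inverse of the same form. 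This monotonicity makes $\Phi$ a homeomorphism, and so $\Lambda_T=\Phi\circ\Lambda_{T'}$ is a homeomorphism too. Iterating over all one-sided closed curves in $T$ completes the proof. The main delicate point is verifying that the local formula really is bi-continuous and positive on all of $\mathbb{R}_{>0}$; this is the step where one must be careful that no degenerate collapse of horocycles occurs.
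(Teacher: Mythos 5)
Your proposal is structurally sound, and it is doing genuinely more than the paper itself, which only offers a one-line citation to [Theorem 7.4] and [Remark 8.8] of Fomin--Thurston; those results cover the orientable tagged-triangulation case, so the double-cover reduction and the final coordinate change for one-sided closed curves are exactly the two ingredients that the paper silently leaves to the reader and which you supply.

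Two corrections, one terminological and one substantive.  First, $T' = T\setminus\{\alpha\}\cup\{\gamma\}$, where $\gamma=\alpha^*$ bounds the M\"obius strip $M_1^\gamma$ containing $\alpha$ and $\beta$, is \emph{not} a quasi-triangulation: Definition~\ref{quasitriangulationdef} explicitly forbids arcs that cut out $M_1$.  It is what Definition~\ref{defn}{5.14} calls a \emph{traditional triangulation}.  This does not break your reduction---$T'$ still lifts to an honest tagged triangulation of the orientable double cover, to which the already-established orientable theorem applies---but the logic of your final step is ``reduce to a traditional triangulation, whose double cover is handled by Fomin--Thurston,'' not ``reduce to another quasi-triangulation of the same kind.''  Second, your proposed relation
\[
\lambda(\gamma)=\lambda(\beta)^2\bigl(\lambda(\alpha)^2+c\bigr)\cdot\lambda(\alpha)^{-1}
\]
is not the one that holds.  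The correct relation, which the paper invokes in the proof of Lemma~\ref{M1 lamination}, is the simple multiplicative identity $\lambda(\alpha^*)=\lambda(\alpha)\lambda(\beta)$ (with $\lambda(\alpha)=2\sinh(l(\alpha)/2)$ since $\alpha$ is one-sided).  Since you only use the relation through the statement that it is positive, smooth and strictly monotone in $\lambda(\alpha)$ with a positive, smooth and strictly monotone inverse, and the genuine relation has those properties (trivially: $\lambda(\alpha)=\lambda(\alpha^*)/\lambda(\beta)$), your coordinate change $\Phi$ still works and the argument survives.  One further point worth flagging but not expanding: in the double-cover step you should verify that the image of the $\mathbb{Z}/2$-fixed locus under $\Lambda_{\widetilde T}$ is the \emph{whole} diagonal $\{\lambda(\gamma)=\lambda(\widetilde\gamma)\}\cong\mathbb{R}_{>0}^{n+b}$, i.e.\ surjectivity; this follows because the Penner construction of the inverse is triangle-by-triangle and produces an equivariant structure when fed equivariant data, but it should be said.
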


As a consequence the lambda lengths of quasi-arcs and boundary arcs in a quasi-triangulation can be viewed as algebraically independent variables and we have a canonical isomorphism $$\mathbb{Q}(\{ \lambda(\gamma) | \gamma \in T\cup B(S,M) \}) \cong \mathcal{F}.$$

We may define a cluster structure by calculating how these lambda lengths are related under flips. We provide these precise relations below in Definition \ref{lambdarules}. Note that instead of working with the lambda lengths of quasi-arcs we shall instead always consider their corresponding elements in $\mathcal{F}$.

\begin{defn}
\label{lambdarules}
Given $\gamma \in T$ we define \textit{\textbf{mutation}} of $(\mathbf{x},T)$ in \textit{\textbf{direction \boldmath$\gamma$}} to be the pair $\mu_{\gamma}(\mathbf{x},T) := (\mathbf{x}',T')$ where $T' := \mu_{\gamma}(T)$ and $\mathbf{x}' := \mathbf{x} \setminus \{ x_{\gamma} \} \cup \{x_{\gamma'}\}$. The new variable $x_{\gamma'}$ depends on the combinatorial type of flip being performed. In Figure \ref{combinatorialflips} we list the possible flips and their corresponding exchange relations, which may be obtained using the combined results of \cite{dupont2015quasi} and \cite{fomin2012cluster}. \newline

\noindent (1). $\gamma$ is the diagonal of quadrilateral in which no two consecutive edges are identified.

\begin{figure}[H]
\begin{center}
\includegraphics[width=10cm]{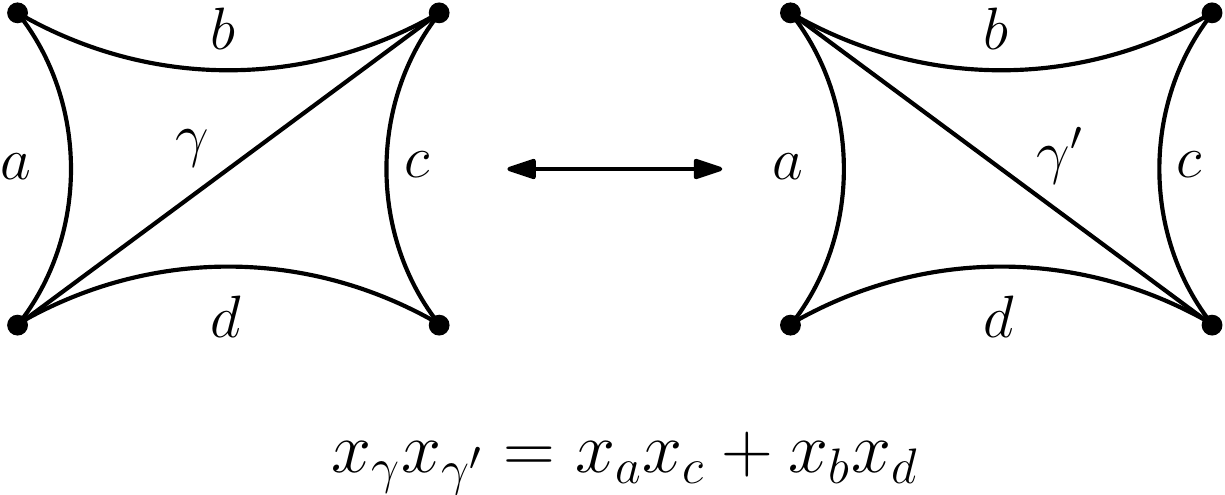}
\end{center}
\end{figure}

\noindent (2). $\gamma$ is an interior arc of a punctured digon.

\begin{figure}[H]
\begin{center}
\includegraphics[width=11cm]{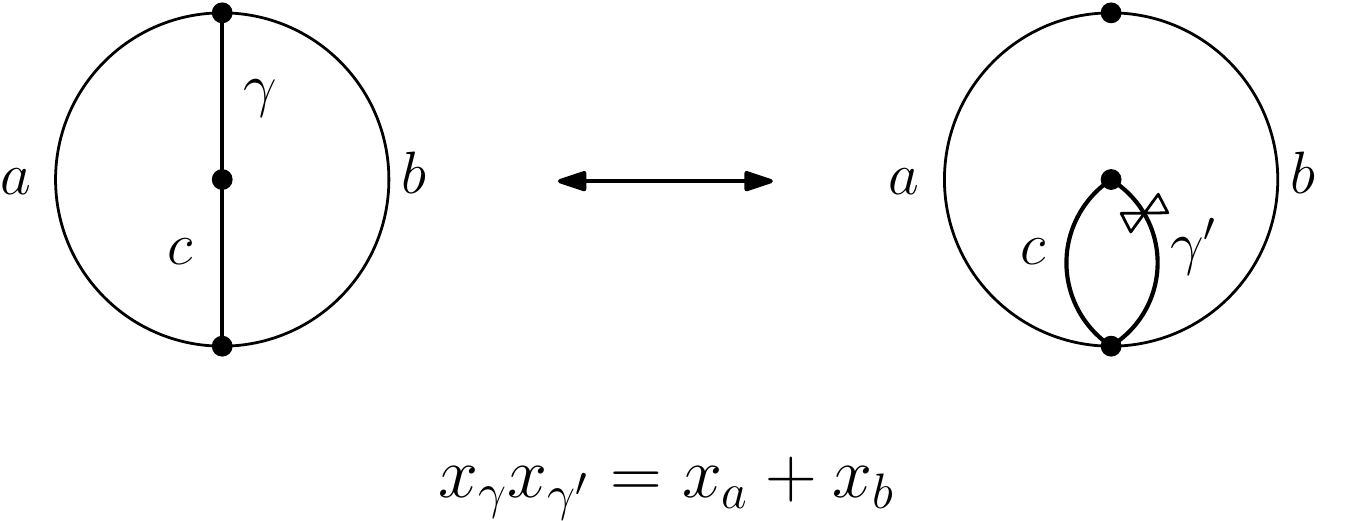}
\end{center}
\end{figure}

\noindent (3). $\gamma$ is an arc that flips to a one-sided closed curve, or vice verca.

\begin{figure}[H]
\begin{center}
\includegraphics[width=11cm]{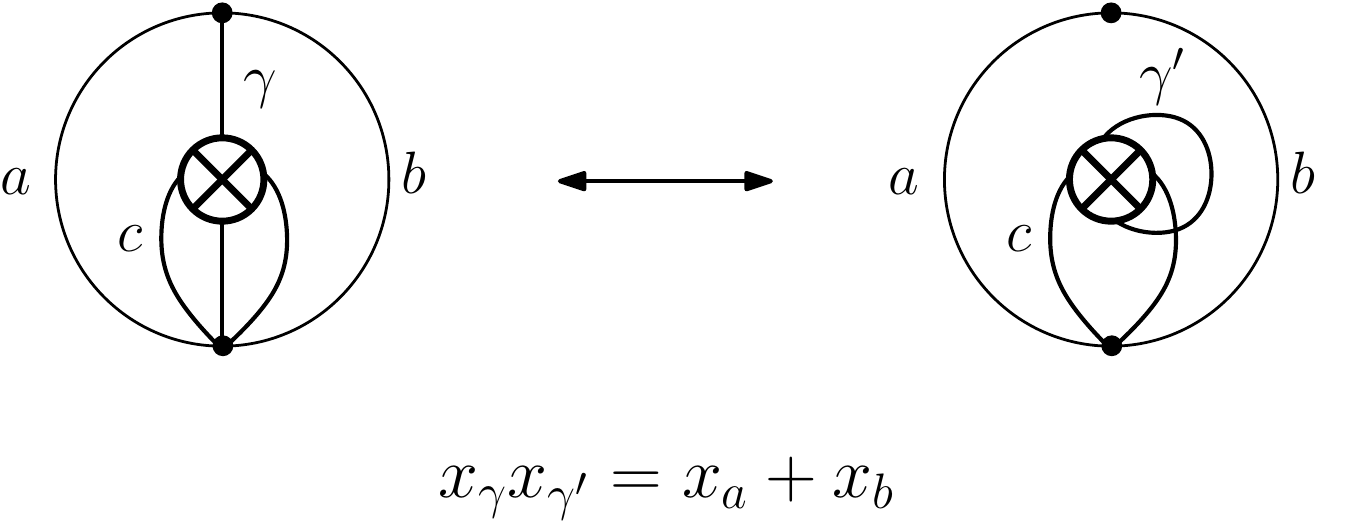}
\end{center}
\end{figure}

\noindent (4). $\gamma$ is an arc intersecting a one-sided close curve $c$.

\begin{figure}[H]
\begin{center}
\includegraphics[width=11cm]{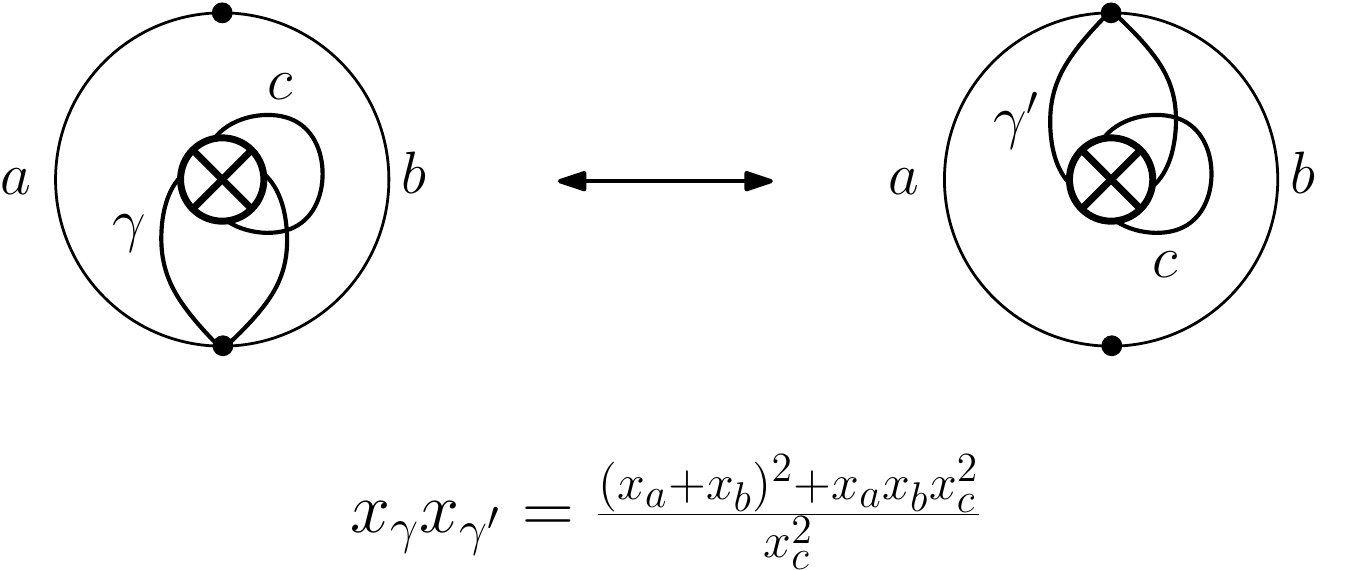}
\caption{Combinatorial types of flips together with their corresponding exchange relations.}
\label{combinatorialflips}
\end{center}
\end{figure}

\end{defn}

Let $(\mathbf{x},T)$ be a seed of $(S,M)$. If we label the cluster variables of $\mathbf{x}$ $1,\ldots, n$ then we can consider the labelled n-regular tree $\mathbb{T}_n$ generated by this seed through mutations. Each vertex in $\mathbb{T}_n$ has $n$ incident vertices labelled $1,\ldots,n$. Vertices represent seeds and the edges correspond to mutation. In particular, the label of the edge indicates which direction the seed is being mutated in. \newline

Let $\mathcal{X}$ be the set of all cluster variables appearing in the seeds of $\mathbb{T}_n$. $\mathcal{A}_{(\mathbf{x},T)}(S,M) := \mathbb{ZP}[\mathcal{X}]$ is the \textit{\textbf{quasi-cluster algebra}} of the seed $(\mathbf{x},T)$.

The definition of a quasi-cluster algebra depends on the choice of the initial seed. However, if we choose a different initial seed the resulting quasi-cluster algebra will be isomorphic to $\mathcal{A}_{(\mathbf{x},T)}(S,M)$. As such, it makes sense to talk about the quasi-cluster algebra of $(S,M)$.

\section{Anti-symmetric quivers}

\subsection{The double cover and anti-symmetric quivers}

Let $(S,M)$ be a bordered surface. We construct an orientable double cover of $(S,M)$ as follows. First consider the orientable surface $\tilde{S}$ obtained by replacing each cross-cap with a cylinder, see Figure \ref{surfaceandcylinder}.

\begin{figure}[H]
\begin{center}
\includegraphics[width=10cm]{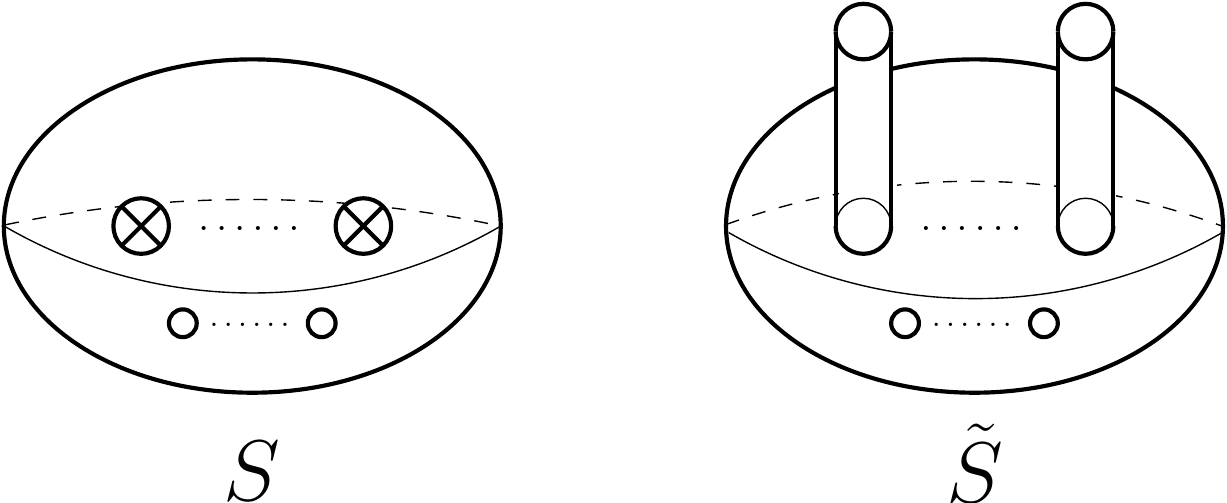}
\caption{An illustration of the non-orientable surface $S$ and the surface $\tilde{S}$ obtained by replacing each cross-cap with a cylinder. The small circles represent boundary components.}
\label{surfaceandcylinder}
\end{center}
\end{figure}

The orientable double cover $\overline{(S,M)}$ of $(S,M)$ is obtained by taking two copies of $\tilde{S}$ and glueing the boundary of each newly ajoined cylinder in the first copy, with a half twist, to the corresponding cylinder in the second copy. To clarify, each cylinder in the first copy is glued along their antipodal points in the second copy, see Figure \ref{surfaceglueing}. If $S$ is orientable then the double cover is two disjoint copies of $(S,M)$. In this case we endow the two disjoint copies with alternate orientations - this is to ensure its adjacency quiver is anti-symmetric, see Definition \ref{antisymmetric}.

\begin{figure}[H]
\begin{center}
\includegraphics[width=10cm]{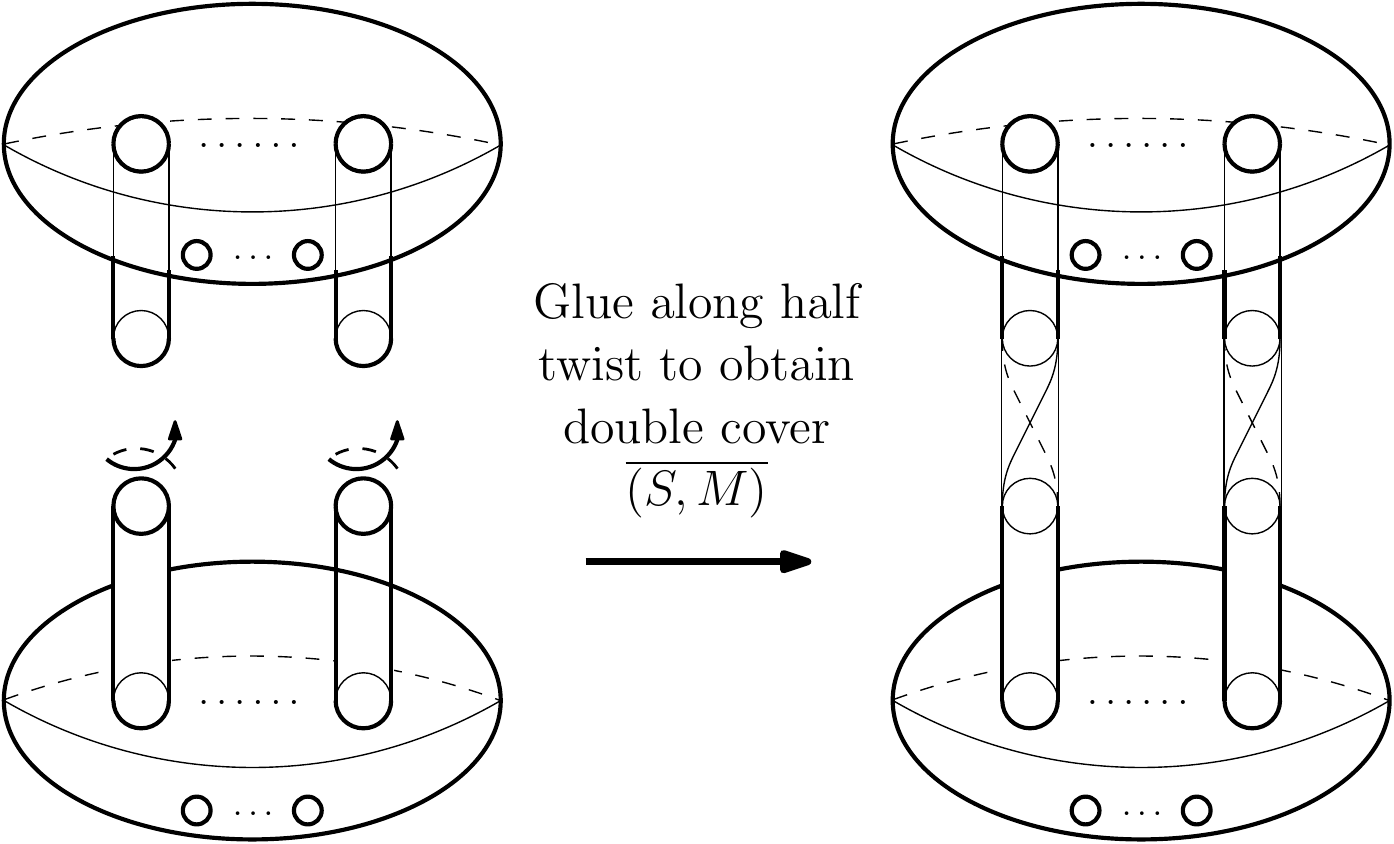}
\caption{The double cover is obtained by glueing two copies of $\tilde{S}$ along the boundaries of the newly adjoined cylinders.}
\label{surfaceglueing}
\end{center}
\end{figure}

If $T$ is a triangulation of $(S,M)$ then $T$ lifts to a triangulation $\overline{T}$ of the orientable double cover $\overline{(S,M)}$. Moreover, let $i$ be an arc in $T$ and, by abuse of notation, denote by $i$ and $\tilde{i}$ the two lifts in $\overline{T}$ of the arc $i \in T$ . Note that if $i$ and $j$ are arcs of a triangle $\Delta$ in $\overline{T}$, and $j$ follows $i$ in $\Delta$ under the agreed orientation of $\overline{(S,M)}$, then $\tilde{i}$ follows $\tilde{j}$ in the twin triangle $\tilde{\Delta}$. Hence in the quiver $Q_{\overline{T}}$ associated to $\overline{T}$ we have that $i \rightarrow j \iff \tilde{j} \rightarrow \tilde{i}$. Here we adopt the notation that $\tilde{\tilde{i}} = i$ for any $i \in \{1,\ldots, n\}$, and we shall use it throughout this paper.

Finally, note that there is no arrow $i \rightarrow \tilde{i}$ in $Q_{\overline{T}}$ as this would imply the existence of an anti-self-folded triangle in $T$, which is forbidden under our definition of triangulation, see Figure \ref{antiself}.

\begin{figure}[H]
\begin{center}
\includegraphics[width=10cm]{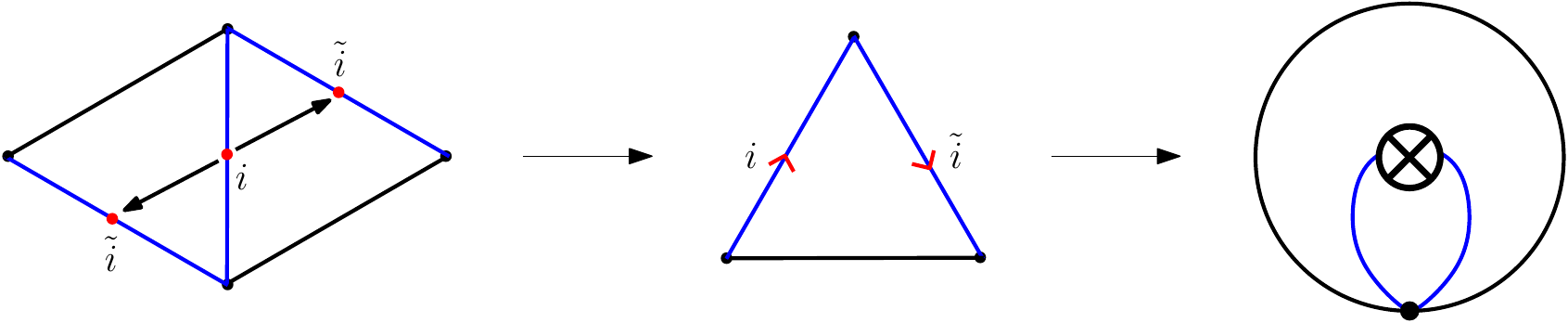}
\caption{On the left we show a segment of the double cover which admits the quiver $\tilde{i} \leftarrow i \rightarrow \tilde{i}$. On the right we show its $\mathbb{Z}_2$-quotient; an anti self-folded triangle. Such a configuration is forbidden under our definition of triangulation.}
\label{antiself}
\end{center}
\end{figure}

These two observations motivate the following definition.

\begin{defn}
\label{antisymmetric}

A quiver $Q$ on vertices $1,\ldots, n, \tilde{1},\ldots, \tilde{n}$ is called \textit{\textbf{anti-symmetric}} if:

\begin{itemize}

\item For any $i, j \in \{1,\ldots, n, \tilde{1},\ldots, \tilde{n}\}$ we have $i \rightarrow j \hspace{2mm} \textit{if and only if}  \hspace{2mm} \tilde{j} \rightarrow \tilde{i}$. 

\item  For any $i \in \{1,\ldots, n, \tilde{1},\ldots, \tilde{n}\}$ there are no arrows $i \rightarrow \tilde{i}$.

\end{itemize}

\end{defn}

The following proposition tells us that each flip between triangulations of $(S,M)$ corresponds to two flips in the double cover.

\begin{prop}
\label{quiverandflip}
Let $\gamma$ be an arc in a triangulation $T$, and by abuse of notation, denote its lifts in $\overline{T}$ by $\gamma$ and $\tilde{\gamma}$. If $\mu_{\gamma}(T)$ is a triangulation then $\mu_{\gamma}\circ\mu_{\tilde{\gamma}}(\overline{T}) = \mu_{\tilde{\gamma}}\circ\mu_{\gamma}(\overline{T}) = \overline{\mu_{\gamma}(T)}$.

\end{prop}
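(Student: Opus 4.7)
My approach is to exploit the locality of flips together with the fact that the double cover $\overline{(S,M)} \to (S,M)$ is a free $\mathbb{Z}/2$-action by the deck transformation $\tau$. Because the flip $\mu_\gamma$ only modifies the arc $\gamma$ inside its local region, leaving all other arcs of $T$ intact, and because $\gamma$ and $\tilde{\gamma} = \tau(\gamma)$ lie in two disjoint copies of this local region in $\overline{T}$, the two flips $\mu_\gamma$ and $\mu_{\tilde\gamma}$ will act on independent pieces of $\overline{T}$ and therefore commute; moreover, both compositions will simultaneously produce the lift of $\mu_\gamma(T)$.

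More concretely, the first step is to pin down the local region $R_\gamma$ of $\gamma$ in $T$. Since $\mu_\gamma(T)$ is again a triangulation, in particular contains no one-sided closed curve, the flip is of type (1) or (2) of Figure \ref{combinatorialflips}: $R_\gamma$ is either a quadrilateral formed by two triangles $\Delta_1,\Delta_2$ sharing the diagonal $\gamma$, or a once-punctured digon containing $\gamma$. In either case $R_\gamma$ is simply connected, so by freeness of $\tau$ the preimage of $R_\gamma$ under the covering projection consists of two disjoint regions $R_\gamma^{(1)}$ and $R_\gamma^{(2)} = \tau(R_\gamma^{(1)})$, containing $\gamma$ and $\tilde{\gamma}$ respectively. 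The distinctness of the four resulting lifted triangles is exactly what the absence of arrows $i \to \tilde{i}$ alluded to before Definition \ref{antisymmetric} encodes.

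The second step is to combine locality with disjointness. A flip only changes the arc being flipped and leaves every other arc of the triangulation untouched; since $\tilde{\gamma}$ and every other arc of $R_\gamma^{(2)}$ lies outside $R_\gamma^{(1)}$, the flip $\mu_\gamma$ does not alter the local configuration around $\tilde{\gamma}$, and symmetrically $\mu_{\tilde\gamma}$ does not alter that around $\gamma$. Hence $\mu_\gamma$ and $\mu_{\tilde\gamma}$ commute as operations on $\overline{T}$. The result in both orders is obtained from $\overline{T}$ by replacing $\gamma$ with the new diagonal $\gamma'$ produced inside $R_\gamma^{(1)}$ and $\tilde{\gamma}$ with the analogous new arc inside $R_\gamma^{(2)}$. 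Because the covering restricts to a homeomorphism on each of $R_\gamma^{(1)}, R_\gamma^{(2)}$ onto $R_\gamma$, and the flip is determined intrinsically by $R_\gamma$, the new arc inside $R_\gamma^{(2)}$ is precisely $\tau(\gamma') = \tilde{\gamma}'$, i.e.\ the second lift of $\mu_\gamma(\gamma)$. Therefore $\mu_\gamma\circ\mu_{\tilde\gamma}(\overline{T}) = \mu_{\tilde\gamma}\circ\mu_\gamma(\overline{T}) = \overline{\mu_\gamma(T)}$.

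The main obstacle I anticipate is verifying that the two lifted regions $R_\gamma^{(1)}$ and $R_\gamma^{(2)}$ are genuinely disjoint in every case permitted by the hypothesis. Edge-sharing along their boundary is harmless, since a flip touches only the diagonal; the potential concern is whether a triangle adjacent to $\gamma$ in $\overline{T}$ could coincide with a triangle adjacent to $\tilde{\gamma}$, which would amount to a $\tau$-fixed triangle and contradict the freeness of the covering action. Confirming this by careful case analysis using the two flip types above, together with the structure of the cover across cross-caps, is the technical heart of the argument; once it is settled, the rest is a straightforward diagram-chase from the locality of flips.
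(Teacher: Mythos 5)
Your overall strategy -- establish that the two lifted flip regions are disjoint, then use locality to conclude commutativity and that the double flip is the lift of the base flip -- matches the paper's. However, the reason you give for disjointness is wrong, and the gap is precisely at the step you yourself flag as the technical heart.

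You argue that if a triangle $\Delta$ adjacent to $\gamma$ coincided with a triangle adjacent to $\tilde\gamma$, this would "amount to a $\tau$-fixed triangle and contradict the freeness of the covering action." That is not so. Label the two triangles adjacent to $\gamma$ in $\overline{T}$ by $\Delta_1,\Delta_2$; then the triangles adjacent to $\tilde\gamma$ are $\tau(\Delta_1),\tau(\Delta_2)$. Freeness rules out $\Delta_i=\tau(\Delta_i)$, but it does not rule out $\Delta_1=\tau(\Delta_2)$. In the latter case $\Delta_1$ has \emph{both} $\gamma$ and $\tilde\gamma$ as sides, the two flip regions coincide as $\Delta_1\cup\Delta_2$, yet no triangle is $\tau$-fixed. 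So freeness alone leaves open exactly the overlap you need to exclude. (Likewise, your claim that simple connectivity of $R_\gamma$ gives two disjoint lifts does not cover the punctured-digon case, whose interior is an annulus; and a flip quadrilateral may have opposite sides identified, which is also not simply connected.)

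The exclusion must instead come from the combinatorics of the triangulation, not from covering-space generalities. A triangle of $\overline{T}$ with both $\gamma$ and $\tilde\gamma$ as sides produces arrows $i\to\tilde i$ in $Q_{\overline{T}}$, and its $\mathbb{Z}_2$-quotient is an anti-self-folded triangle in $T$ (Figure~\ref{antiself}); the paper's definition of triangulation rules such triangles out, and this is what actually guarantees disjointness. You gesture at this when you remark that distinctness of the lifted triangles "is exactly what the absence of arrows $i\to\tilde i$ encodes," but you then derive it from freeness rather than from the forbidden-configuration argument, so the justification as written does not close. Once disjointness is established by the correct route, your locality argument for commutativity and for $\mu_\gamma\circ\mu_{\tilde\gamma}(\overline{T})=\overline{\mu_\gamma(T)}$ is fine, and you should still record (as the paper does) that since $\mu_\gamma(T)$ is again a triangulation, the flip in $(S,M)$ and the flips upstairs are defined by the same local rule, so the results agree.
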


\begin{proof}

Consider the flip region of $\gamma$ in $T$. The interiors of the lifted flip regions will be disjoint, otherwise there would be arrows between the corresponding vertices of $\gamma$ and $\tilde{\gamma}$ in $Q_{\overline{T}}$, and Figure \ref{antiself} would then contradict the fact there are no anti-self-folded triangles in $T$. Finally, since $\mu_{\gamma}(T)$ is a triangulation, then for such triangulations, the definition of a flipping an arc will coincide on both non-orientable and orientable surfaces.

\end{proof}

\begin{rmk}

If $\mu_{\gamma}(T)$ is not a triangulation then $\gamma$ has flipped to a one-sided closed curve, meaning that $\overline{\mu_{\gamma}(T)}$ contains a closed curve and so is not a triangulation. In Section 5, by considering \textit{traditional triangulations}, we introduce an alternative flip for $\gamma$ in replacement of the one-sided closed curve. This alteration ensures that $\mu_{\gamma}\circ\mu_{\tilde{\gamma}}(\overline{T}) = \mu_{\tilde{\gamma}}\circ\mu_{\gamma}(\overline{T}) = \overline{\mu_{\gamma}(T)}$ is a triangulation for all arcs $\gamma$ in $T$. Moreover, this allows us to extend the existing theory of laminations on orientable surfaces to our cluster structure.

\end{rmk}

\subsection{Mutation of anti-symmetric quivers via LP mutation}

We shall now briefly leave the environment of triangulations and move to the more general setting of anti-symmetric quivers. In particular, we shall establish a connection between mutation of these quivers and LP-mutation. Recall that a quiver $Q$ can be equivalently encoded as a skew-symmetric matrix $B=(b_{ij})$. In what follows we shall interchange between the two viewpoints. 

Given an anti-symmetric quiver $Q = (b_{ij})$ we may assign an exchange polynomial to each pair of vertices $(j, \tilde{j})$ of $Q$. 

\begin{center}

 $F_j^Q := \displaystyle \prod_{b_{ij}+b_{\tilde{i}j }>0} x_i^{b_{ij}+b_{\tilde{i}j} } + \prod_{b_{ij}+b_{\tilde{i}j }<0} x_i^{-(b_{ij}+b_{\tilde{i}j})}$

\end{center}

As a result we arrive at the seed $\Sigma_Q := (\{x_1, \ldots , x_n\},\{F_1^Q, \ldots , F_n^Q\})$ associated to $Q$. Of course, this may not be a valid LP seed due to the requirement of irreducibility. We won't always get irreducibility, but the following proposition demonstrates there are plenty of cases where $Q$ does provide a valid LP seed.

\begin{prop}[Proposition 4.7, \cite{wilson2017laurent}]
\label{irreducible}
If $gcd(b_{1j}+b_{\tilde{1}j},\ldots, b_{nj}+b_{\tilde{n}j}) = 1$ then $F_j$ is irreducible in $\mathbb{Z}[x_1,\ldots,x_n]$.

\end{prop}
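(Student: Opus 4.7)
The plan is to reduce irreducibility in $\mathbb{Z}[x_1,\ldots,x_n]$ to irreducibility in the Laurent polynomial ring $\mathbb{Z}[x_1^{\pm 1},\ldots,x_n^{\pm 1}]$, where the primitivity hypothesis on the exponent vector allows a monomial change of variables that trivialises the question.

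First, set $d_i := b_{ij} + b_{\tilde i j}$, so that $F_j = M_+ + M_-$ with $M_+ := \prod_{d_i>0} x_i^{d_i}$ and $M_- := \prod_{d_i<0} x_i^{-d_i}$. The monomials $M_+$ and $M_-$ are coprime (for each $i$ at most one of them involves $x_i$), so $F_j$ is not divisible by any variable. I would then check the following descent: if $F_j = G H$ in the Laurent ring, write $G = x^{-a} G'$ and $H = x^{-b} H'$ with $G',H' \in \mathbb{Z}[x_1,\ldots,x_n]$ each free of variable factors; then $x^{a+b}F_j = G'H'$ has no variable factor (since $\mathbb{Z}[x_1,\ldots,x_n]$ is a UFD), forcing $a=b=0$. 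Hence it suffices to prove $F_j$ is irreducible in the Laurent ring.

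In that ring $M_-$ is a unit, so $F_j = M_-\bigl(1 + x^d\bigr)$ with $x^d := \prod_i x_i^{d_i}$, and the problem reduces to showing $1 + x^d$ is irreducible. Here the hypothesis enters: since $\gcd(d_1,\ldots,d_n)=1$ the vector $d$ is primitive, so there exists $A \in SL_n(\mathbb{Z})$ with $Ad = e_1$. The monomial substitution $x_i \mapsto \prod_k y_k^{A_{ki}}$ defines an automorphism of $\mathbb{Z}[y_1^{\pm 1},\ldots,y_n^{\pm 1}]$ sending $x^d \mapsto y_1$. Thus $1 + x^d$ is conjugate under an automorphism of the Laurent ring to $1 + y_1$, which is irreducible as a degree-one polynomial in $\mathbb{Z}[y_1,\ldots,y_n]$, and therefore also in the Laurent ring by the descent observation.

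The only subtle step is the descent from the Laurent ring to $\mathbb{Z}[x_1,\ldots,x_n]$; the rest is a standard application of the fact that a primitive lattice vector can be completed to a $\mathbb{Z}$-basis. There is no real obstacle to overcome once one decides to work in the Laurent ring, which is the conceptual heart of the argument.
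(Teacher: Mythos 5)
Your proof is correct, and the strategy is the natural one: exploit the fact that in the Laurent ring the monomial $M_-$ becomes a unit, so $F_j$ reduces (up to a unit) to $1+x^d$, and then use primitivity of $d=(b_{1j}+b_{\tilde 1 j},\ldots,b_{nj}+b_{\tilde n j})$ to produce a $GL_n(\mathbb{Z})$ monomial automorphism carrying $x^d$ to a single coordinate $y_1$. Note that the paper itself supplies no proof here — Proposition \ref{irreducible} is cited verbatim from \cite{wilson2017laurent} — so I cannot compare with the paper's own argument; I can only assess yours.

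Two small inaccuracies in the descent discussion, neither fatal. First, with $G = x^{-a}G'$ and $H = x^{-b}H'$ where $G',H'$ are polynomials free of variable factors, the exponents $a,b$ must be allowed to lie in $\mathbb{Z}^n$ (not $\mathbb{Z}_{\geq 0}^n$), and the equation $x^{a+b}F_j = G'H'$ then forces only $a+b=0$, not $a=b=0$; but $a+b=0$ is all you need, since it already yields $F_j=G'H'$ as a polynomial factorization with $G'=x^aG$, $H'=x^{-a}H$. Second, the descent you spell out establishes the implication ``polynomial-irreducible $\Rightarrow$ Laurent-irreducible'' (which you do use at the end to pass from $1+y_1$ in $\mathbb{Z}[y_1,\ldots,y_n]$ to the Laurent ring), whereas the step ``it suffices to prove $F_j$ is irreducible in the Laurent ring'' is really the converse implication. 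That converse is immediate once one knows $F_j$ has no variable factors: a polynomial factorization $F_j = GH$ into polynomial non-units is automatically a Laurent factorization into Laurent non-units, since a factor that were $\pm$ a nontrivial monomial would force some $x_i \mid F_j$. So both implications hold, and the argument closes; it would just be worth stating the one you actually invoke in each place.
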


Note that when flipping between triangulations, if we want double mutation of our quiver to correspond to LP mutation then it is necessary for us to have $\hat{F}_i = F_i$ $ \forall i \in \{1,\ldots, n\}$. This is because the exchange polynomials of the arcs in the triangulations are polynomials (not strictly Laurent polynomials), so the normalisation process needs to be vacuous.

The following proposition from \cite{wilson2017laurent} tells us when LP mutation of a seed $\Sigma_Q$ corresponds to double mutation of $Q$.

\begin{prop}[Proposition 4.8, \cite{wilson2017laurent}]
\label{quiver LP}
Let $Q$ be an anti-symmetric quiver, and $\Sigma_Q = (\mathbf{x},\mathbf{F}^Q)$ its associated seed. Then $$ \mu_i(\{x_1, \ldots , x_n\},\{F_1^Q, \ldots , F_n^Q\}) = (\{x_1,\ldots,\frac{F_i^Q}{x_i} ,\ldots, x_n\},\{F_1^{\mu_i\circ\mu_{\tilde{i}}(Q)},\ldots, F_n^{\mu_i\circ\mu_{\tilde{i}}(Q)}\})$$ if the following conditions are satisfied:

\begin{itemize}

\item $(\mathbf{x}, \mathbf{F}^Q)$ is a valid LP seed, i.e. $F_j^{Q}$ is irreducible in $\mathbb{ZP}[\mathbf{x}]$ for each $j \in \{1,\ldots, n\}$.

\item $\hat{F}_i^{Q} = F_i^{Q}$.

\item There is no path $a \rightarrow i \rightarrow \tilde{a}$ for any vertex $a$ of $Q$.

\end{itemize}

\end{prop}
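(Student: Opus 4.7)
The plan is to compare the exchange polynomials produced by the LP rule against those built directly from the double matrix mutation of $Q$, and show they coincide. Write $c_{kj} := b_{kj} + b_{\tilde k j}$, so that
$F_j^Q = \prod_{c_{kj}>0} x_k^{c_{kj}} + \prod_{c_{kj}<0} x_k^{-c_{kj}}$.
The hypothesis $\hat F_i^Q = F_i^Q$ immediately delivers the new variable $x_i' = F_i^Q/x_i$ on the LP side, which matches the expected mutated variable on the matrix side. For each $j \ne i$ we must show the LP three-step process produces $F_j' = F_j^{\mu_i \circ \mu_{\tilde i}(Q)}$. When $c_{ij} = 0$ (i.e.\ $x_i \notin F_j^Q$), the LP rule gives $F_j' = F_j^Q$ verbatim, and a short tropical check --- using that $b_{i \tilde i} = 0$ in any antisymmetric quiver --- shows the double matrix mutation also preserves $c_{\bullet j}$, so the nontrivial case is $c_{ij} \ne 0$.

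On the matrix side, antisymmetry of $Q$ forces $b_{i\tilde i} = b_{\tilde i i} = 0$; the two mutations at $i$ and $\tilde i$ therefore commute and leave each other's mutation rows and columns untouched, so $\mu_i \circ \mu_{\tilde i}(Q)$ is unambiguously defined. Applying the Fomin--Zelevinsky formula $b''_{kj} = b_{kj} + [b_{ki}]_+[b_{ij}]_+ - [-b_{ki}]_+[-b_{ij}]_+$ twice and summing $b''_{kj} + b''_{\tilde k j}$ expresses $c''_{kj}$ as a four-term sum of tropical products. The hypothesis that there is no path $a \to i \to \tilde a$ in $Q$ translates, after applying antisymmetry and skew-symmetry, to the statement that for every index $k$ the entries $b_{ki}$ and $b_{\tilde k i}$ cannot carry opposite strict signs. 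Hence the tropical max distributes over the sum, $[c_{ki}]_+ = [b_{ki}]_+ + [b_{\tilde k i}]_+$ and similarly with negatives; this collapses the four-term expression for $c''_{kj}$ into a clean form resembling a single mutation performed in the variables $c$.

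On the LP side, assume $c_{ij} \ne 0$ and unroll the three-step rule. By Lemma \ref{welldefined} the expression $\hat F_i^Q\rvert_{x_j \leftarrow 0}$ is well-defined; explicitly it equals $F_i^Q$ when $c_{ji} = 0$, and otherwise equals precisely one of the two monomial summands of $F_i^Q$. Substituting $x_i = \hat F_i^Q\rvert_{x_j \leftarrow 0}/x_i'$ into $F_j^Q$, dividing out common monomial factors with $\hat F_i^Q\rvert_{x_j \leftarrow 0}$ in Step 2, and multiplying by the unique monic monomial $M$ of Step 3 that clears denominators without introducing a new factor of any primed variable, yields $F_j'$ as a binomial in the primed variables. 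The main obstacle is matching the exponents of this binomial to the $c''_{kj}$ computed on the matrix side; this is precisely where the "no path" hypothesis is indispensable, since its tropical-additivity consequence is exactly what reconciles the LP bookkeeping (substitution, gcd stripping, and the monomial $M$) with the matrix bookkeeping (the two tropical mutation terms arising from $\mu_i$ and $\mu_{\tilde i}$). Without this hypothesis the LP output generically fails to be a binomial with the required exponents, and the correspondence would break.
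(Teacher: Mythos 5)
The paper does not actually prove this proposition here — it cites it as Proposition~4.8 of \cite{wilson2017laurent} — so there is no internal proof to compare against; I will assess your sketch on its own merits.

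Your overall strategy is right and you have correctly identified the load-bearing consequence of the ``no path $a \to i \to \tilde a$'' hypothesis: after applying antisymmetry ($b_{k\tilde i} = -b_{\tilde k i}$ and $b_{\tilde k\tilde i} = -b_{ki}$), the hypothesis forces $b_{ki}$ and $b_{\tilde k i}$ to share a weak sign for every $k$, hence $[\,b_{ki}+b_{\tilde k i}\,]_+ = [b_{ki}]_+ + [b_{\tilde k i}]_+$, which is precisely what makes the two tropical contributions from $\mu_i$ and $\mu_{\tilde i}$ add up coherently into a contribution expressible purely in the shortened coefficients $c_{kj}=b_{kj}+b_{\tilde k j}$. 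Your handling of the $c_{ij}=0$ case is also correct (and in fact that case already works using only $b_{i\tilde i}=0$ together with antisymmetry; the ``no path'' condition is not needed there, though it does no harm to invoke it).

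However, the crux of the argument --- verifying, for $c_{ij}\neq 0$, that the three-step LP recipe produces a binomial whose $x_k$-exponents equal $c''_{kj}$ --- is asserted rather than demonstrated. Two concrete gaps: (1) You never write out the post-mutation exponent $c''_{kj}$ explicitly (it is $c_{kj} + [c_{ki}]_+\bigl([b_{ij}]_+-[-b_{\tilde i j}]_+\bigr) + [-c_{ki}]_+\bigl([b_{\tilde i j}]_+-[-b_{ij}]_+\bigr)$ with exactly one of the two brackets vanishing by the no-path condition), nor trace through how the gcd-stripping of Step~2 and the clearing monomial $M$ of Step~3 reshape the substituted binomial to land on exactly those exponents. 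Saying the hypothesis ``is precisely where\dots the correspondence would break'' names the obstacle without clearing it. (2) Your description of Step~2 as ``dividing out common monomial factors'' implicitly assumes $c_{ji}\neq 0$, so that $\hat F_i^Q\rvert_{x_j\leftarrow 0}$ is one of the monomial summands. But the shortened matrix is not skew-symmetric, so $c_{ij}\neq 0$ with $c_{ji}=0$ is possible; in that case $\hat F_i^Q\rvert_{x_j\leftarrow 0}=F_i^Q$ is the full irreducible binomial, and Step~2 asks whether $F_i^Q$ divides $G_j$. You need to rule this divisibility out (it does fail, since one summand of $G_j$ is a pure monomial and $F_i^Q$ is a nonmonomial irreducible), but the case must be treated separately and your sketch elides it.
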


\section{Laminated surfaces and their (laminated) quasi-cluster algebra.}

In \cite{wilson2017laurent} we determined when the quasi-cluster algebra of a bordered surface $(S,M)$ has an LP structure. Specifically, when the boundary segments receive variables then the quasi-cluster algebra $\mathcal{A}(S,M)$ has an LP structure \textit{if and only if} $(S,M)$ is an unpunctured surface. If the boundary segments do not receive variables then the only unpunctured surfaces not admitting an LP structure are the $6$-gon, the cylinder $C_{2,2}$, the M\"obius strip with $4$ marked points, and the torus and the Klein bottle -- both with one boundary component and two marked points.\newline
\indent This classification was obtained by recognising the underlying characteristic preventing a bordered surface $(S,M)$ from having an LP structure. Namely, $(S,M)$ has no LP structure \textit{if and only if} there exists a quasi-triangulation of $(S,M)$ whose corresponding exchange polynomials are not all distinct. \newline
\indent The goal of this paper is to add laminations to the surface in an attempt to modify the exchange polynomials - our desire will then be to concoct a lamination which guarantees the uniqueness of exchange polynomials for any quasi-triangulation.

\subsection{Laminations and shear coordinates}

\begin{defn}
\label{laminationdef}
A \textbf{\textit{lamination}} on a bordered surface (S, M) is a finite collection of non-self-intersecting and pairwise non-intersecting curves in $(S,M)$, considered up to isotopy, and subject to the conditions outlined below -- see Figure \ref{laminations} for an example. Namely, each curve in a lamination must be one of the following:

\begin{itemize}

\item A curve connecting two unmarked points in $\partial S$. Though we do not allow the scenario when this curve is isotopic to a piece of boundary containing one or zero marked points;

\item A curve with one end being an unmarked point in $\partial S$, and whose other end spirals into a puncture;

\item A curve with both ends spiralling into (not necessarily distinct) punctures. We forbid the case when the curve has both ends spiralling into the same puncture, and does not enclose anything else;

\item A two-sided closed curve which does not bound a disk, a once-punctured disk, or a M\"obius strip.

\end{itemize}

\end{defn}

\begin{figure}[H]
\begin{center}
\includegraphics[width=11cm]{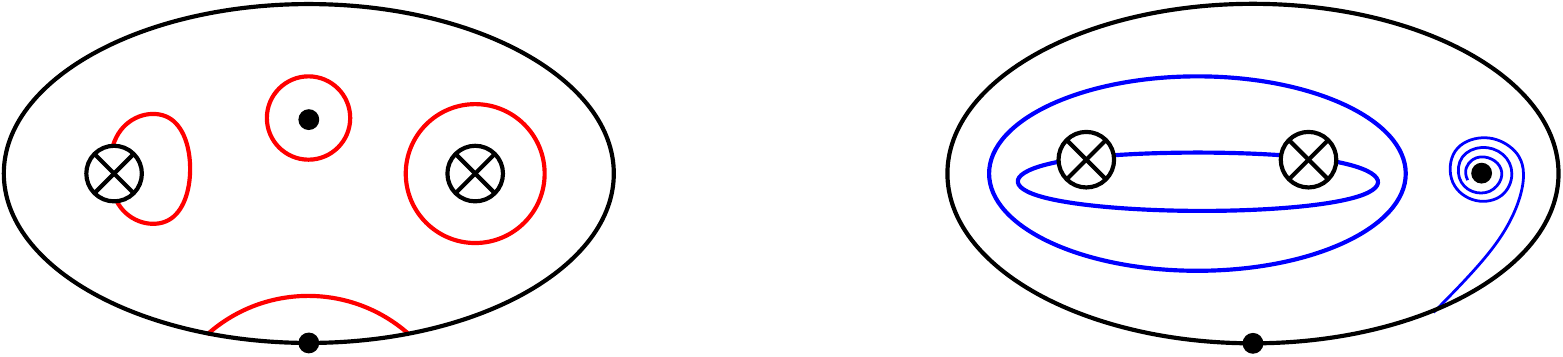}
\caption{None of the curves on the left are considered laminations. All curves on the right are legitimate laminations.}
\label{laminations}
\end{center}
\end{figure}

\begin{rmk}

Note that we have defined laminations in such a way that, when we lift to the double cover, we obtain laminations allowed by Fomin and Thurston \cite{fomin2012cluster}. For technical reasons discussed in Remark \ref{excluding laminations}, we do not allow laminations that bound a M\"{o}bius strip or are one-sided closed curves, even though their lifts would be legitimate laminations in the sense of Fomin and Thurston.

\end{rmk}

We now describe W. Thurston's \textit{shear coordinates} \cite{thurston1988geometry} with respect to a lamination of an ideal-triangulated orientable surface.

\begin{defn}[$S$-shape and $Z$-shape intersections] 
\label{s-z-intersections}

Let $Q_{\gamma}$ be a triangulated quadrilateral with diagonal $\gamma$. Suppose $C$ is a curve intersecting opposite sides of $Q_{\gamma}$ (and does not intersect the boundary of $Q_{\gamma}$ anywhere else). Denote these sides by $\alpha$ and $\beta$. If $\alpha$, $\beta$ and $\gamma$ form an $'S'$ (resp. $'Z'$), then call the intersection of $C$ with $Q_{\gamma}$ an \textit{\textbf{$S$-shape intersection}} (resp. \textit{\textbf{$Z$-shape intersection}}). See Figure \ref{s-z-figure}.

\end{defn}

\begin{defn}[Shear coordinates for ideal triangulations] 
\label{shear}
Let $T$ be an ideal triangulation of an orientable bordered surface $(S,M)$, and $L$ a lamination. Furthermore, let $\gamma$ be an arc of $T$ which is not the folded side of a self-folded triangle, and denote by $Q_{\gamma}$ the quadrilateral of $T$ whose diagonal is $\gamma$. The \textbf{\textit{shear coordinate}}, $b_{T}(L, \gamma)$, of $L$ and $\gamma$, with respect to $T$, is defined as:
$$ b_{T}(L, \gamma) := \# \Big\{ \stackanchor{\text{$S$-shape intersections}}{\text{of $L$ with $Q_{\gamma}$}}\Big\} - \# \Big\{ \stackanchor{\text{$Z$-shape intersections}}{\text{of $L$ with $Q_{\gamma}$}}\Big\}$$

\end{defn}

\begin{figure}[H]
\begin{center}
\includegraphics[width=11cm]{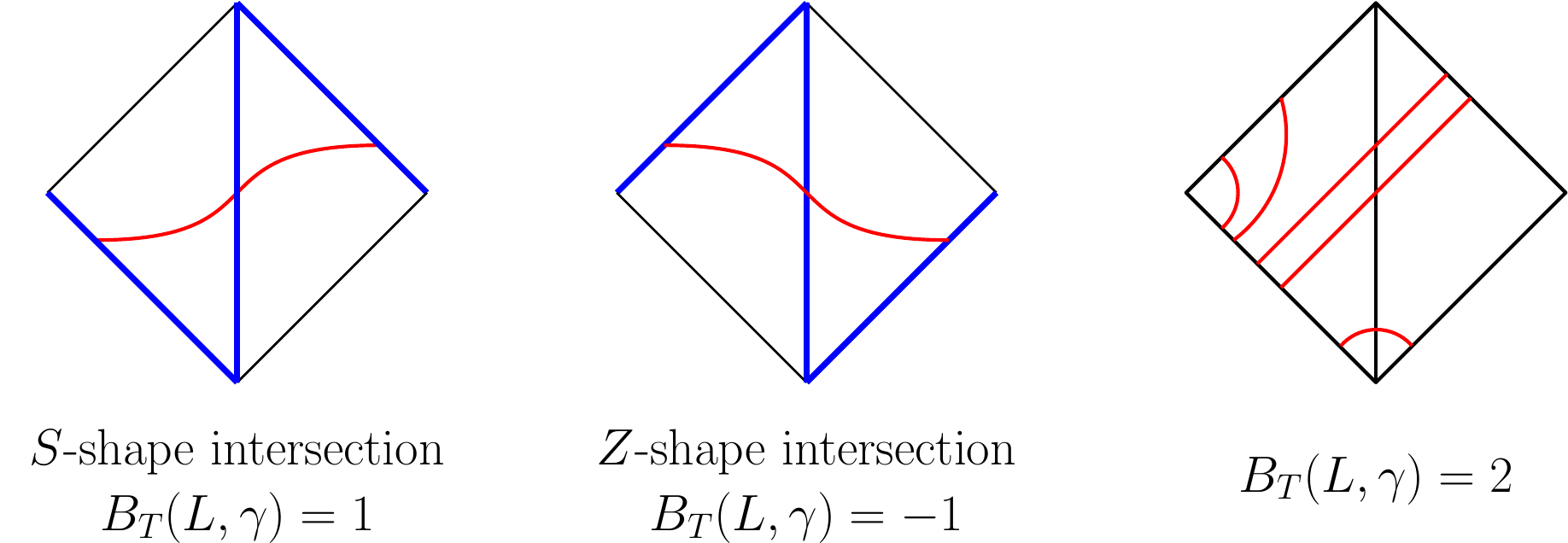}
\caption{S-shape and Z-shape intersections.}
\label{s-z-figure}
\end{center}
\end{figure}

\begin{rmk}

Note that even though a lamination spiralling into a puncture $p$ will intersect any arc incident to $p$ infinitely many times, $b_{T}(L, \gamma)$ will always be finite.

\end{rmk}

We explain below how Fomin and Thurston \cite{fomin2012cluster} extended the notion of shear coordinates to (tagged) triangulations of orientable bordered surfaces.

\begin{defn}[Shear coordinates for triangulations]

Let $T$ be a triangulation and $L$ a lamination. If $L$ spins into a puncture $p$, containing only arcs with notches at $p$, then reverse the direction of spinning of $L$ at $p$, and replace all these notched taggings with plain ones. \newline
\indent Using the rule above we may convert the lamination $L$ of $T$ into a lamination $L_1$ of a triangulation $T_1$, with the property that any notched arc in $T_1$ appears with its plain counterpart. As per usual, denote by $T^{\circ}$ the ideal triangulation associated to $T_1$ -- as hinted by the notation, this is also the ideal triangulation associated to $T$. \newline
\indent Let $\gamma$ be an arc of $T$, and denote by $\gamma^{\circ}$ the corresponding arc in $T^{\circ}$. We define $b_{T}(L, \gamma)$ as follows:

\begin{itemize}

\item If $\gamma^{\circ}$ is not the self-folded side of a triangle in $T^{\circ}$, then define $$b_{T}(L, \gamma):= b_{T^{\circ}}(L_1, \gamma^{\circ}).$$

\item If $\gamma^{\circ}$ is the self-folded side of a triangle in $T^{\circ}$, with puncture $p$, then reverse the direction of spinning of $L_1$ at $p$, and denote this new lamination by $L_2$. Furthermore, let $\beta$ denote the remaining side of the triangle in $T^{\circ}$ that is folded along $\gamma^{\circ}$. We define $$b_{T}(L, \gamma):= b_{T^{\circ}}(L_2, \beta)$$.

\end{itemize}

\end{defn}

\begin{rmk}

For a lamination $L$ of an ideal triangulation $T$, note that if $\gamma$ is the enclosing arc of a puncture $p$, then $b_{T}(L, \gamma)$ does not depend on the direction $L$ is spinning at any other puncture enclosed in any other monogon. In Figure \ref{laminationweights} we illustrate the algorithm of how to compute shear coordiantes of triangulations using ideal triangulations.

\end{rmk}

\begin{figure}[H]
\begin{center}
\includegraphics[width=13cm]{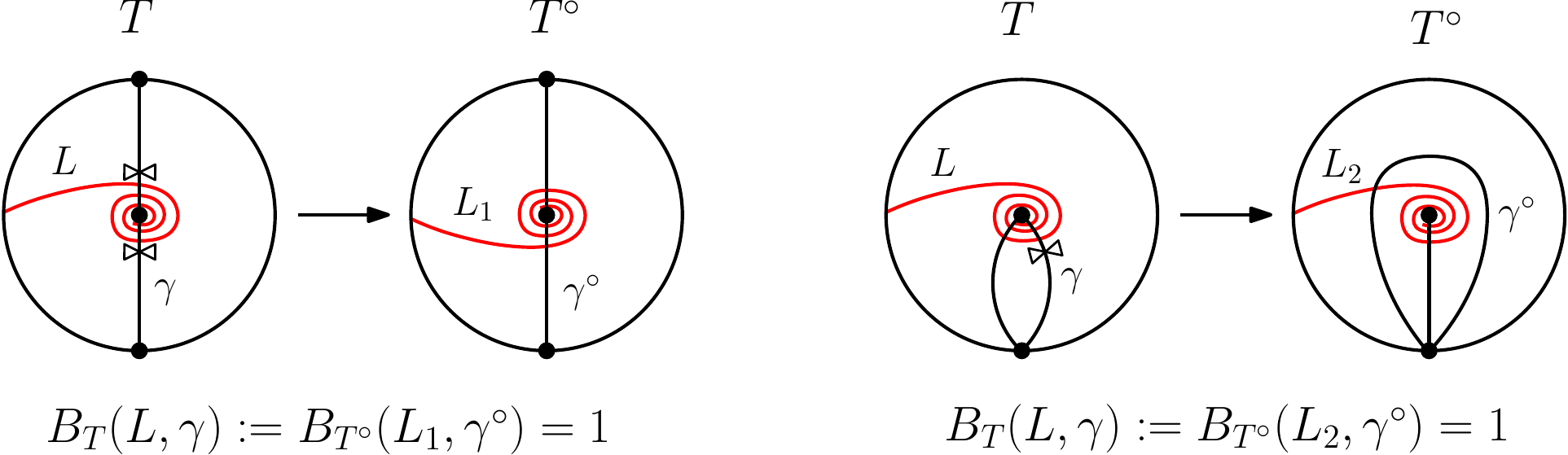}
\caption{Examples of how shear coordinates are defined for triangulations containing tagged arcs.}
\label{laminationweights}
\end{center}
\end{figure}

\begin{defn}

A \textbf{\textit{multi-lamination}}, $\mathbf{L}$, of a bordered surface $(S,M)$ consists of a finite collection of laminations of $(S,M)$.

\end{defn}

\begin{defn}(Adjacency quiver of laminated orientable surfaces).
Let $T$ be a triangulation of an orientable bordered surface $(S,M)$. For a multi-lamination $\textbf{L}$ of $(S,M)$ we extend the adjacency quiver $Q_T$ to a quiver $Q_{T, \mathbf{L}}$ as follows:

\begin{itemize}

\item For each lamination $L_i$ in $\mathbf{L}$ add a corresponding vertex to $Q_T$. Abusing notation, we shall also denote this vertex by $L_i$.

\item Let $\gamma$ denote a vertex in $Q_T$ and its corresponding arc in $T$. If $b_{T}(L_i, \gamma)$ is positive (resp. negative) add $|b_{T}(L_i, \gamma)|$ arrows $L_i \rightarrow \gamma$ (resp. $L_i \leftarrow \gamma$).

\end{itemize}

\end{defn}

\begin{prop}[Theorem 13.5, \cite{fomin2012cluster}]
\label{existing quiver flip}
Let $\mathbf{L}$ be a multi-lamination of an orientable bordered surface $(S,M)$. Then for any arc $\gamma$ in a triangulation $T$, $\mu_{\gamma}(Q_{T, \mathbf{L}}) = Q_{\mu_{\gamma}(T), \mathbf{L}}$.

\end{prop}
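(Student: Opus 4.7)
The plan is to reduce the claim to a local, case-by-case verification inside a single flip region and to check that the shear coordinates of each lamination change exactly the way quiver mutation prescribes. A first observation is that the lamination vertices $L_1,\dots,L_r$ are never themselves mutated and that each arrow $L_i\to\delta$ depends only on the shear coordinate $b_T(L_i,\delta)$, which is computed one lamination at a time. It therefore suffices to treat a single curve $L$ and to prove, for each arc $\delta\neq\gamma$ of $T$, the identity
\[
b_{\mu_\gamma T}(L,\delta) \;=\; b_T(L,\delta) + \tfrac{1}{2}\bigl(|b_T(L,\gamma)|\,b_T(\gamma,\delta) + b_T(L,\gamma)\,|b_T(\gamma,\delta)|\bigr),
\]
together with $b_{\mu_\gamma T}(L,\gamma')=-b_T(L,\gamma)$, where $\gamma'$ is the flipped diagonal.

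Next I would reduce from tagged triangulations to ideal ones via the direction-reversing convention that defines $b_T(L,\delta)$ through the associated ideal triangulation $T^{\circ}$. Away from self-folded triangles the flip of $\gamma$ in $T$ descends to an ordinary flip of $\gamma^{\circ}$ in $T^{\circ}$, so the main case is when $\gamma$ is the diagonal of a quadrilateral $Q_\gamma$ of $T^{\circ}$ bounded by four arcs $\alpha_1,\dots,\alpha_4$ (possibly with identifications). The argument is then a finite combinatorial computation: enumerate the ways in which a connected strand of $L$ can interact with the closure of $Q_\gamma$, tracking the resulting $S$/$Z$ contributions both to $b_T(L,\gamma)$ and to the $b_T(L,\alpha_k)$, and then repeat the count after flipping to $\mu_\gamma T$. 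The decisive observation is that a strand crossing $Q_\gamma$ contributes to $b_T(L,\gamma)$ and to $b_T(L,\alpha_k)$ with signs that match the combination $\tfrac{1}{2}(|b_{L,\gamma}|b_{\gamma,\alpha_k}+b_{L,\gamma}|b_{\gamma,\alpha_k}|)$ appearing in the mutation formula, while a strand that avoids $\gamma$ contributes the same $S$/$Z$ count to $Q_{\alpha_k}$ in both $T$ and $\mu_\gamma T$ and so is invisible. Summing over strands, and separately verifying $b_{\mu_\gamma T}(L,\gamma')=-b_T(L,\gamma)$ (which is immediate, since the notions of $S$-shape and $Z$-shape are swapped when the diagonal flips), yields the claim in the generic case.

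The main obstacle is the bookkeeping in degenerate geometric configurations: flip regions whose four sides are not all distinct, strands whose endpoint spirals into a puncture sitting at a corner of $Q_\gamma$, and flips involving self-folded triangles. For self-folded configurations I would not argue directly on $T$ but compare both sides of the equality through their ideal counterparts $T^{\circ}$ and $(\mu_\gamma T)^{\circ}$, exploiting the fact that the flip only disturbs a neighbourhood of $\gamma$, so the spin-reversal convention at the affected puncture can be applied consistently on both sides; this reduces the self-folded cases to the generic one already handled. Combining the lamination-by-lamination identity with the classical equality $\mu_\gamma(Q_T)=Q_{\mu_\gamma T}$ for the unlaminated part of the quiver then gives the proposition.
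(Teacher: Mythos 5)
The paper does not prove this proposition itself; it cites it verbatim as Theorem~13.5 of Fomin--Thurston, with no re-derivation. The only commentary the paper offers is a remark following equation~(\ref{shear and tropical}), which sketches a \emph{different}, more conceptual route: rewriting $\prod_{i} q_i^{-b_{\gamma}(T,L_i^*)}$ as a monomial in the tropical lambda lengths with exponents taken from $B(T)$, and then using the fact that the tropical lambda lengths satisfy tropicalised Ptolemy relations, makes the shear coordinates transform by the matrix-mutation rule automatically, with no case analysis. Your proposal, by contrast, is the direct combinatorial verification: reduce to one lamination, reduce to ideal triangulations, and check the mutation identity
\[
b_{\mu_{\gamma}T}(L,\delta)
= b_T(L,\delta) + \tfrac{1}{2}\bigl(|b_T(L,\gamma)|\,b_T(\gamma,\delta) + b_T(L,\gamma)\,|b_T(\gamma,\delta)|\bigr)
\]
strand-by-strand inside the flip quadrilateral, together with $b_{\mu_{\gamma}T}(L,\gamma')=-b_T(L,\gamma)$. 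This is in substance the original Fomin--Thurston argument for the result, and it is a valid strategy; the mutation formula you write down is the right one.

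What each route buys: the tropical-lambda-length argument that the paper points to eliminates essentially all case analysis once the tropical exchange relations and the identity~(\ref{shear and tropical}) are available, and in particular handles tagged arcs and self-folded triangles uniformly. Your hands-on enumeration is more elementary and self-contained, but the bookkeeping is genuinely delicate: a strand of $L$ can interact with the two triangles on either side of $\gamma$ \emph{and} with the half-quadrilaterals of the neighbouring arcs $\delta$, so the local picture is larger than a single quadrilateral; and the tagged/self-folded configurations require you to track a spin-reversal convention at a puncture whose status (folded side versus not) can change across the flip. You flag this last point but dispose of it in one sentence; that is the step that, in a full write-up, would need the most care and is exactly the part the tropical route avoids.
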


\begin{defn}(Quiver of laminated surfaces).
Let $T$ be a triangulation of a bordered surface $(S,M)$. For a multi-lamination $\mathbf{L}$ of $(S,M)$ let $\overline{\mathbf{L}}$ denote the lifted lamination on $\overline{(S,M)}$ -- the orientable double cover. We define the quiver associated to $(S,M,\mathbf{L})$ to be $Q_{\overline{T}, \mathbf{\overline{L}}}$.

\end{defn}

\begin{rmk}

Each lamination $L_i$ of $\mathbf{L}$ lifts to two laminations in $\overline{\mathbf{L}}$ -- abusing notation we shall denote these lifts by $L_i$ and $\tilde{L}_i$. Note that there is a choice to which of these lifts are marked $L_i$ and $\tilde{L}_i$. Moreover, if $L_i \in \mathbf{L}$ consists of more than one connected component then, as there is a choice of marking for each connected component, $Q_{\overline{T}, \mathbf{\overline{L}}}$ is not uniquely determined by $(S,M,\mathbf{L})$; it relies on the choice of which lifts of $L_i \in \mathbf{L}$ are marked $L_i$ and $\tilde{L}_i$. However, it will be important later on to note that the quantity $$b_{T}(L_i, \gamma) + b_{T}(\tilde{L}_i, \gamma)$$ is invariant under this choice.

\end{rmk}

\begin{prop}

For each triangulation $T$ of a multi-laminated bordered surface $(S,M,\mathbf{L})$, $Q_{\overline{T}, \mathbf{\overline{L}}}$ is an anti-symmetric quiver.

\end{prop}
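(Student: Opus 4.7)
The plan is to verify both defining conditions of an anti-symmetric quiver for $Q_{\overline{T},\overline{\mathbf{L}}}$ by splitting its arrows into three groups: arrows between two arc vertices, arrows between two lamination vertices, and mixed arrows between an arc vertex and a lamination vertex. The first group is exactly the subquiver $Q_{\overline{T}}$, which Section~4 already shows to be anti-symmetric, and the second group is empty since by construction no arrows are drawn between lamination vertices (in particular there is no $L_i\to\widetilde{L}_i$). Hence what remains is to check the anti-symmetry relation for the mixed arrows, i.e.\ to prove
\[
b_{\overline{T}}(L_i,\gamma)\;=\;-\,b_{\overline{T}}(\widetilde{L}_i,\widetilde{\gamma})
\]
for every arc $\gamma$ of $\overline{T}$ and every lifted lamination $L_i\in\overline{\mathbf{L}}$; this is equivalent to the statement that there is an arrow $L_i\to\gamma$ if and only if there is an arrow $\widetilde{\gamma}\to\widetilde{L}_i$, with matching multiplicities.

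The main tool will be the deck transformation $\tau$ of the cover $\overline{(S,M)}\to(S,M)$, which by construction interchanges the two lifts of every arc and every lamination component, so $\tau(\gamma)=\widetilde{\gamma}$ and $\tau(L_i)=\widetilde{L}_i$. The point is that $\tau$ is orientation-reversing: in the non-orientable case this is forced by the antipodal identification along the glued cylinders, and in the orientable case the two disjoint sheets of the cover were deliberately endowed with opposite orientations for exactly this purpose. The curves excluded from the lamination definition (one-sided closed curves and curves bounding a M\"obius strip) are precisely those whose lifts would be connected and $\tau$-invariant, so the exclusion guarantees that every component of $\overline{\mathbf{L}}$ has a genuinely distinct $\tau$-partner and that the notation $\widetilde{L}_i$ makes sense.

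To finish, $\tau$ carries the flip quadrilateral $Q_\gamma$ of $\overline{T}$ onto $Q_{\widetilde{\gamma}}$ and restricts to a bijection between the intersection points of $L_i$ with $Q_\gamma$ and those of $\widetilde{L}_i$ with $Q_{\widetilde{\gamma}}$. Because the $S$-shape/$Z$-shape labels depend only on the surface orientation of the quadrilateral relative to the triangulation arcs, any orientation-reversing homeomorphism swaps them. Hence $\tau$ takes $S$-shape intersections to $Z$-shape intersections and vice versa, and plugging this directly into the definition of the shear coordinate yields the required sign flip. For tagged arcs one first passes to the ideal triangulation $\overline{T}^{\,\circ}$ and reverses certain spinning directions according to the prescription given earlier; since $\tau$ commutes with both preliminary operations (it simultaneously exchanges the two lifts of a puncture and the spinning directions of $L_i$ and $\widetilde{L}_i$ at that puncture), the same argument applies verbatim to $\overline{T}^{\,\circ}$. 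I expect the only genuinely delicate point to be formalising that orientation-reversal swaps $S$ and $Z$; once this is pinned down via the definition of $S$- and $Z$-shape intersections, everything else is bookkeeping that requires no surface-specific input.
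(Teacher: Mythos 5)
Your proposal is correct and takes essentially the same route as the paper: decompose the arrows into arc--arc (already shown anti-symmetric), lamination--lamination (none by construction), and mixed arc--lamination, and prove the mixed case by observing that passing to the twin quadrilateral swaps $S$- and $Z$-shape intersections, hence negates shear coordinates. The only difference is that you make explicit what the paper leaves implicit, namely that this swap is forced by the deck transformation being orientation-reversing; the aside about why the excluded closed curves would have $\tau$-invariant lifts is a nice observation but not the paper's stated motivation for the exclusion (which is Lemma~\ref{M1 lamination}).
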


\begin{proof}

We have already verified anti-symmetry between vertices corresponding to lifted arcs. It remains to check anti-symmetry for the rest of the quiver. \newline For each lamination $L_i$ of $\mathbf{L}$ we have two vertices in $Q_{\overline{T}, \mathbf{\overline{L}}}$ corresponding to the lifted versions of $L_i$. Abusing notation, we shall denote these vertices by $L_i$ and $\tilde{L}_i$. If the lift $L_i$ cuts through a triangulated quadrilateral in an $'S'$ (resp. $'Z'$) shape, the other lift $\tilde{L}_i$ cuts through the twin quadrilateral in a $'Z'$ (resp. $'S'$) shape. Hence we get an arrow $L_i \rightarrow \gamma$ (resp. $L_i \leftarrow \gamma$) \textit{if and only if} there is an arrow $\tilde{L}_i \leftarrow \tilde{\gamma}$ (resp. $\tilde{L}_i \rightarrow \tilde{\gamma}$). Furthermore, by definition of this quiver, there are no arrows between vertices corresponding to lifted laminations. In particular, there are no arrows $L_i \rightarrow \tilde{L_i}$ for any $i$.

\end{proof}

To utilise anti-symmetric quivers as much as possible, in certain triangulations, it will be helpful to contemplate an alternative choice of flip that our definitions had previously forbidden. This will involve considering \textit{traditional triangulations}, which are defined below. We follow up this definition with a discussion on how this notion of triangulation arises.

\begin{defn}

A \textit{\textbf{traditional triangulation}} consists of a maximal collection of pairwise compatible arcs, containing no arcs that cut out a once punctured monogon.

\end{defn}

\begin{figure}[H]
\begin{center}
\includegraphics[width=11cm]{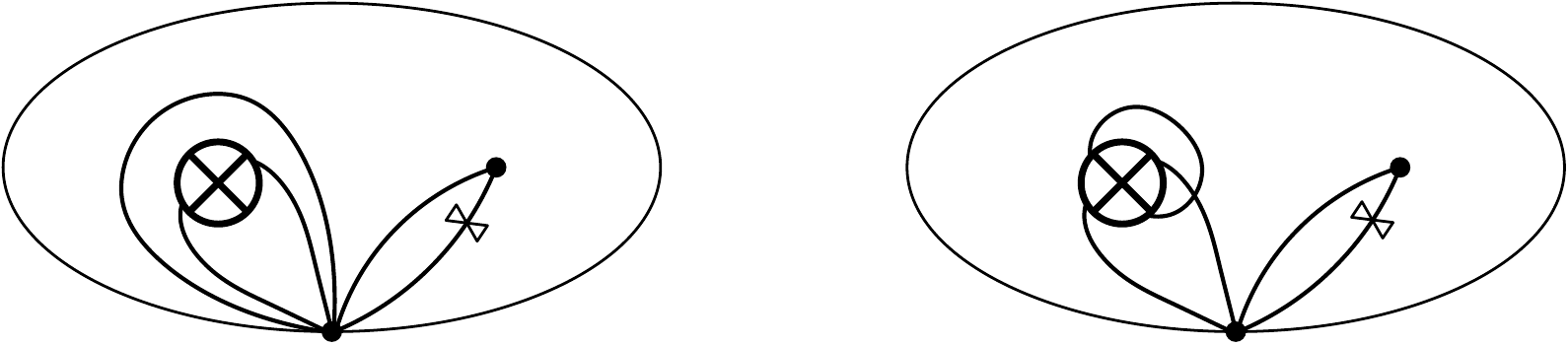}
\caption{To emphasise the differences, we provide an example of a traditional triangulation (left) and a triangulation (right).}
\label{example of traditional}
\end{center}
\end{figure}

\begin{rmk}

Note that traditional triangulations differ to triangulations in the sense that we allow arcs bounding a M\"{o}bius strip $M_1$, but do not allow one-sided closed curves -- see Figure \ref{example of traditional}.

\end{rmk}

Let $T$ be a triangulation of $(S,M)$ and $\alpha$ an arc in $T$. Proposition \ref{flip} tells us there exists a unique quasi-arc $\alpha'$ such that $T\cup\{\alpha'\}\setminus \{\alpha\}$ is a quasi-triangulation. However, when $\alpha'$ is a one-sided closed curve there is an alternative flip of $\alpha$ we can consider (which is forbidden under our current set-up). We shall describe this alternative flip and explain how it fits in with mutation of anti-symmetric quivers. Firstly, note that by Definition \ref{newcompatibilitydef}, if $\alpha'$ is a one-sided closed curve then it intersects precisely one arc $\beta \in T$. There exists a unique arc $\alpha^{*} \notin T$ enclosing $\alpha'$ and $\beta$ in $M_1$. If we choose to flip $\alpha$ to $\alpha^{*}$ (instead of $\alpha'$) then we will arise at a traditional triangulation, see Figure \ref{alternativeflip}. \newline

\begin{figure}[H]
\begin{center}
\includegraphics[width=13cm]{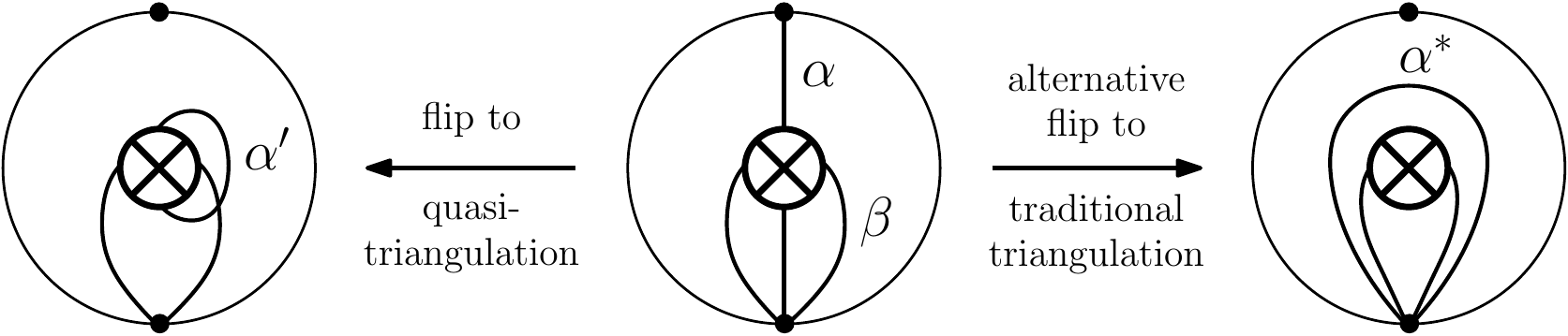}
\caption{On the left we show the flip of the arc $\alpha$ resulting in a quasi-triangulation. The alternative flip to a traditional triangulation is shown on the right.}
\label{alternativeflip}
\end{center}
\end{figure}

\indent In fact, analogous to the proof of Proposition \ref{flip}, for any triangulation $T$ and any arc $\gamma$ of $T$, there exists a unique arc $\gamma' \neq \gamma$ such that $T\cup\{\gamma'\}\setminus \{\gamma\}$ is a traditional triangulation. Turning our attention back to this alternative flip, consider the lift $\overline{T}$, of $T$, in the double cover $\overline{(S,M)}$. If we flip both of the lifts $\alpha$, $\tilde{\alpha}$ in $\overline{(S,M)}$ and take the $\mathbb{Z}_2$-quotient we will obtain precisely $T' := T\cup\{\alpha^*\}\setminus \{\alpha\}$. Therefore the existing theory of cluster algebras from surfaces, Proposition \ref{existing quiver flip} to be precise, tells us that
$$ \mu_{\alpha}\circ \mu_{\tilde{\alpha}}(Q_{\overline{T},\mathbf{\overline{L}}}) = Q_{\overline{T}',\mathbf{\overline{L}}}.$$

We conclude this short discussion with the following proposition. 

\begin{prop}
\label{traditional quiver mutation}
Fix a multi-lamination $\mathbf{L}$ of a bordered surface $(S,M)$. Let $T$ be a triangulation of $(S,M)$. Then for any arc $\gamma$ in $T$, flipping $\gamma$ (with respect to traditional triangulations) corresponds to double mutation of the anti-symmetric quiver $Q_{\overline{T}, \mathbf{\overline{L}}}$, at the vertices corresponding to the two lifts of $\gamma$.

\end{prop}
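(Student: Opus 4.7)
The plan is to reduce the statement to Proposition \ref{existing quiver flip} (Fomin--Thurston) applied to the orientable double cover, by recognising that the traditional flip of $\gamma$ in $(S,M)$ is exactly the $\mathbb{Z}_2$-quotient of the simultaneous flip of the two lifts of $\gamma$ in $\overline{(S,M)}$.

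First, I will distinguish two cases according to the combinatorial type of the flip of $\gamma$ in $T$. In the first case, the quasi-arc $\gamma'$ produced by Proposition \ref{flip} is itself an arc; here the traditional flip and the quasi-arc flip coincide, and $\mu_\gamma(T)$ is a triangulation. Proposition \ref{quiverandflip} (together with the disjointness of the interiors of the lifted flip regions, which follows from the anti-symmetry of $Q_{\overline{T},\overline{\mathbf{L}}}$ and the absence of anti-self-folded triangles) tells us $\mu_\gamma \circ \mu_{\tilde\gamma}(\overline{T}) = \mu_{\tilde\gamma}\circ \mu_\gamma(\overline{T}) = \overline{\mu_\gamma(T)}$. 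In the second case, $\gamma'$ is a one-sided closed curve; here the traditional flip replaces $\gamma$ by the arc $\gamma^{*}$ bounding the M\"{o}bius strip $M_1$ that contains $\gamma'$ and the unique arc $\beta \in T$ intersecting $\gamma'$. One checks directly on the surface that the two lifts of $\gamma^{*}$ are precisely the arcs obtained from $\overline{T}$ by flipping the two lifts of $\gamma$ in either order, so again $\mu_\gamma \circ \mu_{\tilde\gamma}(\overline{T}) = \overline{T'}$ where $T' := T \cup \{\gamma^{*}\}\setminus\{\gamma\}$. This is precisely the observation highlighted in the discussion just preceding the proposition.

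Once the combinatorial identification $\mu_\gamma \circ \mu_{\tilde\gamma}(\overline{T}) = \overline{T'}$ is established in both cases, the quiver statement follows immediately: since $\overline{(S,M)}$ is orientable, Proposition \ref{existing quiver flip} applies, and gives
\[
\mu_\gamma \circ \mu_{\tilde\gamma}\bigl(Q_{\overline{T},\,\overline{\mathbf{L}}}\bigr) \;=\; Q_{\mu_\gamma\circ\mu_{\tilde\gamma}(\overline{T}),\,\overline{\mathbf{L}}} \;=\; Q_{\overline{T'},\,\overline{\mathbf{L}}}.
\]
The fact that $\mu_\gamma$ and $\mu_{\tilde\gamma}$ commute here is a consequence of the disjointness of the flip regions noted above (there are no arrows between $\gamma$ and $\tilde\gamma$ in $Q_{\overline{T},\overline{\mathbf{L}}}$).

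The main obstacle I anticipate is verifying carefully the surface-level identification in the second case, namely that flipping both lifts of $\gamma$ in $\overline{(S,M)}$ produces exactly $\overline{\gamma^{*}}$ rather than some other configuration. The potential complications come from making sure the two lifts of the M\"{o}bius strip $M_1$ glue in $\overline{(S,M)}$ to the correct orientable region (an annulus), so that the two lifted flips are both ordinary quadrilateral flips in the double cover with disjoint interiors; once this picture is drawn the rest of the argument is essentially formal. The shear-coordinate behaviour of $\overline{\mathbf{L}}$ under these flips is entirely handled by Proposition \ref{existing quiver flip}, so no separate computation involving the lamination coordinates is needed.
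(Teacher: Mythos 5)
Your proposal is correct and follows essentially the same route as the paper: disjointness of the lifted flip regions (which you derive from anti-symmetry / absence of anti-self-folded triangles, and the paper derives from the fact that no arc in a triangulation bounds $M_1$), the surface-level identification of the traditional flip with the simultaneous flip in the double cover, and then Proposition \ref{existing quiver flip} to pass to the quiver. The case split on whether $\gamma'$ is an arc or a one-sided closed curve is not needed — the paper's argument handles both at once since the only thing that matters is that $\gamma$ lies in a triangulation rather than a traditional triangulation containing an $M_1$-bounding arc — but it does no harm.
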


\begin{proof}

By Definition \ref{quasitriangulationdef}, since $\gamma$ is not bounded by an arc enclosing a M\"{o}bius strip, $M_1$, then the interiors of the flip regions containing the lifts of $\gamma$ are disjoint. Therefore, flipping $\gamma$ in $(S,M)$ corresponds to simultaneously flipping both of the lifts in $\overline{(S,M)}$. Finally, by the theory of orientable surfaces, flipping an arc in the double cover corresponds to mutating the vertex in $Q_{\overline{T}, \mathbf{\overline{L}}}$ representing that arc.

\end{proof}

\begin{rmk}

In general, when considering traditional triangulations, mutation does not preserve the anti-symmetric property of a quiver. In particular, after performing the flip of $\alpha$ to $\alpha^*$ discussed above, the corresponding quiver will contain (two) arrows between $\beta$ and $\tilde{\beta}$, depriving it of anti-symmetry -- see Figure \ref{brokensymmetry}. (Flips amongst triangulations will of course preserve the anti-symmetric property.)

\end{rmk}

\begin{figure}[H]
\begin{center}
\includegraphics[width=13cm]{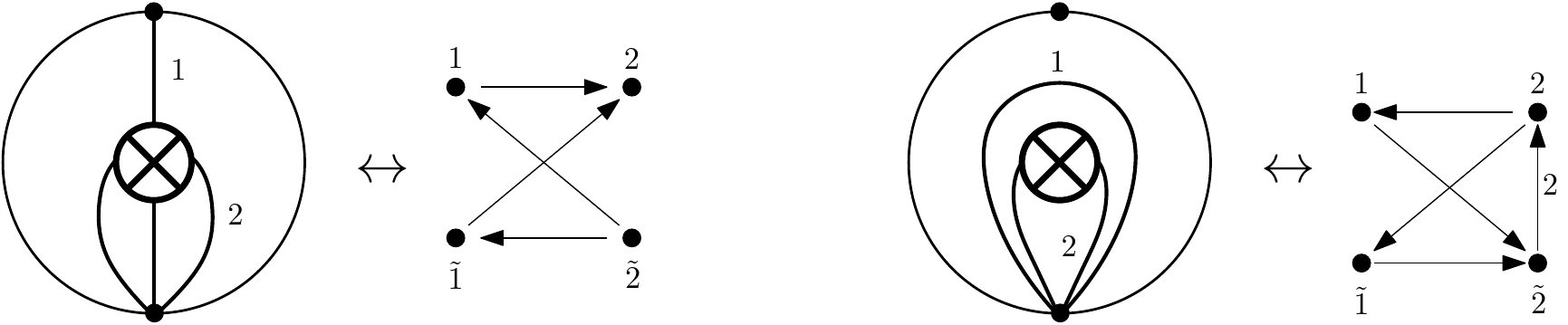}
\caption{Performing a flip to an arc bounding $M_1$ breaks anti-symmetry.}
\label{brokensymmetry}
\end{center}
\end{figure}

We have already seen that the lambda length of a quasi-arc can be viewed as a formal variable. Our goal now will be to introduce new variables, called \textit{laminated lambda lengths}, that take into account the multi-lamination $\mathbf{L}$ on the surface, not just the geometry. The procedure we shall use follows the approach taken by Fomin and Thurston \cite{fomin2012cluster}; it will involve rescaling the lambda length of each quasi-arc $\gamma$ with respect to the intersection numbers of $\gamma$ with $\mathbf{L}$. As it stands, this notion is currently ill-defined. Namely, when $\mathbf{L}$ spirals into a puncture $p$, it will intersect any arc incident to $p$ infinitely many times. To bypass this problem we shall open up the punctures.

\subsection{Opening the surface}

\begin{defn}

Let (S,M) be bordered surface and $P \subseteq M\setminus \partial S$ be a set of punctures. The \textit{\textbf{(partially) opened bordered surface}}, $(S_P,M_P)$ is defined as follows. $S_P$ is obtained from $S$ by removing a small open neighbourhood around each $p\in P$. Furthermore, to each newly created boundary component, $C_p$, we add a marked point $m_p$. We then set $M_P := (M\setminus P) \cup \{m_p\}_{p\in P}$.

\end{defn}

It is crucial to note that our treatment of a partially opened bordered surface $(S_P,M_P)$ throughout this paper will differ from that of a bordered surface. I.e. we will care whether a boundary segment was the consequence of opening a puncture. In particular, the set of quasi-arcs of $A^{\otimes}(S_P,M_P)$ is defined as before, except now:

\begin{itemize}

\item We allow arcs to be notched at $m_p$ for $p \in P$.

\item We don't allow an arc to cut out a monogon containing $C_p$ for $p \in P$.

\end{itemize}

\noindent With this in mind there is a canonical projection map

$$ \kappa_P : A^{\otimes}(S_P,M_P) \longrightarrow A^{\otimes}(S,M)$$

\noindent that amounts to collapsing each boundary component $C_p$ in $(S_P,M_P)$. Any quasi-arc $\overline{\gamma} \in A^{\otimes}(S_P,M_P)$ that projects to a quasi-arc $\gamma \in A^{\otimes}(S,M)$ will be referred to as a \textit{lift} of $\gamma$ -- we give an example of this in Figure \ref{openingpunctures}.

\begin{figure}[H]
\begin{center}
\includegraphics[width=12cm]{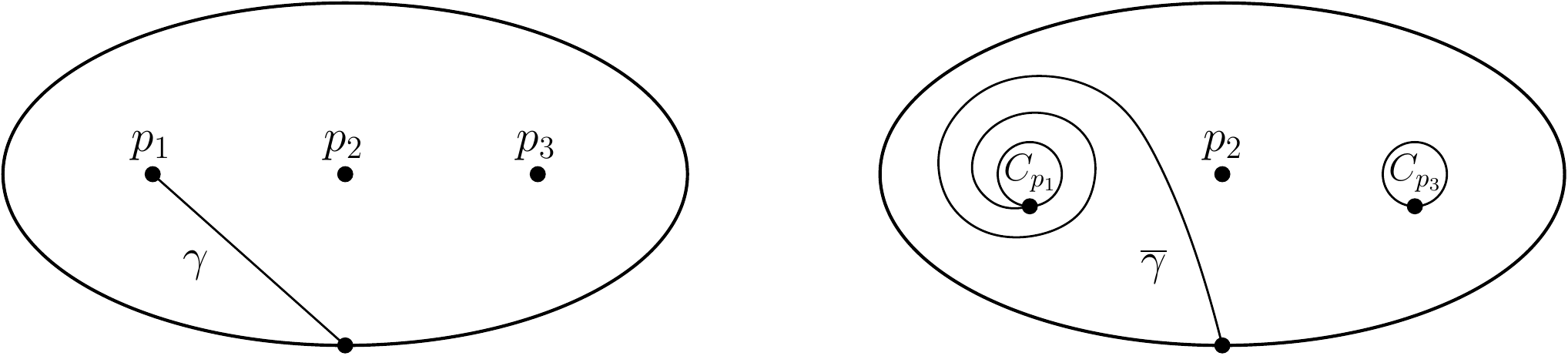}
\caption{Here $P = \{p_1,p_3\}$ meaning we have opened the punctures $p_1$ and $p_3$, but not $p_2$. The arc $\overline{\gamma} \in (S_P,M_P)$ is a lift of $\gamma \in (S,M)$.}
\label{openingpunctures}
\end{center}
\end{figure}

\begin{defn}

The \textit{\textbf{opened bordered surface}}, $(S^*,M^*)$, is the result of opening up all the punctures. Note that $\kappa_{M\setminus \partial S} := \kappa^*$ factors through every other map $\kappa_P$.

\end{defn}

We now describe what will be the overarching notion of a Teichm\"uller space with regards to opening surfaces. For those familiar with the work of Fomin and Thurston \cite{fomin2012cluster}, our definitions of Teichm\"{u}la space are analogous. Moreover, the way we define the signed and lambda lengths of (tagged) arcs is exactly the same. For one-sided closed curves the length of arc is independent of the choice of horocycles chosen, and is simply defined as the hyperbolic length of the curve; the lambda length is given in Definition \ref{lambda length of quasi-arcs}.

\begin{defn}

A \textit{\textbf{decorated set of punctures}}, $\tilde{P}$, is a subset $P \subseteq M\setminus \partial S$ together with a choice of 'orientation' on $C_p$ for each $p \in P$.

\end{defn}

\begin{rmk}

To clarify, our usage of 'orientation' means that we are choosing a direction of flow on each boundary component $C_p$. Being on a non-orientable surface just means that we cannot globally speak about whether this flow is clockwise or counter-clockwise.

\end{rmk}

\begin{defn}

For a decorated set of punctures $\tilde{P}$ we define the \textit{\textbf{partially opened Teichm\"uller space}}, $\mathcal{T}_{\tilde{P}}(S_P,M_P)$, to be the space of all finite volume, complete hyperbolic metrics on $S_P\setminus (M\setminus P)$ with geodesic boundary, up to isotopy. \newline
The \textit{\textbf{decorated partially opened Teichm\"uller space}}, $\tilde{\mathcal{T}}_{\tilde{P}}(S_P,M_P)$, consists of the same metrics as in $\mathcal{T}_{\tilde{P}}(S_P,M_P)$, except now they are considered up to isotopy relative to $\{m_p\}_{p \in P}$. Additionally, there is a choice of horocycle around each point in $M \setminus P$.
\end{defn}

Given a decorated set of punctures $\tilde{P}$ and $\sigma \in \mathcal{T}_{\tilde{P}}(S_P,M_P)$, for any quasi-arc $\gamma \in A^{\otimes}(S_P,M_P)$ we can associate a unique non-intersecting geodesic $\gamma_{\sigma}$ on $S_P$. If $\gamma$ is a one-sided closed curve then $\gamma_{\sigma}$ is just the usual geodesic representative of $\gamma$ with respect to $\sigma$. If $\gamma$ is an arc we define $\gamma_{\sigma}$ as follows: 

\begin{itemize}

\item For an endpoint of $\gamma$ not in $P$, $\gamma_{\sigma}$ runs out to the corresponding cusp.

\item For an endpoint of $\gamma$ in $P$ that is tagged plain $\gamma_{\sigma}$ should spiral (infinitely) around $C_p$ in the chosen orientation of $C_p$. Otherwise the endpoint is notched, and it should spiral \textit{against} the chosen orientation -- an example of this is given in Figure \ref{infinitegeodesic}.

\end{itemize}

\begin{figure}[H]
\begin{center}
\includegraphics[width=12cm]{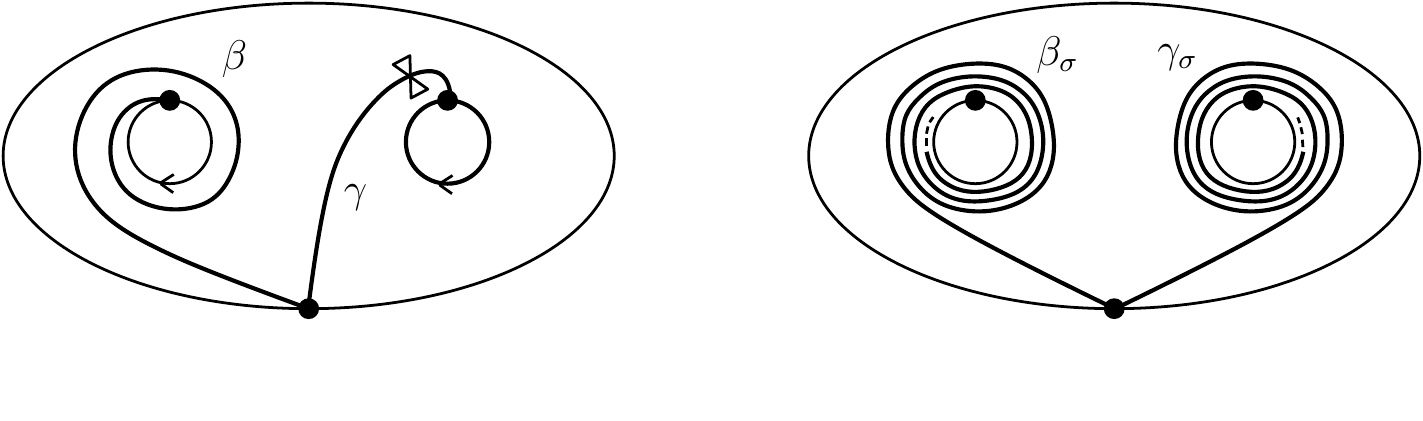}
\caption{Arcs $\beta$ and $\gamma$ and their associated geodesics $\beta_{\sigma}$ and $\gamma_{\sigma}$.}
\label{infinitegeodesic}
\end{center}
\end{figure}

\begin{defn}

Let $\tilde{P}$ be a decorated set of punctures and $\sigma \in \mathcal{T}_{\tilde{P}}(S_P,M_P)$. For each $p \in P$ consider a small segment of the horocycle originating from $m_p$ which is both perpendicular to $C_p$ and all geodesics $\gamma_{\sigma}$ that spiral into $C_p$ in the chosen orientation of $C_p$. Such a segment is called the \textit{\textbf{perpendicular horocycle segment}}, and is denoted $h_p$.

\end{defn}

\begin{defn}[Length of plain arcs on opened surface]
\label{arc length}
Let $\sigma \in \mathcal{\tilde{T}}_{\tilde{P}}(S_P,M_P)$. We will eventually define the lengths of all plain arcs $\gamma$ in $(S_P,M_P)$, however, for now we shall only concentrate on those whose ends twist sufficiently far around opened punctures $C_p$ in the direction consistent with the chosen orientation of each $C_p$. \newline
At the ends of $\gamma_{\sigma}$ that spiral around an open puncture $C_p$ there will be infinitely many intersections with the horocyclic segment $h_p$ at $m_p$. We describe how we pick one of these intersections:  \newline

For $\gamma$ with endpoints $m_p$ and $m_q$ (that twists sufficiently far around the corresponding boundaries) choose the unique intersections between $\gamma_{\sigma}$ and each horocyclic segment, $h_p$ and $h_q$, such that the path running from

\begin{itemize}

\item $m_p$ to an intersection of $h_p$ with $\gamma_{\sigma}$ (along $h_p$), then from

\item $\gamma_{\sigma}$ to an intersection of $\gamma_{\sigma}$ with $h_q$ (along $\gamma_{\sigma}$), then from

\item $h_q$ to $m_q$ (along $h_q$)

\end{itemize}

is homotopic to the original arc $\gamma$. In the less complicated case of $\gamma$ not having both endpoints in $P$, we leave $\gamma_{\sigma}$ unmodified at the ends not in $P$, and, as usual, choose the unique intersection between $\gamma_{\sigma}$ and the corresponding horocycle. The \textit{\textbf{length}} of $\gamma$, $l_{\sigma}(\gamma)$, is defined to be the signed distance of $\gamma_{\sigma}$ between the horocycles at its endpoints (with respect to the intersections described above) -- an illustration of this is given in Figure \ref{perphorocycle}.

This definition is extended to all plain arcs (not just those twisting sufficiently far around open punctures) by defining,

\begin{equation}
\label{twist1}
l_{\sigma}(\psi^{\pm 1}_p(\gamma)) := \pm n_p(\gamma) l_{\sigma}(p) + l_{\sigma}(\gamma),
\end{equation}

\noindent where $\psi_p(\gamma)$ denotes the twist of $\gamma$ around $C_p$ in the direction consistent with $C_p$'s orientation ($\psi_p^{- 1}(\gamma)$ being the twist against $C_p$'s orientation); $n_p(\gamma)$ is the number of endpoints $\gamma$ has at $m_p$; and $l_{\sigma}(p)$ is the length of $C_p$ if $p \in P$, and $0$ otherwise.

\end{defn}

\begin{rmk}

In Definition \ref{arc length}, (\ref{twist1}) is well defined as the distance between successive intersections of $\gamma_{\sigma}$ with $h_p$ is $l_{\sigma}(p)$. A proof of this can be found in [Lemma 10.7, \cite{fomin2012cluster}].

\end{rmk}

\begin{figure}[H]
\begin{center}
\includegraphics[width=10cm]{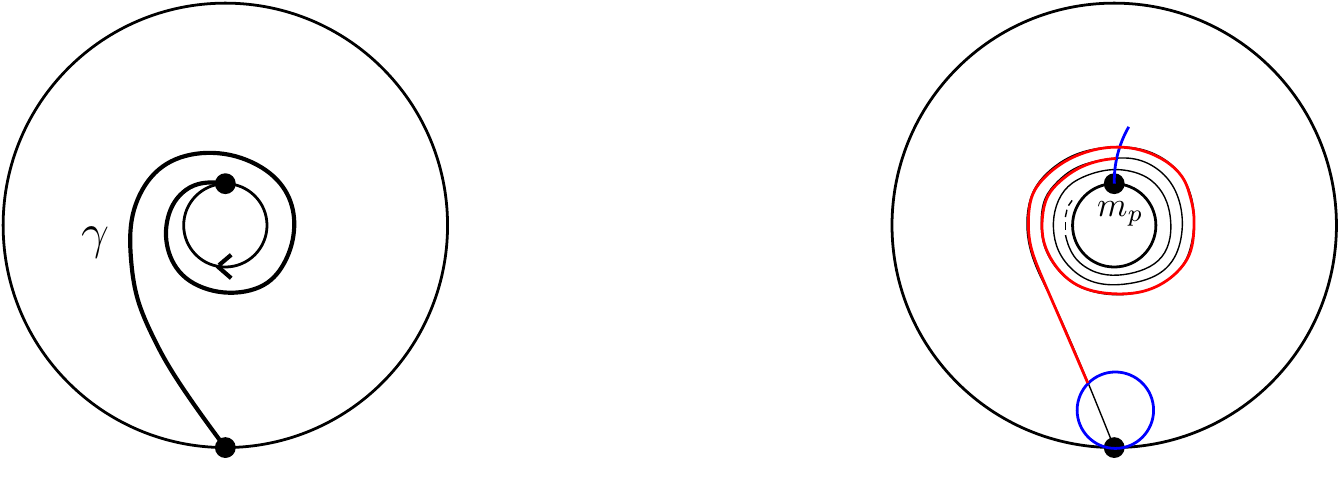}
\caption{On the left we draw an arc $\gamma$. On the right we draw the associated geodesic $\gamma_{\sigma}$; the perpendicular horocyclic segment at $m_p$; and a horocycle around the other marked point of $\gamma$. The length of $\gamma$, $l_{\sigma}(\gamma)$, is the length of the red part of $\gamma_{\sigma}$.}
\label{perphorocycle}
\end{center}
\end{figure}

\begin{defn}

Let $\tilde{P}$ be a decorated set of punctures and $\sigma \in \mathcal{\tilde{T}}_{\tilde{P}}(S_P,M_P)$. For each $p \in P$ consider the point $\overline{m}_p$ on $C_p$ that is a (signed) distance $v(p) : = 2ln|\lambda(p) - \lambda(p)^{-1}|$ from $m_p$ in the direction against the orientation of $C_p$.
The \textit{\textbf{conjugate perpendicular horocycle segment}}, $\overline{h}_p$, is the segment of the horocycle originating from $\overline{m}_p$ which is both perpendicular to $C_p$ and all geodesics $\gamma_{\sigma}$ that spiral into $C_p$ \textit{against} the chosen direction. 

\end{defn}

\begin{defn}[Length of arcs on opened surface]
\label{tagged length}
Let $\sigma \in \mathcal{\tilde{T}}_{\tilde{P}}(S_P,M_P)$ and $\gamma$ is an arc whose endpoints twist sufficiently far around open punctures; namely, if $\gamma$ is tagged plain at $m_p$ then it must twist sufficiently far in the direction of $C_p$'s orientation, and if it is notched it must twist sufficiently far \textit{against} $C_p$'s orientation.

For such an arc $\gamma$, the length $l_{\sigma}(\gamma)$ is defined as in Definition \ref{arc length}, except now, when there is a notched endpoint at $m_p$, at the corresponding endpoint of $\gamma$ we consider the intersection of $\gamma_{\sigma}$ with the conjugate perpendicular horocycle segment $\overline{h}_p$. In particular, the way we choose horocycle intersections is the the same as Definition \ref{arc length}, except for notched endpoints at $m_p$ we now run from $m_p$ to $\overline{m}_p$ (along $C_p$) \textit{against} the orientation of $C_p$, and then run from $\overline{m}_p$ to an intersection of $\overline{h}_p$ with $\gamma_{\sigma}$ (along $\overline{h}_p$) -- an illustration of this is given in Figure \ref{conjperohorocycle}.

The definition is again extended to all arcs by using:

\begin{equation}
\label{twist2}
l_{\sigma}(\psi^{\pm 1}_p(\gamma)) := \pm n_p(\gamma) l_{\sigma}(p) + l_{\sigma}(\gamma)
\end{equation}

Here $\psi_p$ and $l_{\sigma}(p)$ are as in (\ref{twist1}). However, we extend $n_p(\gamma)$ to all arcs by setting it as minus (resp. plus) the number of notched (resp. plain) ends of $\gamma$ at $m_p$.
\end{defn}

\begin{figure}[H]
\begin{center}
\includegraphics[width=10cm]{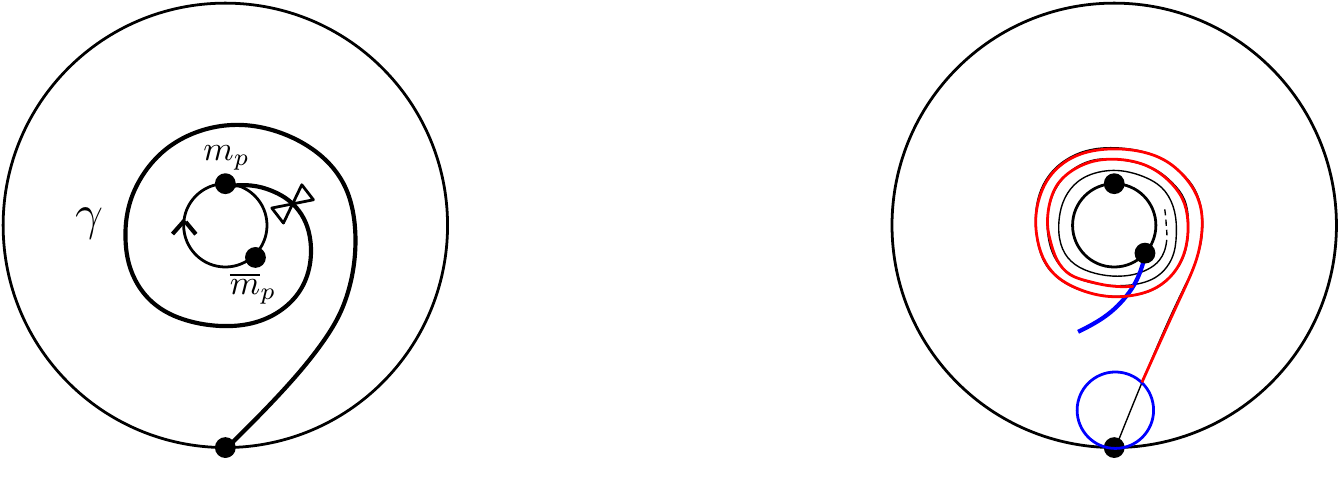}
\caption{On the left we draw an arc $\gamma$ notched arc at $m_p$, and also indicate the point $\overline{m}_p$ which is a distance $v(p)$ away from $m_p$. On the right we draw the associated geodesic $\gamma_{\sigma}$; the conjugate perpendicular horocycle segment $\overline{h}_p$; and a horocycle around the other marked point of $\gamma$. The length of $\gamma$, $l_{\sigma}(\gamma)$, is the length of the red part of $\gamma_{\sigma}$.}
\label{conjperohorocycle}
\end{center}
\end{figure}

\begin{defn}[Length of one-sided closed curves]

Let $\sigma \in \tilde{\mathcal{T}}_{\tilde{P}}(S_P,M_P)$. If $\gamma$ is a one-sided closed curve then we denote by $l_{\sigma}(\gamma)$ the hyperbolic length of the geodesic representation of $\gamma$ in $\sigma$.

\end{defn}

\begin{defn}
\label{lambda length of quasi-arcs}
Let $\sigma \in \tilde{\mathcal{T}}_{\tilde{P}}(S_P,M_P)$. We define the lambda length of a quasi-arc $\gamma$ in $(S_P,M_P)$ as:

\[   
\lambda_{\sigma}(\gamma) = 
     \begin{cases}
       e^{\frac{l_{\sigma}(\gamma)}{{2}}},& \text{if $\gamma$ is an arc,}\\
       2\text{sinh}(\frac{l_{\sigma}(\gamma)}{{2}}),& \text{if $\gamma$ is a one-sided closed curve,}\\
     \end{cases}
\]

In addition to this, for each puncture $p$ of $(S,M)$, we define $\lambda_{\sigma}(p) := e^{\frac{l_{\sigma}(p)}{{2}}}$.

\end{defn}

\begin{rmk}
\label{non-normalised structure}
Let us fix a lift $\overline{\gamma} \in (S^{*},M^{*})$ for each arc $\gamma$ in $(S,M)$. [Corollary 10.16, \cite{fomin2012cluster}] tells us that for each triangulation $T$ of $(S,M)$, the cluster $$\mathbf{x}(T) := \{ \lambda(\overline{\gamma}) | \gamma \in T\}$$ may be viewed as a set of algebraically independent variables. Furthermore, [Theorem 11.1, \cite{fomin2012cluster}] reveals that the exchange relations between these clusters are the relations of the corresponding flips on the pre-opened surface $(S,M)$, that have been rescaled at the situations where a flip region in $(S,M)$ has not lifted to a flip region in $(S^{*},M^{*})$. In the terminology of \cite{fomin2012cluster}, this collection of clusters, together with the corresponding rescaled exchange relations, form a \textit{non-normalised exchange pattern} on $E^{\circ}(S,M)$.

\end{rmk}

\subsection{Transverse measure and tropical lambda lengths}

With regards to defining a notion of length (\textit{laminated lambda length}) that takes into account the laminations as well as the geometry, we introduce \textit{tranverse measures} and \textit{tropical lambda lengths} for quasi-arcs on opened surfaces. Those familiar with \cite{fomin2012cluster} should note that, for (tagged) arcs, our definitions are inherited from there. For one-sided closed curves the transverse measure is simply defined as the intersection number between a given lamination; the tropical lambda length is defined in the same way as arcs. \newline \indent The laminated lambda length of a lifted arc will then be defined as a rescaling of lambda length by the tropical lambda length.

\begin{defn}

A \textit{\textbf{lifted lamination}}, $L^{*}$, of $(S^*,M^*)$ consists of a choice of orientation on each opened puncture $C_p$ together with a finite number of non-intersecting curves, with endpoints in $\partial S^* \setminus M^*$, considered up to isotopy relative to $M^*$. We forbid the following types of curves:

\begin{itemize}

\item one-sided closed curves;

\item two-sided closed curves that bound a disk, a M\"obius strip, or a disk containing a single opened puncture;

\item curves with endpoints in $\partial S^*$ which are isotopic to a piece of boundary containing one or zero marked points.

\end{itemize}

\end{defn}

\begin{rmk}

Observe that we can construct a canonical projection map taking lifted laminations, $L^*$, of $(S^*,M^*)$ to laminations, $L$, of $(S,M)$. Namely, $L$ is obtained from $L^*$ by closing the opened punctures and demanding that endpoints of $L^*$ which end on opened punctures, $C_p$, now spiral around $p$ in the direction \textit{opposite} to the orientation chosen on $C_p$ (with respect to $L^*$). The reason why we demand the spiralling to oppose the orientation on $C_p$ is to produce equation (\ref{transverse twist}) - if the orientation agreed we would have to replace '$\pm$' with '$\mp$' on the RHS.

\end{rmk}

\begin{defn}[Transverse measures for plain arcs]
\label{arc transverse}
Let $L^*$ be a lifted lamination of an opened surface $(S^*,M^*)$ and let $\gamma$ be a plain arc or a boundary segment of $(S^*,M^*)$. The \textit{\textbf{transverse measure}} of $\gamma$ with respect to $L^*$ is the integer $l_{L^*}(\gamma)$ defined as follows:

\begin{itemize}

\item If $\gamma$ does not have ends at any $m_p$ then $l_{L^*}(\gamma)$ is the minimal number of intersection points between $L^*$ and any arcs homotopic to $\gamma$.

\item If $\gamma$ has one or two ends at opened punctures, and $\gamma$ twists sufficiently far around these opened punctures compared to $L^*$, then $l_{L^*}(\gamma)$ is again defined to be the minimal number of intersection points between $L^*$ and any arcs homotopic to $\gamma$. By '$\gamma$ twists sufficiently far around $C_p$ compared to $L^*$' we mean that $\gamma$ wraps around $C_p$ more than $L^*$ does with respect to the orientation on $C_p$. \newline  We extend this definition to all plain arcs, not just to those twisting sufficiently far, by setting:

\begin{equation}
\label{transverse twist}
l_{L^*}(\psi^{\pm 1}_p(\gamma)) := \pm n_p(\gamma)l_{L^*}(p) + l_{L^*}(\gamma)
\end{equation}

Here $\psi_p$ and $n_p$ are as in Definition \ref{tagged length}, although it is key to note $\psi_p$ is now defined with respect to the orientation on each $C_p$ coming from $L^*$. $l_{L^*}(p)$ is the number of intersections of $L^*$ with $C_p$.
\end{itemize}

\end{defn}

\begin{defn}[Transverse measures for all arcs]
\label{tagged transverse}
For plain arcs $\gamma$, $l_{L^*}(\gamma)$ is defined as in Definition \ref{arc transverse}. For an arc $\gamma$ which is notched at an endpoint $m_p$, and is such that $L^*$ twists sufficiently far around these opened punctures compared to $\gamma$, define $l_{L^*}(\gamma)$ to be the minimal number of intersection points between $L^*$ and (any arcs homotopic to) $\gamma$, plus $l_{L^*}(p)$. By `$L^*$ twists sufficiently far around $C_p$ compared to $\gamma$' we mean that $L^*$ wraps around $C_p$ more than $\gamma$ does with respect to the orientation on $C_p$ -- note that the requirements of `sufficient wrapping' for notched arcs is the \textit{opposite} of those demanded for plain arcs in Definition \ref{arc transverse}. \newline
\indent We extend the definition to all arcs using equation (\ref{transverse twist}), defined in Definition \ref{arc transverse}.
\end{defn}

\begin{rmk}

The additional term $l_{L^*}(p)$ appearing for notched arcs in the above definition ensures that the laminated lambda lengths (defined later on in Definition \ref{laminated lambda length}) of arcs in the punctured digon satisfy the same exchange relation (2) appearing in Figure \ref{combinatorialflips}.

\end{rmk}

\begin{defn}

For a one-sided closed curve $\gamma$ we define $l_{L^*}(\gamma)$ as the minimal number of intersection points between $L^*$ and any one-sided closed curves homotopic to $\gamma$.

\end{defn}

\begin{defn}[Tropical semi-field associated with a multi lamination]

Let $\mathbf{L}$ be a multi lamination of a bordered surface $(S,M)$. For each lamination $L_i$ in $\mathbf{L}$ we introduce a variable $q_i$. We consider the tropical semifield $\mathbb{P}_{\mathbf{L}}$ over these variables. More specifically, $\mathbb{P}_{\mathbf{L}} := \text{Trop}(q_i : i \in I)$. Note that $I$ is just the indexing set for the laminations $L_i$ in $\mathbf{L}$.

\end{defn}

\begin{defn}[Tropical lambda lengths]

Let $\mathbf{L^*} = \{L^*_i\}_{i \in I}$ be a lifted multi-lamination on an opened surface $(S^{*},M^{*})$. Let $\gamma$ be a quasi-arc or boundary component of $(S^{*},M^{*})$. We define the \textit{\textbf{tropical lambda length}}, $c_{\mathbf{L^*}}(\gamma)$, of $\gamma$ as follows:

\begin{equation}
c_{\mathbf{L^*}}(\gamma) = \prod_{i \in I} q_i^{-\frac{l_{L^*_i}(\gamma)}{2}}
\end{equation}

Note that by (\ref{transverse twist}) these tropical lambda lengths satisfy

\begin{equation}
\label{tropical twist}
c_{\mathbf{L^*}}(\psi^{\pm 1}_p(\gamma)) = c_{\mathbf{L^*}}(p)^{\pm n_p(\gamma)}c_{\mathbf{L^*}}(\gamma)
\end{equation}

\end{defn}

\begin{rmk} 
The transverse measure of an arc considers the number of intersections between the laminations, whereas shear coordinates depend on a triangulation and are only concerned with counting $'S'$ and $'Z'$ intersections. Nevertheless, the two notions are closely related. Fomin and Thurston \cite{fomin2012cluster} showed that for any arc $\gamma$ in a triangulation $T$ we have:

\begin{equation}
\label{shear and tropical}
r_{\gamma}(T,\mathbf{L^*}) := \prod_{i \in I}q_i^{-b_{\gamma}(T,L^*_i)} =  \frac{p_{\gamma}^{+}}{p_{\gamma}^{-}}\prod_{\beta \in T}c_{L^*}(\overline{\beta})^{B(T)_{\beta \gamma}},
\end{equation}

\noindent where the term $\frac{p_{\gamma}^{+}}{p_{\gamma}^{-}}$ accounts for the instances where $\overline{\gamma}$ is not the interior of a flip region.
Moreover, they showed that, for arcs, the exchange relations between the $c_{\mathbf{L^*}}$'s are the tropical versions of the exchange relations between the corresponding lambda lengths (in fact we shall see later this statement extends to all quasi-arcs). In turn, using this fact together with equation (\ref{shear and tropical}) yields an elegant proof of Proposition \ref{existing quiver flip}.
\end{rmk}

\subsection{Laminated lambda lengths and the laminated quasi-cluster algebra}

Recall that we began to consider the opened surface with the intention of rescaling lambda lengths of quasi-arcs using transverse measures. (Transverse measure is generally ill-defined on un-opened surfaces due to possible infinite intersections of arcs with the multi-lamination.) Our approach so far requires us to fix a lift $\overline{\gamma}$ in $(S^{*},M^{*})$ for each quasi-arc $\gamma$ in $(S,M)$. As we already noted in Remark \ref{non-normalised structure}, the clusters $\mathbf{x}(T) := \{l(\overline{\gamma}) : \gamma \in T\}$ arising from triangulations form a non-normalised exchange pattern on $E^{\circ}(S,M)$. If the arcs of a flip region in $(S,M)$ lift to another flip region in $(S^{*},M^{*})$ then the exchange relations coincide. However, if they do not lift to a flip region, the exchange relations will differ. In particular, when they do not, the exchange relation on the opened surface will be a rescaled version of the original. This rescaled relation is obtained by finding a new collection of lifts such that, with respect to these new lifts, the flip region does lift to a flip region. Since the new lifts only differ from the old via spiralling at opened punctures, this rewriting is obtained using (\ref{twist2}).
The issue with the current standings is that it is quite hard to keep track of these particular rescalings. The following definition shows that by putting boundary conditions on the opened punctures, we may both achieve our goal of defining \textit{laminated lambda lengths} that take into account the lamination on the surface, and eliminate the nasty rescaling process required when flip regions do not lift to flip regions.

\begin{defn}

The \textbf{\textit{complete decorated Teichm\"uller space}}, $\overline{\mathcal{T}}(S,M)$, is the disjoint union of the $\mathcal{T}_{\tilde{P}}(S_P,M_P)$ over all $3^{|\partial S \setminus M|}$ partially decorated sets $\tilde{P}$.

\end{defn}

\begin{defn}

Let $\mathbf{L} = \{L_i\}_{i \in I}$ be a multi-lamination of $(S,M)$. Fix a lift $\mathbf{L^*}$, of $\mathbf{L}$, on the opened surface $(S^*,M^*)$. A point $(\sigma,q)$ of the \textbf{\textit{laminated Teichm\"uller space}}, $\overline{\mathcal{T}}(S,M,\mathbf{L^*})$, consists of a decorated hyperbolic structure $\sigma \in \overline{\mathcal{T}}(S,M)$ and a collection of positive real numbers $q := (q_1,\ldots q_{|I|})$ subject to the following condition on all punctures $p \in \partial S \setminus M$:

$$ \lambda(p) = c_{\mathbf{L^*}}(p).$$

\end{defn}

\begin{defn}
\label{laminated lambda length}
Let $(S,M)$ be a bordered surface, and $\mathbf{L}$ a multi-lamination. Fix a lift $\mathbf{L^*}$ of $\mathbf{L}$. For each quasi-arc $\gamma$ of $(S,M)$ choose a lift $\overline{\gamma}$. We define the \textit{\textbf{laminated lambda length}}, $x_{\mathbf{L^*}}(\gamma)$, of $\gamma$ to be:

\begin{equation}
\label{laminated length}
x_{\mathbf{L^*}}(\gamma) := \frac{\lambda(\overline{\gamma})}{c_{\mathbf{L^*}}(\overline{\gamma})}
\end{equation}

Due to the enforced 'boundary' condition $\lambda(p) = c_{\mathbf{L^*}}(p)$ for each puncture $p$, from equations (\ref{twist2}) and (\ref{tropical twist}) we realise that $x_{\mathbf{L^*}}(\gamma)$, as the notation suggests, is independent of the choice of lift $\overline{\gamma}$. It is worth noting that this definition \textit{does} depend on the choice of lift $\mathbf{L^*}$.

\end{defn}

The following theorem follows from [Corollary 15.5, \cite{fomin2012cluster}].

\begin{thm}
\label{main homeo}
Let $\mathbf{L} = \{L_i\}_{i \in I}$ be a multi-lamination of $(S,M)$ and $\mathbf{L^*}$ a lift. For any quasi-triangulation $T$ with quasi-arcs and boundary arcs $\gamma_1, \ldots, \gamma_{n+b}$ there exists a homeomorphism 
\begin{align*} 
\Lambda_T \colon   {\overline{\mathcal{T}}}(S , &M,\mathbf{L^*}) \longrightarrow \mathbb{R}_{>0}^{n+b+|I|} \\
          &(\sigma,q) \mapsto (x_{\mathbf{L^*}}(\gamma_1), \ldots, x_{\mathbf{L^*}}(\gamma_1),q_1,\ldots q_{|I|})
\end{align*} 

\end{thm}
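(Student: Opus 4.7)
The plan is to deduce the theorem from [Corollary 15.5, \cite{fomin2012cluster}] via two reductions: a reduction from non-orientable to orientable surfaces using the double cover construction of Section 4, and a reduction from quasi-triangulations containing one-sided closed curves to (traditional) triangulations via the M\"{o}bius-strip substitution $\alpha \leftrightarrow \gamma^*$.

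First suppose $(S,M)$ is orientable. Since orientable surfaces admit no one-sided closed curves, $T$ is automatically a triangulation, its laminated lambda lengths coincide with the laminated $x$-coordinates defined by Fomin--Thurston, and the boundary condition $\lambda(p) = c_{\mathbf{L^*}}(p)$ matches their rescaling at opened punctures; hence $\Lambda_T$ is precisely the homeomorphism of [Corollary 15.5, \cite{fomin2012cluster}]. If $T$ contains a one-sided closed curve $\alpha$, let $\beta \in T$ be the unique arc it intersects, and let $\gamma^*$ be the arc enclosing both in a M\"{o}bius strip. The laminated-length identity read off from Figure \ref{combinatorialflips} case (4) provides an invertible, semi-algebraic change of coordinates between $\{\alpha,\beta\}$ and $\{\gamma^*,\beta\}$. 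Iterating over all one-sided closed curves reduces the statement to the triangulation case handled above.

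For non-orientable $(S,M)$, let $\pi\colon \overline{(S,M)} \to (S,M)$ be the orientable double cover with deck transformation $\tau$, and lift $T$ and $\mathbf{L^*}$ to $\tau$-invariant data $\overline{T}$ and $\overline{\mathbf{L^*}}$ on the opened cover. Because a $\tau$-invariant decorated hyperbolic structure on $\overline{(S,M)}$ descends uniquely to $(S,M)$, the laminated Teichm\"{u}ller space $\overline{\mathcal{T}}(S,M,\mathbf{L^*})$ may be identified with the $\tau$-fixed locus of $\overline{\mathcal{T}}(\overline{(S,M)},\overline{\mathbf{L^*}})$. Since laminated lambda lengths are defined via lifts, $x_{\mathbf{L^*}}(\gamma) = x_{\overline{\mathbf{L^*}}}(\overline{\gamma})$ for any chosen lift $\overline{\gamma}$, so applying the orientable result on the double cover and restricting to $\tau$-fixed data produces $\Lambda_T$.

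The main obstacle will be the identification of $\overline{\mathcal{T}}(S,M,\mathbf{L^*})$ with the $\tau$-fixed subset of $\overline{\mathcal{T}}(\overline{(S,M)},\overline{\mathbf{L^*}})$: one must verify the boundary condition $\lambda(p) = c_{\mathbf{L^*}}(p)$ lifts correctly (so each preimage $\overline{p}$ satisfies the analogous condition with respect to $\overline{\mathbf{L^*}}$), and that the orientable homeomorphism restricts to a homeomorphism, not merely a bijection, onto the correct positive semi-algebraic slice of $\mathbb{R}_{>0}^{n+b+|I|}$. A secondary technical point is checking that the coordinate change eliminating each one-sided closed curve is globally bi-continuous; this reduces to positivity of the relevant Laurent monomials in the laminated lambda lengths, which follows from their geometric (lengths of geodesics) interpretation.
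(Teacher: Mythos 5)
The paper offers no proof of this theorem beyond the single remark that it \emph{``follows from [Corollary 15.5, \cite{fomin2012cluster}]''}, so there is nothing detailed to compare against; your proposal fills in exactly the reductions that remark glosses over. Your two-step reduction (eliminate one-sided closed curves by exchanging $\alpha$ for the arc $\alpha^*$ enclosing $M_1$, then pass to the orientable double cover to apply Fomin--Thurston) is the natural way to deduce the claim, and the key algebraic ingredient you invoke is, in the paper's notation, Lemma~\ref{M1 lamination}: $x_{\mathbf{L}^*}(\alpha)x_{\mathbf{L}^*}(\beta)=x_{\mathbf{L}^*}(\alpha^*)$, which gives a monomial (hence smooth and invertible on $\mathbb{R}_{>0}$) change of coordinates. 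Your identification of $\overline{\mathcal{T}}(S,M,\mathbf{L^*})$ with the $\tau$-fixed locus of the laminated Teichm\"uller space of the double cover is the right idea and matches the spirit of Section 4.

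One structural point is worth flagging: the reduction by one-sided closed curves is placed in the paragraph opened with ``suppose $(S,M)$ is orientable,'' but one-sided closed curves exist only on non-orientable surfaces, so that step logically belongs to the non-orientable branch, and it must be applied \emph{before} lifting to the double cover (a one-sided closed curve lifts to a two-sided curve, not to an arc, so the lifted collection would not be a triangulation). Your phrase ``reduces the statement to the triangulation case handled above'' is therefore slightly misleading: it reduces to the \emph{traditional} triangulation case, which for a non-orientable surface still requires the double-cover argument. Once the order of reductions is stated as (i) quasi-triangulation $\to$ traditional triangulation on $(S,M)$, then (ii) traditional triangulation on $(S,M)$ $\to$ tagged triangulation on $\overline{(S,M)}$, then (iii) Fomin--Thurston, the argument is clean. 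The obstacles you name (the boundary conditions at opened punctures lifting $\tau$-equivariantly, and restricting the homeomorphism to the $\tau$-fixed slice) are genuine but routine, and you are right to flag them.
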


Theorem \ref{main homeo} allows us to simultaneously view the laminations in a multi-lamination, and the laminated lambda lengths of any quasi-triangulation, as algebraically independent variables. With this in mind, given a laminated bordered surface $(S,M,\mathbf{L})$, we can consider a seed, $(\textbf{x},T)$, consisting of a quasi triangulation $T$ and a collection of algebraically independent \textit{cluster variables} $\textbf{x} := \{x_{\gamma}| \gamma \in T\}$. Furthermore, consider the coefficient ring $\mathbb{ZP}$ generated (over $\mathbb{Z}$) by the algebraically independent \textit{frozen variables} $x_{b}$ and $x_{L_i}$ corresponding to each boundary segment $b$ of $(S,M)$ and each lamination $L_i$ of $\mathbf{L}$.\newline
\indent Performing flips of quasi-arcs, and using the exchange relations in Definition \ref{lambdarules} coupled with the equation (\ref{laminated length}) of a laminated lambda length, we can generate all other seeds with respect to our initial seed $(\textbf{x},T)$. \newline
\indent Let $\mathcal{X}$ be the set of all cluster variables appearing in all of these seeds. $\mathcal{A}_{(\mathbf{x},T)}(S,M,\mathbf{L^*}) := \mathbb{ZP}[\mathcal{X}]$ is the \textit{\textbf{laminated quasi-cluster algebra}} of the seed $(\mathbf{x},T)$. \newline
\indent The definition of a quasi-cluster algebra depends on the choice of the initial seed and of the lift $\mathbf{L^*}$. However, [Definition 15.3, \cite{fomin2012cluster}] reassures us that if we choose a different initial seed, or a different lift, the resulting laminated quasi-cluster algebra will be isomorphic to $\mathcal{A}_{(\mathbf{x},T)}(S,M,\mathbf{L^*})$. As such, it makes sense to talk about the laminated quasi-cluster algebra, $\mathcal{A}(S,M,\mathbf{L})$, of $(S,M,\mathbf{L})$.

\begin{defn}

The \textit{\textbf{laminated quasi-arc complex}} $\Delta^{\otimes}(S,M,\mathbf{L})$ of the laminated quasi-cluster algebra $\mathcal{A}(S,M,\mathbf{L})$ is the simplicial complex with the ground set being the cluster variables of $\mathcal{A}(S,M,\mathbf{L})$, and the maximal simplices being the clusters.

\end{defn}

\begin{defn}

The \textit{\textbf{exchange graph}} $E^{\otimes}(S,M)$ of the laminated quasi-cluster algebra $\mathcal{A}(S,M,\mathbf{L})$ is the graph whose vertices correspond to the clusters of $\mathcal{A}(S,M,\mathbf{L})$. Two vertices are connected by an edge if their corresponding clusters differ by a single mutation.

\end{defn}

\section{Connecting laminated quasi-cluster algebras to LP algebras}

\subsection{Finding exchange relations of quasi-arcs via quivers}

The following proposition, which is a subcase of [Theorem 15.6, \cite{fomin2012cluster}], tells us that when we are looking at flips between traditional triangulations, then the exchange polynomials of arcs can be obtained by looking at the ingoing and outgoing arrows of the associated quiver.

\begin{prop}
\label{flip variable change} 
Let $T$ be a triangulation of $(S,M)$, $\mathbf{L}$ a multi-lamination and $\overline{\mathbf{L}}$ a lift of $\mathbf{L}$ to $\overline{(S,M)}$. Label the arcs, boundary segments and laminations $1,\ldots, m$ and consider the associated quiver $Q_{\overline{T},\mathbf{\overline{L}}}$. 

Let $\gamma$ be an arc in $T$ and consider the unique arc $\gamma' \neq \gamma$ such that $T\cup\{\gamma'\}\setminus\{\gamma\}$ is a traditional triangulation. Suppose the lifts of $\gamma$ receive the labels $j$ and $\tilde{j}$. Then the exchange polynomial of $\gamma$ with respect to this flip is:

$$ F_j = \displaystyle \prod_{\substack{ b_{ij} >0\\ i \in \{1,\ldots, m, \tilde{1} \ldots, \tilde{m}\}}} x_i^{b_{ij}} \hspace{4mm} + \prod_{\substack{ b_{ij} <0\\ i \in \{1,\ldots, m, \tilde{1} \ldots, \tilde{m}\}}} x_i^{-b_{ij}}$$ 

\end{prop}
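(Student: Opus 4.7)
The plan is to lift the flip to the orientable double cover $\overline{(S,M)}$, invoke [Theorem 15.6, \cite{fomin2012cluster}] there, and then descend the resulting exchange relation to $(S,M)$. First I would translate the data: the triangulation $T$ lifts to $\overline{T}$, the multi-lamination $\mathbf{L}$ lifts to $\overline{\mathbf{L}}$, and the arc $\gamma$ lifts to two arcs carrying the labels $j$ and $\tilde{j}$ in $Q_{\overline{T},\overline{\mathbf{L}}}$. Because $T\cup\{\gamma'\}\setminus\{\gamma\}$ is a traditional triangulation, in particular $\gamma$ is not bounded by an arc enclosing a M\"{o}bius strip $M_1$, so Proposition \ref{traditional quiver mutation} guarantees that the flip regions of $j$ and $\tilde{j}$ in $\overline{T}$ have disjoint interiors. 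Hence the single flip of $\gamma$ on $(S,M)$ lifts to the simultaneous but independent flip of $j$ and $\tilde{j}$ on $\overline{(S,M)}$, and the flip of $j$ alone may be analysed purely on the orientable surface.

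Next I would apply [Theorem 15.6, \cite{fomin2012cluster}] directly on $\overline{(S,M)}$ with multi-lamination $\overline{\mathbf{L}}$. That theorem asserts that, for a triangulation of a laminated orientable bordered surface, the exchange polynomial of an arc is the standard cluster-algebra binomial read off from the column of the exchange matrix encoded by the adjacency quiver $Q_{\overline{T},\overline{\mathbf{L}}}$. Applied to the vertex $j$, it yields
\[
\bar{x}_j\,\bar{x}_j' \;=\; \prod_{b_{ij}>0} \bar{x}_i^{\,b_{ij}} \;+\; \prod_{b_{ij}<0} \bar{x}_i^{\,-b_{ij}},
\]
where $i$ ranges over the $2m$ vertices $\{1,\ldots,m,\tilde{1},\ldots,\tilde{m}\}$ of $Q_{\overline{T},\overline{\mathbf{L}}}$ and each $\bar{x}_i$ denotes the laminated lambda length (or frozen variable) of the corresponding lifted object on $\overline{(S,M)}$.

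Finally, I would descend this relation to $(S,M)$. By Definition \ref{laminated lambda length}, the laminated lambda length $x_{\mathbf{L}^{*}}(\gamma_i)=\lambda(\overline{\gamma_i})/c_{\mathbf{L}^{*}}(\overline{\gamma_i})$ is independent of the chosen lift, thanks to the boundary condition $\lambda(p)=c_{\mathbf{L}^{*}}(p)$ at each opened puncture. Consequently, the two lifts $i$ and $\tilde{i}$ of any single arc, boundary segment, or lamination of $(S,M)$ both produce the same variable $x_i$. Substituting $\bar{x}_i=x_i$ into the relation above therefore yields precisely the stated formula for $F_j$, with the left-hand side becoming $x_\gamma x_{\gamma'}$, which is exactly the exchange relation obtained by flipping $\gamma$ on $(S,M)$.

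The main obstacle, and the reason the traditional-triangulation hypothesis is essential, is to guarantee that a single flip on $(S,M)$ lifts to two independent flips on $\overline{(S,M)}$. If $\gamma$ were bounded by an arc enclosing a M\"{o}bius strip, the two lifted flip regions would overlap, the anti-symmetric structure of $Q_{\overline{T},\overline{\mathbf{L}}}$ would be destroyed after the flip (compare Figure \ref{brokensymmetry}), and the orientable exchange polynomial could no longer be transported back down cleanly. Once this disjointness is secured by Proposition \ref{traditional quiver mutation}, the remaining work is purely bookkeeping of variables across the projection $\kappa^{*}$, which is automatic from the boundary conditions built into the definition of laminated lambda lengths.
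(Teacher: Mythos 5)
Your overall plan --- lift to the orientable double cover, invoke [Theorem 15.6, \cite{fomin2012cluster}] there, and then descend --- is exactly what the paper's one-line citation intends, and the first two steps are handled correctly, including the correct use of Proposition \ref{traditional quiver mutation} (and the fact that triangulations contain no arc bounding $M_1$) to ensure that the two lifted flip regions are disjoint, so that the single flip on $(S,M)$ lifts to two independent flips on $\overline{(S,M)}$.

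The descent step as you have written it, however, contains a genuine gap. You justify the identification $\bar{x}_i = \bar{x}_{\tilde{i}} = x_i$ by appealing to the lift-independence built into Definition \ref{laminated lambda length}. That independence concerns the choice of lift $\overline{\gamma}$ of a quasi-arc of $(S,M)$ to the \emph{opened} surface $(S^*,M^*)$, and it is a consequence of the boundary condition $\lambda(p) = c_{\mathbf{L^*}}(p)$ at opened punctures. It says nothing about the two preimages $i$ and $\tilde{i}$ of an arc in the \emph{orientable double cover} $\overline{(S,M)}$, which is an entirely different covering. As written, "Consequently, the two lifts $i$ and $\tilde{i}$ $\ldots$ both produce the same variable $x_i$" does not follow from the cited property: one could give $\overline{(S,M)}$ a metric for which the lift-independence on the opened surface holds at every puncture but $\bar{x}_i \neq \bar{x}_{\tilde{i}}$. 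The correct justification is that one works with the pullback to $\overline{(S,M)}$ of a metric $\sigma$ on $(S,M)$; the deck involution of $\overline{(S,M)} \to (S,M)$ is then an isometry that sends $\overline{T}$ and $\overline{\mathbf{L}}$ to themselves while swapping $i \leftrightarrow \tilde{i}$ and $L_k \leftrightarrow \tilde{L}_k$, so it preserves lambda lengths, transverse measures, tropical lambda lengths and hence laminated lambda lengths, forcing $\bar{x}_i = \bar{x}_{\tilde{i}}$, and moreover these equal $x_{\gamma_i}$ on $(S,M)$ since the covering projection preserves the local geometry from which each is computed. With this substitution made correctly, the rest of your argument goes through.
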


\begin{proof}

Follows from [Theorem 15.6, \cite{fomin2012cluster}].

\end{proof}

We are now at the stage where we know that for a laminated surface $(S,M,\mathbf{L})$, flipping arcs in a triangulation $T$ corresponds to double-mutation of the associated anti-symmetric quiver $Q_{\overline{T},\mathbf{\overline{L}}}$. Moreover, we know that the associated exchange relations of each vertex of $Q_{\overline{T},\mathbf{\overline{L}}}$ describe how the laminated lambda lengths change under flip. It is crucial to note that to get the correspondence above we have been allowing flips to arcs bounding $M_1$ instead of to one-sided closed curves. It turns out that if we make an adjustment to how we 'read off' polynomials from $Q_{\overline{T},\mathbf{\overline{L}}}$ then we can obtain the exchange relations regarding the flip to a one-sided closed curve instead of the arc bounding $M_1$.

\begin{defn}
\label{shortened}
Let $Q$ be an anti-symmetric quiver with $2m$ vertices, of which $m-n$ pairs are frozen. The \textit{\textbf{shortened exchange matrix}} of $Q$ is the matrix $\overline{B} = (\overline{b}_{ij})_{\substack{1 \leq i \leq m \\ 1 \leq j \leq n}}$, where $\overline{b}_{ij} := b_{ij} + b_{\tilde{i}j}$. Each column $1 \leq j \leq n$ of $\overline{B}$ is naturally associated to the polynomial

\begin{center}

 $\overline{F}_j^Q := \displaystyle \prod_{\substack{\overline{b}_{ij} >0\\ i \in \{1,\ldots, m\}}} x_i^{\overline{b}_{ij}} + \prod_{\substack{\overline{b}_{ij} <0\\ i \in \{1,\ldots, m\}}} x_i^{-\overline{b}_{ij}}$. 

\end{center}

\end{defn}

We wish to show that these exchange relations from $\overline{B}$ describe how laminated lambda lengths change when flipping arcs. To achieve this we require the following two lemmas.

\begin{lem}
\label{t-mutable path}
Let $T$ be a triangulation of $(S,M,\mathbf{L})$ and $Q_{\overline{T},\mathbf{\overline{L}}}$ its associated quiver. Furthermore, let $i$ be a vertex of $Q_{\overline{T},\mathbf{\overline{L}}}$ corresponding to an arc. Then there is a path $k \rightarrow i \rightarrow \tilde{k}$ in $Q_{\overline{T},\mathbf{\overline{L}}}$ for some vertex $k$ \textit{if and only if} $i$ flips to a one-sided closed curve and $k$ is an arc.

\end{lem}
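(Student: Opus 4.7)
The strategy is to analyse the three possible shapes of the flip region of $i$ in $T$ and match each one with the local structure of the quiver $Q_{\overline{T}, \overline{\mathbf{L}}}$ around the pair $\{i, \tilde{i}\}$. Proposition~\ref{flip} together with the puzzle-piece classification of Lemma~\ref{puzzlepieces} forces this flip region to be either (a) a quadrilateral, (b) a punctured digon, or (c) the M\"{o}bius strip $M_{2}$ with two marked points; by Figure~\ref{combinatorialflips}(3), case (c) is precisely the configuration in which flipping $i$ produces a one-sided closed curve. Hence the lemma reduces to showing that an arc $k$ admitting a path $k \to i \to \tilde{k}$ in $Q_{\overline{T}, \overline{\mathbf{L}}}$ exists exactly in case (c).

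\textbf{If direction.} Assume the flip region of $i$ is $M_{2}$. Since $M_{2}$ has rank two, the restriction of $T$ to $M_{2}$ consists of $i$ together with a single further interior arc $j$, and the two triangles of $T$ at $i$ have sides $\{i, j, c_{1}\}$ and $\{i, j, c_{2}\}$ for some distinct sides $c_{1}, c_{2}$ (each a boundary segment of $(S, M)$ or an arc of $T$ bordering $M_{2}$). Lifting to the orientation double cover of $M_{2}$ (an annulus $A$), the pair $\{i, j\}$ lifts to $\{i, \tilde{i}, j, \tilde{j}\}$, triangulating $A$ into four triangles. I would verify in an explicit model --- for instance $A = [0, 1] \times [0, 2]$ with $(0, y) \sim (1, y)$ and $\mathbb{Z}_{2}$-action $(x, y) \mapsto (x + \tfrac{1}{2}, 2 - y)$ --- that the two triangles of $\overline{T}$ adjacent to $i$ are of the form $\{i, j, \ast\}$ and $\{i, \tilde{j}, \ast\}$, so that $j$ and $\tilde{j}$ sit on opposite sides of $i$ in the cover. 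Reading off the induced cyclic orientations then produces a path $j \to i \to \tilde{j}$ (or its reverse) in $Q_{\overline{T}, \overline{\mathbf{L}}}$, with $k := j$ the desired arc witness.

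\textbf{Only-if direction.} Conversely, assume $k \to i \to \tilde{k}$ is a path with $k$ an arc. The two arrows arise from triangles $\Delta_{1}, \Delta_{2} \in \overline{T}$ each containing $i$, with $k \in \Delta_{1}$ and $\tilde{k} \in \Delta_{2}$; projecting to $(S, M)$, the arc $\pi(k) = \pi(\tilde{k})$ must appear as a side of \emph{both} triangles $\delta_{1}, \delta_{2}$ of $T$ adjacent to $i$. In the quadrilateral flip-region case the four non-$i$ sides are distinct, so no such $k$ exists. In the punctured-digon case the flip region is orientable, so (by the Section~4 treatment of orientable pieces) its preimage in $\overline{(S, M)}$ consists of two disjoint copies; both $\Delta_{1}$ and $\Delta_{2}$ then lie in the same copy and hence contain the \emph{same} lift of $\pi(k)$, contradicting $\tilde{k} \neq k$. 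The only surviving case is therefore the $M_{2}$ case, in which the ``if'' analysis forces $k$ to be the second interior arc $j$ of $M_{2}$.

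\textbf{Main obstacle.} The crucial point is the explicit verification in the $M_{2}$ case that the two triangles of $\overline{T}$ containing $i$ involve the \emph{two distinct} lifts $j$ and $\tilde{j}$ of the second interior arc, rather than $j$ on both sides. This is exactly where the non-orientability of $M_{2}$ is used: the passage between the two triangles at $i$ within $M_{2}$ must go through the cross-cap, and in the orientation double cover this forces a swap of sheets and therefore an interchange of $j$ and $\tilde{j}$. Once this crossed incidence pattern is pinned down --- most transparently via a direct calculation in the explicit parametrisation of $A$ above --- both directions of the lemma follow from anti-symmetry of $Q_{\overline{T}, \overline{\mathbf{L}}}$ and the usual cancellation of oppositely oriented triangle-arrows.
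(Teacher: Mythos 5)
Your strategy (classify the flip region of $i$ and eliminate cases) is genuinely different from the paper's, which argues forward from the path: anti-symmetry turns $k \rightarrow i \rightarrow \tilde{k}$ into $i \leftarrow k \rightarrow \tilde{i}$, forcing $k$ to be the diagonal of a quadrilateral having $i$ and $\tilde{i}$ as opposite sides; gluing this quadrilateral to its twin and taking the $\mathbb{Z}_2$-quotient produces the $M_2$ configuration directly. Your route could be made to work, but as written it has two genuine gaps.

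First, you never rule out that $k$ is a lamination vertex. The quiver $Q_{\overline{T},\overline{\mathbf{L}}}$ contains vertices $L_j,\tilde{L}_j$, and the conclusion of the lemma includes the assertion that $k$ is an arc; your only-if direction begins ``assume $k \to i \to \tilde{k}$ is a path with $k$ an arc,'' which assumes away part of what is to be proved. The missing step is short -- since $L$ and $\tilde{L}$ do not intersect, the shear coordinates $b_{Li}$ and $b_{\tilde{L}i}$ are both non-negative or both non-positive, whereas a path $L \to i \to \tilde{L}$ would force them to have strictly opposite signs -- but it is the only point in the lemma where the lamination enters at all, and it is needed later (the no-path hypothesis in Lemma \ref{mutationequation} and Proposition \ref{rank preserving} ranges over all vertices, laminations included).

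Second, in the quadrilateral case your claim that ``the four non-$i$ sides are distinct'' is false: case (1) of Figure \ref{combinatorialflips} forbids only the identification of \emph{consecutive} edges, and opposite edges may well coincide. This happens for the second interior arc $\beta$ of an $M_2$ region (its quadrilateral has the central arc $\alpha$ as both members of a pair of opposite sides, which is exactly why $\alpha^2$ appears in $F_\beta$) and for either arc of an annulus with one marked point on each boundary component. In these configurations $\pi(k)$ really is a side of both triangles adjacent to $i$; what saves the statement is the direction of the arrows -- one obtains $k \to i \leftarrow \tilde{k}$, or two parallel arrows from the same lift, but never a path $k \to i \to \tilde{k}$. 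So the elimination must be carried out by tracking lifts and orientations (as you do, correctly, for the punctured digon), not by counting distinct sides; the non-orientably identified quadrilateral in particular still requires the arrow-direction check. Your ``if'' direction, deferred to an explicit model of the double cover of $M_2$, is plausible and is the direction the paper itself dismisses as trivial.
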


\begin{proof}
Note that for any lamination $L \in \overline{\mathbf{L}}$ and any arc $\gamma \in \overline{T}$, $b_{L\gamma}$ and $b_{\tilde{L}\gamma}$ must both be non-negative or non-positive since $L$ and $\tilde{L}$ do not intersect. Therefore, if there is a path $k \rightarrow i \rightarrow \tilde{k}$ in $Q_{\overline{T},\mathbf{\overline{L}}}$ then $k$ must be an arc and not a lamination. Furthermore, by anti-symmetry there is also the path $i \leftarrow k \rightarrow \tilde{i}$. This implies the existence of the quadrilateral $(a,\tilde{i},\tilde{b},i)$ shown in Figure \ref{square}, where $a$ and $\tilde{b}$ may not be arcs in $T$, but the associated arc bounding an arc and its notched counterpart. We see that $a, \tilde{b} \notin \{i, \tilde{i}\}$ as this would then imply $T$ contains either a punctured monogon or $M_1$, both of which are forbidden. Applying anti-symmetry again we find the existence of the quadrilateral $(\tilde{a},i,b,\tilde{i})$. Glueing these two quadrilaterals together and taking the $\mathbb{Z}_2$-quotient yields the picture in Figure \ref{badflip}, confirming that $i$ flips to a one-sided closed curve. The proof of the other direction is trivial.

\end{proof}

\begin{figure}[H]
\begin{center}
\includegraphics[width=13cm]{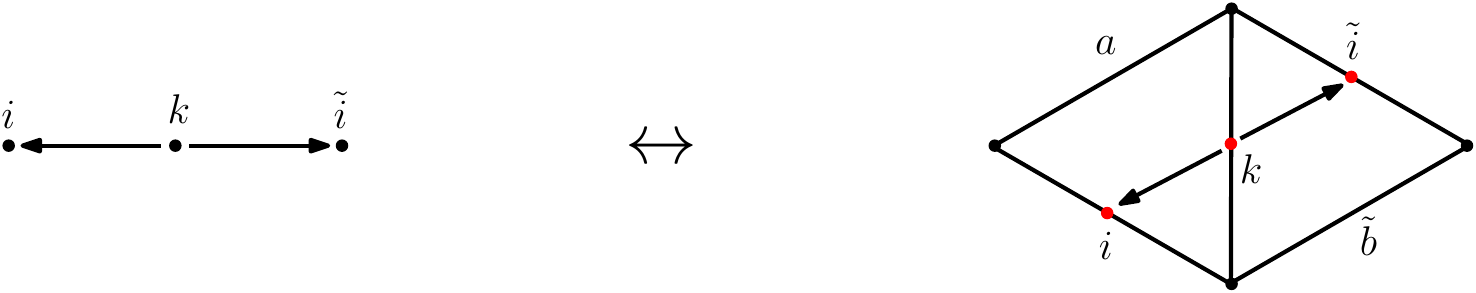}
\caption{The local configuration of the surface if $i \leftarrow k \rightarrow \tilde{i}$ is a path in $Q_{\overline{T},\mathbf{\overline{L}}}$.}
\label{square}
\end{center}
\end{figure}

\begin{figure}[H]
\begin{center}
\includegraphics[width=10cm]{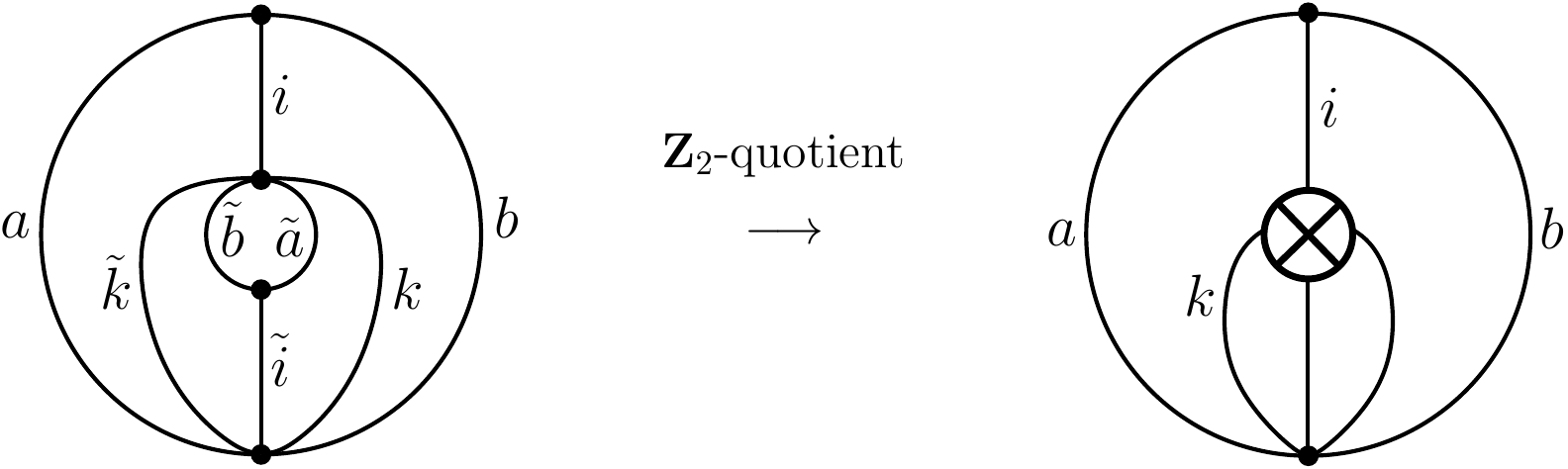}
\caption{The quasi-triangulation induced by the path $k \rightarrow i \rightarrow \tilde{k}$ in $Q_{\overline{T},\mathbf{\overline{L}}}$.}
\label{badflip}
\end{center}
\end{figure}

\begin{lem}
\label{M1 lamination}
Let $\alpha^*$ be an arc bounding a M\"obius strip with one marked point, $M_1$, and $\beta$ the unique arc in $M_1$. Consider the flip of $\beta$ to the one-sided closed curve $\alpha$. Then $x_{\mathbf{L}^*}(\alpha)x_{\mathbf{L}^*}(\beta) = x_{\mathbf{L}^*}(\alpha^*)$.

\end{lem}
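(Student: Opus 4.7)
The plan is to first unpack the definition $x_{\mathbf{L}^*}(\gamma)=\lambda(\overline\gamma)/c_{\mathbf{L}^*}(\overline\gamma)$, and then, using that $x_{\mathbf{L}^*}$ is independent of the chosen lift, to pick lifts $\overline\alpha$, $\overline\beta$, $\overline{\alpha^*}$ all sitting in a single connected lift $\overline{M_1}$ of the Möbius strip into $(S^*,M^*)$. The claim then reduces to proving the two companion identities
\[
\lambda(\overline{\alpha^*})=\lambda(\overline\alpha)\,\lambda(\overline\beta) \qquad\text{and}\qquad c_{\mathbf{L}^*}(\overline{\alpha^*})=c_{\mathbf{L}^*}(\overline\alpha)\,c_{\mathbf{L}^*}(\overline\beta),
\]
which I would call the hyperbolic identity and the tropical identity, respectively.

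For the hyperbolic identity I would invoke the classical Möbius-strip, Penner-type identity for lambda lengths recorded in Dupont and Palesi \cite{dupont2015quasi}. The cleanest derivation is to pass to the orientation double cover of $M_1$, an annulus in which $\overline\alpha$ becomes the core geodesic and $\overline\beta$ together with its deck image lifts to two arcs joining the two preimages of $m$. The usual Ptolemy relation on the resulting hyperbolic quadrilateral, combined with $\lambda(\textrm{arc})=e^{l/2}$ and $\lambda(\textrm{one-sided closed curve})=2\sinh(l/2)$, reduces the hyperbolic identity to the elementary equality $e^{l(\overline{\alpha^*})/2}=2\sinh(l(\overline\alpha)/2)\,e^{l(\overline\beta)/2}$.

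For the tropical identity, which is equivalent to $l_{L^*}(\overline{\alpha^*})=l_{L^*}(\overline\alpha)+l_{L^*}(\overline\beta)$ for each $L^*\in\mathbf{L}^*$, I would argue by direct intersection counting inside $\overline{M_1}$. When the marked point $m$ is an opened puncture, a preliminary step is to use the twisting formula (\ref{transverse twist}) to replace each relevant lift by a representative that does not spiral around $C_m$, noting that the rescalings induced on $\overline\alpha$, $\overline\beta$ and $\overline{\alpha^*}$ cancel on the two sides of the desired equality. After this reduction every component of $L^*\cap\overline{M_1}$ is an arc with both endpoints on $\overline{\alpha^*}$; cutting $\overline{M_1}$ along $\overline\alpha\cup\overline\beta$ yields a polygonal disk whose boundary alternates segments of $\overline{\alpha^*}$, $\overline\alpha$, and $\overline\beta$, and a short case analysis based on which edges of this polygon each strand's two endpoints lie in shows that the strand's contribution to the right-hand side matches its contribution to the left.

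The main obstacle is the tropical identity: a purely homological argument only yields the equality modulo $2$, so one must genuinely track intersection multiplicities. Particular care is needed in the case where $m$ is an opened puncture, since there the book-keeping of spiralling strands of $L^*$ via equations (\ref{twist2}) and (\ref{transverse twist}) must be carried out consistently across all three of $\overline\alpha$, $\overline\beta$ and $\overline{\alpha^*}$ so that the individual rescalings do not destroy the additive relation.
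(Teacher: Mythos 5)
Your decomposition into a hyperbolic identity $\lambda(\overline{\alpha^*})=\lambda(\overline\alpha)\,\lambda(\overline\beta)$ and a tropical identity $c_{\mathbf{L}^*}(\overline{\alpha^*})=c_{\mathbf{L}^*}(\overline\alpha)\,c_{\mathbf{L}^*}(\overline\beta)$ is exactly how the paper structures the argument, and your treatment of the hyperbolic half (cite the Möbius-strip lambda-length relation from Dupont--Palesi, which the paper takes from Definition~\ref{lambdarules}) is the same. The two proofs differ only in how the tropical identity is established. The paper simply records the two elementary laminations of $M_1$ (Figure~\ref{elementarylaminations}), writes down $l_{L^*}(\alpha)$, $l_{L^*}(\beta)$, $l_{L^*}(\alpha^*)$ for each, checks that $l(\alpha^*)=l(\alpha)+l(\beta)$ holds on both, and lets additivity over disjoint strands finish the job; this is short once one accepts that every lamination restricted to $M_1$ is a disjoint union of those two types. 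You instead propose a cut-and-count argument, slicing $\overline{M_1}$ along $\overline\alpha\cup\overline\beta$ and matching each strand's endpoint contributions; this avoids invoking the classification as a black box but incurs more bookkeeping, which you rightly flag. One point worth making explicit if you write this up: after the reduction using the twisting formula, it is \emph{not} automatic that ``every component of $L^*\cap\overline{M_1}$ is an arc with both endpoints on $\overline{\alpha^*}$'' when $m$ is a puncture, since strands of $L^*$ may terminate on the opened boundary $C_m$; your polygon case analysis therefore needs to include $C_m$-to-$\overline{\alpha^*}$ and $C_m$-to-$C_m$ strands as well. The paper's elementary-lamination bookkeeping implicitly absorbs this, but your more hands-on version should name it.
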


\begin{proof}

There are two elementary laminations of $M_1$ - these are shown in Figure \ref{elementarylaminations}. \newline

\noindent For the lamination on the left in Figure \ref{elementarylaminations}:  $$c_{\mathbf{L}^*}(\beta) = c_{\mathbf{L}^*}(\alpha) = q^{-\frac{1}{2}}, \hspace{5mm} c_{\mathbf{L}^*}(\alpha^*) = q^{-1}.$$
\noindent For the lamination on the right in Figure \ref{elementarylaminations}: $$c_{\mathbf{L}^*}(\beta) = c_{\mathbf{L}^*}(\alpha^*) = q^{-1}, \hspace{11mm} c_{\mathbf{L}^*}(\alpha) = 1.$$

Recall that by Definition \ref{lambdarules} the lambda lengths of $\alpha, \beta, \alpha^*$ are related by $\lambda(\alpha)\lambda(\beta) = \lambda(\alpha^*)$. Therefore, employing equation (\ref{laminated length}), for any multi lamination $\mathbf{L}$ we obtain $$x_{\mathbf{L}^*}(\alpha)x_{\mathbf{L}^*}(\beta) = \frac{\lambda(\alpha)\lambda(\beta)}{c_{\mathbf{L}^*}(\alpha)c_{\mathbf{L}^*}(\beta)} = \frac{\lambda(\alpha^*)}{c_{\mathbf{L}^*}(\alpha^*)}= x_{\mathbf{L}^*}(\alpha^*).$$

\end{proof}

\begin{rmk}
\label{excluding laminations}
Note that the truth of Lemma \ref{M1 lamination} crucially depends on our exclusion, in Definition \ref{laminationdef}, of closed curves that are: one-sided, or bound a M\"obius strip. If $L$ is one of these forbidden curves contained in $M_1$, then $c_{\mathbf{L}^*}(\alpha)c_{\mathbf{L}^*}(\beta) = q^{-1} \neq 1 = c_{\mathbf{L}^*}(\alpha^*)$. Consequently, $x_{\mathbf{L}^*}(\alpha)x_{\mathbf{L}^*}(\beta) \neq x_{\mathbf{L}^*}(\alpha^*)$.
\end{rmk}

\begin{figure}[H]
\begin{center}
\includegraphics[width=10cm]{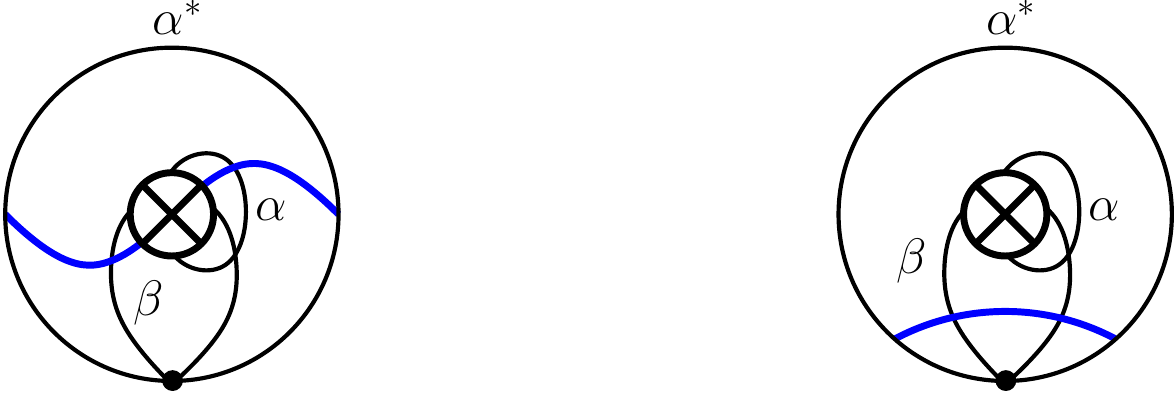}
\caption{The two \textit{elementary} laminations of $M_1$ (meaning every lamination of $M_1$ will be some union of these laminations). We also include the quasi-arcs $\alpha, \beta, \alpha^*$ so the reader can verify the tropical lambda lengths occurring in the proof of Lemma \ref{M1 lamination}.}
\label{elementarylaminations}
\end{center}
\end{figure}

\noindent \textbf{\underline{Notation}:} From here onwards, by abuse of notation, for each quasi-arc $\gamma$ of a laminated bordered surface $(S,M,\mathbf{L})$, we shall denote the laminated lambda length $x_{\mathbf{L}^*}(\gamma)$ by $\gamma$ itself. Previously we had also denoted the laminated lambda length of a quasi-arc $\gamma$ by $x_{\gamma}$, however this notation would prove cumbersome in what follows.

\begin{prop}
\label{correct polys}
Let $Q_{\overline{T},\mathbf{\overline{L}}}$ be an anti-symmetric quiver of a triangulation $T$ of $(S,M,\mathbf{L})$. Then the polynomials $\overline{\mathbf{F}}$ from Definition \ref{shortened} are the exchange relations describing how laminated lambda lengths change under flips of arcs in $T$.

\end{prop}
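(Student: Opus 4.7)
The plan is to split into two cases based on whether the flip of $\gamma$ in $T$ produces another arc (Case~A) or a one-sided closed curve (Case~B), and in each case to reduce to Proposition~\ref{flip variable change} applied on the double cover, followed by the identification $x_{\tilde i}=x_i$ which arises because the laminated lambda length of an arc on $(S,M)$ is indifferent to which of its two lifts is chosen.

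In Case~A the quasi-triangulation flip coincides with the traditional flip, so Proposition~\ref{flip variable change} gives the exchange relation $x_\gamma\cdot x_{\gamma'}=F_j$ on the double cover. By Lemma~\ref{t-mutable path} no path $k\to j\to\tilde k$ exists in $Q_{\overline T,\overline{\mathbf L}}$, so for every arc vertex $k$ the coefficients $b_{kj}$ and $b_{\tilde k j}$ share a sign or are zero; for lamination vertices the same holds automatically since $L$ and $\tilde L$ are disjoint. Under these sign conditions, each paired contribution $x_i^{b_{ij}}\cdot x_{\tilde i}^{b_{\tilde i j}}$ in $F_j$ collapses under the identification $x_{\tilde i}=x_i$ into a single factor $x_i^{\overline b_{ij}}$ on the appropriate side, yielding $F_j|_{x_{\tilde i}=x_i}=\overline F_j^Q$ exactly.

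In Case~B the traditional flip of $\gamma$ produces the arc $\alpha^*$ bounding the M\"obius strip $M_1^{\alpha^*}$ inside which the new one-sided closed curve $\alpha$ and the unique arc $\beta\in T$ which $\alpha$ intersects both lie. Proposition~\ref{flip variable change} supplies $x_\gamma\cdot x_{\alpha^*}=F_j|_{x_{\tilde i}=x_i}$, and Lemma~\ref{M1 lamination} supplies $x_{\alpha^*}=x_\alpha\cdot x_\beta$; combining them yields $x_\gamma\cdot x_\alpha=F_j|_{x_{\tilde i}=x_i}/x_\beta$, so it remains to verify this equals $\overline F_j^Q$. By Lemma~\ref{t-mutable path} there is a path $k\to j\to\tilde k$ in the quiver; examining the local picture of the lift of the flip region in $\overline{(S,M)}$ identifies $k$ with $\beta$ and forces $b_{\beta j}=1$ and $b_{\tilde\beta j}=-1$, while all remaining vertices behave as in Case~A. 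Consequently each of the two monomial summands of $F_j|_{x_{\tilde i}=x_i}$ carries exactly one factor of $x_\beta$, and since $\overline b_{\beta j}=0$ the variable $x_\beta$ is absent from $\overline F_j^Q$; dividing by $x_\beta$ yields the required identity.

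The main obstacle is the local geometric verification in Case~B that $b_{\beta j}=1$ and $b_{\tilde\beta j}=-1$: this reduces to enumerating the triangles of $\overline T$ which share both a lift of $\beta$ and a lift of $\gamma$, and checking that the specific structure of $M_1^{\alpha^*}$ and its double cover produces exactly one arrow in each of the two required directions, with no additional contributions from elsewhere in $\overline T$.
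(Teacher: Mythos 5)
Your proposal matches the paper's proof essentially step for step: the same case split (flip to an arc versus flip to a one-sided closed curve), the same invocation of Proposition~\ref{flip variable change} for the traditional flip, the same use of Lemma~\ref{t-mutable path} to control the signs of $b_{ij}$ and $b_{\tilde{i}j}$ and to isolate the exceptional pair $(\beta,\tilde\beta)$, and the same appeal to Lemma~\ref{M1 lamination} to trade $\alpha^*$ for $\alpha\beta$ in Case~B. The only (harmless) imprecision is fixing $b_{\beta j}=1$, $b_{\tilde\beta j}=-1$ rather than allowing the pair $\pm 1$ as in the paper; either sign gives the single factor of $\beta$ you need.
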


\begin{proof}

Currently by Proposition \ref{flip variable change} we know that the polynomial 

$$F_j^{Q_{\overline{T},\mathbf{\overline{L}}}} = \displaystyle \prod_{\substack{ b_{ij} >0\\ i \in \{1,\ldots, m, \tilde{1} \ldots, \tilde{m}\}}} x_i^{b_{ij}} \hspace{4mm} + \prod_{\substack{ b_{ij} <0\\ i \in \{1,\ldots, m, \tilde{1} \ldots, \tilde{m}\}}} x_i^{-b_{ij}}$$

\noindent describes how the laminated lambda length of an arc $\gamma_j$ in $T$ changes under flip when we allow flips to arcs bounding $M_1$ instead of one-sided closed curves. By Lemma \ref{t-mutable path}, if $\gamma_j$ does not flip to a one-sided closed curve then $F_j^{Q_{\overline{T},\mathbf{\overline{L}}}} = \overline{F}_j^{Q_{\overline{T},\mathbf{\overline{L}}}}$. \newline
If $\gamma_j$ does flip to a one-sided closed curve $\alpha$, then by Lemma \ref{t-mutable path} we know $\overline{b}_{kj} := b_{kj} + b_{\tilde{k}j} = 0$ for some $k$ (where $b_{kj} = -b_{\tilde{k}j} = \pm 1$), and $b_{ij},b_{\tilde{i}j}$ are both simultaneously non-positive or non-negative for all $i \in [m]\setminus\{k\}$. Hence $\overline{F}_j^{Q_{\overline{T},\mathbf{\overline{L}}}} = \frac{F_j^{Q_{\overline{T},\mathbf{\overline{L}}}}}{\beta}$, where $\beta$ is the arc corresponding to $k$. Proposition \ref{flip variable change} tells us that when $\gamma_j$ flips instead to the arc $\alpha^*$ enclosing $M_1$, then $\gamma_j \alpha^* = F_j^{Q_{\overline{T},\mathbf{\overline{L}}}}$. Moreover, by Lemma \ref{M1 lamination} we know that $\alpha\beta = \alpha^*$. This gives us the desired relation $\gamma_j \alpha = \frac{F_j^{Q_{\overline{T},\mathbf{\overline{L}}}}}{\beta} = \overline{F}_j^{Q_{\overline{T},\mathbf{\overline{L}}}}$.

\end{proof}

Proposition \ref{correct polys} tells us that for a triangulation $T$ of a laminated bordered surface $(S,M,\mathbf{L})$, for any arc $\gamma \in T$, the exchange polynomial $\overline{F}_{\gamma}$ is obtained from considering (sums of) the ingoing and outgoing arrows of the vertex $\gamma$ (or equivalently $\tilde{\gamma}$) in $Q_{\overline{T},\mathbf{\overline{L}}}$. However, currently we have no such combinatorial method that provides us with the exchange polynomials of quasi-arcs in quasi-triangulations containing one-sided closed curves. The following lemma addresses this. \newline Before we state the lemma let us fix some notation. Recall that a one-sided closed curve $\alpha$ in a quasi-triangulation $T$ will intersect precisely one arc $\beta \in T$. As it always will throughout this chapter, $\alpha^*$ denotes the unique arc enclosing $\alpha$ and $\beta$ in $M_1$. For each quasi-triangulation $T$ we can therefore uniquely associate a traditional triangulation $T^{*}$ by replacing each one-sided closed curve $\alpha \in T$ with $\alpha^*$, see Figure \ref{alternativeflip}. \newline
For the rest of this chapter, by an abusive of notation, for each quasi-arc $\gamma$ we shall also denote its laminated lambda length by $\gamma$ -- previously written as $x_{\mathbf{L}^*}(\gamma)$. Similarly, for a lamination $L_i$ of a multi-lamination $\mathbf{L}$, we also denote its corresponding variable by $L_i$ -- previously written as $q_i$.

\begin{lem}
\label{correct polys quasi}
Let $T$ be a quasi-triangulation containing a one-sided closed curve $\alpha$, and let $\beta$ denote the unique arc in $T$ intersecting $\alpha$. Moreover, suppose $x$ and $y$ are the arcs in $T$ enclosing $\alpha$ and $\beta$ in a M\"{o}bius strip with 2 marked points. Consider the associated traditional triangulation $T^*$ and its corresponding quiver $Q_{\overline{T}^*}$, shown in Figure \ref{traditionalquiver}. Furthermore, denote by $b_{ij}$ and $b_{ij}'$ the coefficients of $Q_{\overline{T}^*}$ and $\mu_{\alpha^*}\circ \mu_{\tilde{\alpha}^*}(Q_{\overline{T}^*})$, respectively. Then the exchange polynomials of the quasi-arcs $\alpha$ and $\beta$ in $T$ are given by:

$$\alpha \alpha' = \displaystyle \big( \prod_{\overline{b}_{L_i\alpha^{*}} >0} L_i^{\overline{b}_{L_i\alpha^{*}}} \big)y +  \big(\prod_{\overline{b}_{L_i\alpha^{*}} <0} L_i^{-\overline{b}_{L_i\alpha^{*}}} \big)x$$

\begingroup\makeatletter\def\f@size{10}\check@mathfonts
$$\beta\beta' = \frac{\displaystyle \big( \prod_{\overline{b}'_{L_i\beta} >0} L_i^{\overline{b}'_{L_i\beta}} \big){\bigg(\big( \prod_{\overline{b}_{L_i\alpha^{*}} >0} L_i^{\overline{b}_{L_i\alpha^{*}}} \big)y +  \big(\prod_{\overline{b}_{L_i\alpha^{*}} <0} L_i^{-\overline{b}_{L_i\alpha^{*}}} \big)x\bigg)}^2 +  \big(\prod_{\overline{b}'_{L_i\beta} <0} L_i^{-\overline{b}'_{L_i\beta}} \big)xy\alpha^2}{\alpha^2}.$$\endgroup

\end{lem}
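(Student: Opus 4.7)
The plan is to derive both exchange relations by working in the associated traditional triangulation $T^*$, and where needed in the genuine triangulation $T^{**}$ obtained by flipping $\alpha^*$ in $T^*$. The identity $\alpha^* = \alpha\beta$ from Lemma \ref{M1 lamination} is what allows us to translate between $T^*$-variables and $T$-variables throughout the argument.

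First I would unpack the geometry. The quasi-triangulation $T$ contains the one-sided closed curve $\alpha$ and the arc $\beta$ intersecting it, sitting inside a M\"obius strip enclosed by $x, y$. The traditional triangulation $T^*$ replaces $\alpha$ with $\alpha^*$, the unique arc enclosing $\alpha$ and $\beta$ in $M_1^{\alpha^*}$. Flipping $\alpha^*$ in $T^*$ in the traditional triangulation sense produces a genuine triangulation $T^{**} = T^* \setminus\{\alpha^*\}\cup\{\alpha^{*\prime}\}$ whose quiver is $Q_{\overline{T}^{**}} = \mu_{\alpha^*}\circ \mu_{\tilde{\alpha}^*}(Q_{\overline{T}^*})$ by Proposition \ref{traditional quiver mutation}.

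For the first equation, I would apply Proposition \ref{flip variable change} to the flip $T^* \to T^{**}$ to read off the relation $\alpha^* \cdot \alpha^{*\prime} = F^{Q_{\overline{T}^*}}_{\alpha^*}$. The flips $T \to T\setminus\{\alpha\}\cup\{\alpha'\}$ and $T^* \to T^{**}$ produce the same new quasi-triangulation: both leave $\beta$ and the outer arcs alone, so by the uniqueness clause of Proposition \ref{flip} the replacing quasi-arcs $\alpha'$ and $\alpha^{*\prime}$ must coincide. Substituting $\alpha^* = \alpha\beta$ then yields $\alpha\beta\alpha' = F^{Q_{\overline{T}^*}}_{\alpha^*}$. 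Since $\beta$ appears as a linear factor of $F^{Q_{\overline{T}^*}}_{\alpha^*}$, a fact visible from the local structure of $Q_{\overline{T}^*}$ near $\alpha^*$ depicted in Figure \ref{traditionalquiver}, dividing by $\beta$ produces exactly the claimed formula for $\alpha\alpha'$ in terms of $\overline{b}_{L_i \alpha^*}$, $x$, and $y$.

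For the second equation, I would apply Proposition \ref{correct polys} to the genuine triangulation $T^{**}$ with respect to flipping $\beta$, yielding $\beta \beta^{**} = \overline{F}^{Q_{\overline{T}^{**}}}_{\beta}$. The structure of the mutated quiver $Q_{\overline{T}^{**}}$ around $\beta$ involves double arrows with $\alpha^{*\prime}$ (reflecting the original M\"obius-strip configuration) and single arrows with $x$ and $y$, so this polynomial takes the form $\big(\prod_{\overline{b}'_{L_i\beta} > 0} L_i^{\overline{b}'_{L_i \beta}}\big)(\alpha^{*\prime})^2 + \big(\prod_{\overline{b}'_{L_i\beta} < 0} L_i^{-\overline{b}'_{L_i\beta}}\big)\, xy$. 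By the same flip-uniqueness argument, $\beta^{**}$ equals $\beta'$, the flip of $\beta$ in $T$. Substituting $\alpha^{*\prime} = \alpha'$ and using the first equation to express $\alpha'$ in terms of $\alpha$, $x$, $y$, and the $L_i$'s then yields, after clearing the $\alpha^2$ denominator, precisely the fractional formula stated in the lemma.

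The main obstacle will be verifying the two geometric flip-coincidence claims, $\alpha^{*\prime} = \alpha'$ and $\beta^{**} = \beta'$. Each rests on uniqueness of flip (Proposition \ref{flip}) combined with a careful check that the local configurations match before and after. A secondary technical point is pinning down the precise arrow structure in $Q_{\overline{T}^*}$ (establishing that $\beta$ divides $F^{Q_{\overline{T}^*}}_{\alpha^*}$ exactly once) and in $Q_{\overline{T}^{**}}$ (establishing the multiplicity-two arrow between $\beta$ and $\alpha^{*\prime}$), both of which must be verified from Figure \ref{traditionalquiver}.
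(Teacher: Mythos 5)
Your proposal is correct and follows essentially the same route as the paper: derive the $\alpha$-relation from $Q_{\overline{T}^*}$ via the traditional flip of $\alpha^*$ (using $\alpha^* = \alpha\beta$ from Lemma~\ref{M1 lamination} and Propositions~\ref{traditional quiver mutation}, \ref{flip variable change}/\ref{correct polys}), then read off the $\beta$-relation from the doubly-mutated quiver $\mu_{\alpha^*}\circ\mu_{\tilde\alpha^*}(Q_{\overline{T}^*})$ and substitute the $\alpha$-relation for $\alpha'$. The only places you add content beyond the paper's terse citation are the flip-coincidence claims $\alpha^{*\prime}=\alpha'$ and $\beta^{**}=\beta'$, which you flag but do not prove; the paper's own proof also takes these for granted (it simply asserts ``$\beta$ flips to $\beta'$ in $\mu_{\alpha^*}(T^*)$''), so you are at the same level of rigour.
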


\begin{proof}

The exchange relation of $\alpha$ follows from Proposition \ref{traditional quiver mutation} and Proposition \ref{correct polys}.\newline
To obtain the exchange polynomial of $\beta$ we (first) need to consider $\mu_{\alpha^*}\circ \mu_{\tilde{\alpha}^*}(Q_{\overline{T}^*})$ instead of $Q_{\overline{T}^*}$ -- this is because $\beta$ flips to $\beta'$ in $\mu_{\alpha^*}(T^*)$, but not in $T^*$. By Proposition \ref{correct polys} we get that:

$$\beta \beta' = \displaystyle \big( \prod_{\overline{b}'_{L_i\beta} >0} L_i^{\overline{b}'_{L_i\beta}} \big)\alpha'^2 +  \big(\prod_{\overline{b}'_{L_i\beta} <0} L_i^{-\overline{b}'_{L_i\beta}} \big)xy.$$

Rewriting $\alpha'$ using the exchange relation already obtained for $\alpha$ yields

\begingroup\makeatletter\def\f@size{10}\check@mathfonts
$$\beta\beta' = \frac{\displaystyle \big( \prod_{\overline{b}'_{L_i\beta} >0} L_i^{\overline{b}'_{L_i\beta}} \big){\bigg(\big( \prod_{\overline{b}_{L_i\alpha^{*}} >0} L_i^{\overline{b}_{L_i\alpha^{*}}} \big)y +  \big(\prod_{\overline{b}_{L_i\alpha^{*}} <0} L_i^{-\overline{b}_{L_i\alpha^{*}}} \big)x\bigg)}^2 +  \big(\prod_{\overline{b}'_{L_i\beta} <0} L_i^{-\overline{b}'_{L_i\beta}} \big)xy\alpha^2}{\alpha^2}.$$\endgroup

\end{proof}

\begin{rmk} More generally, for any quasi-arc $\gamma$ in a quasi-triangulation $T$, the exchange polynomial for $\gamma$ is still obtained by the formulae of Propositions \ref{correct polys} and \ref{correct polys quasi} -- we just have to remember that if a variable $\alpha^*$ appears in the exchange relation we must replace it with $\alpha\beta$.
\end{rmk}

\begin{figure}[H]
\begin{center}
\includegraphics[width=12cm]{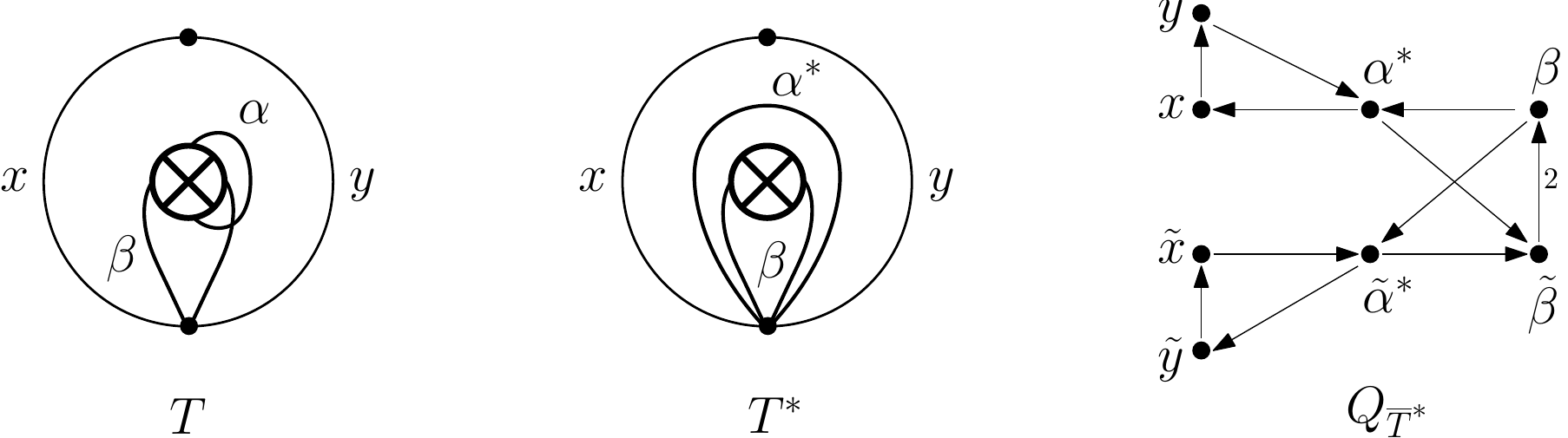}
\caption{The quasi-triangulation $T$, its traditional triangulation $T^*$, and the associated quiver $Q_{\overline{T}^*}$ arising from the lift of $\overline{T}^*$ to the double cover.}
\label{traditionalquiver}
\end{center}
\end{figure}

\subsection{Laminated quasi-cluster algebras via LP mutation}

Let $\gamma$ be a quasi-arc in a quasi-triangulation $T$. We have seen in Propositions \ref{correct polys} and \ref{correct polys quasi} that the exchange relation of $\gamma$ is a Laurent polynomial; we denote by $F_{\gamma}$ the numerator of this polynomial. To each quasi-triangulation $T$ we assign the 'LP' seed $(\mathbf{x},\mathbf{F}_T)$ where $\mathbf{x} := \{x_{\mathbf{L}^*}(\gamma) | \gamma \in T\}$ and $\mathbf{F}_T := \{F_{\gamma} | \gamma \in T\}$. Of course, due to the irreducibility conditions, $(\mathbf{x},\mathbf{F}_T)$ may not be a valid LP seed -- this will be addressed later. \newline
The following lemma assures us that, if the polynomials in $\mathbf{F}_T$ are distinct, then the normalisations of these polynomials are the exchange relations of their corresponding quasi-arcs.

\begin{lem}
\label{normalisation assumption}
Let $T$ be a quasi-triangulation and suppose $F_{\gamma_i} \neq F_{\gamma_j}$ for any quasi-arcs $\gamma_i$ and $\gamma_j$ in $T$ $(i\neq j)$. If $\gamma \in T$ intersects a one-sided closed curve $\alpha \in T$ then $\hat{F}_{\gamma} = \frac{F_{\gamma}}{\alpha^2}$, otherwise $\hat{F}_{\gamma} = F_{\gamma}$.

\end{lem}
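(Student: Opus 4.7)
The plan is to compute $\hat{F}_\gamma$ directly from its definition
$\hat{F}_\gamma = F_\gamma / \prod_{k \neq \gamma} x_k^{a_k}$, where $a_k$ is the largest non-negative integer such that $F_k^{a_k}$ divides $F_\gamma|_{x_k \leftarrow F_k/t}$ in $R[x_1,\ldots,t^{-1},\ldots,x_n]$. The goal is to show $a_\alpha = 2$ when $\gamma$ intersects a one-sided closed curve $\alpha$, and $a_k = 0$ in every other case. A useful first reduction is the criterion that $a_k \geq 1$ if and only if $F_k$ divides $F_\gamma|_{x_k = 0}$: expanding $F_\gamma = \sum_i c_i\, x_k^{n_i}$ with each $c_i$ free of $x_k$ and setting $N = \max_i n_i$, we have $t^N F_\gamma|_{x_k \leftarrow F_k/t} = \sum_i c_i F_k^{n_i} t^{N - n_i}$; modulo $F_k$ every term with $n_i \geq 1$ vanishes, leaving $t^N \cdot F_\gamma|_{x_k = 0}$, and $F_k$ is coprime to $t$.

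Next I would run a case analysis on the explicit shape of $F_\gamma$ from Propositions \ref{correct polys} and \ref{correct polys quasi}. When $\gamma$ does not intersect a one-sided closed curve, $F_\gamma = M_1 + M_2$ is a binomial and $F_\gamma|_{x_k = 0}$ is either a monomial (when $x_k$ appears in exactly one of the two terms) or $F_\gamma$ itself (when $x_k \notin F_\gamma$). A binomial $F_k$ cannot divide a monomial, giving $a_k = 0$ in the monomial subcase. When $\gamma$ is the arc intersecting a one-sided closed curve $\alpha$, Lemma \ref{correct polys quasi} gives $F_\gamma = M_1' F_\alpha^2 + M_2' x y \alpha^2$, so substituting $\alpha \leftarrow F_\alpha/t$ yields $F_\alpha^2\bigl(M_1' + M_2' xy/t^2\bigr)$, visibly divisible by $F_\alpha^2$ and hence $a_\alpha \geq 2$. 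To rule out $a_\alpha \geq 3$ I would clear the $t^{-2}$ and check that no factorisation $F_\alpha \cdot (Ax + By + Ct + D) = M_1' t^2 + M_2' xy$ exists in $R[x,y,t]$; matching coefficients of $x^2$, $y^2$, $xt$, $yt$, $x$, $y$, $xy$ and the constant term forces $A = B = C = D = 0$, contradicting the presence of $M_1' t^2$ and $M_2' xy$ on the right. For the remaining cluster variables $x_k$ in the intersecting configuration, $F_\gamma|_{x_k = 0}$ is again either a monomial (when $x_k \in \{x, y\}$) or $F_\gamma$ itself, reducing to the cases already treated.

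The main obstacle is the subcase shared by both configurations in which $x_k \notin F_\gamma$ and one must verify $F_k \nmid F_\gamma$. Here I plan to combine the distinctness hypothesis with the primitive structure of the exchange polynomials built into the shortened exchange matrix of Definition \ref{shortened}: a divisibility $F_\gamma = Q \cdot F_k$ forces $Q$ to be a monomial, since both $F_\gamma$ and $F_k$ are sums of monomials with strictly positive coefficients, and a non-monomial $Q$ would produce strictly more terms in the expansion of $Q \cdot F_k$ than $F_\gamma$ has, with no possibility of cancellation. Then $F_\gamma$ is a monomial multiple of $F_k$, and the primitivity of both $F_k$ and $F_\gamma$ (no common monomial factor between their terms, inherited from the disjoint supports of the positive and negative columns of $\overline{B}$) pins the monomial down to a unit, giving $F_\gamma = F_k$ and contradicting distinctness. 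Combining all three subcases yields $\hat{F}_\gamma = F_\gamma/\alpha^2$ in the intersecting case and $\hat{F}_\gamma = F_\gamma$ otherwise.
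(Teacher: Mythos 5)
Your proof follows the same structure as the paper's: reduce to the question of whether $F_k$ divides the constant term of $F_\gamma$ when viewed as a polynomial in $x_k$, then analyse the shape of the constant term case by case. Your explicit argument ruling out $a_\alpha \geq 3$ is a correct elaboration of a step the paper leaves implicit.

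However, there is a gap in the subcase $x_k \notin F_\gamma$ when $\gamma$ intersects a one-sided closed curve. In that configuration $F_\gamma$, fully expanded in the cluster variables, has four terms, and your claim that a non-monomial $Q$ in $F_\gamma = Q\cdot F_k$ must produce strictly more terms than $F_\gamma$ no longer holds: with positive coefficients the number of terms of $QF_k$ equals the size of the sumset of the supports, which is bounded below only by (number of terms of $Q$) $+$ (number of terms of $F_k$) $- 1$. When $F_k$ is a binomial, a trinomial $Q$ can therefore yield exactly four terms, the extremal case occurring when both supports are translates of arithmetic progressions with the same common difference. The paper avoids this issue by invoking irreducibility of $F_\gamma$ (which belongs to the standing hypothesis, made explicit in Theorem \ref{partial main theorem}, that $(\mathbf{x},\mathbf{F}_T)$ is a valid LP seed): if $F_k$ divides the irreducible $F_\gamma$ and $F_k$ is not a unit, then $F_\gamma = u F_k$ for a unit $u$; since the units of $\mathbb{ZP}$ are $\pm 1$ and both polynomials have positive coefficients, this forces $F_\gamma = F_k$, contradicting distinctness. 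Your route can be repaired by inspecting the coefficient vector $(M_1'A^2,\ 2M_1'AB,\ M_1'B^2,\ M_2')$ of the expanded $F_\gamma$ and noting that its single factor of $2$ is incompatible with the $(1,2,2,1)$ profile forced in the extremal sumset case, but as written the term-counting assertion is too strong and needs this additional check.
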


\begin{proof}

Let $\gamma_1, \ldots, \gamma_n$ be the quasi-arcs in $T$. Recall that $$\hat{F}_{\gamma_j} := \frac{F_{\gamma_j}}{\gamma_1^{a_1}\ldots \gamma_{j-1}^{a_{j-1}}\gamma_{j+1}^{a_{j+1}}\ldots \gamma_n^{a_n}}$$ \noindent where $a_k \in \mathbb{Z}_{\geq 0}$ is maximal such that $F_{\gamma_k}^{a_k}$ divides ${F}_{\gamma_j} \rvert_{\gamma_k \leftarrow \frac{F_{\gamma_k}}{x}}$.

Hence, $a_k > 0$ \textit{if and only if} $F_{\gamma_k}$ divides the constant term of $F_{\gamma_j}$ when viewed as a polynomial in $\gamma_k$.

If $\gamma_j$ does not intersect a one-sided closed curve in $T$ then $F_{\gamma_j}$ is a binomial. As a consequence, when viewed as a polynomial in $\gamma_k$, the constant term is either a monomial ($\gamma_k \in F_{\gamma_j}$), or the whole binomial $F_{\gamma_j}$ ($\gamma_k \notin F_{\gamma_j}$). If it is a monomial then it is not divisible by $F_{\gamma_k}$. From our assumptions in the lemma we know that $F_{\gamma_j}$ is irreducible and $F_{\gamma_j} \neq F_{\gamma_k}$. So if the constant term is $F_{\gamma_j}$, this also cannot be divisible by $F_{\gamma_k}$. Hence $\hat{F}_{\gamma_j} = F_{\gamma_j}$.

If $\gamma_j$ intersects a one-sided closed curve $\alpha \in T$, then $\gamma_j$ has the flip region shown in Figure \ref{yjflipregion}. Moreover, by Lemma \ref{correct polys quasi}, it has the exchange polynomial $$F_{\gamma_j} = \displaystyle \big( \prod_{\overline{b}_{i\gamma_j} >0} L_i^{\overline{b}_{i\gamma_j}} \big)F_{\alpha}^2 +  \big(\prod_{\overline{b}_{i\gamma_j} <0} L_i^{-\overline{b}_{i\gamma_j}} \big)xy\alpha^2 $$

where 

$$F_{\alpha} = \displaystyle \big( \prod_{\overline{b}_{i\alpha} >0} L_i^{\overline{b}_{i\alpha}} \big)y +  \big(\prod_{\overline{b}_{i\alpha} <0} L_i^{-\overline{b}_{i\alpha}} \big)x $$.

Accordingly, for any quasi-arc $\gamma_k \in T\setminus \{\alpha\}$, the constant term of $F_{\gamma_j}$, when viewed as a polynomial in $\gamma_k$, is a monomial or $F_{\gamma_j}$. Just as before, this implies $\gamma_k \notin \frac{\hat{F}_{\gamma_j}}{F_{\gamma_j}}$. However, when $F_{\gamma_j}$ is viewed as a polynomial in $\alpha$ the constant term is $\displaystyle \big( \prod_{\overline{b}_{i\gamma_j} >0} L_i^{\overline{b}_{i\gamma_j}} \big)F_{\alpha}^2$, and the degree $1$ term is $0$. Thus, $\hat{F}_{\gamma_j} = \frac{F_{\gamma_j}}{\alpha^2}$.

\end{proof}

\begin{figure}[H]
\begin{center}
\includegraphics[width=4cm]{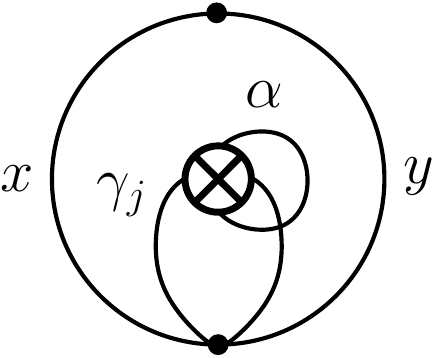}
\caption{The flip region of $\gamma_j$ if it intersects a one-sided closed curve.}
\label{yjflipregion}
\end{center}
\end{figure}

\begin{lem}
\label{lamination restriction}
Let $T$ be the traditional triangulation of $M_2$ obtained from glueing together a triangle and an anti-self-folded triangle. If we label the lifted arcs as in Figure \ref{traditionallift}, then for any lamination $L$ of $M_2$, in the quiver $Q_{\overline{T},\overline{L}}$ we have $\overline{b}_{L\beta} \geq 0$ and either 
\begin{center}

$b_{L\alpha^*} \geq b_{\tilde{L}\beta}$ and $b_{\tilde{L}\alpha^*} \geq b_{L\beta} \hspace{7mm} or \hspace{7mm} b_{L\alpha^*} \leq b_{\tilde{L}\beta}$ and $b_{\tilde{L}\alpha^*} \leq b_{L\beta}$.

\end{center}

\end{lem}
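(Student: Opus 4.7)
The proof proceeds by direct case analysis of the laminations on $M_2$, carried out in the orientable double cover $\overline{M_2}$, which is an annulus $A$ with two marked points $\{p,q\}$ on one boundary component and their lifts $\{\tilde{p},\tilde{q}\}$ on the other. The traditional triangulation $T=\{\alpha^*,\beta\}$ lifts to $\overline{T}=\{\alpha^*,\tilde{\alpha}^*,\beta,\tilde{\beta}\}$: the arcs $\alpha^*,\tilde{\alpha}^*$ cut off small outer triangles at the two boundaries of $A$, while $\beta,\tilde{\beta}$ are arcs from $p$ to $\tilde{p}$ that triangulate the inner annulus $A_1\subset A$ (the lift of $M_1$). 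As a consequence the flip quadrilateral about $\beta$ is exactly $A_1$, with opposite-side pairs $(\alpha^*,\tilde{\alpha}^*)$ and $(\tilde{\beta},\tilde{\beta})$, while the flip quadrilateral about $\alpha^*$ extends from $A_1$ into the outer triangle bounded by $\alpha^*,x,y$.

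Next I would classify the laminations on $M_2$. Since $M_2$ has no punctures, and every two-sided simple closed curve in $M_2$ bounds either a disk or a M\"obius strip (and is hence excluded by Definition \ref{laminationdef}), every curve of a lamination is an arc with endpoints in $\partial M_2\setminus\{p,q\}$. Up to isotopy these arcs are indexed by (i) the pair of boundary segments (from $\{x,y\}$) containing the two endpoints, and (ii) a non-negative winding number through the cross-cap. For each arc type I would lift to the $\tau$-invariant configuration $(L,\tilde{L})$ on $A$, where $\tau$ is the orientation-reversing deck transformation swapping $p\leftrightarrow\tilde{p}$, $\alpha^*\leftrightarrow\tilde{\alpha}^*$, and $\beta\leftrightarrow\tilde{\beta}$, and then read off the four shear coordinates $b_{L\alpha^*},b_{\tilde{L}\alpha^*},b_{L\beta},b_{\tilde{L}\beta}$ by counting $S$- and $Z$-shape intersections with the respective flip quadrilaterals.

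With the coordinates in hand, the first claim $\overline{b}_{L\beta}\geq 0$ is verified case by case, the key simplification being that the $\tau$-symmetry of $(L,\tilde{L})$ in $A$ forces the contributions of $L$ and $\tilde{L}$ to the $\beta$-quadrilateral to pair up favorably, so that any $Z$-type contribution of one lift is compensated by an $S$-type contribution of the other. For the component-wise comparison between $(b_{L\alpha^*},b_{\tilde{L}\alpha^*})$ and $(b_{\tilde{L}\beta},b_{L\beta})$, each crossing of $L$ with the $\alpha^*$-quadrilateral that enters $A_1$ corresponds via $\tau$ to a crossing of $\tilde{L}$ with the $\beta$-quadrilateral (and symmetrically), and tracing these correspondences through the cases yields the required inequalities; the dichotomy between the ``$\geq$'' and ``$\leq$'' alternatives corresponds to the two possible chiralities of winding of the lamination arc around the cross-cap.

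The main obstacle will be handling arcs of arbitrary winding number uniformly, since a priori the case list is infinite. The saving grace is that incrementing the winding by one changes each of $b_{L\alpha^*},b_{\tilde{L}\alpha^*},b_{L\beta},b_{\tilde{L}\beta}$ by the same increment (coming from the additional intersections forced by the extra wrap around the cross-cap), so the two conclusions reduce to verification on a small finite set of base configurations, one for each choice of boundary-segment pattern together with the two chiralities.
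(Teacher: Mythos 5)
Your skeleton --- pass to the orientable double cover and read off shear coordinates case by case --- matches the paper's proof, but the specific mechanisms you invoke would not close the argument. The key point behind $\overline{b}_{L\beta}\geq 0$ is not a cancellation between $Z$-type crossings of $L$ and $S$-type crossings of $\tilde{L}$: the paper shows that a $Z$-shape crossing of one lift with the $\beta$-quadrilateral forces that lift to intersect its twin, so $Z$-shapes cannot occur at all, giving $b_{L\beta}\geq 0$ and $b_{\tilde{L}\beta}\geq 0$ separately. If, as you suggest, each $Z$ of one lift genuinely paired with an $S$ of the other, those terms would cancel in the sum and you would be arguing for equality rather than an inequality; and if the pairing were only partial, you would still need to rule out unmatched $Z$'s, which is precisely the self-intersection argument your compensation heuristic tries to replace.

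Two further issues. The ``main obstacle'' of unbounded winding does not in fact arise: a simple arc on $M_2$ passes through the cross-cap at most once, since an arc winding more than once lifts to a pair of arcs that must cross in the annulus. The single-curve laminations of $M_2$ therefore form a small finite list (the paper exhibits five elementary ones), and the ``same increment'' reduction you propose is unnecessary and, for shear coordinates, not obviously true. Lastly, the either/or dichotomy for an arbitrary lamination --- which may consist of several disjoint curves --- rests on additivity of shear coordinates over disjoint curves together with the geometric fact that the elementary lamination falling on the $\leq$ side intersects every elementary lamination on the $\geq$ side, so a single lamination cannot mix the two types. Your $\tau$-correspondence gestures at the per-curve inequalities, but this disjointness/incompatibility step is what promotes the finite case computation to a statement about all laminations, and your sketch omits it.
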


\begin{figure}[H]
\begin{center}
\includegraphics[width=10cm]{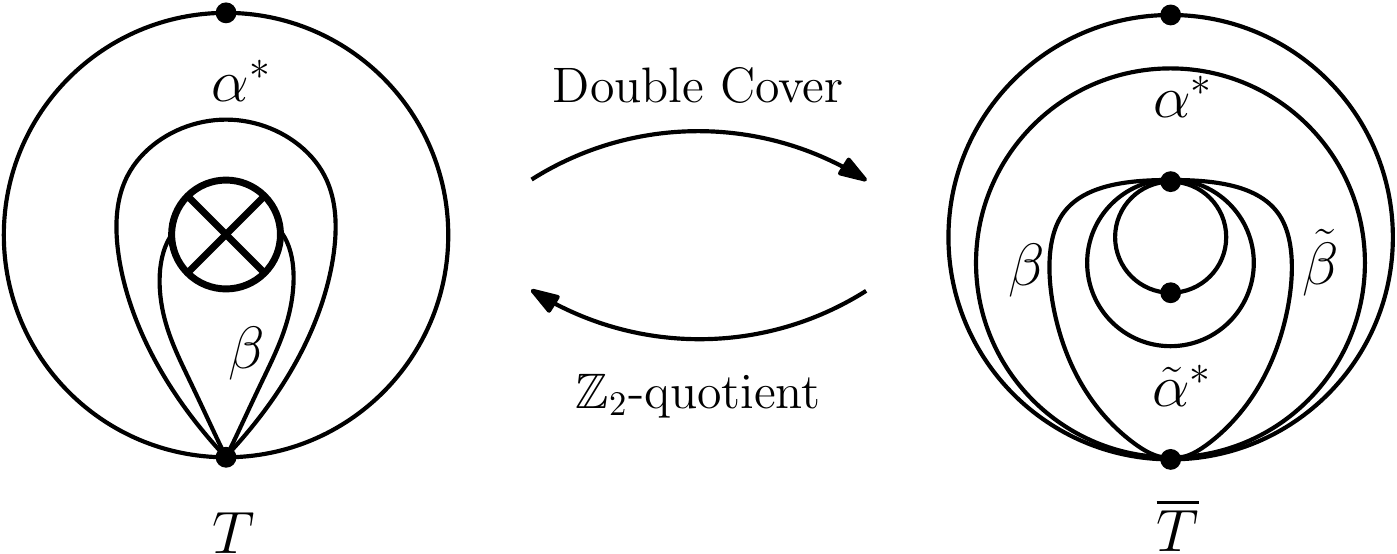}
\caption{The traditional triangulation $T$ obtained from glueing a triangle with an anti-self-folded triangle, and its lift $\overline{T}$. See Figure \ref{antiself} to recall the definition of an anti-self-folded triangle.}
\label{traditionallift}
\end{center}
\end{figure}

\begin{proof}

Let us consider the lifted triangulation $\overline{T}$ of $T$, and suppose we have labelled the arcs as shown in Figure \ref{traditionallift}. We shall first determine when $L$ adds weight to $\beta$ or $\alpha^*$.\newline Recall that for a lamination $L$ to add positive (resp. negative) weight to $\beta$ it needs to cut the quadrilateral in $\overline{T}$ containing $\beta$ in an $'S'$ (resp. $'Z'$) shape. In Figure \ref{betashapes}, for each shape type, we show the local configuration of the lamination within the quadrilateral containing $\beta$, and we denote its accompanying twin lamination with a dotted line. 

\begin{figure}[H]
\begin{center}
\includegraphics[width=10cm]{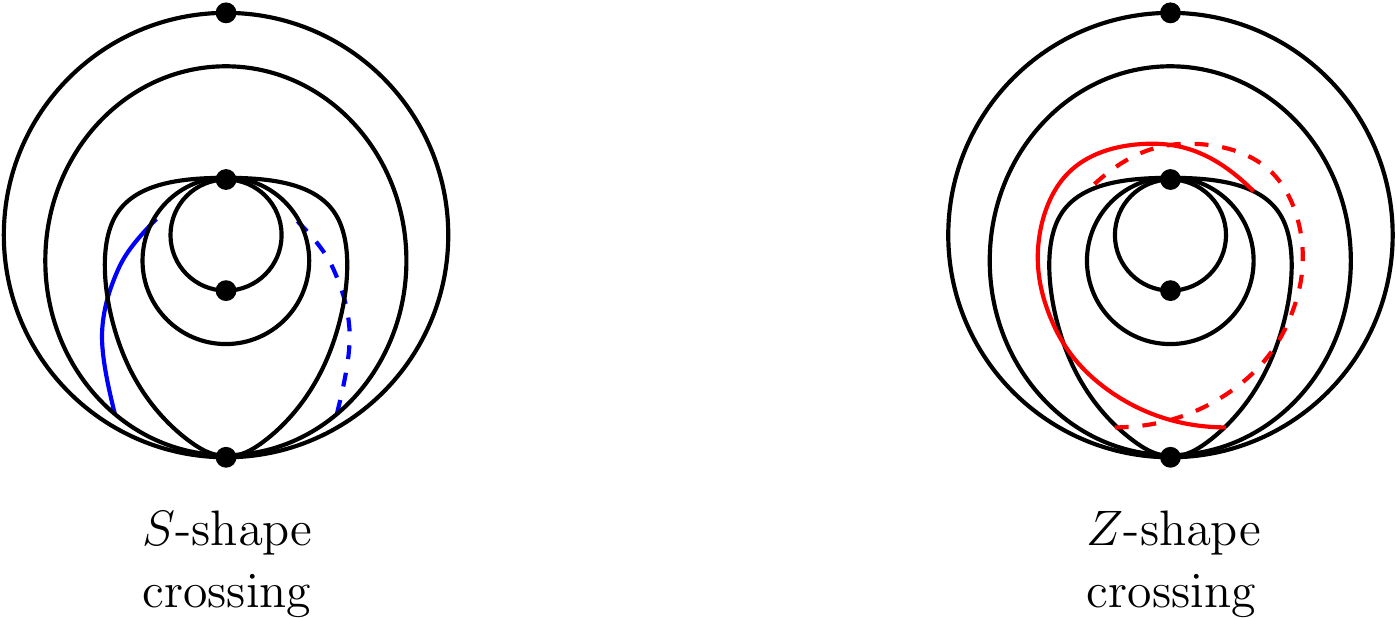}
\caption{The instances where the lamination cuts $\beta$ in an '$S$' or '$Z$' shape.}
\label{betashapes}
\end{center}
\end{figure}

For the case when $L$ cuts $\beta$ in an $'S'$ shape the partial lamination shown in Figure \ref{betashapes} can be extended (without self-intersections) in three ways. The $\mathbb{Z}_2$-quotients of these extensions are shown in Figure \ref{betalaminations}. However, note that when $L$ cuts $\beta$ in a $'Z'$ shape the partial 'lamination' shown in Figure \ref{betashapes} is self intersecting, and therefore will not form a legitimate lamination. As a consequence, for any lamination $L$, when we label the arcs as in Figure \ref{traditionallift}, then we can only ever have $\overline{b}_{L\beta} \geq 0$. (If we labelled $\beta$ and $\tilde{\beta}$ the other way round we would only ever have $\overline{b}_{L\beta} \leq 0$. See Definition \ref{shortened} for a recap on how we define $\overline{b}_{ij}$.)

\begin{figure}[H]
\begin{center}
\includegraphics[width=13cm]{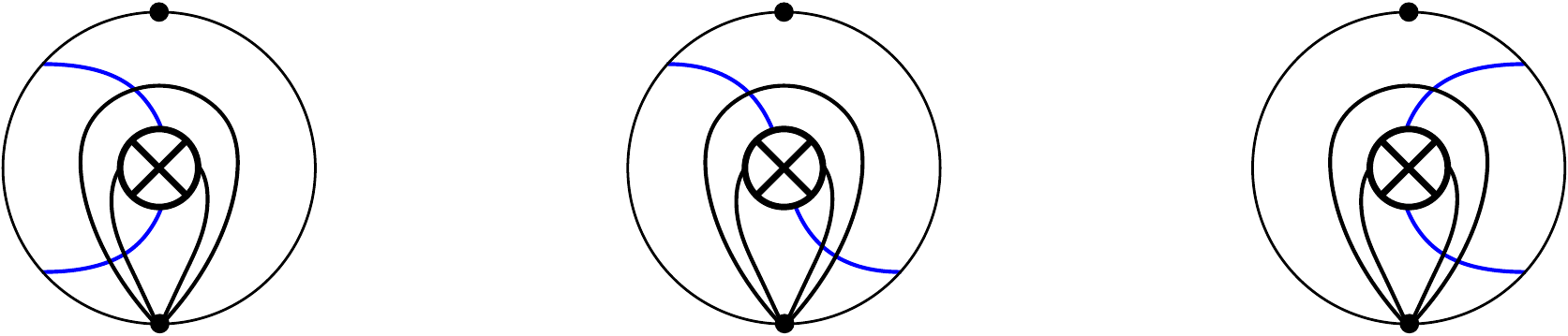}
\caption{The possible (elementary) laminations adding weight to $\beta$.}
\label{betalaminations}
\end{center}
\end{figure}

Now suppose $L$ adds weight to $\alpha^*$. Locally within the quadrilateral containing $\alpha^*$, depending on which shape $L$ cuts $\alpha^*$, $L$ will have one of the configurations shown in Figure \ref{alphashapes}.

\begin{figure}[H]
\begin{center}
\includegraphics[width=10cm]{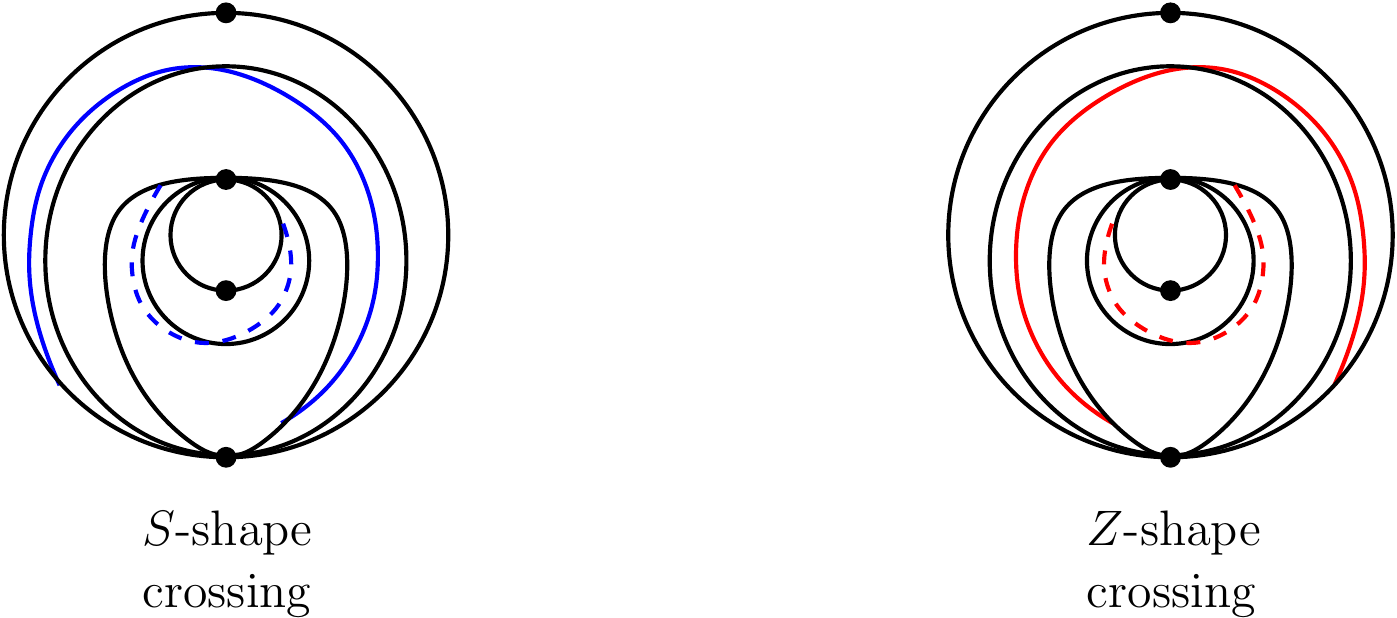}
\caption{The instances where the lamination cuts $\alpha^*$ in an '$S$' or '$Z$' shape.}
\label{alphashapes}
\end{center}
\end{figure}

We can see that each configuration can be extended to a (non-intersecting) lamination in precisely two ways. Taking the $\mathbb{Z}_2$-quotient leaves us with the laminations shown in Figure \ref{alphalaminations}.

\begin{figure}[H]
\begin{center}
\includegraphics[width=13cm]{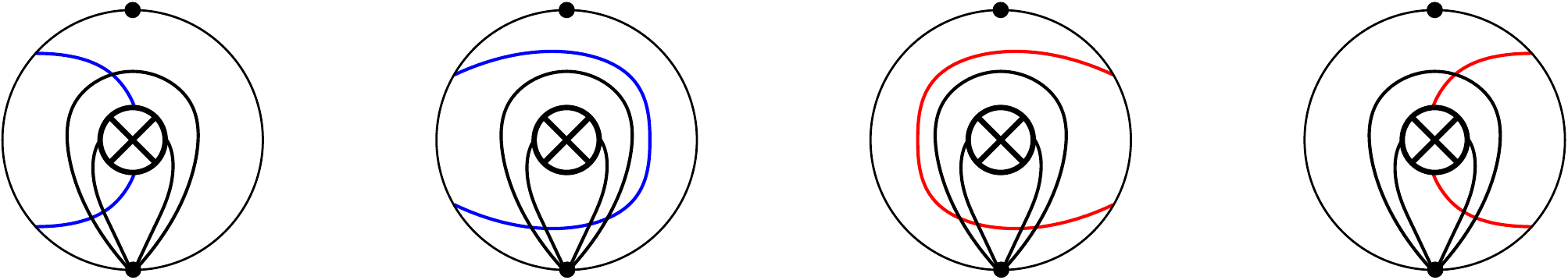}
\caption{The possible (elementary) laminations adding weight to $\alpha^*$.}
\label{alphalaminations}
\end{center}
\end{figure}

In Figure \ref{laminationcoefficients} we list the double covers of all elementary laminations which add weight to $\alpha^*$ and $\beta$ -- in each case we indicate the corresponding lamination weights. Note that for: $$(1) \hspace{2mm} \text{we get} \hspace{2mm}  b_{L\alpha^*} = b_{\tilde{L}\beta} \hspace{2mm}  \text{and} \hspace{2mm}  b_{\tilde{L}\alpha^*} = b_{L\beta},$$ 
$$(4) \hspace{2mm} \text{we get} \hspace{2mm}  b_{L\alpha^*} \geq b_{\tilde{L}\beta} \hspace{2mm}  \text{and} \hspace{2mm}  b_{\tilde{L}\alpha^*} \geq b_{L\beta},$$
$$(2), (3) \text{ and } (5) \hspace{2mm} \text{we get} \hspace{2mm}  b_{L\alpha^*} \geq b_{\tilde{L}\beta} \hspace{2mm}  \text{and} \hspace{2mm}  b_{\tilde{L}\alpha^*} \geq b_{L\beta}.$$

Recall that, by definition, any lamination is the union of non-intersecting elementary laminations. Since the lamination in $(4)$ intersects the laminations in $(2)$,$(3)$ and $(5)$ then any lamination will indeed satisfy $$b_{L\alpha^*} \geq b_{\tilde{L}\beta} \text{ and } b_{\tilde{L}\alpha^*} \geq b_{L\beta} \hspace{5mm} \text{ or } \hspace{5mm} b_{L\alpha^*} \leq b_{\tilde{L}\beta} \text{ and } b_{\tilde{L}\alpha^*} \leq b_{L\beta}.$$

\end{proof}

\begin{figure}[H]
\begin{center}
\includegraphics[width=13cm]{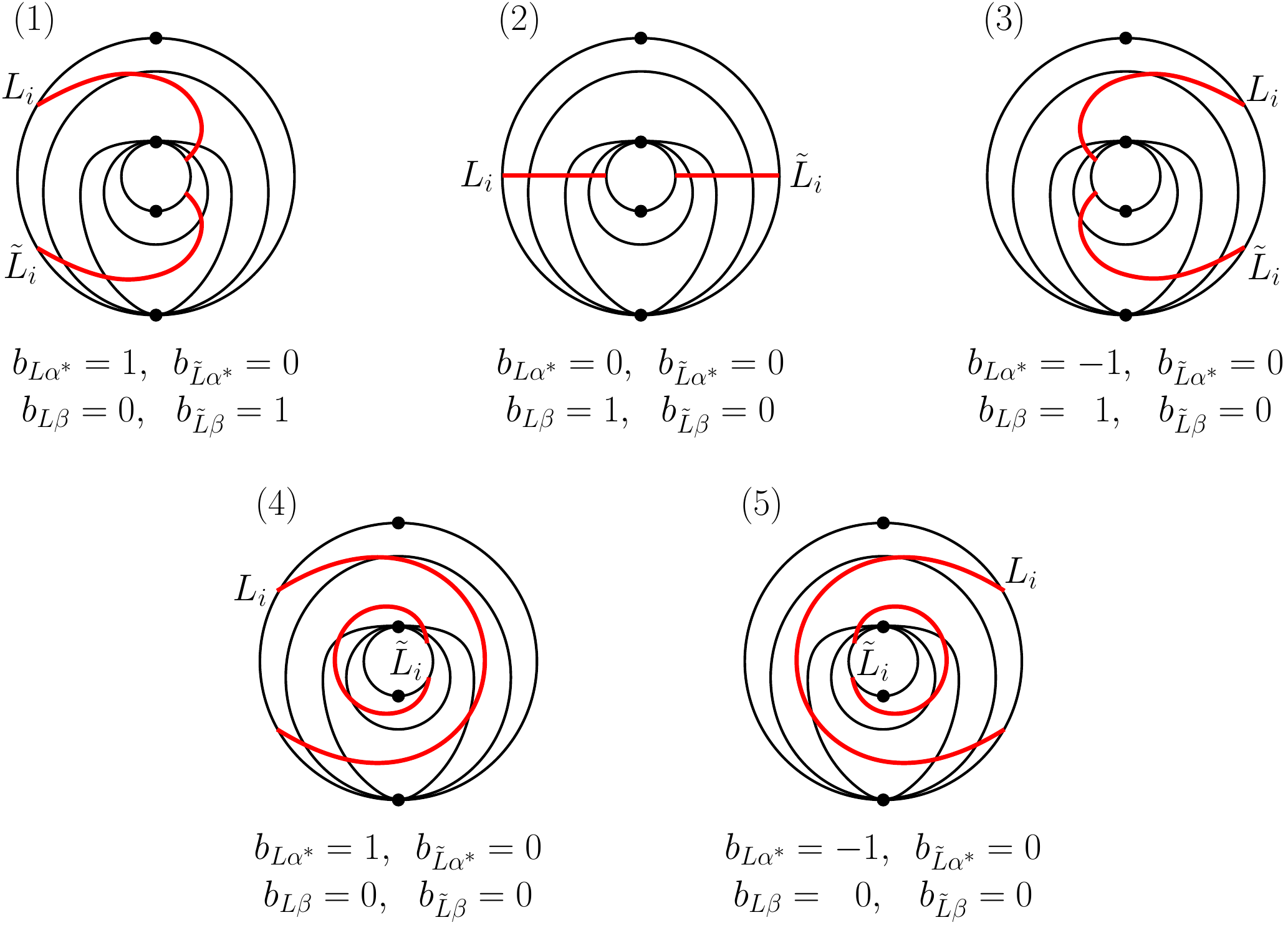}
\caption{Here we draw the double covers of all elementary laminations adding weight to $\alpha^*$ or $\beta$, and we also include the corresponding lamination weights.}
\label{laminationcoefficients}
\end{center}
\end{figure}

\begin{rmk} 
In the above proof note that relabelling $L \leftrightarrow \tilde{L}$ in any of the configurations in Figure \ref{laminationcoefficients} does not alter the corresponding inequalities.
\end{rmk}

Having realised how to obtain lamination coefficients of exchange polynomials of any quasi-triangulation $T$ we will now describe how these change under flips. If $T$ is a triangulation then these coefficients will change in accordance to usual quiver mutation formulae. We are therefore left with the task of describing how coefficients change when we perform flips in regions containing a one-sided closed curve. For a quiver $Q_{\overline{T}}$ arising from a traditional triangulation $T$ containing anti-self-folded triangles, Lemma \ref{lamination restriction} puts a restriction on the possible extended quivers, $Q_{\overline{T},\mathbf{\overline{L}}}$, that can arise from a multi lamination $\mathbf{L}$ on the surface. We shall use this lemma to sift out these 'obvious' impossible extended quivers. After this sifting, on the surviving extended quivers, we will describe in the following lemma how the quiver changes with respect to flips of arcs in $T$. \newline

\begin{lem}
\label{exceptional relations}
Let $(S,M,\mathbf{L})$ be a laminated bordered surface. Consider the quasi-triangulation $T$ and its associated lift $\overline{T}^*$, both shown in Figure \ref{tandtraditionallift}. Let us denote the coefficients of $Q_{\overline{T}^{*},\overline{\mathbf{L}}}$ by $\overline{b}_{ij}$. Then: 
\begin{enumerate}[label=(\alph*)]
\item the lamination coefficients $\overline{b}'_{ij}$, corresponding to $\alpha^*$, $\beta$ and $x$ in $Q_{\overline{\mu_{\alpha^*}(T)}^{*},\overline{\mathbf{L}}}$ can be written as: $$ \overline{b}_{L_ix}' = \overline{b}_{L_ix} + \max(0,\overline{b}_{L_i\alpha^*}), \hspace{9mm} \overline{b}_{L_i\alpha^*}' = -\overline{b}_{L_i\alpha^*}, \hspace{9mm}\overline{b}_{L_i\beta}' = \overline{b}_{L_i\beta} - |\overline{b}_{L_i\alpha^*}| $$
\item The lamination coefficients $\overline{b}'''_{ij}$, of $\alpha^*$, $\beta$ and $x$ in $Q_{\overline{\mu_{\beta}(T)}^{*},\overline{\mathbf{L}}}$ can be written as: 
$$\overline{b}'''_{L_i\alpha^*} = \overline{b}_{L_i\alpha^*}, \hspace{10mm} \overline{b}'''_{L_i\beta} = -\overline{b}_{L_i\beta},$$
$$ \overline{b}'''_{L_ix} = \overline{b}_{L_ix} + \max(0,\overline{b}_{L_i\alpha^*} + \overline{b}_{L_i\beta}) + \max(0,\overline{b}_{L_i\alpha^*} - \overline{b}_{L_i\beta})$$

\end{enumerate}

\end{lem}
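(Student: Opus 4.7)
Both parts reduce to realising the flips of arcs as mutations of the lifted quiver $Q_{\overline{T}^*, \overline{\mathbf{L}}}$ via Proposition \ref{traditional quiver mutation}, applying the Fomin--Zelevinsky mutation formula
\[
b'_{ij} = b_{ij} + [b_{ik}]_+[b_{kj}]_+ - [-b_{ik}]_+[-b_{kj}]_+ \quad (k\notin\{i,j\}), \qquad b'_{ij} = -b_{ij}\ \text{if}\ k\in\{i,j\},
\]
where $[x]_+ := \max(0,x)$, and then summing over the twin pairs $(L_i,\tilde{L}_i)$ to pass from the full coefficients $b_{ij}$ to the shortened coefficients $\overline{b}_{ij}$.

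For part (a), Proposition \ref{traditional quiver mutation} identifies $Q_{\overline{\mu_{\alpha^*}(T)}^{*},\overline{\mathbf{L}}}$ with $\mu_{\alpha^*}\circ\mu_{\tilde{\alpha}^*}(Q_{\overline{T}^{*},\overline{\mathbf{L}}})$: the flip region of $\alpha^*$ is a genuine quadrilateral in $\overline{T}^*$, since $\alpha^*$ bounds but is not the folded side of the anti-self-folded triangle. Reading off the local arrows in Figure \ref{traditionallift} shows that $\alpha^*$ and $\tilde{\alpha}^*$ are not joined by an arrow, so the two mutations compose without interaction. The identity $\overline{b}'_{L_i\alpha^*} = -\overline{b}_{L_i\alpha^*}$ is then immediate. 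For the remaining two coefficients I would apply the $[\cdot]_+$ formula twice, sum the $L_i$- and $\tilde{L}_i$-contributions, and invoke Lemma \ref{lamination restriction} to collapse the resulting four $[\cdot]_+$ summands per coefficient into the single $\max(0,\overline{b}_{L_i\alpha^*})$ and $-|\overline{b}_{L_i\alpha^*}|$ expressions stated.

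Part (b) is subtler because $\beta$ is the folded side of the anti-self-folded triangle in $T^*$, so Proposition \ref{traditional quiver mutation} does not apply to $\beta$ directly. The natural workaround is to first perform the double mutation at $\alpha^*, \tilde{\alpha}^*$ from part (a), obtaining a lifted triangulation in which $\beta$ and $\tilde{\beta}$ now sit in genuine flip-quadrilaterals, then mutate at $\beta, \tilde{\beta}$, and finally track the coefficients back to the shortened form of the original quiver. When the contributions of $\beta$ and $\tilde{\beta}$ to $\overline{b}'''_{L_ix}$ are summed, the symmetric and anti-symmetric combinations $\overline{b}_{L_i\alpha^*}\pm\overline{b}_{L_i\beta}$ surface naturally from the $[\cdot]_+$ formula, while the fact that $\alpha^*$ is untouched by this sequence (modulo cancellation of the two bracketing $\mu_{\alpha^*}\circ\mu_{\tilde{\alpha}^*}$ steps on that column) yields $\overline{b}'''_{L_i\alpha^*} = \overline{b}_{L_i\alpha^*}$, and $\overline{b}'''_{L_i\beta} = -\overline{b}_{L_i\beta}$ follows from the negation rule at the mutated vertex. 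Lemma \ref{lamination restriction} pins the signs of the twin coefficients tightly enough that the remaining $[\cdot]_+$ terms collapse to exactly the two $\max$-expressions stated.

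The main obstacle is the bookkeeping of twin pairs $(L_i,\tilde{L}_i)$ through the sequence of mutations. A single round of the $[\cdot]_+$ formula produces four non-negative summands per coefficient, and without sign information these cannot be combined. Lemma \ref{lamination restriction} is precisely what pins down the relative signs of the twin lamination coefficients so that the correct pairs collapse into clean $\max$ or $|\cdot|$ expressions; correctly identifying which of its two sign regimes applies at each stage of each computation, and verifying that the two regimes consistently produce the same closed-form answer, is where all the care is needed.
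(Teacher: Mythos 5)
Your proposal follows the same route the paper takes: translate the flips into double mutations of $Q_{\overline{T}^*,\overline{\mathbf{L}}}$ (at $\alpha^*,\tilde\alpha^*$ for part (a), and via the sandwiching sequence $\mu_{\alpha^*}\circ\mu_{\tilde\alpha^*}$, then $\mu_{\beta}\circ\mu_{\tilde\beta}$, then $\mu_{\alpha^*}\circ\mu_{\tilde\alpha^*}$ for part (b)), apply the Fomin--Zelevinsky matrix mutation rule, sum over twin pairs to pass to shortened coefficients, and use Lemma \ref{lamination restriction} to fix enough of the relative signs that the four $[\,\cdot\,]_+$ terms collapse into the stated $\max$ and $|\cdot|$ expressions. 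The paper is simply more explicit about the casework, dividing the verification into four sign cases dictated by Lemma \ref{lamination restriction} and checking each by direct computation; your account of the sign bookkeeping is looser but the underlying argument is identical.
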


\begin{proof}

To validate these formulae we must:
\begin{itemize}

\item perform mutation, $\mu_{\alpha^*}$, at $\alpha^*$ and $\tilde{\alpha}^*$ in $Q_{\overline{T}^{*},\overline{\mathbf{L}}}$ to obtain $Q_{\overline{\mu_{\alpha}(T)}^{*},\overline{\mathbf{L}}}$

\item perform the sequence of mutations $\mu_{\alpha^*}\circ\mu_{\beta}\circ\mu_{\alpha^*}$ on $Q_{\overline{T}^{*},\overline{\mathbf{L}}}$ to obtain $Q_{\overline{\mu_{\beta}(T)}^{*},\overline{\mathbf{L}}}$.

\end{itemize}

By Lemma \ref{lamination restriction}, we can divide our task into four natural cases:

\begin{enumerate}

\item $b_{L\alpha},b_{L\tilde{\alpha}} \geq 0$, \hspace{3mm} $b_{L\alpha^*} \geq b_{\tilde{L}\beta}$, \hspace{3mm} $b_{\tilde{L}\alpha^*} \geq b_{L\beta}$

\item $b_{L\alpha},b_{L\tilde{\alpha}} \geq 0$, \hspace{3mm} $b_{L\alpha^*} \leq b_{\tilde{L}\beta}$, \hspace{3mm} $b_{\tilde{L}\alpha^*} \leq b_{L\beta}$

\item $b_{L\alpha},b_{L\tilde{\alpha}} \leq 0$, \hspace{3mm} $b_{L\alpha^*} \geq b_{\tilde{L}\beta}$, \hspace{3mm} $b_{\tilde{L}\alpha^*} \geq b_{L\beta}$

\item $b_{L\alpha},b_{L\tilde{\alpha}} \leq 0$, \hspace{3mm} $b_{L\alpha^*} \leq b_{\tilde{L}\beta}$, \hspace{3mm} $b_{\tilde{L}\alpha^*} \leq b_{L\beta}$

\end{enumerate}

Using the matrix mutation exchange relation, $b'_{kj} = \sgn(b_{ij})[b_{ki}b_{ij}]_{+}$, it is easily verified that, in each of the four cases, the resulting coefficients agree with the claimed formulae.

\end{proof}

\begin{figure}[H]
\begin{center}
\includegraphics[width=13cm]{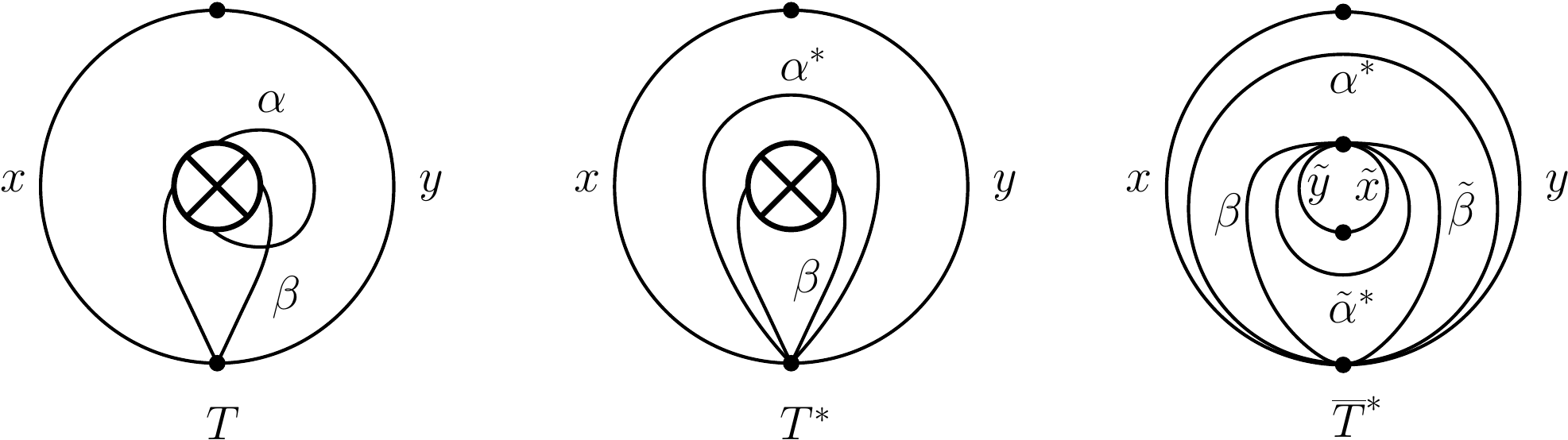}
\caption{A quasi-triangulation $T$ together with its associated traditional triangulation $T^*$ and lift $\overline{T}^*$.}
\label{tandtraditionallift}
\end{center}
\end{figure}

Having discovered how exchange polynomials of any quasi-triangulation change via the consideration of quivers, we are now ready to show that (under certain conditions) LP mutation is also describing how these polynomials change under flips.

\begin{thm}
\label{partial main theorem}
Suppose $(S,M,\mathbf{L})$ is a laminated bordered surface such that for each quasi-triangulation $T$, $(\mathbf{x}, \mathbf{F}_T)$ is a valid seed, and $F_{\gamma_i} \neq F_{\gamma_j}$ for any quasi-arcs $\gamma_i$ and $\gamma_j$ in $T$ $(i \neq j)$. Then LP mutation amongst seeds corresponds to flipping quasi-arcs. Specifically, for any seed, $(\mathbf{x}, \mathbf{F}_T)$, and quasi-arc $\gamma \in T$ we have that $\mu_{\gamma}(\mathbf{x}, \mathbf{F}_T) = (\mu_{\gamma}(\mathbf{x}), \mathbf{F}_{\mu_{\gamma}(T)})$. \newline Here $\mu_{\gamma}(\mathbf{x}):= \mathbf{x}\setminus\{x_{\mathbf{L}^*}(\gamma)\}\cup \{x_{\mathbf{L}^*}(\gamma')\}$ where $\gamma'$ is the flip of $\gamma$ with respect to $T$.

\end{thm}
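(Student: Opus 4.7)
The claim has two parts to check: that the new cluster variable produced by LP mutation equals $x_{\mathbf{L}^*}(\gamma')$, and that the new exchange polynomials produced by the three-step process agree with those prescribed by $\mathbf{F}_{\mu_\gamma(T)}$. For the first part, LP mutation sets the new variable equal to $\hat{F}_\gamma/x_{\mathbf{L}^*}(\gamma)$. By Lemma \ref{normalisation assumption}, $\hat{F}_\gamma = F_\gamma$ when $\gamma$ is not incident to a one-sided closed curve, and $\hat{F}_\gamma = F_\gamma/\alpha^2$ otherwise. In both situations Propositions \ref{correct polys} and \ref{correct polys quasi} identify $\hat{F}_\gamma/x_{\mathbf{L}^*}(\gamma)$ with the laminated lambda length of the flipped quasi-arc $\gamma'$, confirming the cluster update.

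For the second part, I would divide the analysis into cases depending on the structure of $T$ and $\mu_\gamma(T)$. The cleanest case is when both $T$ and $\mu_\gamma(T)$ are triangulations (no one-sided closed curves appear before or after the flip). In this situation every exchange polynomial in both seeds is of the binomial form produced by Proposition \ref{correct polys}; Lemma \ref{normalisation assumption} guarantees $\hat{F}_\gamma = F_\gamma$; and Lemma \ref{t-mutable path} supplies the absence of any path $a \to \gamma \to \tilde{a}$ in $Q_{\overline{T},\overline{\mathbf{L}}}$. These are precisely the hypotheses of Proposition \ref{quiver LP}, which then outputs the polynomials associated with $\mu_\gamma \circ \mu_{\tilde\gamma}(Q_{\overline{T},\overline{\mathbf{L}}})$. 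Proposition \ref{traditional quiver mutation} (combined with Proposition \ref{existing quiver flip}) identifies this with $Q_{\overline{\mu_\gamma(T)},\overline{\mathbf{L}}}$, and Proposition \ref{correct polys} converts the latter back to $\mathbf{F}_{\mu_\gamma(T)}$.

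The remaining cases handle one-sided closed curves, and I would split them into: (B) $\gamma$ is an arc in $T$ flipping to a one-sided closed curve $\alpha$; (C) $\gamma = \alpha$ is a one-sided closed curve in $T$; (D) $\gamma = \beta$ is the arc in $T$ intersecting an already-present one-sided closed curve $\alpha$; and (E) $\gamma$ is far from any M\"obius strip cut out by a one-sided closed curve in $T$. Case (E) proceeds essentially as the triangulation case once one observes that the LP mutation rules act trivially on the $\alpha^2$-denominators present in the non-binomial $F_\beta$'s occurring elsewhere in the seed. Case (C) is the inverse of (B) via involution of both flips and LP mutation, so it suffices to establish (B). For (B), I would run the three-step process explicitly: Step 1 substitutes the new variable into the binomial $F_\gamma$ of the triangulation $T$; Step 2 strips the factor corresponding to the interior arc $\beta$ of the resulting M\"obius strip $M_1$ (reflecting the identity $\alpha^* = \alpha\beta$ of Lemma \ref{M1 lamination}); and Step 3 multiplies by the unique monic Laurent monomial from the definition. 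Comparing the result with the non-binomial polynomial prescribed by Lemma \ref{correct polys quasi} together with the coefficient transformation rules of Lemma \ref{exceptional relations}(a) closes (B). Case (D) is handled symmetrically, with Lemma \ref{exceptional relations}(b) supplying the analogous coefficient updates.

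The main obstacle will be the bookkeeping in cases (B) and (D), where exchange polynomials of quasi-arcs near a one-sided closed curve take the somewhat involved form of Lemma \ref{correct polys quasi}. In particular, Step 2 of LP mutation (dividing by common factors with $\hat{F}_\gamma|_{x_j \leftarrow 0}$) must interact correctly with the squared factor $F_\alpha^2$ appearing inside $F_\beta$, and the resulting coefficient changes must be reconciled precisely with Lemma \ref{exceptional relations}. Ensuring that each of the four sign-and-magnitude sub-cases enumerated in the proof of that lemma produces matching LP output is where the genuine computational effort lies; the other cases are largely reductions to, or formal consequences of, Proposition \ref{quiver LP}.
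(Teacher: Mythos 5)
Your plan follows essentially the same route as the paper's proof: reduce to the triangulation-to-triangulation case (handled via Propositions \ref{traditional quiver mutation}, \ref{correct polys}, \ref{quiver LP} and Lemma \ref{normalisation assumption}) plus the two flip types involving a one-sided closed curve, which you must verify explicitly against Lemmas \ref{correct polys quasi} and \ref{exceptional relations} — these are precisely the paper's Case 1 and Case 2. The only genuine difference is that you make explicit two things the paper glosses over: (i) citing Proposition \ref{quiver LP} as the bridge between double quiver mutation and LP mutation rather than leaving it implicit, and (ii) handling the flip of a one-sided closed curve back to an arc (your Case (C)) by invoking the involutivity of LP mutation, whereas the paper only writes out the forward direction. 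Both are reasonable elaborations; the heart of the argument — the bookkeeping of the three-step normalisation process against the coefficient updates of Lemma \ref{exceptional relations} — is correctly identified but not yet carried out, so this remains a plan rather than a complete proof.
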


\begin{proof}

In Proposition \ref{flip} we classified the type of flip regions of quasi-triangulations $T$ - these are shown in Figure \ref{flipregions}. It is crucial to note that the sides of these flip regions may not be arcs (or boundary segments) in $T$, but rather an arc bounding $M_1$ or a punctured digon. In which case this arc is representing the two quasi-arcs it bounds. Propositions \ref{traditional quiver mutation}, \ref{correct polys} and Lemma \ref{normalisation assumption} tells us that LP mutation describes how the exchange polynomials of arcs change when flipping amongst triangulations. It remains to check this is the case when mutating to, and amongst, quasi-triangulations containing one-sided closed curves. \newline \indent In this proof we shall only consider flip regions whose sides are arcs in $T$, however, our arguments can easily be extended to the case when they are not. Note that when we perform a flip only the exchange polynomials of the interior and boundary quasi-arcs of the flip region can change. Therefore, for each flip, we just need to show that LP mutation describes how these polynomials change (as LP mutation will also leave all other exchange polynomials unchanged).

\begin{figure}[H]
\begin{center}
\includegraphics[width=13.8cm]{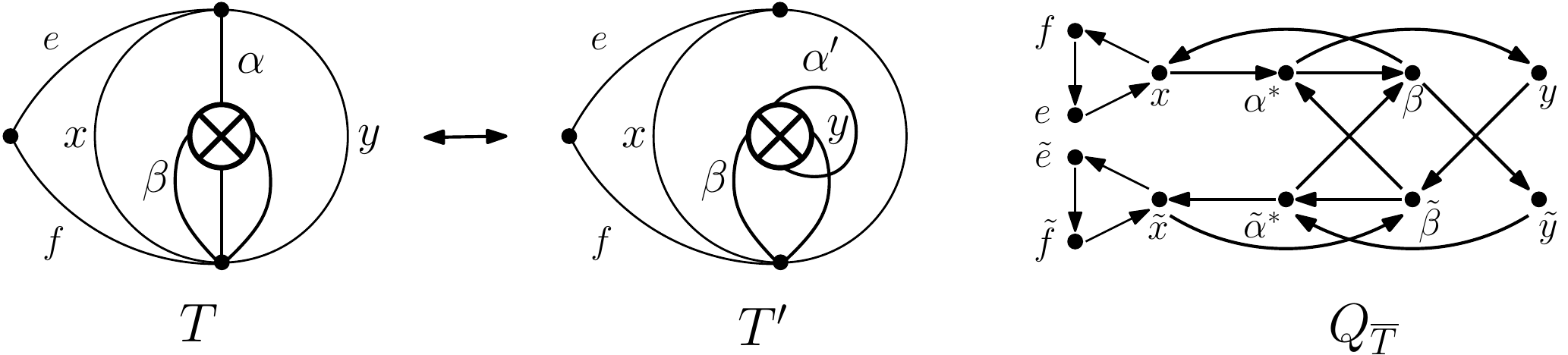}
\caption{A triangulation $T$, in which $\alpha$ flips to a one-sided closed curve $\alpha'$, and the corresponding quiver $Q_{\overline{T}}$.}
\label{fliptoonesided}
\end{center}
\end{figure}

\underline{Case 1}: The flip of an arc $\alpha$ to a one-sided closed curve.

Suppose an arc $\alpha$ in a triangulation of $(S,M)$ flips to a one-sided closed curve $\alpha'$. Let $x$ and $y$ be the boundaries of this region, and $\beta$ the other interior arc. Without loss of generality it suffices to show that LP mutation describes how $F_x$ and $F_{\beta}$ change under this flip. Furthermore we may assume that $x$ is not a boundary segment of $(S,M)$, as otherwise it has no exchange polynomial, and there would be nothing to check. We will therefore have the local picture shown in Figure \ref{fliptoonesided}, moreover, for our chosen labelling of $Q_{\overline{T}}$, by Proposition \ref{correct polys}, we will get:

$$ F_{\alpha} = \displaystyle \big( \prod_{\overline{b}_{L_i\alpha} >0} L_i^{\overline{b}_{L_i\alpha}} \big)x +  \big(\prod_{\overline{b}_{L_i\alpha} <0} L_i^{-\overline{b}_{L_i\alpha}} \big)y $$

$$ F_{\beta} = \displaystyle \big( \prod_{\overline{b}_{L_i\beta} >0} L_i^{\overline{b}_{L_i\beta}} \big)\alpha^2 +  \big(\prod_{\overline{b}_{L_i\beta} <0} L_i^{-\overline{b}_{L_i\beta}} \big)xy $$

$$ F_{x} = \displaystyle \big( \prod_{\overline{b}_{L_i x} >0} L_i^{\overline{b}_{L_i x}} \big)\beta e +  \big(\prod_{\overline{b}_{L_i x} <0} L_i^{-\overline{b}_{L_i x}} \big)\alpha f .$$

Let us consider the quasi-triangulation $T' := \mu_{\alpha}(T)$ and denote the coefficients in $Q_{\overline{T}'}$ by $\overline{b}'_{ij}$. By Propositions \ref{correct polys} and \ref{correct polys quasi}, we are required to show that LP mutation changes $F_{\beta}$ and $F_x$ to the following polynomials: 

$$ F'_{\beta} = \displaystyle \big( \prod_{\overline{b}_{L_i\beta} >0} L_i^{\overline{b}_{L_i\beta}} \big)F_{\alpha}^2 +  \big(\prod_{\overline{b}_{L_i\beta} <0} L_i^{-\overline{b}_{L_i\beta}} \big)xy\alpha'^2 $$

$$ F'_{x} = \displaystyle \big( \prod_{\overline{b}'_{L_i x} >0} L_i^{\overline{b}'_{L_i x}} \big)\beta e\alpha' +  \big(\prod_{\overline{b}'_{L_i x} <0} L_i^{-\overline{b}'_{L_i x}} \big)yf .$$

Since $F_{\alpha} \rvert_{\beta \leftarrow 0 } = F_{\alpha}$ then,

$$ G_{\beta} = F_{\beta} \rvert_{\alpha \leftarrow \frac{F_{\alpha}}{\alpha'}} = \frac{\displaystyle \big( \prod_{\overline{b}_{L_i\beta} >0} L_i^{\overline{b}_{L_i\beta}} \big)F_{\alpha}^2 +  \big(\prod_{\overline{b}_{L_i\beta} <0} L_i^{-\overline{b}_{L_i\beta}} \big)xy\alpha'^2}{\alpha'^2} $$

Hence $MG_{\beta} = F_{\beta}'$, as required. \newline Since $\hat{F}_{\alpha} \rvert_{x \leftarrow 0 } = \big(\prod_{\overline{b}_{L_i\alpha} <0} L_i^{-\overline{b}_{L_i\alpha}} \big)y$ we obtain:

$$ G_{x} = F_{x} \rvert_{\alpha \leftarrow \frac{\hat{F}_{\alpha} \rvert_{x \leftarrow 0 }}{\alpha'}} = \frac{\displaystyle \big( \prod_{\overline{b}_{L_i x} >0} L_i^{\overline{b}_{L_i x}} \big)\beta e\alpha' +  \big(\prod_{\overline{b}_{L_i x} <0} L_i^{-\overline{b}_{L_i x}} \big)\big(\prod_{\overline{b}_{L_i\alpha} <0} L_i^{-\overline{b}_{L_i\alpha}} \big) yf}{\alpha'}$$

From here we see that the exponent of $L_i$ in $MG_x$ will be $| \overline{b}_{L_i x} - \max(0,-\overline{b}_{L_i\alpha})|$. Moreover, $L_i$ will appear in the left or right monomial of $MG_x$ respective of whether $ \overline{b}_{L_i x} - \max(0,-\overline{b}_{L_i\alpha})$ is positive or negative. Lemma \ref{exceptional relations} tells us that $\overline{b}_{L_i x} = \overline{b}'_{L_i x} + \max(0, \overline{b}_{L_i\alpha})$ and $\overline{b}_{L_i \alpha} = - \overline{b}'_{L_i \alpha}$. So $\overline{b}'_{L_i\beta} = \overline{b}_{L_i x} - \max(0,-\overline{b}_{L_i\alpha})$ and LP mutation indeed describes how the exchange polynomials change for this flip. \newline

\begin{figure}[H]
\begin{center}
\includegraphics[width=13.8cm]{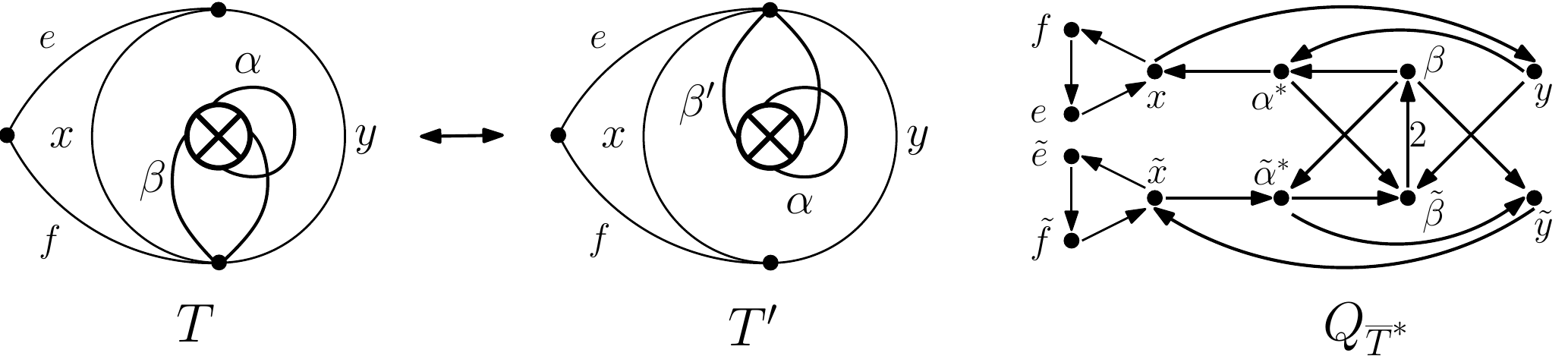}
\caption{A triangulation $T$, in which $\beta$ intersects a one-sided closed curve $\alpha$, and the corresponding quiver $Q_{\overline{T}^*}$.}
\label{flipintersectingonesided}
\end{center}
\end{figure}

\underline{Case 2}: The flip of an arc $\beta$ intersecting a one-sided closed curve.

Suppose an arc $\beta$ in a quasi-triangulation of $(S,M)$ intersects a one-sided closed curve $\alpha$. Let $x$ and $y$ be the boundary segments of the flip region, and denote by $\beta'$ the arc $\beta$ flips to. As before, it is enough to show that LP mutation describes how $F_{\alpha}$ and $F_x$ change under this flip, and we may assume $x$ is not a boundary segment of $(S,M)$. We therefore arrive at the quasi-triangulation $T$, and quiver $Q_{\overline{T}^{*}}$ shown in Figure \ref{flipintersectingonesided}.  For our chosen labelling of $Q_{\overline{T}^{*}}$, by Propositions \ref{correct polys} and \ref{correct polys quasi}, we obtain:

$$ F_{\alpha} = \displaystyle \big( \prod_{\overline{b}_{L_i\alpha^*} >0} L_i^{\overline{b}_{L_i\alpha^*}} \big)y +  \big(\prod_{\overline{b}_{L_i\alpha^*} <0} L_i^{-\overline{b}_{L_i\alpha^*}} \big)x $$

$$F_{\beta} = \displaystyle \big( \prod_{\overline{b}'_{L_i\beta} >0} L_i^{\overline{b}'_{L_i\beta}} \big)F_{\alpha}^2 +  \big(\prod_{\overline{b}'_{L_i\beta} <0} L_i^{-\overline{b}'_{L_i\beta}} \big)xy\alpha^2.$$

$$ F_{x} = \displaystyle \big( \prod_{\overline{b}_{L_i x} >0} L_i^{\overline{b}_{L_i x}} \big)\alpha\beta e +  \big(\prod_{\overline{b}_{L_i x} <0} L_i^{-\overline{b}_{L_i x}} \big)fy $$

Note that here we represent the coefficients of $Q_{\overline{\mu_{\alpha}(T)}} = \mu_{\alpha^*}\circ \mu_{\tilde{\alpha}^*}(Q_{\overline{T}^*})$ by $\overline{b}'_{L_i\beta}$. Furthermore, for $T' := \mu_{\beta}(T)$, if we denote the coefficients of $Q_{\overline{T'}^*}$ by $\overline{b}'''_{ij}$ then, by Propositions \ref{correct polys} and \ref{correct polys quasi}, we are required to show that LP mutation changes $F_{\alpha}$ and $F_x$ to the following polynomials:

$$ F_{\alpha}' = \displaystyle \big( \prod_{\overline{b}'''_{L_i\alpha^*} >0} L_i^{\overline{b}'''_{L_i\alpha^*}} \big)y +  \big(\prod_{\overline{b}'''_{L_i\alpha^*} <0} L_i^{-\overline{b}'''_{L_i\alpha^*}} \big)x $$

$$ F_{x}' = \displaystyle \big( \prod_{\overline{b}'''_{L_i x} >0} L_i^{\overline{b}'''_{L_i x}} \big)ey +  \big(\prod_{\overline{b}'''_{L_i x} <0} L_i^{-\overline{b}'''_{L_i x}} \big)f\alpha\beta' $$

Since $\beta \notin F_{\alpha}$ then we need $F_{\alpha} = F'_{\alpha}$. This is the case since Lemma \ref{exceptional relations} tells us that $\overline{b}'''_{L_i \alpha^*} = \overline{b}_{L_i \alpha^*}$. It remains to check how $F_{\beta}$ changes under LP mutation.

$$\hat{F}_{\beta} \rvert_{x \leftarrow 0} = \frac{ \displaystyle \big( \prod_{\overline{b}'_{L_i\beta} >0} L_i^{\overline{b}'_{L_i\beta}} \big)\big( \prod_{\overline{b}_{L_i\alpha^*} >0} L_i^{2\overline{b}_{L_i\alpha^*}} \big)y^2}{\alpha^2}$$ $$= \frac{ \displaystyle \Big( \prod_{\max(0,\overline{b}_{L_i\alpha^*} + \overline{b}_{L_i\beta}) + \max(0,\overline{b}_{L_i\alpha^*} - \overline{b}_{L_i\beta}) >0} L_i^{\max(0,\overline{b}_{L_i\alpha^*} + \overline{b}_{L_i\beta}) + \max(0,\overline{b}_{L_i\alpha^*} - \overline{b}_{L_i\beta})} \Big)y^2}{\alpha^2}$$ 

The last equality follows from Lemma \ref{exceptional relations} which tells us that $\overline{b}'_{L_i\beta} = \overline{b}_{L_i\beta} - |\overline{b}_{L_i\alpha^*}|$, and the inequalities of Lemma \ref{lamination restriction}. For convenience, let us define $K_i := \max(0,\overline{b}_{L_i\alpha^*} + \overline{b}_{L_i\beta}) + \max(0,\overline{b}_{L_i\alpha^*} - \overline{b}_{L_i\beta})$. As a consequence we obtain:

$$ G_{x} = F_{x} \rvert_{\beta \leftarrow \frac{\hat{F}_{\beta} \rvert_{x \leftarrow 0 }}{\beta'}} = \frac{ \displaystyle \big( \prod_{\overline{b}_{L_i x} >0} L_i^{\overline{b}_{L_i x}} \big)\Big( \prod_{K_i >0} L_i^{K_i} \Big)ey + \big(\prod_{\overline{b}_{L_i x} <0} L_i^{-\overline{b}_{L_i x}} \big)f\alpha\beta'}{\frac{\alpha\beta'}{y}}$$

From here we see $L_i$ will have exponent $|\overline{b}_{L_i x} + K_i|$ in $MG_x$. Moreover, $L_i$ will appear in the left or right monomial of $MG_x$ respective of whether $\overline{b}_{L_i x} + K_i$ is positive or negative. From Lemma \ref{exceptional relations} we saw $\overline{b}'''_{L_i\beta} = \overline{b}_{L_i x} + K_i$, so LP mutation does indeed describe how the exchange polynomials change for this flip. \newline

For the cases when the boundaries of flip regions are not all arcs, analogous calculations show that LP mutation still describes how the exchange polynomials change.
\end{proof}

\subsection{Principal laminations}

Theorem \ref{partial main theorem} asserts that for a laminated quasi-cluster algebra $\mathcal{A}(S,M,\mathbf{L})$, if the exchange polynomials in each seed are irreducible and distinct then flips coincide with LP mutations.
Therefore, to establish an LP structure on a bordered surface $(S,M)$ we must concoct a multi-lamination which guarantees irreducibility and uniqueness of the exchange polynomials in any quasi-triangulation. This multi-lamination will follow the flavour of principal coefficients, but to introduce it we will first need some preliminaries.

\begin{defn}

An arc $\gamma$ of $(S,M)$ is called \textit{\textbf{orientable}} if it has an orientable neighbourhood. Otherwise $\gamma$ is said to be \textit{\textbf{non-orientable}}. Examples of these types of curves are given in Figure \ref{orientablearcs}.

\end{defn}

\begin{rmk}
Note that an arc $\gamma$ will be non-orientable \textit{if and only if} it has a unique endpoint and crosses through an odd number of cross-caps.
\end{rmk}

\begin{figure}[H]
\begin{center}
\includegraphics[width=10cm]{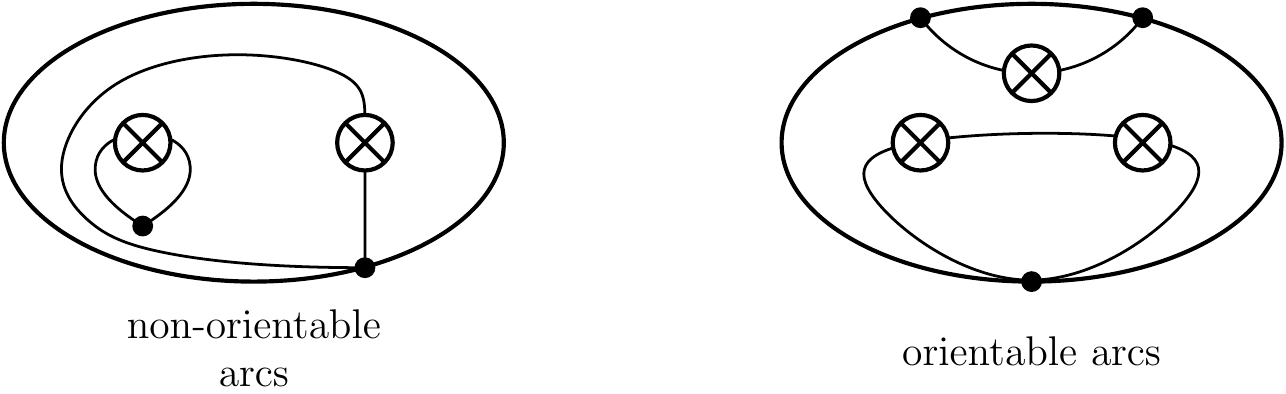}
\caption{Examples of orientable and non-orientable arcs.}
\label{orientablearcs}
\end{center}
\end{figure}

\begin{defn}

Define the parity, $p(\gamma)$, of an arc $\gamma$ of $(S,M)$ to be $+1$ or $-1$ respective of whether $\gamma$ passes through an even or odd number of cross-caps.

\end{defn}

\begin{lem}
\label{parity}
Let $\Delta = (\alpha, \beta, \gamma)$ be a triangle in $(S,M)$. Then $$p(\alpha)p(\beta)p(\gamma) = 1.$$

\end{lem}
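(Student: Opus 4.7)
The plan is to interpret the parity $p(\gamma) = (-1)^{n(\gamma)}$ (where $n(\gamma)$ is the number of cross-cap crossings of $\gamma$) as the value of a mod-$2$ homomorphism evaluated on the free homotopy class of $\gamma$. Concretely, the orientation double cover $\pi: \widehat{S} \to S$ provides a monodromy character $\varepsilon: \pi_1(S) \to \mathbb{Z}/2$ whose kernel consists of the orientation-preserving loops; one verifies that a loop traverses the cross-caps of $S$ an even number of times if and only if it lies in $\ker \varepsilon$. In particular, $n$ descends to a homomorphism on free homotopy classes of loops, and is additive mod $2$ under concatenation.

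With this in hand, I would argue as follows. The triangle $\Delta$, viewed as a puzzle piece of $T^{\circ}$ in the sense of Lemma \ref{puzzlepieces}, is a closed topological disk embedded in $S$; in particular, its boundary loop, obtained by concatenating $\alpha$, $\beta$, $\gamma$ with compatible orientations, is null-homotopic in $S$. Applying $\varepsilon$ to this nullhomotopic loop then yields $n(\alpha) + n(\beta) + n(\gamma) \equiv 0 \pmod{2}$, which multiplicatively reads $p(\alpha)\, p(\beta)\, p(\gamma) = 1$.

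The only genuine subtlety, and the main obstacle to a fully uniform argument, is the possibility that $\Delta$ is a self-folded triangle, in which case two of the sides coincide (say $\beta = \gamma$) and the topological boundary of the disk $\Delta$ is just the single arc $\alpha$, as the folded side is traversed twice in opposite directions and therefore contributes $0$ mod $2$ to the count. In this situation one observes that the monogon bounded by $\alpha$ is still a disk in $S$ (the enclosed puncture is merely an interior marked point of $S$, not a removed point), so $\alpha$ remains null-homotopic in $S$. Hence $n(\alpha)$ is even, $p(\alpha) = +1$, and the product reduces to $p(\alpha) \cdot p(\gamma)^2 = +1$, as required.
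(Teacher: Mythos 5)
Your proof is correct, and it reaches the same conclusion by a route that differs from the paper's in two respects. Where you invoke the orientation character $\varepsilon:\pi_1(S)\to\mathbb{Z}/2$ of the orientation double cover and apply it to the (null-homotopic) boundary word of $\Delta$, the paper reasons more geometrically and more briefly: it perturbs $\Delta$ inward to a strictly smaller triangle $\Delta'=(\alpha',\beta',\gamma')$ and observes that, since $\Delta'$ sits in an orientable disk neighbourhood, its boundary loop is orientation-preserving and hence $p(\alpha')p(\beta')p(\gamma')=1$; the perturbed arcs have the same parities as the originals, so the result follows. The second difference is that the paper never needs your self-folded case split, because the perturbation automatically makes the three sides of $\Delta'$ distinct even when two sides of $\Delta$ coincide. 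In fact, your separate case is avoidable within your own framework too: when $\beta=\gamma$, the boundary word of the triangular puzzle piece still reads $\alpha\cdot\gamma\cdot\gamma^{-1}$, and applying $\varepsilon$ to this null-homotopic concatenation directly yields $n(\alpha)+2n(\gamma)\equiv 0\pmod 2$, which is the desired identity. Both proofs hinge on the same key observation — a triangle is a disk, so its boundary loop is orientation-preserving — with yours making the homotopy-theoretic mechanism explicit and the paper's keeping it terse via the interior-perturbation trick.
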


\begin{proof}

Consider a slightly smaller triangle $\Delta' = (\alpha', \beta', \gamma')$ lying in the interior of $\Delta$. The parities of the arcs in $\Delta'$ remain the same as those in $\Delta$ since they are only slight perturbations of their original versions. In particular, $$p(\alpha)p(\beta)p(\gamma) = p(\alpha')p(\beta')p(\gamma').$$

Moreover, although two sides of $\Delta$ may be glued together in $(S,M)$, all arcs in $\Delta'$ will be distinct. As a consequence, the neighbourhood of $\Delta'$ in $(S,M)$ is orientable, implying that $p(\alpha')p(\beta')p(\gamma') = 1$.

\end{proof}

\begin{lem}
\label{incident arc}
Let $T$ be a triangulation of $(S,M)$ and $\gamma$ be a non-orientable arc in $T$ with unique endpoint $m \in M$. Then there exists an orientable arc $\beta \in T$ with (at least one) endpoint $m$.

\end{lem}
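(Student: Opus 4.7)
The plan is to pass to the associated ideal triangulation $T^{\circ}$, apply Lemma~\ref{parity} to a triangle containing $\gamma$, and translate the resulting orientable side back to an arc of $T$. Since $\gamma$ is non-orientable, the remark following the definition of parity tells us that $m$ is its unique endpoint and $p(\gamma)=-1$.

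Because $T$ is a triangulation it contains no one-sided closed curves, so the annulus possibility in Lemma~\ref{puzzlepieces} does not occur and $T^{\circ}$ cuts $(S,M)$ into triangles only. The hypothesis that $\gamma$ does not cut out a once-punctured monogon (built into the definition of a triangulation) moreover ensures that $\gamma\in T^{\circ}$ is not the outer loop of a self-folded triangle. Pick any triangle $\Delta=(\gamma,\alpha,\delta)$ of $T^{\circ}$ adjacent to $\gamma$: since the two corners of $\Delta$ incident to $\gamma$ are both the marked point $m$, each of $\alpha$ and $\delta$ has $m$ as an endpoint. If $\alpha$ and $\delta$ coincided as arcs then $\Delta$ would itself be self-folded with $\gamma$ as the outer loop, which we have excluded; so $\gamma,\alpha,\delta$ are pairwise distinct.

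Lemma~\ref{parity} applied to $\Delta$ now gives $p(\gamma)p(\alpha)p(\delta)=1$, hence $p(\alpha)p(\delta)=-1$, so exactly one of $\alpha,\delta$---say $\alpha$---has $p(\alpha)=+1$. By the remark above, $\alpha$ is therefore orientable. Finally, I would trace $\alpha$ back to an arc of $T$ via the inverse of the $T\mapsto T^{\circ}$ recipe (only the first two bullets matter since $T$ has no one-sided closed curves). In the direct case, $\alpha$ is the underlying curve of some arc $\alpha'\in T$ (possibly with tags differing at over-notched punctures), so $\alpha'\in T$ is incident to $m$ and remains orientable since parity depends only on the underlying curve. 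In the indirect case, $\alpha$ is a monogon-enclosing loop that replaced some once-notched arc $\beta'\in T$ running from $m$ to an interior puncture $q$; since $q$ lies in the interior of the disk enclosed by $\alpha$ while $m$ lies on its boundary we have $q\ne m$, so $\beta'$ has two distinct endpoints and is automatically orientable, again with $m$ as an endpoint.

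The main obstacle I anticipate is this final translation: ensuring that the arc of $T^{\circ}$ produced by the parity argument yields an arc of $T$ that retains both orientability and the endpoint $m$. The parity computation itself is essentially one application of Lemma~\ref{parity}; the subtlety lies in unpacking the two cases of the $T\mapsto T^{\circ}$ construction and verifying the conclusion in each.
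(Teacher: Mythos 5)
Your proof follows the paper's argument exactly: pass to the ideal triangulation $T^{\circ}$, apply Lemma~\ref{parity} to a triangle containing $\gamma$ to extract a side of parity $+1$ (hence orientable), and translate that side back to an arc of $T$, treating separately the case where it is a monogon-enclosing loop substituted for a notched arc. You also make explicit a few points the paper leaves implicit (that both remaining sides of the triangle are incident to $m$ because $\gamma$ is a loop at $m$, and that the triangle is not self-folded along $\gamma$), and the argument is correct.
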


\begin{proof}

The non-orientable arc $\gamma$ belongs to a triangle $\Delta$ in $T^{\circ}$ (see Figure \ref{idealtriangulation} regarding definition of $T^{\circ}$). Lemma \ref{parity} ensures there exists an orientable arc $\beta \in \Delta$. If $\beta \in T$ then we are done, so consider the other possibility of $\beta$ enclosing two arcs $\beta_1,\beta_2 \in T$ which only differ by a tagging at one puncture. $\beta_1$ and $\beta_2$ have distinct endpoints which ensures they are orientable, and this concludes the proof.

\end{proof}

\begin{defn}[Principal lamination]
\label{principal lamination def}
Let $T$ be a triangulation of $(S,M)$. We define a \textit{\textbf{principal lamination}}, $\mathbf{L}_T := \{L_{\gamma} | \gamma \in T\}$ to be a multi-lamination satisfying the bullet points below. In Figures \ref{perturbingarcs} and \ref{nonperturbingarcs2} we provide examples of the types of laminations which constitute $\mathbf{L}_T$.
\begin{itemize}

\item If $\gamma$ is an orientable plain arc then $L_{\gamma}$ is taken to be the lamination that runs along $\gamma$ in a small neighbourhood thereof, which consistently spirals around the endpoints of $\gamma$ both clockwise (or anti-clockwise). For endpoints of $\gamma$ which are not punctures $L_{\gamma}$ cannot `spiral', instead we mean it turns clockwise (resp. anti-clockwise) at the marked point, and ends when it reaches the boundary.

\item If $\gamma$ is an orientable arc with some notched endpoints, $L_{\gamma}$ is defined as above, except now, at notched endpoints the direction of spinning is reversed.

\item If $\gamma$ is a non-orientable arc with (unique) endpoint $m$ situated on the boundary, then consider two points on the boundary, $m_1$ and $m_2$, that lie either side of $m$ in a small neighbourhood thereof. $L_{\gamma}$ is the lamination with endpoints $m_1$ and $m_2$, which runs along a small neighbourhood of $\gamma$ - note that $L_{\gamma}$ will intersect $\gamma$ once.

\item If $\gamma$ is a non-orientable arc situated at a puncture $p$ then, by Lemma \ref{incident arc}, $\gamma$ has an incident orientable arc $\beta \in T$. In Figure \ref{nonperturbingarcs2} we provide an illustration of what $L_{\gamma}$ looks like in this case, but to be precise it is the lamination which:
\begin{itemize}

\item spirals out of the puncture $p$, then

\item runs parallel to $\gamma$ after intersecting $\gamma$ and then $\beta$, (after $L_{\gamma}$ intersects $\gamma$ it is allowed to intersect both endpoints of $\beta$ before running parallel to $\gamma$, it is just not allowed to intersect one endpoint of $\gamma$ and then run parallel to the other endpoint of $\gamma$, without intersecting $\beta$ inbetween) then,

\item intersects $\gamma$ and continues to run parallel to it, then

\item when it arrives back to a neighbourhood of $p$, it should run against the orientation of spiralling at $p$ until it reaches an endpoint of $\beta$, then

\item runs along a small neighbourhood of $\beta$, and at the endpoint spirals depending on the type of tagging of $\beta$: if the endpoints of $\beta$ receive the same tagging then the direction of spiralling should be consistent with the spiralling of $\gamma$ at $p$; if the endpoints of $\beta$ receive different taggings then the direction of spiralling should oppose the spiralling of $\gamma$ at $p$.

\end{itemize}

\end{itemize}

\end{defn}

\begin{figure}[H]
\begin{center}
\includegraphics[width=8cm]{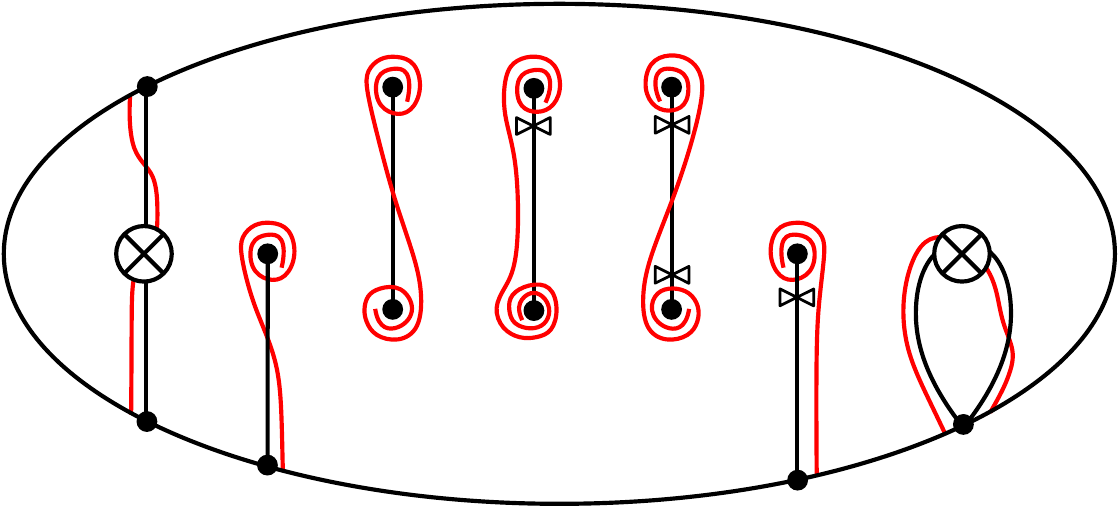}
\caption{Examples of various types of laminations occurring in a principal lamination.}
\label{perturbingarcs}
\end{center}
\end{figure}

\begin{figure}[H]
\begin{center}
\includegraphics[width=12cm]{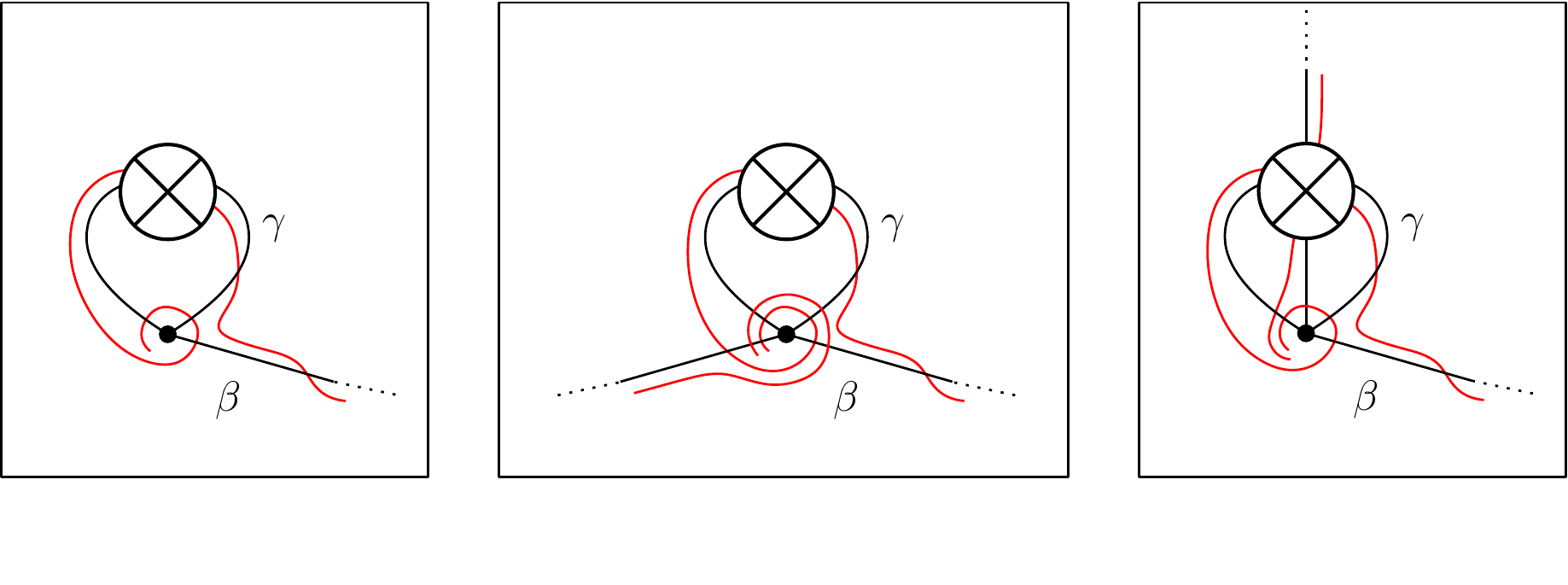}
\caption{The types of laminations, $L_{\gamma}$, when $\gamma$ is a non-orientable arc situated at a puncture, and $\beta$ is the chosen incident orientable arc.}
\label{nonperturbingarcs2}
\end{center}
\end{figure}

\begin{rmk}
In Definition \ref{principal lamination def}, $\beta$ is required to be orientable so that if it has unique endpoint $p$, when $L_{\gamma}$ runs parallel to it (and intersects it) and arrives back at $p$, it is able to spiral back around $p$. If $\beta$ was non-orientable it would not be able to spiral back around $p$ without self-intersections. Likewise, the conditions that $L_{\gamma}$ must:

\begin{enumerate}[label=(\alph*)]

\item intersect $\gamma$ and then $\beta$ before running parallel to $L_{\gamma}$;

\item (when moving against the orientation of spiralling at $p$) run parallel to $\beta$ as soon as it meets an endpoint $\beta$; 

\end{enumerate}

\noindent are required, otherwise self-intersections would occur if $\beta$ also has unique endpoint $p$.
\end{rmk}

\begin{rmk}

In general, for a given arc $\gamma \in T$ the choice of $L_{\gamma}$ is not unique. We are just concerned about choosing some lamination $L_{\gamma}$ that satisfies the rules demanded in Definition \ref{principal lamination def}. The motivation behind the definition is to ensure the shortened exchange matrix associated to $T$ and $L_{\gamma}$ is of full rank.

\end{rmk}

\begin{prop}

Let $(S,M)$ be a bordered surface and $T$ a triangulation. If $\mathbf{L}_T$ is a principal lamination of $T$ then the shortened exchange matrix $\overline{B}$ is of full rank, and the gcd of each column is $1$.

\end{prop}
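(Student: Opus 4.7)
My plan is to exhibit an $n \times n$ unimodular submatrix of $\overline{B}$; this simultaneously forces full column rank and $\gcd = 1$ in every column, since the gcd of any column of an integer matrix divides the determinant of every $n\times n$ submatrix it sits in, and $\pm 1$ is only divisible by $\pm 1$. The natural candidate is the square block $\overline{B}_L := (\overline{b}_{L_\gamma, \delta})_{\gamma,\delta \in T}$ indexed by the rows of the laminations of $\mathbf{L}_T$, and the aim is to show that, under a suitable ordering of the arcs of $T$, $\overline{B}_L$ is lower triangular with $\pm 1$ on the diagonal.

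First I would verify that $\overline{b}_{L_\gamma, \gamma} = \pm 1$ for each $\gamma \in T$ by a local computation at the flip quadrilateral $Q_\gamma$, splitting into the four arc-types of Definition \ref{principal lamination def}. The defining feature of $L_\gamma$ -- it runs in an arbitrarily small neighbourhood of $\gamma$ and spirals consistently at the endpoints -- ensures that after lifting to the double cover the two lifts $\overline{L}_\gamma, \tilde{\overline{L}}_\gamma$ together contribute exactly one net $S$- or $Z$-shaped crossing at $Q_\gamma$. For a non-orientable arc at a puncture, the auxiliary orientable arc $\beta$ from Lemma \ref{incident arc} is essential, and the dependence of the spiralling direction on the tagging of $\beta$ is precisely what produces the value $\pm 1$.

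Next I would show that, with non-orientable punctured arcs $\gamma$ ordered strictly after their auxiliary $\beta$'s, the entries $\overline{b}_{L_\gamma, \delta}$ above the diagonal vanish. Because $L_\gamma$ is confined to a small neighbourhood of $\gamma$ (together with a neighbourhood of $\beta$ in the puncture case), the entry $\overline{b}_{L_\gamma, \delta}$ can only be non-zero when $\delta$ shares a marked point with $\gamma$ (or with $\beta$). The consistency of the spiral directions at each shared endpoint then makes the $S$- and $Z$-contributions of the two lifts to $Q_\delta$ cancel once the shortened shear coordinate is assembled, and the ordering convention moves the only remaining interactions (those between $L_\gamma$ and $\beta$ in the puncture case) into strictly sub-diagonal positions.

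The hardest step will be the non-orientable-at-a-puncture case, in which $L_\gamma$ executes a multi-step route through both $\gamma$ and $\beta$ and the direction of spinning depends on the tagging of $\beta$. The cancellation analysis there requires a careful enumeration of elementary local configurations, in the spirit of the analysis used in the proof of Lemma \ref{lamination restriction}. Once this case is settled, $\det(\overline{B}_L) = \pm 1$, giving both conclusions of the proposition.
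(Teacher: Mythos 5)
Your proposal matches the paper's proof in essence: both arguments isolate the $n\times n$ block of $\overline{B}$ whose rows are indexed by the laminations $L_\gamma$, show it is triangular (under a suitable ordering that places each $\beta$ before its non-orientable punctured $\gamma$) with $\pm 1$ on the diagonal, and read off full rank together with the column-gcd condition. The only cosmetic differences are that the paper justifies the vanishing of the remaining entries via the bigon criterion and minimal position rather than your lift-cancellation phrasing, and that it speaks directly of a triangular block rather than passing through the determinant; both routes deliver the same unimodularity.
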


\begin{proof}

By the bigon criterion [Proposition 1.7, \cite{farb2011primer}], since $L_i$ does not form a bigon with any arc in $T$, then it is in minimal position (regarding intersections). Therefore, $L_i$ will add weight $\pm 1$ to $\gamma_i$, and to $\beta_i$ if $\gamma_i$ is non-orientable and situated at a puncture. Moreover, $L_i$ will not add weight to any other arcs. Consequently, after rearranging columns of $\overline{B}$, the bottom $n \times n$ submatrix will be upper triangular with $\pm 1$ entries on its diagonals. This confirms $\overline{B}$ has full rank, and that the gcd of each column is $1$.

\end{proof}

In Proposition \ref{rank preserving} we will show that the rank of the shortened exchange matrix is preserved under mutation. For this we need the following technical Lemma \ref{mutationequation}.

\begin{lem}
\label{mutationequation}
Let $i \in \{1,\ldots, n\}$ be a vertex in an anti-symmetric quiver $Q$, and suppose there is no path $k \rightarrow i \rightarrow \tilde{k}$ for any vertex $k$ in $Q$.

If $\overline{b}_{ij} \geq 0 \geq \overline{b}_{ji}$ or $\overline{b}_{ij} \leq 0 \leq \overline{b}_{ji}$ for every $j \in \{1,\ldots, n\}$, then, for any $j,k \in \{1,\ldots, n\}\setminus \{i\}$, mutation at $i$ and $\tilde{i}$ in $Q$ gives:

\begin{equation}
\label{shortened equation}
\overline{b}_{jk}' = \overline{b}_{jk} + \max(0, -\overline{b}_{ji})\overline{b}_{ik} + \max(0, \overline{b}_{ik})\overline{b}_{ji}.
\end{equation}

\end{lem}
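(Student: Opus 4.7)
The plan is to apply the standard Fomin--Zelevinsky matrix mutation formula at $i$ and then at $\tilde{i}$ in $Q$, and project down to the shortened matrix by summing rows $j$ and $\tilde{j}$. Since anti-symmetry forbids arrows between $i$ and $\tilde{i}$, we have $b_{i\tilde{i}} = b_{\tilde{i}i} = 0$, so the entries $b_{j\tilde{i}}, b_{\tilde{j}\tilde{i}}, b_{\tilde{i}k}$ are unaffected by $\mu_i$, and for $j,k \notin \{i,\tilde{i}\}$ one obtains
\begin{equation*}
b''_{jk} \;=\; b_{jk} + \tfrac{1}{2}\bigl(|b_{ji}|b_{ik} + b_{ji}|b_{ik}|\bigr) + \tfrac{1}{2}\bigl(|b_{j\tilde{i}}|b_{\tilde{i}k} + b_{j\tilde{i}}|b_{\tilde{i}k}|\bigr),
\end{equation*}
and the analogous expression for $b''_{\tilde{j}k}$; adding these yields $\overline{b}'_{jk}$ in terms of the entries of the original quiver.

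The first simplification uses anti-symmetry in the forms $b_{j\tilde{i}} = -b_{\tilde{j}i}$ and $b_{\tilde{j}\tilde{i}} = -b_{ji}$, together with two consequences of the no-path hypothesis: firstly, $b_{ji}$ and $b_{\tilde{j}i}$ are sign-compatible (so that $|b_{ji}|+|b_{\tilde{j}i}| = |\overline{b}_{ji}|$), and secondly, $b_{ik}$ and $b_{\tilde{i}k}$ have opposite signs (so that $b_{ik}+b_{\tilde{i}k} = \overline{b}_{ik}$ with no cancellation of magnitudes). After these substitutions, the sum collapses to
\begin{equation*}
\overline{b}'_{jk} \;=\; \overline{b}_{jk} + \tfrac{1}{2}\bigl[\,|\overline{b}_{ji}|\,\overline{b}_{ik} + \overline{b}_{ji}\bigl(|b_{ik}|-|b_{\tilde{i}k}|\bigr)\bigr].
\end{equation*}

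The key remaining identity is $|b_{ik}|-|b_{\tilde{i}k}| = |\overline{b}_{ik}|$, and here both hypotheses enter. From anti-symmetry one computes $b_{ik} = (\overline{b}_{ik} - \overline{b}_{ki})/2$ and $b_{\tilde{i}k} = (\overline{b}_{ik} + \overline{b}_{ki})/2$; the no-path condition translates into $b_{ik}b_{\tilde{i}k}\le 0$, equivalently $|\overline{b}_{ik}| \le |\overline{b}_{ki}|$; and the sign hypothesis, applied with $j$ replaced by $k$, forces $\overline{b}_{ik}$ and $\overline{b}_{ki}$ to have opposite signs. A short direct computation on the two resulting sub-cases (depending on which of $\overline{b}_{ik}, \overline{b}_{ki}$ is non-negative) verifies the identity. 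The right-hand side then becomes the standard Fomin--Zelevinsky mutation increment $\tfrac{1}{2}\bigl(|\overline{b}_{ji}|\overline{b}_{ik} + \overline{b}_{ji}|\overline{b}_{ik}|\bigr)$ applied to the pair $(\overline{b}_{ji},\overline{b}_{ik})$, and a final four-case check on the signs of $\overline{b}_{ji}$ and $\overline{b}_{ik}$ confirms this coincides with $\max(0,-\overline{b}_{ji})\overline{b}_{ik} + \max(0,\overline{b}_{ik})\overline{b}_{ji}$. The main obstacle is precisely this sign bookkeeping: all three structural hypotheses (anti-symmetry, absence of a path $k\to i\to\tilde{k}$, and the sign condition on $\overline{b}_{ij}$ versus $\overline{b}_{ji}$) must work in concert to force the a priori two-step mutation of $Q$ to collapse into a single Fomin--Zelevinsky step on the shortened matrix.
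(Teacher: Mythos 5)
Your proof is correct, and it goes by the same basic route as the paper's: apply the Fomin--Zelevinsky mutation rule at $i$ and $\tilde{i}$, sum the rows $j$ and $\tilde{j}$, and use anti-symmetry together with the no-path condition and the sign hypothesis to evaluate the result. The difference is organizational, and in your favour. The paper first reduces by WLOG to $\overline{b}_{ji}\geq 0$, then immediately splits into the two sign-configurations of $(b_{ik},b_{\tilde{i}k})$ and computes $\overline{b}'_{jk}$ separately in each (getting $\overline{b}_{jk}$ in one case and $\overline{b}_{jk}+\overline{b}_{ji}\overline{b}_{ik}$ in the other), and then needs a second round of case analysis on the signs of $\overline{b}_{ji}$ and $\overline{b}_{ik}$ to match the claimed formula. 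You instead carry the sum uniformly to the intermediate identity $\overline{b}'_{jk}=\overline{b}_{jk}+\tfrac{1}{2}\bigl(|\overline{b}_{ji}|\,\overline{b}_{ik}+\overline{b}_{ji}(|b_{ik}|-|b_{\tilde{i}k}|)\bigr)$ and then concentrate \emph{all} the case-checking in the single sub-identity $|b_{ik}|-|b_{\tilde{i}k}|=|\overline{b}_{ik}|$ (which uses both the no-path condition, giving $b_{ik}b_{\tilde{i}k}\le 0$, and the hypothesis $\overline{b}_{ik}\overline{b}_{ki}\le 0$). This makes transparent what the paper's formula is really saying: the double mutation of $Q$ descends to a single Fomin--Zelevinsky mutation step on $\overline{B}$. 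That is a nicer way to package the argument, even though the underlying ingredients are the same. One small wording issue: your parenthetical ``so that $b_{ik}+b_{\tilde{i}k}=\overline{b}_{ik}$ with no cancellation of magnitudes'' is confusing -- the sum $b_{ik}+b_{\tilde{i}k}$ is $\overline{b}_{ik}$ by definition, and when $b_{ik}$ and $b_{\tilde{i}k}$ have opposite signs there \emph{is} cancellation; what you actually exploit (correctly, later) is that the opposite-sign fact makes $|b_{ik}|-|b_{\tilde{i}k}|$ computable in terms of $\overline{b}_{ik}$ and $\overline{b}_{ki}$.
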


\begin{proof}

Without loss of generality we may assume $\overline{b}_{ji}:= b_{ji} + b_{\tilde{j}i} \geq 0$; otherwise we could just reverse all the arrows, as this has no effect on the truth of the proposed equation (\ref{shortened equation}). Since there are no paths $k \rightarrow i \rightarrow \tilde{k}$ for any $k$, then $b_{ji}, b_{\tilde{j}i} \geq 0$. Moreover, by using this path condition again, and anti-symmetry, we realise either 

\begin{enumerate}[label=(\alph*)]

\item $b_{ik} \leq 0$ and $b_{\tilde{i}k} \geq 0$, or

\item $b_{ik} \geq 0$ and $b_{\tilde{i}k} \leq 0$.

\end{enumerate}

The respective local configurations of the quiver for cases (a) and (b) are shown in Figure \ref{twoquivers}. In case (a) we see that mutating the quiver (at $i$ and $\tilde{i}$) adds no new arrows between $j,k,\tilde{j},\tilde{k}$, so 

\begin{equation}
\label{mutationa}
\overline{b}_{jk}' = \overline{b}_{jk}
\end{equation}.

In case (b) we see mutation produces 
\begin{equation}
\label{mutationb}
\overline{b}_{jk}' :=  b_{jk}' + b_{\tilde{j}k}' = (b_{jk} + b_{ji}b_{ik} - b_{j\tilde{i}}b_{\tilde{i}k}) + (b_{\tilde{j}k} + b_{\tilde{j}i}b_{ik} - b_{\tilde{j}\tilde{i}}b_{\tilde{i}k}) = \overline{b}_{jk} + \overline{b}_{ji}\overline{b}_{ik}.
\end{equation}

With this knowledge at hand, we will now check agreement of the proposed equation \ref{shortened equation} and quiver mutation. We shall achieve this by splitting the task into two parts, depending on whether

\begin{enumerate}

\item $\sgn(\overline{b}_{ji}) = \sgn(\overline{b}_{ik}) = \pm 1$, or

\item $\sgn(-\overline{b}_{ji}) = \sgn(\overline{b}_{ik})$, or at least one of $\overline{b}_{ji}, \overline{b}_{ik}$ is zero.

\end{enumerate}

For case 1(a), $\overline{b}_{ki}:= b_{ki} + b_{\tilde{k}i} > 0$. Since $b_{ik} = 1 > 0$, this contradicts the conditions of the lemma, meaning case 1(a) is redundant. \newline \indent For cases 1(b) and 2(a) our proposed equation \ref{shortened equation} produces exactly what is written in (\ref{mutationb}) and (\ref{mutationa}), respectively. \newline
\indent For case 2(b) we have $\overline{b}_{ki}:= b_{ki} + b_{\tilde{k}i} \leq 0$. However, since $0 \leq \sgn(-\overline{b}_{ji}) = \sgn(\overline{b}_{ik})$, then $\overline{b}_{ik} \leq 0$, so by the conditions of the lemma we deduce that $\overline{b}_{ik} = 0$. In turn, this implies $b_{ki} = b_{\tilde{i}k}$, and equation (\ref{mutationb}) reduces to $\overline{b}_{jk}' = \overline{b}_{jk}$. This is exactly what our proposed equation \ref{shortened equation} produces.

\end{proof}

\begin{figure}[H]
\begin{center}
\includegraphics[width=12cm]{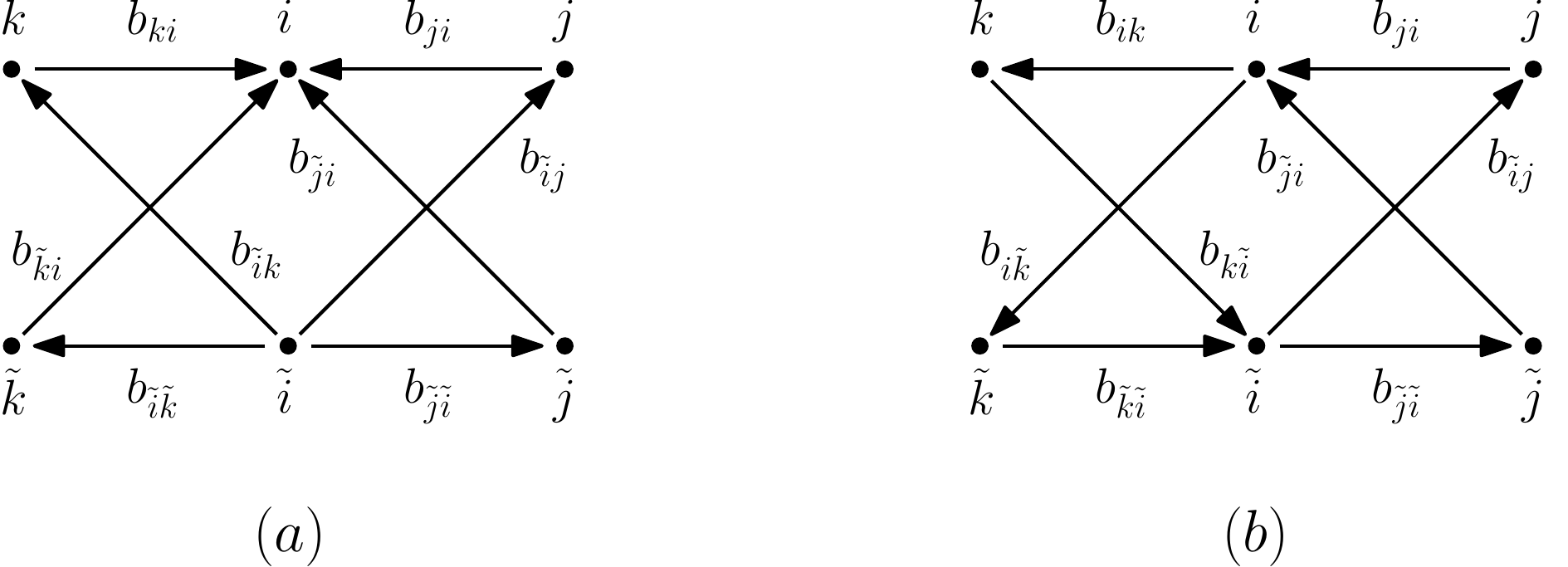}
\caption{The two possible (local) configurations of $Q$ with respect to $i,j,k$. (Here all coefficients present are $\geq 0$.)}
\label{twoquivers}
\end{center}
\end{figure}

\begin{prop}
\label{rank preserving}
Let $i \in \{1,\ldots, n\}$ be a vertex in an anti-symmetric quiver $Q$, and suppose there is no path $k \rightarrow i \rightarrow \tilde{k}$ for any vertex $k$ in $Q$. Then mutation at $i$ and $\tilde{i}$ in $Q$ preserves the rank of the shortened exchange matrix $\overline{B}$.

\end{prop}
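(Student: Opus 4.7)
The plan is to show that double mutation $\mu_i\circ\mu_{\tilde i}$ on the anti-symmetric quiver $Q$ induces standard cluster-algebra mutation at column $i$ on the rectangular matrix $\overline B$; rank preservation will then follow from the classical fact that cluster-algebra matrix mutation can be written as $\overline B'=E\,\overline B\,F$ for unimodular $E$ and $F$ depending only on the sign pattern of the $i$th row and column of $\overline B$.

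First I would handle the $i$th row and column of $\overline B$ directly. Because $b_{i\tilde i}=0$ by the second clause of anti-symmetry, a short computation of the two single mutations gives $b'_{ij}=-b_{ij}$ and $b'_{\tilde i j}=-b_{\tilde i j}$ for all $j\neq i,\tilde i$, and so $\overline b'_{ij}=-\overline b_{ij}$; an analogous computation for the columns yields $\overline b'_{ji}=-\overline b_{ji}$. To handle the remaining entries $\overline b_{jk}$ with $j,k\neq i$, I would apply Lemma \ref{mutationequation}; its conclusion
\[
\overline b'_{jk}=\overline b_{jk}+\max(0,-\overline b_{ji})\,\overline b_{ik}+\max(0,\overline b_{ik})\,\overline b_{ji}
\]
is precisely the entry-wise exchange rule for standard cluster-algebra mutation applied to $\overline B$ at column $i$. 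Combined with the sign reversals above, this identifies $\overline B'=\mu_i(\overline B)$ in the rectangular-matrix sense; writing this mutation in the unimodular form $\overline B' = E\,\overline B\,F$ then delivers $\mathrm{rank}(\overline B')=\mathrm{rank}(\overline B)$.

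The hard part will be reconciling the extra sign hypothesis of Lemma \ref{mutationequation}---namely that $\overline b_{ij}$ and $\overline b_{ji}$ have weakly opposite signs for every $j$---with the weaker no-path assumption of the proposition. I expect to address this by a finer case analysis: the no-path condition alone forces, for every $k$, the pair $b_{ik},b_{\tilde i k}$ to have weakly opposite signs (as observed in the proof of Lemma \ref{mutationequation}), and by splitting into the resulting cases one can verify that the composite mutation still acts on $\overline B$ as a product of the negation of row $i$ and column $i$ with a sequence of invertible row and column additions, regardless of whether the cleaner cluster-mutation formula literally applies. In each sign regime these row and column operations are unimodular, so the rank of $\overline B$ is preserved.
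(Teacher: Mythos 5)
You correctly set up the argument and you correctly locate the obstacle: Lemma \ref{mutationequation} requires $\overline b_{ij}$ and $\overline b_{ji}$ to have weakly opposite signs for every $j$, and the no-path hypothesis alone does not supply this. Your computations for the $i$th row and column of $\overline B$ are right, and the overall plan of writing $\overline B'=E\,\overline B\,F$ with unimodular $E,F$ is exactly what the paper does. But the gap you flag is never actually closed: your proposed fix is to ``split into the resulting cases'' and \emph{assert} that double mutation is still a product of unimodular row and column operations. That assertion is not established, and it is not an innocent omission. If you carry out the direct computation, the result in the general case is not the clean formula of Lemma \ref{mutationequation}; instead one gets, for $j,k\neq i$,
\[
\overline b''_{jk}=\overline b_{jk}+\max(0,\overline b_{ji})\,\overline b_{ik}\,[\overline b_{ki}\le 0]+\max(0,-\overline b_{ji})\,\overline b_{ik}\,[\overline b_{ki}\ge 0],
\]
which is a rank-$\le 2$ perturbation whose dependence on $k$ involves the sign of $\overline b_{ki}$ as well as the value of $\overline b_{ik}$. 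This does not visibly factor as a single $E\,\overline B\,F$, so rank preservation does not follow by the route you sketch without further work.

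The paper's missing ingredient, which your proposal does not contain, is a relabelling trick: for each $j\in\{1,\dots,n\}$ for which $\overline b_{ij}$ and $\overline b_{ji}$ have the same (strict) sign, swap the labels $j\leftrightarrow\tilde j$ in $Q$. Anti-symmetry gives $\overline b^*_{jk}=\overline b_{jk}$ and $\overline b^*_{kj}=-\overline b_{kj}$, so this relabelling negates the $j$th column of $\overline B$ (a unimodular, hence rank-preserving, operation that also leaves the exchange polynomials unchanged) and restores the sign hypothesis of Lemma \ref{mutationequation}. Only after this normalization does the lemma apply and the explicit unimodular factorization $\overline B'=E\,\overline B\,F$ go through. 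Without that observation your proof has a genuine hole; with it, the rest of your outline agrees with the paper.
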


\begin{proof}

We would like to apply Lemma \ref{mutationequation} to understand how the coefficients in $\overline{B}$ change under mutation. However, it may be that $\overline{B}$ does not satisfy the conditions demanded in the lemma. Explicitly, there may exist $j \in \{1,\ldots,n\}$ such that $\overline{b}_{ij},\overline{b}_{ji} > 0$ or $\overline{b}_{ij},\overline{b}_{ji} < 0$. However, swapping the labels $j \leftrightarrow \tilde{j}$ in $Q$ gives us a different shortened exchange matrix $\overline{B}^{*}$; for any $k \in \{1,\ldots, n\}$ we get:

$$ \overline{b}^{*}_{jk} = b^{*}_{jk} + b^{*}_{\tilde{j}k} = b_{\tilde{j}k} + b_{jk} = \overline{b}_{jk}$$
$$ \overline{b}^{*}_{kj} = b^{*}_{kj} + b^{*}_{\tilde{k}j} = b_{k\tilde{j}} + b_{\tilde{k}\tilde{j}} = -(b_{\tilde{k}j} + b_{kj}) = -\overline{b}_{kj}$$.

In particular, we obtain $\overline{b}^{*}_{ji} = \overline{b}_{ji} > 0 > -\overline{b}_{ij} = \overline{b}^{*}_{ij}$. This means that we can perform a relabelling, $j \leftrightarrow \tilde{j}$, of the quiver for any $j$ which fails the condition demanded in Lemma \ref{mutationequation}. The new corresponding shortened exchange matrix $\overline{B}^{*}$ will then satisfy the desired conditions. Note that this relabelling process only multiplies the $j^{th}$ column by $-1$, so it preserves the rank of the matrix, and the corresponding exchange polynomials remain unchanged. \newline
Therefore, without loss of generality, we may assume $\overline{B}$ satisfies the conditions of Lemma \ref{mutationequation}. As a consequence of Lemma \ref{mutationequation} the following equations holds.

\[
\begin{array}{c}
\begin{pmatrix}
  \hspace{5mm} \scalebox{0.75}{-1} \hspace{5mm} \vline & \hspace{-2mm} \scalebox{0.75}{0} \cdots \scalebox{0.75}{0} \\ \hline
  \scalebox{0.5}{$ {\max(0,-\overline{b}_{21})}$} \hspace{1mm} \vline & \hspace{-3mm} \raisebox{-15pt}{{\mbox{{$I_{m-1}$}}}} \\[-3.5ex]
  \vdots & \\[-0.5ex]
  \scalebox{0.5}{$ {\max(0,-\overline{b}_{m1})}$} \hspace{0.45mm} \vline &
  \end{pmatrix}
  \overline{B}
 \begin{pmatrix}
  \scalebox{0.75}{-1} \hspace{1mm} \vline & \hspace{-2mm} \scalebox{0.75}{$ {\max(0,\overline{b}_{12})}$} \cdots \scalebox{0.75}{$ {\max(0,\overline{b}_{1n})}$} \\ \hline
  \scalebox{0.75}{0} \hspace{2mm} \vline & \hspace{-3mm} \raisebox{-15pt}{{\mbox{{$I_{n-1}$}}}} \\[-3.5ex]
  \vdots & \\[-0.5ex]
  \scalebox{0.75}{0} \hspace{2mm} \vline &
  \end{pmatrix}
  =
 \begin{pmatrix}
  \hspace{5mm} 0 \hspace{4.1mm} \vline & \hspace{-2mm} -\overline{b}_{12} \hspace{1mm} \cdots \hspace{1mm} -\overline{b}_{1n} \\ \hline
  \hspace{1mm} -\overline{b}_{21} \hspace{2mm} \vline & \hspace{-3mm} \raisebox{-15pt}{{\mbox{{$\big(\overline{b}'_{jk}\big)$}}}}_{j,k\geq 2} \\[-3.5ex]
  \vdots & \\[-0.5ex]
  \hspace{1mm} -\overline{b}_{m1} \hspace{0.85mm} \vline &
  \end{pmatrix}

\end{array}
\]

The matrix on the right is, $\overline{B}'$, the shortened exchange matrix of the mutated quiver $Q' = \mu_1 \circ \mu_{\tilde{1}}(Q)$. Moreover, since the matrices we are multiplying $\overline{B}$ by are invertible, $\overline{B}$ and $\overline{B}'$ have the same rank.

\end{proof}

\begin{lem}
\label{distinctness of polys}
Let $\mathbf{L}$ be a principal lamination of $(S,M)$ and $T$ a quasi-triangulation. Then the exchange polynomials of the quasi-arcs in $T$ are distinct.

\end{lem}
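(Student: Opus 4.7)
The plan is to reduce distinctness to the assertion that the shortened exchange matrix $\overline{B}$ of $Q_{\overline{T}^*,\overline{\mathbf{L}}}$ has full rank $n$ for every quasi-triangulation $T$, and then establish this inductively along the flip graph, starting from the distinguished triangulation $T_0$ used to build the principal lamination.

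For the reduction, observe that if $\overline{B}$ has rank $n$ then no two of its columns can coincide or be negatives of one another, since either would provide a nontrivial linear dependence. Because the binomial $\overline{F}_j = \prod_{\overline{b}_{ij}>0} x_i^{\overline{b}_{ij}} + \prod_{\overline{b}_{ij}<0} x_i^{-\overline{b}_{ij}}$ is determined by column $j$ only up to simultaneous negation of all entries, distinct columns therefore produce distinct polynomials. Proposition \ref{correct polys} identifies $F_\gamma$ with $\overline{F}_\gamma$ for every quasi-arc $\gamma \in T$ that neither is nor meets a one-sided closed curve, giving distinctness amongst this collection. For the remaining pair, the one-sided closed curve $\alpha$ and the arc $\beta$ intersecting it, Lemma \ref{correct polys quasi} shows that $F_\alpha$ is a binomial only in $x$, $y$, and the $L_i$, whereas $F_\beta$ genuinely involves the variable $\alpha$; hence $F_\alpha \neq F_\beta$, and the column-distinctness already established precludes either from coinciding with any other $F_\gamma$.

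To prove the rank property, begin with $T = T_0$, where the immediately preceding proposition has already established full rank via the upper-triangular $n \times n$ block formed by the lamination rows with $\pm 1$ diagonal. Since the flip graph of quasi-triangulations is connected by Proposition \ref{flipconnected} (modulo the once-punctured closed surface exception), it suffices to verify that each single flip $T \leadsto T'$ transforms $\overline{B}$ invertibly. When $T$ and $T'$ are both triangulations, Lemma \ref{t-mutable path} guarantees that the path obstruction $k \to i \to \tilde{k}$ is absent at the mutated vertex, so Proposition \ref{rank preserving} supplies an explicit factorisation $\overline{B}' = P\overline{B}Q$ with invertible $P$ and $Q$. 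When the flip instead creates, destroys, or occurs inside a quasi-triangulation with a one-sided closed curve, the path obstruction is present and Proposition \ref{rank preserving} does not directly apply; here Lemma \ref{exceptional relations} makes the change in the columns of $\alpha^*$, $\beta$, $x$, and $y$ entirely explicit (other columns being undisturbed), and a case analysis governed by the sign inequalities of Lemma \ref{lamination restriction} shows that the induced transformation on these four columns is invertible, so full rank again persists.

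The principal obstacle is this exceptional case: proving rank preservation when $\mu_{\alpha^*} \circ \mu_{\tilde\alpha^*}$ violates the hypothesis of Proposition \ref{rank preserving}. Turning the piecewise $\max$ and $|\cdot|$ formulas of Lemma \ref{exceptional relations} into a localised invertible block-matrix factorisation, branch by branch along the inequalities provided by Lemma \ref{lamination restriction}, is where the real work of the proof lies.
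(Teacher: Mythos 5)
Your plan coincides with the paper's for triangulations: reduce distinctness to full rank of the shortened exchange matrix, start from $T_0$ where the preceding proposition already gives full rank, and propagate along the flip graph (which is connected by Proposition~\ref{flipconnected}) using Proposition~\ref{rank preserving}, invoking Lemma~\ref{t-mutable path} to verify the no-path hypothesis whenever the flip stays within triangulations. Up to this point your argument is essentially the paper's.

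The gap is in the treatment of quasi-triangulations containing a one-sided closed curve. You propose to extend the rank argument to them by showing that the column transformation under $\mu_{\alpha^*}\circ\mu_{\tilde\alpha^*}$ is invertible, but this does not go through as stated, for several compounding reasons. First, Definition~\ref{shortened} defines $\overline{B}$ only for \emph{anti-symmetric} quivers, and the paper explicitly notes that for a traditional triangulation $T^*$ containing an anti-self-folded triangle, $Q_{\overline{T}^*,\overline{\mathbf{L}}}$ is \emph{not} anti-symmetric (it acquires arrows between $\beta$ and $\tilde\beta$; see Figure~\ref{brokensymmetry}). One can still write down the numbers $\overline{b}_{ij}=b_{ij}+b_{\tilde i j}$, but the diagonal entry $\overline{b}_{\beta\beta}$ is then nonzero, which destroys the correspondence between columns and exchange polynomials on which the reduction relies. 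Second, even setting this aside, the exchange polynomials in a quasi-triangulation with a one-sided closed curve are \emph{not} all columns of a single matrix: by Lemma~\ref{correct polys quasi}, $F_\alpha$ is read from $Q_{\overline{T}^*,\overline{\mathbf{L}}}$, $F_\beta$ is read from the \emph{different} (mutated) quiver $\mu_{\alpha^*}\circ\mu_{\tilde\alpha^*}(Q_{\overline{T}^*,\overline{\mathbf{L}}})$, and by the remark following it, $F_x$ and $F_y$ are obtained from $\overline{F}_x,\overline{F}_y$ only after substituting $\alpha\beta$ for $\alpha^*$. So ``full rank of $\overline{B}$'' does not yield pairwise distinctness of this collection, and the sentence ``the column-distinctness already established precludes either from coinciding with any other $F_\gamma$'' is unsupported. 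Third, Lemma~\ref{exceptional relations} only records the lamination rows; it does not give a global factorisation $\overline{B}'=P\overline{B}Q$, so ``invertibility of the transformation on these four columns'' has not been set up.

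The paper sidesteps all of this: having established distinctness in triangulations by the rank argument, it argues inductively across a single flip from a triangulation $T$ to a quasi-triangulation $T'=\mu_{\alpha'}(T)$ producing a one-sided closed curve $\alpha$. Since only $F'_x,F'_y,F'_\beta$ depend on $\alpha$ and $F'_\alpha$ does not, and since $F'_\beta$ is the unique polynomial of degree $2$ in $\alpha$, everything reduces to $F'_x\neq F'_y$. This last point is \emph{geometric}, not linear-algebraic: if $F'_x=F'_y$ the arrows between $x$ and $y$ in $Q_{\overline{T'}^*,\overline{\mathbf{L}}}$ must cancel, which forces the local configuration of Figure~\ref{containeddigon}, whence $x$ and $y$ are interior arcs of a punctured digon; but a principal lamination spirals into every puncture, so $F'_x\neq F'_y$ after all. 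Your proposal is missing precisely this digon-with-puncture observation, and without it (or some substitute) the one-sided-closed-curve case remains open.
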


\begin{proof}

Let $T_{\mathbf{L}}$ be the triangulation that $L$ is constructed from. By construction we know that the shortened exchange matrix of $T_{\mathbf{L}}$ will have full rank. As a direct consequence, the exchange polynomials of $T_{\mathbf{L}}$ will be distinct. Moreover, by Lemma \ref{t-mutable path} and Proposition \ref{rank preserving} we know that the shortened exchange matrix of any triangulation will have full rank, in turn implying the desired uniqueness of the exchange polynomials. It remains to show the exchange polynomials of quasi-triangulations containing one-sided closed curves are distinct. Since any quasi-triangulation can be mutated into a triangulation by successive mutations at one-sided closed curves, it suffices to show that mutating to a one-sided closed curve in a quasi-triangulation preserves the uniqueness of the exchange polynomials.\newline
\indent Let $\alpha'$ be an arc in a quasi-triangulation $T$ that flips to a one-sided closed curve $\alpha$. Denote by $\beta$ the unique arc intersecting $\alpha$, and let $x$ and $y$ denote the boundary segments of the flip region. Assuming uniqueness of the exchange polynomials of $T$, we will argue why all exchange polynomials in the quasi-triangulation $T' := \mu_{\alpha'}(T)$ are also distinct. Suppose for now that $x$ and $y$ are not arcs enclosing $M_1$ or a punctured monogon. \newline
\indent Since $F_{\alpha'} = F'_{\alpha}$ and $\alpha' \in F_x, F_y$, then as all other exchange polynomials remain unchanged, we have $F'_{\alpha} \neq F_{\gamma}$ for any quasi-arc $\gamma \in T'\setminus \{\alpha\}$. The only exchange polynomials of $T'$ depending on $\alpha$ are $F'_x$, $F'_y$ and $F'_{\beta}$. Furthermore, when viewed as a polynomial in $\alpha$, $F_{\beta}$ is the only degree $2$ polynomial in $T'$, so our task is reduced to showing that $F'_x \neq F'_y$. \newline
\indent Consider the subquiver $Q$ of $Q_{\overline{T'}^*}$ obtained from looking solely at the flip region in question. We see that there is an arrow between $x$ (or $\tilde{x}$) and $y$ (or $\tilde{y}$) in $Q$, however, for $F'_x$ and $F'_y$ to be equal there cannot be any arrows between them in the global quiver $Q_{\overline{T'}^*}$. It must therefore be the case that the arrow in $Q$ gets cancelled, and our quasi-triangulation must contain the configuration shown in Figure \ref{containeddigon}.

\begin{figure}[H]
\begin{center}
\includegraphics[width=10cm]{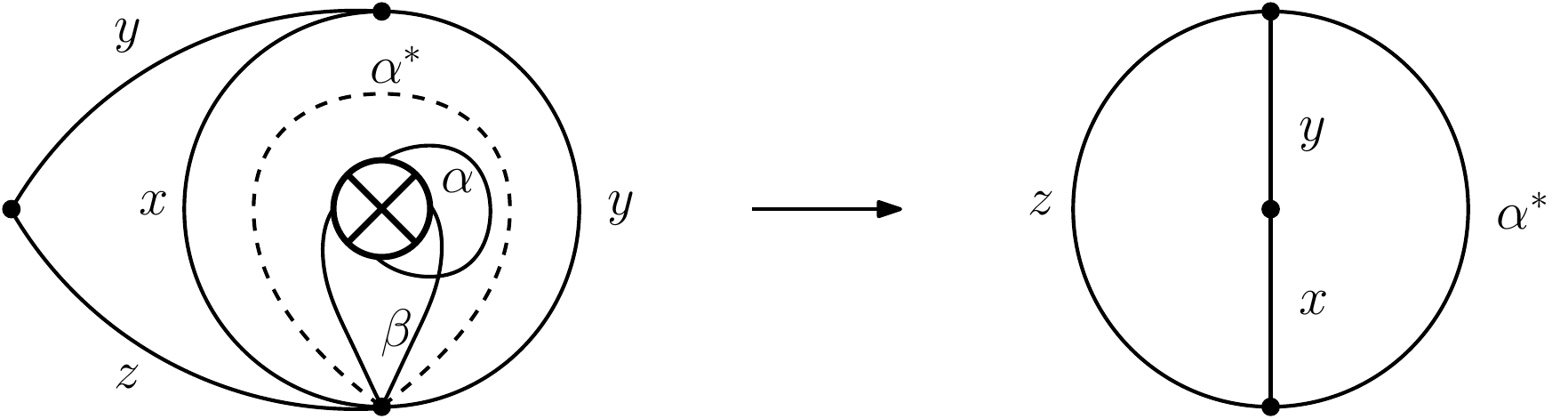}
\caption{On the left we illustrate the (local) configuration of $T'$ required to ensure $\overline{b}_{xy} = \overline{b}_{yx} = 0$ in $Q_{\overline{T}^*}$. The digon embodying this configuration is shown on the right.}
\label{containeddigon}
\end{center}
\end{figure}

However, from here we realise that $x$ and $y$ are the interior arcs of a punctured digon (with boundary segments $z$ and $\alpha^*$). By construction of our principal lamination $\mathbf{L}$ there is a lamination spiralling into every puncture of $(S,M)$, so there will be a lamination spiralling into the puncture of this digon. In turn this implies $F'_x \neq F'_y$. \newline
\indent Finally, we need to turn our attention back to the possibility that $x$ or $y$ is an arc enclosing $M_1$ or a punctured monogon. Without loss of generality, suppose that $x$ is such an arc, and let $x_1$ and $x_2$ be the quasi-arcs it bounds. Analogous to the reasoning employed in our proof thus far, we may deduce that the only possibility for non-uniqueness of the polynomials in $T'$ is if $F_{x_1} = F_{x_2}$. However, if $x$ bounds $M_1$ then $F_{x_1}$ and $F_{x_2}$ have different degrees. If $x$ bounds a punctured monogon then $x_1$ and $x_2$ are the interior arcs of a punctured digon, and since there is a lamination spinning into this puncture we obtain $F_{x_1} \neq F_{x_2}$.

\end{proof}

\subsection{Proof of Main Theorem}

\begin{thm}
\label{maintheorem}
Let $(S,M)$ be an orientable or non-orientable marked surface and $\mathbf{L}$ a principal lamination. Then the LP cluster complex $\Delta_{LP}(S,M,\mathbf{L})$ is isomorphic to the laminated quasi-arc complex $\Delta^{\otimes}(S,M,\mathbf{L})$, and the exchange graph of $\mathcal{A}_{LP}(S,M,\mathbf{L})$ is isomorphic to $E^{\otimes}(S,M,\mathbf{L})$.\newline
More explicitly, if $(S,M)$ is not a once-punctured closed surface, the isomorphisms may be rephrased as follows.
Let $T$ be a quasi-triangulation of $(S,M)$ and $\Sigma_{T}$ its associated LP seed. Then in the LP algebra $\mathcal{A}_{LP}(\Sigma_{T})$ generated by this seed the following correspondence holds:
\begin{align*}
&\hspace{8mm} \mathbf{\mathcal{A}_{LP}(\Sigma_T)} & & &\mathbf{(S,M,\mathbf{L})} \hspace{26mm}&  \\ 
&\textit{Cluster variables} &\longleftrightarrow& &\textit{Laminated lambda lengths of quasi-arcs} & \\
&\hspace{8mm}\textit{Clusters}  &\longleftrightarrow& &\textit{Quasi-triangulations} \hspace{16mm}& \\
&\hspace{4mm} \textit{LP mutation}   &\longleftrightarrow&  &\textit{Flips} \hspace{30.5mm}& \\
\end{align*}
 \end{thm}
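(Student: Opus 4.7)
My plan is to identify the LP algebra generated by the seed associated to a triangulation $T_{\mathbf{L}}$ (from which $\mathbf{L}$ is built) with the laminated quasi-cluster algebra $\mathcal{A}(S,M,\mathbf{L})$, and then transport the correspondence along the flip graph.

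First I would take the LP seed $\Sigma_{T_{\mathbf{L}}} = (\mathbf{x}(T_{\mathbf{L}}), \mathbf{F}_{T_{\mathbf{L}}})$ whose cluster variables are the laminated lambda lengths $\{x_{\mathbf{L}^*}(\gamma) : \gamma \in T_{\mathbf{L}}\}$ and whose exchange polynomials are read off the anti-symmetric quiver $Q_{\overline{T_{\mathbf{L}}},\overline{\mathbf{L}}}$ via Propositions \ref{correct polys} and \ref{correct polys quasi}. By Definition \ref{principal lamination def}, after reordering the shortened exchange matrix of $T_{\mathbf{L}}$ is upper-triangular with $\pm 1$ diagonal in its lamination block, so every column has gcd $1$; Proposition \ref{irreducible} then gives irreducibility, and distinctness is immediate from this same upper-triangular structure, so $\Sigma_{T_{\mathbf{L}}}$ is a valid LP seed.

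Next I would propagate validity to every quasi-triangulation $T$ obtained from $T_{\mathbf{L}}$ by a sequence of flips, arguing by induction on the length of the sequence. Lemma \ref{distinctness of polys} already supplies distinctness of $\mathbf{F}_T$ for any principal lamination and any quasi-triangulation. Irreducibility of $\mathbf{F}_T$ splits into two cases: when $T$ is a triangulation, I would invoke Proposition \ref{rank preserving}, together with the observation, essentially encoded in the matrix factorisation used in its proof, that the gcd-$1$ condition on columns is preserved by double-mutation, so Proposition \ref{irreducible} continues to apply; when $T$ contains a one-sided closed curve, I would read off the explicit quadratic expressions of Lemma \ref{correct polys quasi} and check irreducibility directly, exploiting that a polynomial of the shape $A\alpha^2 + B$ is irreducible in $\alpha$ unless $-B/A$ is a square, which fails in our setting because $F_{\alpha}$ is irreducible and $xy$ is squarefree. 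This irreducibility verification for the exotic polynomials arising from one-sided closed curves is what I expect to be the main technical obstacle.

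With validity established at every quasi-triangulation, Theorem \ref{partial main theorem} immediately identifies LP mutation $\mu_\gamma(\mathbf{x}, \mathbf{F}_T)$ with the flipped seed $(\mu_\gamma(\mathbf{x}), \mathbf{F}_{\mu_\gamma(T)})$. Invoking the connectivity of the flip graph (Proposition \ref{flipconnected}), every quasi-triangulation of $(S,M)$ is reached from $T_{\mathbf{L}}$ by some sequence of flips, and hence corresponds to a cluster of $\mathcal{A}_{LP}(\Sigma_{T_{\mathbf{L}}})$; simultaneously, the cluster variables in this cluster are exactly the laminated lambda lengths of the quasi-arcs of $T$ by Definition \ref{laminated lambda length}. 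This yields the bijections of clusters with quasi-triangulations, cluster variables with laminated lambda lengths, and LP mutations with flips, which encode the desired isomorphisms $\Delta_{LP}(S,M,\mathbf{L}) \cong \Delta^{\otimes}(S,M,\mathbf{L})$ and of exchange graphs. When $(S,M)$ is a once-punctured closed surface, Proposition \ref{flipconnected} gives two isomorphic flip components (plain and notched), so the same argument applied to the component containing $T_{\mathbf{L}}$ identifies $\mathcal{A}_{LP}(\Sigma_{T_{\mathbf{L}}})$ with that component, and the tagging-swapped copy handles the other, assembling into the claimed two-component correspondence.
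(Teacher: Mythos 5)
Your overall route coincides with the paper's: the published proof is a one-line citation of Proposition \ref{irreducible}, Theorem \ref{partial main theorem} and Lemma \ref{distinctness of polys}, and you are simply fleshing out how those three ingredients combine with flip-graph connectivity (Proposition \ref{flipconnected}) and the once-punctured-closed dichotomy, which is the intended argument.

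The one place where your elaboration introduces a claim that does not follow from what you cite is the assertion that ``the gcd-$1$ condition on columns is preserved by double-mutation,'' which you say is ``essentially encoded in the matrix factorisation used in'' Proposition \ref{rank preserving}. That factorisation writes $\overline{B}' = A\,\overline{B}\,C$ with $A,C$ integer matrices of determinant $\pm 1$. Left-multiplication by the unimodular $A$ does preserve each column gcd, but right-multiplication by $C$ performs column operations: for $k\geq 2$ the new column is $\overline{B}_{\cdot k} + \max(0,\overline{b}_{1k})\,\overline{B}_{\cdot 1}$, and adding an integer multiple of one column to another can change gcds (e.g.\ $(1,2)^{\!T} + (1,0)^{\!T} = (2,2)^{\!T}$). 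So the factorisation alone only delivers rank invariance, which suffices for Lemma \ref{distinctness of polys}, but it does not deliver the gcd-$1$ hypothesis needed to re-apply Proposition \ref{irreducible} at an arbitrary triangulation. If gcd-$1$ does persist here it must be for a reason special to anti-symmetric quivers arising from principal laminations, and that reason needs to be supplied. (Your separate check of irreducibility for the degree-two polynomial $F_\beta$ at a quasi-triangulation containing a one-sided closed curve is a sensible addition, since that $F_\beta$ is not a binomial and so falls outside the scope of Proposition \ref{irreducible}; but even there, irreducibility in the single variable $\alpha$ is not the same as irreducibility in the full polynomial ring, so the Eisenstein/square-freeness sketch would want to be made into an argument over $\mathbb{ZP}[x,y,\alpha,\dots]$ rather than over a one-variable ring.)
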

\begin{proof}
This is a consequence of Proposition \ref{irreducible}, Theorem \ref{partial main theorem} and Lemma \ref{distinctness of polys}.
\end{proof}

\begin{rmk}

If $(S,M)$ is a closed once-punctured bordered surface then Proposition \ref{flipconnected} tells us that $E^{\otimes}(S,M,\mathbf{L})$ has two connected components. In this case, Theorem \ref{maintheorem} reveals the cluster variables correspond to the laminated lambda lengths of one-sided closed curves and plain arcs (or equivalently notched arcs), and the clusters will therefore correspond to quasi-triangulations consisting of one-sided closed curves and plain arcs (notched arcs).

\end{rmk}

\begin{cor}
\label{maincor}

Let $(S,M)$ be a bordered surface. Then the quasi-cluster algebra $\mathcal{A}(S,M)$ is a specialised LP algebra. 

\end{cor}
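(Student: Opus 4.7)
The plan is to deduce the corollary as a direct consequence of the Main Theorem by choosing an appropriate principal lamination and then specialising the lamination variables to $1$.

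First, I would fix any triangulation $T_0$ of $(S,M)$ and pick a principal lamination $\mathbf{L} = \mathbf{L}_{T_0}$ in the sense of Definition \ref{principal lamination def}. By Theorem \ref{maintheorem}, the laminated quasi-cluster algebra $\mathcal{A}(S,M,\mathbf{L})$ carries a bona fide LP structure: its seeds are the pairs $(\mathbf{x}(T), \mathbf{F}_T)$ attached to quasi-triangulations $T$, its cluster variables are the laminated lambda lengths $x_{\mathbf{L}^*}(\gamma)$, and LP mutation coincides with flipping of quasi-arcs. The coefficient ring of this LP algebra is $\mathbb{Z}\mathbb{P} = \mathbb{Z}[x_{b_1},\ldots,x_{b_m},x_{L_1},\ldots,x_{L_{|I|}}]$, where the $x_{L_i}$ are the variables corresponding to the laminations in $\mathbf{L}$.

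Next, I would consider the specialisation $\mathrm{sp}\colon x_{L_i}\mapsto 1$ for every lamination $L_i \in \mathbf{L}$ (leaving boundary-segment variables alone). On the geometric side, this corresponds to sending each $q_i$ to $1$ in the formulas defining tropical lambda lengths, so that $c_{\mathbf{L}^*}(\overline{\gamma}) = 1$ for every quasi-arc $\gamma$. Comparing with Definition \ref{laminated lambda length}, the laminated lambda length $x_{\mathbf{L}^*}(\gamma) = \lambda(\overline{\gamma})/c_{\mathbf{L}^*}(\overline{\gamma})$ therefore specialises to $\lambda(\overline{\gamma})$, i.e.\ the lambda length of (a lift of) $\gamma$ on the opened surface, which by the unpunctured boundary-condition discussion in Section~5 is precisely the cluster variable used to define $\mathcal{A}(S,M)$ in Section~3. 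Moreover, inspection of the exchange relations produced by Propositions \ref{correct polys} and \ref{correct polys quasi} shows that after setting every $x_{L_i}=1$ the exponential factors $\prod L_i^{\overline{b}_{L_i\gamma}}$ disappear and one is left exactly with the flip relations of Definition \ref{lambdarules}. Hence the specialised seeds coincide with the quasi-seeds of $(S,M)$.

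Finally, since flipping in $(S,M)$ commutes with specialisation (the lamination variables $x_{L_i}$ are frozen, so LP mutation at a non-frozen direction and the evaluation $x_{L_i}\mapsto 1$ act independently), every seed of $\mathcal{A}(S,M,\mathbf{L})$ specialises to a quasi-seed of $\mathcal{A}(S,M)$, and every quasi-seed is obtained this way. Consequently $\mathcal{A}(S,M) = \mathcal{A}(S,M,\mathbf{L})_{sp}$, which is by definition a specialised LP algebra. The only delicate point -- the part I expect to be the main obstacle -- is verifying commutation of specialisation with mutation in the quasi-arc cases (flips to one-sided closed curves and flips of arcs crossing one-sided closed curves), since these produce non-binomial exchange polynomials and involve the auxiliary normalisation step of Lemma \ref{normalisation assumption}; here one has to check that the monomial $M$ appearing in Step 3 of LP mutation is itself independent of the $x_{L_i}$ (up to the relevant specialisation), so that no hidden cancellations are lost. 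This follows from the explicit formulas of Lemma \ref{exceptional relations} exactly as in the proof of Theorem \ref{partial main theorem}, and gives the corollary.
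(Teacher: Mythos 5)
Your proof is correct and takes the same route as the paper: fix a principal lamination $\mathbf{L}$, invoke Theorem~\ref{maintheorem} to get the LP algebra $\mathcal{A}(S,M,\mathbf{L})$, and specialise the lamination variables $x_{L_i}\mapsto 1$, checking that the laminated lambda lengths and exchange relations reduce to those of $\mathcal{A}(S,M)$. The paper's own proof is exactly this, stated in two sentences; your expansion of what the specialisation does is sound.

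One caution about your final paragraph: you do not actually need specialisation to commute with mutation for this corollary, and the parenthetical justification you give for it (``the lamination variables are frozen, so mutation and evaluation act independently'') is false in general --- Example~\ref{not LP} in Section~2 has a frozen coefficient $X$ and yet $\mu_a(\Sigma_{sp})\neq\mu_a(\Sigma)_{sp}$. Fortunately the corollary only asserts that $\mathcal{A}(S,M)$ \emph{is} a specialised LP algebra, which by the definition in Section~2 is simply the image of an LP algebra under evaluation of some coefficients at $1$; it does not assert that the specialisation carries an induced LP structure. So once you have checked that the generators $x_{\mathbf{L}^*}(\gamma)$ specialise to the ordinary lambda lengths $\lambda(\gamma)$ (which you did), the equality of rings $\mathcal{A}(S,M)=\mathcal{A}(S,M,\mathbf{L})_{sp}$ follows and the proof is complete; the commutation claim, and the appeal to Lemma~\ref{exceptional relations}, can simply be dropped.
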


\begin{proof}

Let $\mathbf{L}$ be a principal lamination of $(S,M)$. Theorem \ref{maintheorem} yields that $\mathcal{A}(S,M,\mathbf{L})$ is an LP algebra. Specialising the lamination coefficients yields the desired result.

\end{proof}

\nocite{*}

\bibliography{laurent2}

\begin{thebibliography}{10}

\bibitem{alman2016laurent}
J.~Alman, C.~Cuenca, and J.~Huang.
\newblock Laurent phenomenon sequences.
\newblock {\em Journal of Algebraic Combinatorics}, 43(3):589--633, 2016.

\bibitem{amiot2017cluster}
C.~Amiot and P.~Plamondon.
\newblock The cluster category of a surface with punctures via group actions.
\newblock {\em arXiv preprint arXiv:1707.01834}, 2017.

\bibitem{baur2014compactifying}
K.~Baur and G.~Dupont.
\newblock Compactifying exchange graphs i: Annuli and tubes.
\newblock {\em Annals of Combinatorics}, 18(3):383--396, 2014.

\bibitem{baur2018transfinite}
K.~Baur and S.~Gratz.
\newblock Transfinite mutations in the completed infinity-gon.
\newblock {\em Journal of Combinatorial Theory, Series A}, 155:321--359, 2018.

\bibitem{bucher2016recovering}
E.~Bucher and M.~Yakimov.
\newblock Recovering the topology of surfaces from cluster algebras.
\newblock {\em Mathematische Zeitschrift}, pages 1--30, 2016.

\bibitem{canakci2017infinite}
I.~Canakci and A.~Felikson.
\newblock Infinite rank surface cluster algebras.
\newblock {\em arXiv preprint arXiv:1704.01826}, 2017.

\bibitem{dupont2015quasi}
G.~Dupont and F.~Palesi.
\newblock Quasi-cluster algebras from non-orientable surfaces.
\newblock {\em Journal of Algebraic Combinatorics}, 42(2):429--472, 2015.

\bibitem{farb2011primer}
B.~Farb and D.~Margalit.
\newblock {\em A primer on mapping class groups (pms-49)}.
\newblock Princeton University Press, 2011.

\bibitem{felikson2012cluster}
A.~Felikson, M.~Shapiro, and P.~Tumarkin.
\newblock Cluster algebras of finite mutation type via unfoldings.
\newblock {\em International Mathematics Research Notices}, 2012(8):1768--1804,
  2012.

\bibitem{felikson2008skew}
A.~Felikson, M.~Shapiro, and P.~Tumarkin.
\newblock Skew-symmetric cluster algebras of finite mutation type.
\newblock {\em Journal of the European Mathematical Society},
  14(4):1135–1180, 2012.

\bibitem{felikson2017bases}
A.~Felikson and P.~Tumarkin.
\newblock Bases for cluster algebras from orbifolds.
\newblock {\em Advances in Mathematics}, 318:191--232, 2017.

\bibitem{fock2006moduli}
V.~Fock and A.~Goncharov.
\newblock Moduli spaces of local systems and higher {T}eichm{\"u}ller theory.
\newblock {\em Publications Math{\'e}matiques de l'IH{\'E}S}, 103:1--211, 2006.

\bibitem{fock2007dual}
V.~Fock and A.~Goncharov.
\newblock Dual {T}eichm{\"u}ller and lamination spaces.
\newblock {\em Handbook of Teichm{\"u}ller Theory, Volume I}, pages 647--684,
  2007.

\bibitem{fomin2008cluster}
S.~Fomin, M.~Shapiro, and D.~Thurston.
\newblock Cluster algebras and triangulated surfaces. part i: Cluster
  complexes.
\newblock {\em Acta Mathematica}, 201(1):83--146, 2008.

\bibitem{fomin2012cluster}
S.~Fomin and D.~Thurston.
\newblock Cluster algebras and triangulated surfaces. part ii: Lambda lengths.
\newblock {\em preprint
  \href{https://arxiv.org/abs/1210.5569}{arXiv:1210.5569}}, 2012.

\bibitem{fomin2002cluster}
S.~Fomin and A.~Zelevinsky.
\newblock Cluster algebras i: foundations.
\newblock {\em Journal of the American Mathematical Society}, 15(2):497--529,
  2002.

\bibitem{fomin2002laurent}
S.~Fomin and A.~Zelevinsky.
\newblock The laurent phenomenon.
\newblock {\em Advances in Applied Mathematics}, 28(2):119--144, 2002.

\bibitem{fomin2003cluster}
S.~Fomin and A.~Zelevinsky.
\newblock Cluster algebras ii: Finite type classification.
\newblock {\em Inventiones Mathematicae}, 154(1):63--121, 2003.

\bibitem{fomin2007cluster}
S.~Fomin and A.~Zelevinsky.
\newblock Cluster algebras iv: coefficients.
\newblock {\em Compositio Mathematica}, 143(1):112--164, 2007.

\bibitem{gallagher2013broken}
S.~Gallagher and A.~Stevens.
\newblock The broken ptolemy algebra: A finite-type laurent phenomenon algebra.
\newblock {\em preprint}, 2013.

\bibitem{gastineau2017binomial}
S.~Gastineau and G.~Moreland.
\newblock A binomial laurent phenomenon algebra associated with the complete
  graph.
\newblock {\em Journal of Algebraic Combinatorics}, 46(3-4):549--569, 2017.

\bibitem{harer1985stability}
J.~Harer.
\newblock Stability of the homology of the mapping class groups of orientable
  surfaces.
\newblock {\em Annals of Mathematics}, 121(2):215--249, 1985.

\bibitem{hatcher1991triangulations}
A.~Hatcher.
\newblock On triangulations of surfaces.
\newblock {\em Topology and its Applications}, 40(2):189--194, 1991.

\bibitem{irelli2013linear}
G.~Irelli, B.~Keller, D.~Labardini-Fragoso, and P.~Plamondon.
\newblock Linear independence of cluster monomials for skew-symmetric cluster
  algebras.
\newblock {\em Compositio Mathematica}, 149(10):1753--1764, 2013.

\bibitem{lam2015linear}
T.~Lam and P.~Pylyavskyy.
\newblock Linear laurent phenomenon algebras.
\newblock {\em International Mathematics Research Notices},
  2016(10):3163--3203, 2015.

\bibitem{lam2012laurent}
T.~Lam and P.~Pylyavskyy.
\newblock Laurent phenomenon algebras.
\newblock {\em Cambridge Journal of Mathematics}, 4(1):121--162, 2016.

\bibitem{lee2013positivity}
K.~Lee and R.~Schiffler.
\newblock Positivity for cluster algebras.
\newblock {\em arXiv preprint arXiv:1306.2415}, 2013.

\bibitem{mosher1988tiling}
L.~Mosher.
\newblock Tiling the projective foliation space of a punctured surface.
\newblock {\em Transactions of the American Mathematical Society}, pages 1--70,
  1988.

\bibitem{musiker2011positivity}
G.~Musiker, R.~Schiffler, and L.~Williams.
\newblock Positivity for cluster algebras from surfaces.
\newblock {\em Advances in Mathematics}, 227(6):2241--2308, 2011.

\bibitem{musiker2013bases}
G.~Musiker, R.~Schiffler, and L.~Williams.
\newblock Bases for cluster algebras from surfaces.
\newblock {\em Compositio Mathematica}, 149(2):217--263, 2013.

\bibitem{musiker2012matrix}
G.~Musiker and L.~Williams.
\newblock Matrix formulae and skein relations for cluster algebras from
  surfaces.
\newblock {\em International Mathematics Research Notices},
  2013(13):2891--2944, 2012.

\bibitem{okubo2016laurent}
N.~Okubo.
\newblock Laurent phenomenon algebras and the discrete bkp equation.
\newblock {\em Journal of Physics A: Mathematical and Theoretical},
  49(35):355201, 2016.

\bibitem{penner2012decorated}
R.~Penner.
\newblock {\em Decorated Teichm{\"u}ller theory}, volume~1.
\newblock European Mathematical Society Publishing House, ETH-Zentrum SEW A27,
  2012.

\bibitem{schiffler2010cluster}
R.~Schiffler.
\newblock On cluster algebras arising from unpunctured surfaces ii.
\newblock {\em Advances in Mathematics}, 223(6):1885--1923, 2010.

\bibitem{schiffler2009cluster}
R.~Schiffler and H.~Thomas.
\newblock On cluster algebras arising from unpunctured surfaces.
\newblock {\em International Mathematics Research Notices},
  2009(17):3160--3189, 2009.

\bibitem{thurston1988geometry}
W.~Thurston.
\newblock On the geometry and dynamics of diffeomorphisms of surfaces.
\newblock {\em Bulletin (new series) of the american mathematical society},
  19(2):417--431, 1988.

\bibitem{wilson2017laurent}
J.~Wilson.
\newblock Laurent phenomenon algebras arising from surfaces.
\newblock {\em International Mathematics Research Notices}, page rnw341, 2017.

\bibitem{wilson2017shellability}
J.~Wilson.
\newblock Shellability and sphericity of finite quasi-arc complexes.
\newblock {\em Discrete \& Computational Geometry}, pages 1--27, 2017.

\end{thebibliography}
\bibliographystyle{plain}

\Addresses

\end{document}